\newcommand{\thesistitle}{Dualization and deformations of the Bar-Natan--Russell skein module}
\newcommand{\thesisauthor}{Andrea Heyman}
\newcommand{\thesisyear}{2016}
\newcommand{\doublespace}{\renewcommand{\baselinestretch}{1.5} \small \normalsize}
\newcommand{\normalspace}{\doublespace}
\newcommand{\thesistitlepage}{
    \normalspace
    \thispagestyle{empty}
    \begin{center}
        \textbf{\LARGE \thesistitle} \\[1cm]
        \textbf{\LARGE \thesisauthor} \\[8cm]
        Submitted in partial fulfillment of the \\
        requirements for the degree \\
        of Doctor of Philosophy \\
        in the Graduate School of Arts and Sciences \\[4cm]
        \textbf{\Large COLUMBIA UNIVERSITY} \\[5mm]
        \thesisyear
    \end{center}
    \clearpage
}
\newcommand{\thesiscopyrightpage}{
    \thispagestyle{empty}
    \strut \vfill
    \begin{center}
      \copyright \thesisyear \\
      \thesisauthor \\
      All Rights Reserved
    \end{center}
    \cleardoublepage
}
\newcommand{
    \thispagestyle{empty}
    \begin{center}
    \textbf{\LARGE ABSTRACT} \\[1cm]
     \textbf{\LARGE \thesistitle} \\[1cm]
     \textbf{\LARGE \thesisauthor} \\[1cm]
    \end{center}
    This thesis studies the Bar-Natan skein module of the solid torus with a particular boundary curve system, and in particular a diagrammatic presentation of it due to Russell. This module has deep connections to topology and categorification: it is isomorphic to both the total homology of the $(n,n)$-Springer variety and the 0th Hochschild homology of the Khovanov arc ring $H^n$.

We can also view the Bar-Natan--Russell skein module from a representation-theoretic viewpoint as an extension of the Frenkel--Khovanov graphical description of the Lusztig dual canonical basis of the $U_q(\mathfrak{sl}_2)$-representation $V_1^{\otimes 2n}$. One of our primary results is to extend a dualization construction of Khovanov using Jones--Wenzl projectors from the Lusztig basis to the Russell basis.

We also construct and explore several deformations of the Russell skein module. One deformation is a quantum deformation that arises from embedding the Russell skein module in a space that obeys Kauffman--Lins diagrammatic relations. Our quantum version recovers the original Russell space when $q$ is specialized to $-1$ and carries a natural braid group action that recovers the symmetric group action of Russell and Tymoczko. We also present an equivariant deformation that arises from replacing the TQFT algebra $\mathcal{A}$ used in the construction of the rings $H^n$ by the equivariant homology of the two-sphere with the standard action of $U(2)$ and taking the 0th Hochschild homology of the resulting deformed arc rings. We show that the equivariant deformation has the expected rank.

Finally, we consider the Khovanov two-functor $\mathcal{F}$ from the category of tangles. We show that it induces a surjection from the space of cobordisms of planar $(2m, 2n)$-tangles to the space of $(H^m, H^n)$-bimodule homomorphisms and give an explicit description of the kernel. We use our result to introduce a new quotient of the Russell skein module.

    \cleardoublepage
}{
    \thispagestyle{empty}
    \begin{center}
    \textbf{\LARGE ABSTRACT} \\[1cm]
     \textbf{\LARGE \thesistitle} \\[1cm]
     \textbf{\LARGE \thesisauthor} \\[1cm]
    \end{center}
    This thesis studies the Bar-Natan skein module of the solid torus with a particular boundary curve system, and in particular a diagrammatic presentation of it due to Russell. This module has deep connections to topology and categorification: it is isomorphic to both the total homology of the $(n,n)$-Springer variety and the 0th Hochschild homology of the Khovanov arc ring $H^n$.

We can also view the Bar-Natan--Russell skein module from a representation-theoretic viewpoint as an extension of the Frenkel--Khovanov graphical description of the Lusztig dual canonical basis of the $U_q(\mathfrak{sl}_2)$-representation $V_1^{\otimes 2n}$. One of our primary results is to extend a dualization construction of Khovanov using Jones--Wenzl projectors from the Lusztig basis to the Russell basis.

We also construct and explore several deformations of the Russell skein module. One deformation is a quantum deformation that arises from embedding the Russell skein module in a space that obeys Kauffman--Lins diagrammatic relations. Our quantum version recovers the original Russell space when $q$ is specialized to $-1$ and carries a natural braid group action that recovers the symmetric group action of Russell and Tymoczko. We also present an equivariant deformation that arises from replacing the TQFT algebra $\mathcal{A}$ used in the construction of the rings $H^n$ by the equivariant homology of the two-sphere with the standard action of $U(2)$ and taking the 0th Hochschild homology of the resulting deformed arc rings. We show that the equivariant deformation has the expected rank.

Finally, we consider the Khovanov two-functor $\mathcal{F}$ from the category of tangles. We show that it induces a surjection from the space of cobordisms of planar $(2m, 2n)$-tangles to the space of $(H^m, H^n)$-bimodule homomorphisms and give an explicit description of the kernel. We use our result to introduce a new quotient of the Russell skein module.

    \cleardoublepage
}
\newtheorem{theorem}{Theorem}[section]
\newtheorem{lemma}[theorem]{Lemma}
\newtheorem{proposition}[theorem]{Proposition}
\newtheorem{corollary}[theorem]{Corollary}
\newenvironment{proof}[1][Proof]{\begin{trivlist}
\item[\hskip \labelsep {\bfseries #1}]}{\end{trivlist}}
\newenvironment{definition}[1][Definition]{\begin{trivlist}
\item[\hskip \labelsep {\bfseries #1}]}{\end{trivlist}}
\newenvironment{example}[1][Example]{\begin{trivlist}
\item[\hskip \labelsep {\bfseries #1}]}{\end{trivlist}}
\newenvironment{remark}[1][Remark]{\begin{trivlist}
\item[\hskip \labelsep {\bfseries #1}]}{\end{trivlist}}
\newenvironment{conjecture}[1][Conjecture]{\begin{trivlist}
\item[\hskip \labelsep {\bfseries #1}]}{\end{trivlist}}
\newenvironment{observation}[1][Observation]{\begin{trivlist}
\item[\hskip \labelsep {\bfseries #1}]}{\end{trivlist}}
\newcommand{\heart}{\,\heartsuit \,}
\newcommand{\diam}{\, \lozenge \,}
\begin{document}

\pagestyle{empty}

\thesistitlepage
\thesiscopyrightpage

    \thispagestyle{empty}
    \begin{center}
    \textbf{\LARGE ABSTRACT} \\[1cm]
     \textbf{\LARGE \thesistitle} \\[1cm]
     \textbf{\LARGE \thesisauthor} \\[1cm]
    \end{center}
    
    \cleardoublepage


\pagenumbering{roman}
\pagestyle{plain}

\setlength{\footskip}{0.5in}

\setcounter{tocdepth}{2}
\renewcommand{\contentsname}{Table of Contents}
\tableofcontents
\cleardoublepage

\addcontentsline{toc}{chapter}{List of Figures}
\listoffigures
\cleardoublepage

~\\[1in] 
\textbf{\Huge Acknowledgments}\\

Thank you to my advisor, Mikhail Khovanov, for your unique perspective and patience.

Thank you to my thesis committee, Mohammed Abouzaid, Melissa Liu, Josh Sussan, and especially Heather Russell, for your detailed and constructive comments.

Thank you to my amazing husband, Will, for loving and supporting me always.

Thank you to my family, Ellen, David, Ben, Vic, Reba, Ed, and Sylvia, for a lifetime of support and for being as proud of this accomplishment as I am.

Thank you to my classmates, St\'{e}phane, Rob, Karsten, Jo\~{a}o, Connor, Vivek, Vlad, Natasha, and Andrey, for making it fun to come to work each day for the last five years.

Thank you to Terrance, who went above and beyond to make the department a better place to work.

I was partially supported by NSF grants DMS-1406065 and DMS-1005750.

\cleardoublepage

\pagestyle{headings}
\pagenumbering{arabic}

%
%
\setlength{\textheight}{8.5in}
\setlength{\footskip}{0.5in}

\fancypagestyle{plain} {%
\fancyhf{}
\cfoot{\thepage}
\fancyhead[RE,LO]{\itshape \leftmark}
\renewcommand{\headrulewidth}{0pt}
}
\pagestyle{plain}

\chapter{Introduction}
\label{section:intro}

The primary object of study in this thesis will be the Bar-Natan skein module of the solid torus with a particular boundary curve system depending on a nonnegative integer $n$. A convenient set of diagrammatics for this skein module was provided by Russell \cite{russ}, and when using these diagrammatics we refer to the equivalent space as the Russell skein module $R_n$. In Russell's graphical calculus, diagrams consist of dotted crossingless matchings, subject to certain Type I and Type II relations (see Figure \ref{Russell_relns}), and a basis of diagrams is given by those that have dots on outer arcs only.

In \cite{khfrenkel}, Frenkel and Khovanov introduce a graphical calculus for the Lusztig dual canonical basis in tensor powers of irreducible representations of the quantum group $U_q(\mathfrak{sl}_2)$. In this thesis, we will be primarily concerned with even tensor powers of the fundamental representation $V_1$. In the Frenkel--Khovanov calculus, the basis of the invariant subspace of $V_1^{\otimes 2n}$, denoted $\mbox{Inv}(n)$, is exactly given by crossingless matchings of $2n$ points. In this sense, we view the basis of $R_n$ as an extension of the graphical basis of $\mbox{Inv}(n)$. Khovanov provides a description of the Lusztig canonical basis using a graphical approach in \cite{khthesis}. He first gives a graphical interpretation of the traditional bilinear form on $V_1^{\otimes n}$ and then constructs duals to the graphical dual Lusztig canonical basis of \cite{khfrenkel} using Jones--Wenzl projectors. In Chapter \ref{ch:Dual-Russell}, specializing to $q=-1$, we extend several of the results of Frenkel and Khovanov to the Russell basis.

In particular, in Section \ref{sec:bilin_form} we extend the graphical description of the Khovanov bilinear form on $\mbox{Inv}(n)$ to $R_n$ and show that it is well-defined, symmetric, and non-degenerate. As in the Khovanov case, our bilinear form admits a diagrammatic description. It also allows us to introduce a new graphical calculus for the dual of the Russell space (see Section \ref{sec:dual-calc}). The primary focus of Chapter \ref{ch:Dual-Russell} is to graphically describe dual elements to Russell basis elements with respect to this bilinear form. In Section \ref{sec:no-dots}, we review Khovanov's construction of the Lusztig canonical basis and tailor it to our set-up, which involves projecting to the invariant subspace $\mbox{Inv}(n)$ and specializing the value of $q$ to $-1$. The main result lies in Section \ref{sec:dual-dots}, Theorem \ref{dualbasisthm}, which extends the graphical construction of the Lusztig canonical basis to construct the dual Russell basis of $R_n$ using Jones--Wenzl projectors. The proof of this result occupies the majority of Chapter \ref{ch:Dual-Russell}.

While admitting a purely combinatorial description, the Russell skein module has strong topological significance. Recall that the $(n,n)$-Springer variety is the variety of complete flags in $\mathbb{C}^{2n}$ fixed by a nilpotent matrix with two Jordan blocks of size $n$. The following key result is due to Russell:

\begin{theorem}[Russell]
$R_n$ is isomorphic to the total homology of the $(n,n)$-Springer variety.
\end{theorem}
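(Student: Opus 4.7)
Let $\mathcal{B}_{n,n}$ denote the $(n,n)$-Springer variety. The plan is to construct an explicit isomorphism $\Phi \colon R_n \to H_*(\mathcal{B}_{n,n})$ by sending Russell diagrams to cohomology classes, verify that the Type I and Type II relations become identities on the Springer side, and then compare ranks.

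First, I would invoke a convenient presentation of $H^*(\mathcal{B}_{n,n})$. The handiest option is the Tanisaki-style presentation as a quotient of $\mathbb{Z}[x_1, \dots, x_{2n}]$, in which degree-two generators $x_1, \dots, x_{2n}$ satisfy $x_i^2 = 0$ together with elementary symmetric function relations. This presentation already exhibits the key algebraic feature $x_i^2 = 0$ of the underlying TQFT algebra $\mathcal{A} \cong H^*(S^2)$ that governs $R_n$.

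Second, I would define $\Phi$ on Russell diagrams. Each crossingless matching corresponds geometrically to an irreducible component of the Springer fiber via the Spaltenstein--Fung description; a Russell diagram is then sent to the fundamental class of the corresponding component multiplied by one Chern class for each dot. Equivalently, in the Tanisaki picture, a dotted arc from $i$ to $j$ contributes the generator $x_i = x_j$, the equality being justified by the matching's identification of the two endpoint classes. The Type I relations go over to $x_i^2 = 0$, while the Type II relations correspond to the local Frobenius surgery identity $\Delta(1) = X \otimes 1 + 1 \otimes X$ translated to cohomology. This second compatibility is the main technical obstacle: it requires a case analysis on the local arc configurations before and after the surgery move, cross-checked against the comultiplication in $\mathcal{A}$.

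Finally, I would establish that $\Phi$ is an isomorphism. For surjectivity it suffices to note that every generator $x_i$ lies in the image, realized by a single dotted-arc diagram. For injectivity I would compare ranks: Russell's basis theorem gives $\dim R_n = \binom{2n}{n}$ by counting dotted crossingless matchings with dots on outer arcs only, and the same rank $\binom{2n}{n}$ is obtained for $H_*(\mathcal{B}_{n,n})$ from Spaltenstein's affine paving, whose cells are indexed by row-strict fillings of the shape $(n,n)$ by $\{1, \dots, 2n\}$. A surjection between free modules of equal finite rank is an isomorphism.
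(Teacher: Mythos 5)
Your overall strategy (map diagrams to classes on the Springer fiber, check the relations, then compare ranks) is close in spirit to Russell's actual argument, but two steps as written do not work. First, the translation of the relations is off. The identity $x_i^2=0$ is not what Type~I encodes --- it is already built into the diagrammatics by the ``at most one dot per arc'' convention and into the TQFT algebra by $\mathcal{A}\cong\mathbb{Z}[X]/X^2$. The Type~I relation $m_1+m_2-m_1'-m_2'=0$ is the shadow of $\Delta(1)=1\otimes X+X\otimes 1$ and Type~II is the shadow of $\Delta(X)=X\otimes X$; both arise from the two ways of performing neck-cutting on a tube joining two components. Moreover, the prescription ``a dotted arc from $i$ to $j$ contributes $x_i=x_j$'' does not define an element of the Tanisaki quotient, where $x_i\neq x_j$; it only makes sense after capping against the class of the component $S_a$, i.e., you are forced to work with push-forwards from the components $S_a\cong(S^2)^n$ of Khovanov's space $\widetilde S=\bigcup_a S_a$, which is exactly where the published proof lives (Russell computes $H_*(\widetilde S)$ from the pieces $S_a$ and their intersections, and Khovanov--Wehrli identify $\widetilde S$ with the Springer variety). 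The verification you defer as ``the main technical obstacle'' is in fact the entire content of the theorem.

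Second, your surjectivity argument is broken. The target is the total homology $H_*(\mathcal{B}_{n,n})$, which is not a ring, and even passing to cohomology the image of $\Phi$ is only a subgroup: knowing each degree-two generator $x_i$ lies in the image does not place the monomials $x_{i_1}\cdots x_{i_k}$ there, because $R_n$ carries no multiplication making $\Phi$ multiplicative. What you need is that the classes $(x_{i_1}\cdots x_{i_k})\cap[S_a]$ span $H_*$, and that is again the substance of the computation, not a formality. The rank count itself is fine: $\operatorname{rank} R_n=\sum_{k}\bigl[\binom{2n}{n+k}-\binom{2n}{n+k+1}\bigr]=\binom{2n}{n}$ matches $\dim H_*(\mathcal{B}_{n,n})$, and a surjection of free $\mathbb{Z}$-modules of equal finite rank is an isomorphism --- but only once surjectivity is actually established; you should also make sure the linear-independence half of the basis theorem you invoke is proved independently of the Springer identification, to avoid circularity. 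For comparison, this thesis reaches the statement by a different route: a direct isomorphism $R_n\cong HH_0(H^n)$ (Proposition~\ref{prop:HH0}), combined with Khovanov's $Z(H^n)\cong H^*(\mathcal{B}_{n,n})$ and the duality of $HH_0$ and $HH^0$ for symmetric algebras.
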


In \cite{russ-2}, Russell and Tymoczko describe a natural, combinatorial action of the symmetric group on a space isomorphic to the Russell skein module. They identify basis elements of that space, again given by ``standard'' crossingless matchings with dots on outer arcs only, with homology generators of $X_n$ and show that the $S_{2n}$ action they define is the Springer representation. Their action can be extended to the full Russell skein module by first rewriting any diagram with dots on inner arcs in terms of standard dotted matchings using Type I and Type II relations and then applying the $S_{2n}$ action previously defined. However, it should be noted that this extension does not have a local description, in the sense that the action of the symmetric group generator $s_i$ in general affects more arcs than just those with endpoints numbered $i$ or $i+1$.

In Chapter \ref{ch:Quantum-Russell}, we present a quantized version of the Russell skein module, denoted $R_{n,k}^q$, that is a deformation of the original skein module now considered over the ring $\mathbb{Z}[q, q^{-1}]$ for some fixed element $q$ of the ground ring $\Bbbk$ with deformed Type I and Type II relations depending on this parameter $q$ (see Figure \ref{quantum_Russell}). Specializing to $q = -1$ recovers the original Russell space. The deformed Type I and Type II relations are not local in the same sense as the original ones. Instead, they are semi-local, in the sense that they involve an additional term for each undotted arc containing the two arcs involved in the traditional Russell relations. In Section \ref{sec:no-local}, we explain that purely local quantum Russell relations are not possible.

In Section \ref{sec:KL-def} we first define spaces $\widetilde{S}^q_{n,k}$ that obey traditional Kauffman--Lins diagrammatic relations. We then introduce spaces $S^q_{n,k}$ that are quotients of $\widetilde{S}^q_{n,k}$ and turn out to be isomorphic to $R^q_{n,k}$. Sections \ref{sec:psi-tilde} and \ref{sec:psi} describe the embedding of the quantum Russell space inside the isomorphic spaces $S^q_{n,k}$ for generic $q$. As a consequence, we are able to identify a convenient basis for $R^q_{n,k}$ and compute its dimension.

The embedding of $R_{n,k}^q$ in a quotient of a Kauffman--Lins space is advantageous, as Kauffman--Lins diagrammatics are well-understood. In particular, the Kauffman--Lins space carries a natural action of the braid group, where the action of the generator $\sigma_i$ is given by attaching a positive crossing between strands numbered $i$ and $i+1$. In Section \ref{sec:Bn-act} we pull back the well-defined action of $B_{2n}$ on $S^q_{n,k}$ to the quantum Russell space. We observe that when we consider the subspace $\overline{R}^q_{n,k}$ of $\widetilde{R}^q_{n,k}$ with a basis given by diagrams with dots on outer arcs only, which is isomorphic to the quantum Russell space $R_{n,k}^q$, the braid group action admits a fully local description (Figure \ref{braid_group_act}), as it does in the Russell--Tymoczko case. That is, the spaces fit into a commutative diagram

\centerline{
\xymatrix{
\overline{R}_{n,k}^q \ar@{^{(}->}[r]  \ar@/_1pc/[rr]_{\cong} & \widetilde{R}_{n,k}^q \ar@{->>}[r] & R_{n,k}^q \\
}} 
\noindent where the leftmost and rightmost spaces both carry a $B_{2n}$ action, and the bent arrow represents an isomorphism intertwining these actions, but only the action on the leftmost space is local.

In Section \ref{sec:symm_gp_act-1} we explain that, when $q=1$, the Kauffman--Lins-induced braid group action descends to a symmetric group action on $\overline{R}^{q=1}_{n,k}$, and this action is identical to that of Russell and Tymoczko. We note that the space on which Russell and Tymoczko define their action actually corresponds with $\overline{R}^{q=-1}_{n,k}$ in our set-up, but there is no conflict because these spaces are naturally isomorphic. When $q = \pm 1$, the obstruction to locality on the full quantum Russell space disappears, so we extend the Russell--Tymoczko symmetric group action to the full $q=1$ skein module in a local manner in Section \ref{sec:symm_gp_act-2}.

Russell's result connecting $R_n$ to the homology of the $(n,n)$-Springer variety came by working with an alternate topological space $\widetilde{S}$, whose homology and cohomology rings are isomorphic to those of the Springer variety (and which was later shown by Wehrli \cite{wehrli} to actually be homeomorphic to the Springer variety.) $\widetilde{S}$ was introduced by Khovanov in \cite{crossmatch}, where he showed a parallel result concerning its cohomology:

\begin{theorem} [Khovanov]
The cohomology of the $(n,n)$-Springer variety is isomorphic to the center of the arc ring $H^n$.
\end{theorem}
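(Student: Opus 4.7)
The plan is to describe both sides of the claimed isomorphism as subspaces of a common vector space $\bigoplus_a \mathcal{A}^{\otimes n}$, where $a$ ranges over crossingless matchings of $2n$ points and $\mathcal{A}$ is the two-dimensional Frobenius algebra used to define $H^n$. The preceding discussion provides a topological model $\widetilde{S}$ whose cohomology agrees with that of the $(n,n)$-Springer variety, so it suffices to produce an isomorphism $H^*(\widetilde{S}) \cong Z(H^n)$.

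On the topological side, I would decompose $\widetilde{S}$ as a union of closed pieces $\widetilde{S}_a$ indexed by crossingless matchings $a$, each homeomorphic to $(S^2)^n$, with $\widetilde{S}_a$ and $\widetilde{S}_b$ glued along the subvariety where the sphere coordinates labeled by common circles of the closed $1$-manifold $W(b,a) = \bar{b} \cup a$ are forced to coincide. A Mayer--Vietoris argument (equivalently, inclusion--exclusion on the collection $\{\widetilde{S}_a\}$) then presents $H^*(\widetilde{S})$ as the subspace of $\bigoplus_a H^*((S^2)^n) \cong \bigoplus_a \mathcal{A}^{\otimes n}$ consisting of tuples $(z_a)$ whose restrictions to each intersection locus agree for every pair $(a,b)$.

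On the algebraic side, $H^n$ decomposes as $\bigoplus_{a,b} {}_b(H^n)_a$ with ${}_a(H^n)_a \cong H^*(W(a,a)) \cong \mathcal{A}^{\otimes n}$ via the TQFT determined by $\mathcal{A}$. Writing an arbitrary central element as $z = \sum_a z_a$ with $z_a \in {}_a(H^n)_a$, the condition $z \cdot x = x \cdot z$ applied to each basis element $x \in {}_b(H^n)_a$ translates, after unwinding the TQFT multiplication by the cobordism $W(b,a) \times [0,1]$, into the assertion that $z_a$ and $z_b$ have matching restrictions along the shared circles of $W(b,a)$. In other words, $Z(H^n)$ embeds into $\bigoplus_a \mathcal{A}^{\otimes n}$ as the subspace cut out by exactly the pairwise restriction compatibilities just described.

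The main obstacle, and the step where the real work lies, is matching these two sets of restriction conditions arc by arc: one must verify that the topological restriction maps arising from the inclusions of the gluing loci into the $(S^2)^n$ factors of $\widetilde{S}$ coincide, under the canonical identification $H^*((S^2)^n) \cong \mathcal{A}^{\otimes n}$, with the tensor-factor restriction maps produced by applying the TQFT to the pants-like cobordisms that implement multiplication in $H^n$. Once that identification is in place, the two subspaces of $\bigoplus_a \mathcal{A}^{\otimes n}$ coincide, and the desired isomorphism $H^*(\widetilde{S}) \cong Z(H^n)$ follows.
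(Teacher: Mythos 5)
This theorem is stated in the thesis as background and attributed to Khovanov's paper \cite{crossmatch}; no proof is given here, so your proposal can only be measured against Khovanov's original argument. Your outline does in fact reproduce his strategy: $\widetilde{S}=\bigcup_a S_a$ with $S_a\cong (S^2)^n$ the locus in $(S^2)^{2n}$ where coordinates joined by an arc of $a$ coincide, $S_a\cap S_b\cong (S^2)^{c}$ with $c$ the number of circles of $W(b)a$, and the identification of both $H^*(\widetilde{S})$ and $Z(H^n)$ with the ``compatible tuples'' inside $\bigoplus_a \mathcal{A}^{\otimes n}$. Your reliance on the comparison $H^*(\widetilde{S})\cong H^*(\mathcal{B}_{n,n})$ is acceptable in context, since the thesis asserts it (and cites Wehrli for the stronger homeomorphism statement), though you should be aware that in Khovanov's paper this comparison is itself roughly half of the theorem.

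The genuine gap is the step you dispose of with ``a Mayer--Vietoris argument (equivalently, inclusion--exclusion).'' For a union of more than two closed pieces it is simply not true in general that $H^*(\bigcup_a S_a)$ injects into $\bigoplus_a H^*(S_a)$ with image the equalizer of the pairwise restriction maps; the Mayer--Vietoris spectral sequence can have higher differentials and contributions from triple and deeper intersections. Khovanov's proof works precisely because all the spaces $S_a$ and all their multiple intersections are products of two-spheres, hence have free cohomology concentrated in even degrees, and because the restriction maps are surjective; he then runs an induction over a carefully chosen ordering of the crossingless matchings to split the relevant long exact sequences. You need to supply (or at least invoke) this evenness-and-surjectivity argument before the ``subspace of compatible tuples'' description of $H^*(\widetilde{S})$ is legitimate. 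A smaller but still real omission on the algebraic side: to show that centrality of $z=\sum_a z_a$ is equivalent to the pairwise conditions $z_b\cdot 1_{ba}=1_{ba}\cdot z_a$ on the distinguished generators $1_{ba}\in{}_b(H^n)_a$, you must argue that testing against these elements suffices, using that left and right multiplication by $z$ are maps of ${}_b(H^n)_b$-${}_a(H^n)_a$-bimodules; only then does centrality reduce to the restriction compatibilities you describe.
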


The rings $H^n$ were invented by Khovanov in \cite{khov} when he extended his categorification of the Jones polynomial to the categorification of a tangle invariant. The combination of these two results of Russell and Khovanov provides an important link between the Russell skein module and the world of categorification: $R_n$ is isomorphic to the $0$th Hochschild homology of the ring $H^n$. This statement follows from the fact that the center of a ring is isomorphic to its $0$th Hochschild cohomology, the rings $H^n$ are symmetric, and the following:

\begin{proposition}
For a finite-dimensional symmetric algebra $A$ over a field $\Bbbk$, Hochschild homology and cohomology are dual vector spaces, i.e., for $n \geq 0$,
\[ HH^n(A) \cong \mbox{Hom}_{\Bbbk}(HH_n(A), \Bbbk).\]
\end{proposition}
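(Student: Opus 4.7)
The plan is to reduce the statement to a standard duality between $\mathrm{Tor}$ and $\mathrm{Ext}$ via the bimodule isomorphism $A \cong A^{\ast}$ that characterizes symmetric algebras. Recall that $HH_n(A) = \mathrm{Tor}_n^{A^e}(A,A)$ and $HH^n(A) = \mathrm{Ext}^n_{A^e}(A,A)$, where $A^e = A \otimes_{\Bbbk} A^{op}$ and $A$ is viewed as an $A^e$-module in the standard way. The defining property of a symmetric algebra gives a nondegenerate symmetric bilinear form $\langle-,-\rangle : A \otimes A \to \Bbbk$ with $\langle ab,c\rangle = \langle a,bc\rangle$, which is equivalent to the existence of an $A^e$-module isomorphism $A \xrightarrow{\sim} A^{\ast} := \mathrm{Hom}_{\Bbbk}(A,\Bbbk)$. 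I would begin by making this identification explicit so that $HH^n(A) = \mathrm{Ext}^n_{A^e}(A,A) \cong \mathrm{Ext}^n_{A^e}(A, A^{\ast})$.

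Next I would invoke the Hom-tensor adjunction in the form
\[
\mathrm{Hom}_{A^e}\bigl(M, \mathrm{Hom}_{\Bbbk}(A,\Bbbk)\bigr) \;\cong\; \mathrm{Hom}_{\Bbbk}(A \otimes_{A^e} M,\Bbbk),
\]
which holds naturally for any $A^e$-module $M$. Fixing a projective resolution $P_\bullet \to A$ of $A$ over $A^e$ (for instance, the bar resolution), the adjunction yields an isomorphism of cochain complexes
\[
\mathrm{Hom}_{A^e}(P_\bullet, A^{\ast}) \;\cong\; \mathrm{Hom}_{\Bbbk}(A \otimes_{A^e} P_\bullet, \Bbbk).
\]

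The final step is to pass to (co)homology. Because $\Bbbk$ is a field, the functor $\mathrm{Hom}_{\Bbbk}(-,\Bbbk)$ is exact and therefore commutes with taking homology; finite-dimensionality of $A$ ensures that the relevant spaces remain finite-dimensional so that the double dual causes no trouble. Thus
\[
HH^n(A) \;\cong\; \mathrm{Ext}^n_{A^e}(A, A^{\ast}) \;\cong\; H^n\bigl(\mathrm{Hom}_{\Bbbk}(A \otimes_{A^e} P_\bullet, \Bbbk)\bigr) \;\cong\; \mathrm{Hom}_{\Bbbk}\bigl(H_n(A \otimes_{A^e} P_\bullet),\Bbbk\bigr) \;=\; \mathrm{Hom}_{\Bbbk}(HH_n(A),\Bbbk),
\]
which is the desired isomorphism.

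There is no serious obstacle: the only genuinely nontrivial input is the bimodule isomorphism $A \cong A^{\ast}$ supplied by the symmetric-algebra hypothesis, and the mild bookkeeping step is checking that the adjunction above is an isomorphism of $A^e$-chain complexes (rather than merely of $\Bbbk$-vector spaces) so that it descends to (co)homology compatibly. Everything else is formal homological algebra.
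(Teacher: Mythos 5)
Your argument is correct and is the standard proof: the symmetric-algebra hypothesis supplies the bimodule isomorphism $A \cong A^{\ast}$, the Hom-tensor adjunction identifies $\mathrm{Hom}_{A^e}(P_\bullet, A^{\ast})$ with the $\Bbbk$-linear dual of $A \otimes_{A^e} P_\bullet$, and exactness of $\mathrm{Hom}_{\Bbbk}(-,\Bbbk)$ lets you pass to (co)homology. The paper states this proposition without proof (it is quoted as a known fact in the introduction), so there is no argument to compare against; note only that the naturality you need is that the adjunction commutes with the differentials as a map of complexes of $\Bbbk$-vector spaces, and that no double-dual step is required for the direction $HH^n(A) \cong \mathrm{Hom}_{\Bbbk}(HH_n(A),\Bbbk)$.
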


In Chapter \ref{ch:equiv}, Section \ref{sec:HH0}, we give a direct proof of the isomorphism between $R_n$ and $HH_0(H^n)$. This isomorphism motivates our definition of what we call the ``equivariant'' deformation of the Russell skein module. In Section \ref{equi-Hn}, we first describe the equivariant deformation of the arc rings $H^n$, which we call $H^n_{h,t}$. This deformation comes by replacing the ring $\mathcal{A}$ used to define the TQFT from the construction of the original rings $H^n$, which is isomorphic to the cohomology ring of the two-sphere $S^2$, by the equivariant cohomology of $S^2$ with the standard action of $U(2)$.

In Section \ref{sec:equi-Russell}, we consider the 0th Hochschild homology of the rings $H^n_{h,t}$. We also present a graphical deformation of the Russell skein module depending on $h$ and $t$, $R_n^{h,t}$, and show that
\[ HH_0(H^n_{h,t}) \cong R_n^{h,t}.\]
In Section \ref{sec:equi-rank}, we show that $R_n^{h,t}$ is a free $\mathbb{Z}[h,t]$-module and has the same rank over $\mathbb{Z}[h,t]$ that $R_n$ has over $\mathbb{Z}$, justifying our description of $R_n^{h,t}$ as a deformation of $R_n$.

Finally, in Chapter \ref{ch:tangle-cob}, we return to the set-up of the rings $H^n$. We recall that the Khovanov two-functor $\mathcal{F}$ associates to cobordisms of planar $(2m, 2n)$-tangles homomorphisms of $(H^m, H^n)$-bimodules. We define two hom-spaces for any planar $(2m,2n)$-tangles $T_1, T_2$: $\mbox{Hom}_{BN}(T_1, T_2)$, the space of tangle cobordisms from $T_1$ to $T_2$ modulo the local Bar-Natan relations, and $\mbox{Hom}_{(m,n)}(T_1, T_2)$, the space of homomorphisms from $\mathcal{F}(T_1)$ to $\mathcal{F}(T_2)$ as $(H^m, H^n)$-bimodules. The main result of this chapter is Theorem \ref{thm:bimod-kernel}, which says that the map between these hom-spaces is in fact surjective: every bimodule homomorphism arises from a tangle cobordism in this way. Further, our theorem gives an explicit description of its kernel.

Summing over all planar $(m,n)$-tangles $T_1, T_2$, we get a surjection
\[ H^{m+n} \cong \mbox{Hom}_{BN}(m,n) \xrightarrow{\phi_{m.n}} \mbox{Hom}(m,n).\]
The composition of these maps takes the center of $H^{m+n}$ to the center of $\mbox{Hom}(m,n)$. While we do not have a description of $Z(\mbox{Hom}(m,n))$, it contains $\mbox{im}(\phi_{m,n}|_Z)$, which we show in Proposition \ref{prop:Z-Hom} is isomorphic to $Z(H^m) \otimes Z(H^n)$. We conjecture that $Z(\mbox{Hom}(m,n))$ is in fact isomorphic to $Z(H^m) \otimes Z(H^n)$.

Thinking of the rings $\mbox{Hom}(m,n)$ as quotients of $H^{m+n}$, in Section \ref{sec:quotient-Russell} for any integers $m,n$ we define a quotient of the Russell space $R_{m+n}$ by considering $HH_0(\mbox{Hom}(m,n))$ as a quotient of $HH_0(H^{m+n}) \cong R_{m+n}$. In Section \ref{sec:x^2=t}, we extend the surjectivity result to the equivariant case, where $x^2 = t$.


\chapter{Preliminaries}

\section{The rings $H^n$}

\subsection{Definition}
\label{sec:Hn-def}
The rings $H^n$, sometimes referred to as arc rings, were introduced by Khovanov in \cite{khov} in the context of the categorification of a tangle invariant that extends the Jones polynomial.

Their construction involves the two-dimensional topological quantum field theory (TQFT) functor $\mathcal{F}$ used in the definition of Khovanov homology \cite{kh-hom}, the original of the link homology theories. By a two-dimensional TQFT we mean a functor from the category of two-dimensional cobordisms between closed one-manifolds to the category of abelian groups and group homomorphisms. It was shown by Abrams \cite{abrams} that two-dimensional TQFTs exactly correspond to commutative Frobenius algebras. In our case, $\mathcal{F}$ will be defined by a Frobenius algebra $\mathcal{A}$ given as follows.

As a (graded) free abelian group, let $\mathcal{A}$ have rank two, spanned by $1$ and $X$, with $1$ in degree $-1$ and $X$ in degree $1$. Then introduce a commutative, associative multiplication map $m: \mathcal{A} \otimes \mathcal{A} \to \mathcal{A}$, graded of degree 1, by
\[ 1^2 = 1, \quad 1X = X1 = X, \quad X^2 = 0.\]
The unit map $\iota: \mathbb{Z} \to \mathcal{A}$ is defined by $\iota(1) = 1$. The trace map $\epsilon: \mathcal{A} \to \mathbb{Z}$ is defined by
\[ \epsilon(1) = 0, \quad \epsilon(X) = 1.\]

The functor $\mathcal{F}$ associates to a disjoint union of $k$ circles the abelian group $\mathcal{A}^{\otimes k}$. For the elementary cobordisms, $\mathcal{F}$ associates $m$ to the ``pair of pants'' cobordism from two circles to one circle, $\iota$ to the cup cobordism from the empty manifold to a single circle, and $\epsilon$ to the cap cobordism from a single circle to the empty manifold (see Figure \ref{TQFT}).

\begin{figure}[htbp]
   \centering
   \includegraphics[scale=.8]{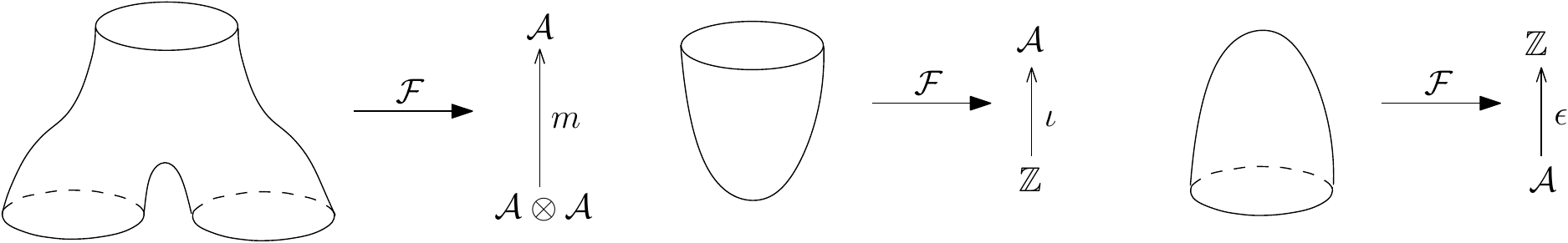}
   \caption{The TQFT functor $\mathcal{F}$.}
   \label{TQFT}
\end{figure}

The cobordism from one circle to two circles is different from that from two circles to one circle and is associated the map $\Delta: \mathcal{A} \to \mathcal{A} \otimes \mathcal{A}$:
\[ \Delta(1) = 1 \otimes X + X \otimes 1, \quad \Delta(X) = X \otimes X.\]

\begin{definition}
Let $B^n$ denote the set of isotopy classes of pairwise disjoint embeddings of $n$ arcs in $\mathbb{R} \times [0,1]$ connecting in pairs $2n$ points on $\mathbb{R} \times \{1\}$. Elements of $B^n$ will be referred to as \emph{crossingless matching cups}, or sometimes just \emph{crossingless matchings}, of $2n$ points.
\end{definition}

Given a crossingless matching cup $m \in B^n$, we let $W(m)$ denote the reflection of $m$ about the line $\mathbb{R} \times \{\frac{1}{2} \}$, so that $W(m)$ is a crossingless matching cap connecting $2n$ points on $\mathbb{R} \times \{0 \}$.

We are now able to describe the space underlying the finite-dimensional graded ring $H^n$, for $n \geq 0$. As a graded abelian group, $H^n$ decomposes into the direct sum
\[ H^n = \bigoplus_{a, b \in B^n} {}_b(H^n)_a,\]
where
\[{}_b (H^n)_a := \mathcal{F}(W(b)a)\{n\}. \]

Here $W(b)a$ represents the closed one-manifold formed by gluing together the diagrams $W(b)$ and $a$ along their $2n$ fixed points to get a disjoint union of manifolds that are isotopic to circles. The notation $\{n\}$ means that we shift the grading up by $n$, that is, if a graded $G$-module has summand $G_k$ in degree $k$, then $G\{n\}$ has summand $G_{k-n}$ in degree $k$.

To give $H^n$ a ring structure, we must define its multiplication. First define $uv$ to be $0$ if $u \in {}_d(H^n)_c$ and $v \in {}_b (H^n)_a$ where $c \neq b$. If this is not the case, then the multiplication maps
\[{}_c (H^n)_b \otimes {}_b(H^n)_a	\to {}_c(H^n)_a\]
will be defined as follows.

Given the one-manifolds $W(c)b$ and $W(b)a$ in $\mathbb{R} \times [0,1]$, we form the one-manifold $W(c)bW(b)a$ by vertically stacking $W(c)b$ on top of $W(b)a$ and scaling the second coordinate by a factor of $\frac{1}{2}$ to get a configuration of circles in $\mathbb{R} \times [0,1]$. Consider the ``simplest'' cobordism, denoted $S(b)$, from $bW(b)$ to $\mbox{Vert}_{2n}$, the one-manifold of $2n$ arcs embedded vertically in $\mathbb{R} \times [0,1]$. More precisely, $S(b)$ is a surface in $\mathbb{R} \times [0,1] \times [0,1]$ with bottom boundary equal to $W(b)b$, top boundary equal to $\mbox{Vert}_{2n}$, and $S(b)$ is diffeomorphic to a disjoint union of $n$ discs. Let $\mbox{Id}_{W(c)}S(b)\mbox{Id}_{a}$ be the cobordism
\[ W(c)bW(b)a \to W(c)a\]
given by composing $S(b)$ with the identity cobordisms from $W(c)$ to itself and $a$ to itself. By applying the TQFT functor, we get a map
\[ \mathcal{F}(W(c)bW(b)a) \to \mathcal{F}(W(c)a).\]
Composing with the canonical isomorphism $\mathcal{F}(W(c)b) \otimes \mathcal{F}(W(b)a) \to \mathcal{F}(W(c)bW(b)a)$, we get a map
\[\mathcal{F}(W(c)b) \otimes \mathcal{F}(W(b)a) \to \mathcal{F}(W(c)a). \]
Note that the surface $\mbox{Id}_{W(c)}S(b)\mbox{Id}_{a}$ has $n$ saddle points, and both $m$ and $\Delta$ have degree $1$, so that the above map has degree $n$. Therefore, after shifting, the map
\[\mathcal{F}(W(c)b)\{n\} \otimes \mathcal{F}(W(b)a)\{n\} \to \mathcal{F}(W(c)a)\{n\}\]
is grading-preserving. This is the map that defines the associative multiplication 
\[m_{c,b,a}: {}_c (H^n)_b \otimes {}_b(H^n)_a	\to {}_c(H^n)_a.\]

The unit $1$ in $H^n$ is the sum over all crossingless matchings $a \in B^n$ of idempotent elements $1_a \in {}_aH^n_a$ defined as the element $1^{\otimes n} \in \mathcal{A}^{\otimes n}\{n\} = {}_aH^n_a$.

With underlying abelian group structure, multiplication, and unit as above, $H^n$ is now a graded, associative, unital ring.

\begin{example}
To clarify the ring structure of $H^n$, we examine the case where $n=2$. $B^2$ consists of two crossingless matchings: that of two unnested adjacent arcs, which we call $a$, and that of two nested arcs, which we call $b$. The diagrams $W(a)a, W(a)b, W(b)a,$ and $W(b)b$ are shown in Figure \ref{H2-ex}.

\begin{figure}[htbp]
   \centering
   \includegraphics[scale=1]{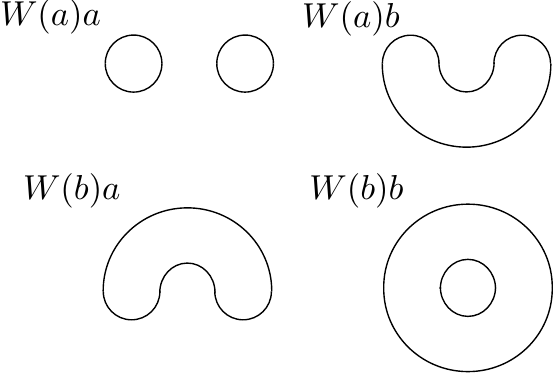}
   \caption{The diagrams underlying the ring $H^2$.}
   \label{H2-ex}
\end{figure}

After applying $\mathcal{F}$ to those diagrams, we see that the structure of $H^2$ as a graded abelian group is given by
\begin{eqnarray*}
H^2 &=& {}_a(H^2)_a \oplus {}_a(H^2)_b \oplus {}_b(H^2)_a \oplus {}_b(H^2)_b \\
&=& \mathcal{A}^{\otimes 2} \{2 \} \oplus \mathcal{A} \{ 2 \} \oplus \mathcal{A} \{2\} \oplus \mathcal{A}^{\otimes 2} \{2\}.
\end{eqnarray*}
As an example of the multiplication in $H^2$, consider the map ${}_a(H^2)_b \otimes {}_b(H^2)_a \to {}_a(H^2)_a$. This map is induced by the ``simplest'' cobordism from $W(a)bW(b)a$ to $W(a)a$, which involves two saddles that first merge the two circles and then splits them:
\[ \mathcal{A} \{2\} \otimes \mathcal{A}\{2\} \xrightarrow{m} \mathcal{A}\{3\} \xrightarrow{\Delta} \mathcal{A}^{\otimes 2}\{2\}. \]
\end{example}

\subsection{Role in categorification}
To place the rings $H^n$ in their appropriate context, it is important to mention that they were constructed by Khovanov as part of the categorification of an extension of the Jones polynomial to tangles \cite{khov}. We briefly explain that process here.

\begin{definition}
A $(m,n)$-tangle is a one-dimensional cobordism in $\mathbb{R}^2 \times [0,1]$ from the 0-manifold of $n$ points lying on the bottom boundary $\mathbb{R}^2 \times \{0\}$ to the 0-manifold of $m$ points lying on the top boundary $\mathbb{R}^2 \times \{1\}$.
\end{definition}

The extended Jones polynomial associates to a $(2m, 2n)$-tangle $T$ a map $J(T): \mbox{Inv}(n) \to \mbox{Inv}(m)$, where $\mbox{Inv}(k)$ is defined to be the $U_q(\mathfrak{sl_2})$-invariant subspace of $V_1^{\otimes 2k}$, with $V_1$ the fundamental two-dimensional representation of $U_q(\mathfrak{sl}_2)$. We can consider tangles as the one-morphisms of a two-category in which objects correspond to nonnegative integers (the number of fixed points on the boundary of a tangle) and two-morphisms are tangle cobordisms. We will restrict to the subcategory of even tangles, in which objects are even integers and one-morphisms are tangles with an even number of top and bottom endpoints. Khovanov's work defines a functor from the two-category of even tangles that turns an object $2n$ into the ring $H^n$, a $(2m,2n)$-tangle $T$ into a chain complex of $(H^m, H^n)$-bimodules, and a tangle cobordism into a map of such chain complexes. After categorification, $J(T)$ becomes a functor from $\mathcal{K}_P^m$ to $\mathcal{K}_P^n$, where $\mathcal{K}_P^n$ is the category of bounded complexes of graded projective $H^n$-modules.

\section{The Bar-Natan--Russell skein module}

In this section we will present the definition of the Bar-Natan--Russell skein module, which will be our primary object of study, and survey previous results related to it. Key references are \cite{russ} and \cite{russ-2}.

\subsection{Bar-Natan skein module of solid torus}
In foundational work \cite{BN}, Bar-Natan gives an alternate proof of Khovanov's link homology, and more generally its extension to tangles, coming from a more topological viewpoint. Khovanov's original construction forms a cube of resolutions, which is a complex of tensor products of the algebra $\mathcal{A}$ formed by applying the functor $\mathcal{F}$ to closed one-manifolds obtained by resolving all crossings in the tangle in all possible ways, with maps between then involving $m$ or $\Delta$ according to whether circles are being merged or split.

Bar-Natan's construction delays the application of $\mathcal{F}$ until later in the process. His cube of resolutions consists of the one-mainfolds themselves, before the application of $\mathcal{F}$, with the ``maps'' between them replaced by marked cobordisms, where marked means that sheets in the cobordisms may carry dots. In the Bar-Natan set-up, cobordisms are subject to the local relations of Figure \ref{BN-relns}, commonly referred to as the \emph{Bar-Natan relations}. The fourth relation is commonly called the ``neck-cutting'' relation.

\begin{figure}[htbp]
\[
  \begin{array}{c}
    \includegraphics[height=1cm]{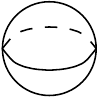}
  \end{array}\hspace{-2mm}=0
  \qquad\qquad
  \begin{array}{c}
    \includegraphics[height=1cm]{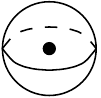}
  \end{array}\hspace{-2mm}=1
  \qquad\qquad
  \begin{array}{c}\includegraphics[height=10mm]{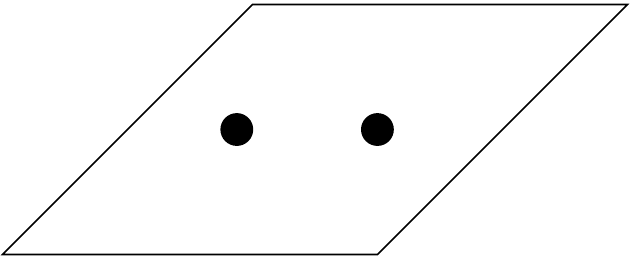}\end{array}
  \hspace{-4mm}=0
\]
\[
  \begin{array}{c}\includegraphics[height=10mm]{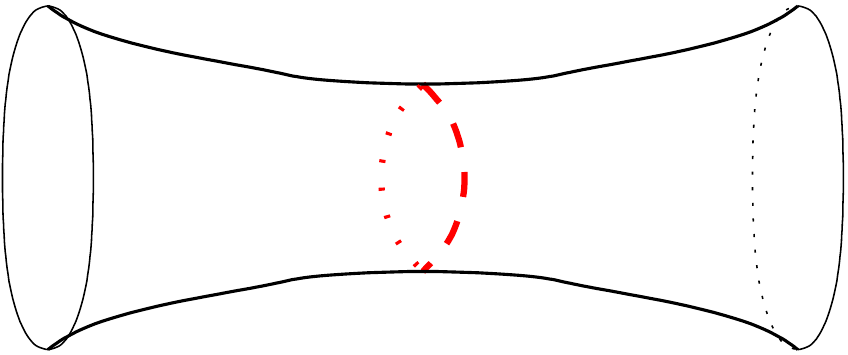}\end{array}
  =\begin{array}{c}\includegraphics[height=10mm]{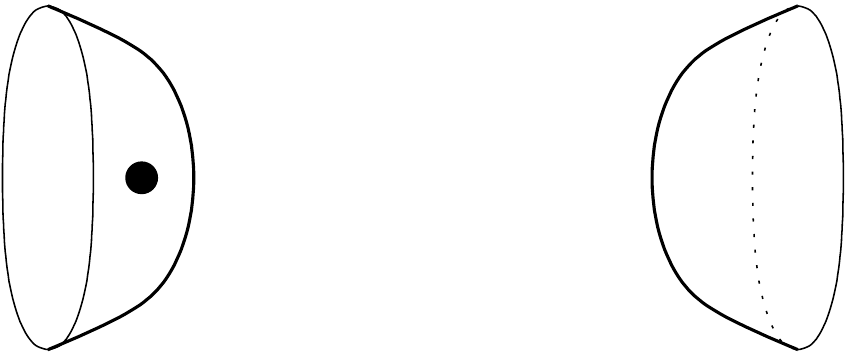}\end{array}
  +\begin{array}{c}\includegraphics[height=10mm]{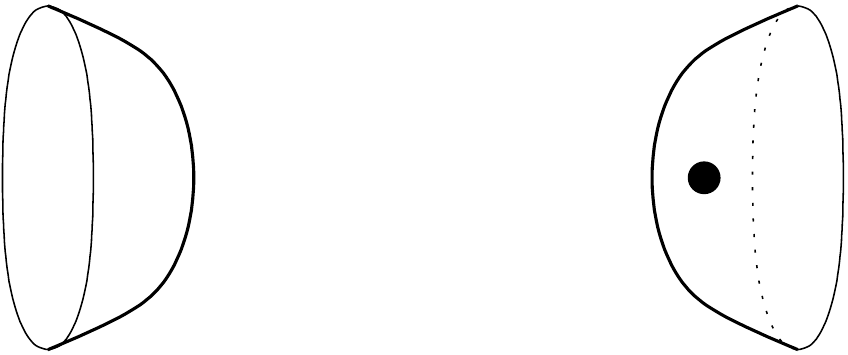}\end{array}
\]
\caption{Local Bar-Natan relations}
\label{BN-relns}
\end{figure}

These relations will be essential to the definition of what we will call the Bar-Natan skein module. While such a skein module can be defined for any three-manifold $M$, we will only be interested in the special case in which $M = A \times I$, where $A$ is the standard planar annulus. Also, for $n \geq 0$, we fix a boundary curve system $c_{n}$ consisting of $2n$ disjoint copies of the longitude of the solid torus, considered to be embedded in $A \times \{1\} \subset M$, which we think of as the ``top'' of the torus.

\begin{definition}
For $n \geq 0$, define $\mathcal{BN}_n$ to be the $\mathbb{Z}$-module generated by marked surfaces $S \subset A \times I$ modulo isotopy with boundary $c_n$ subject to the local Bar-Natan relations of Figure \ref{BN-relns}. We refer to $\mathcal{BN}_n$ as the $n$th \emph{Bar-Natan skein module}.
\end{definition}

\begin{example}
Figure \ref{BN1} shows two surfaces which generate the skein module $\mathcal{BN}_1$. Each surface is a half-torus with boundary $c_1 \subset A \times \{1\}$. The surface on the left carries 0 dots, while the surface on the right carries 1 dot. There are no Bar-Natan relations between these two surfaces.
\begin{figure}[htbp]
   \centering
   \includegraphics[scale=.8]{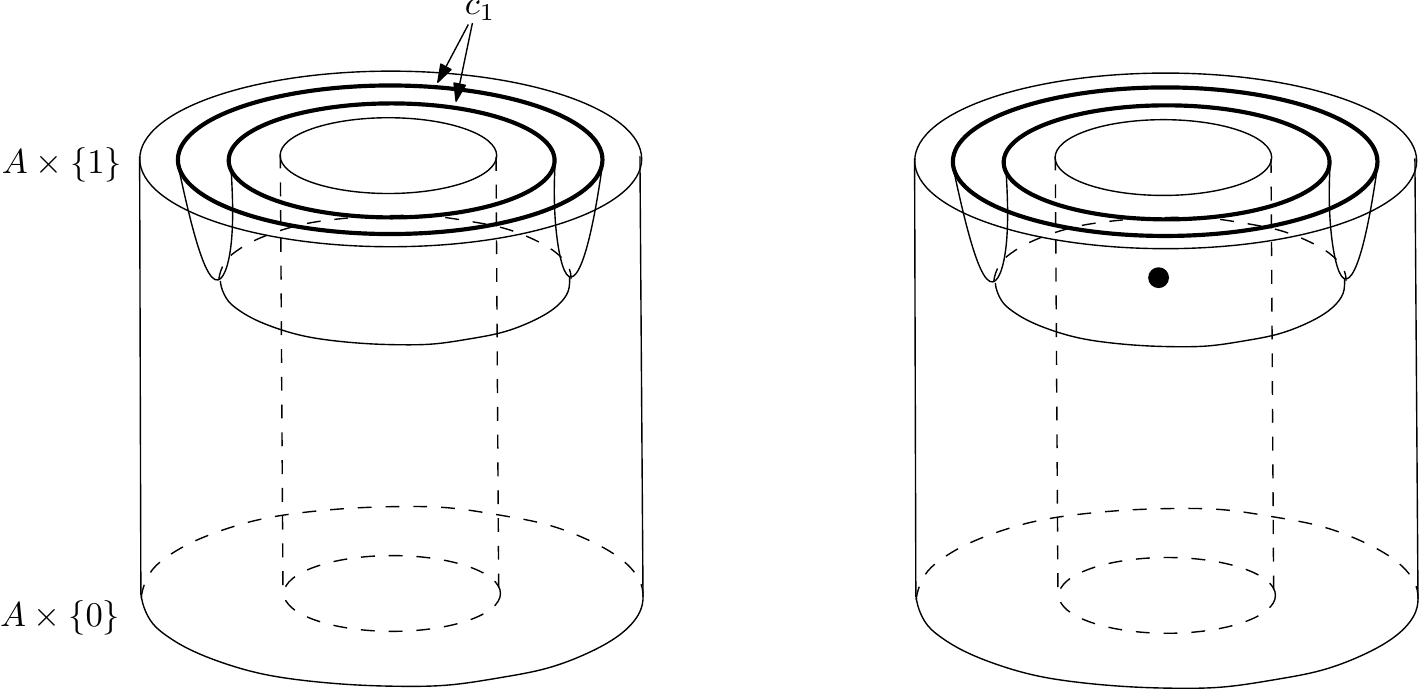}
   \caption{Generators for $\mathcal{BN}_1$.}
   \label{BN1}
\end{figure}
\end{example}

\subsection{Russell diagrammatics}
In this thesis, we will primarily consider the Bar-Natan skein module in terms of a diagrammatic calculus due to Russell \cite{russ}.

\begin{definition}
For $0 \leq k \leq n$, let $\widetilde{R}_{n,k}$ be the space of formal linear combinations with coefficients in $\mathbb{Z}$ of diagrams, where a diagram is defined to be a crossingless matching of $2n$ fixed points on a line decorated with $k$ dots such that each arc carries at most one dot.
\end{definition}

The $\mathbb{Z}$-module $R_{n,k}$ is defined to be the quotient of $\widetilde{R}_{n,k}$ by certain relations which we now describe. Let $\alpha$ and $\beta$ be crossingless matchings of $2n$ points that have identical arcs except that for some fixed points numbered $a < b < c < d$, $\alpha$ has arcs with endpoints $(a,b)$ and $(c,d)$ while $\beta$ has arcs with endpoints $(a,d)$ and $(b,c)$ (where we number the $2n$ fixed points 1 through $2n$ from left to right.)
\begin{enumerate}
\item \textbf{Type I relations: } Let $m_1$ and $m_2$ be diagrams in $\widetilde{R}_{n,k}$ that have the arc structure of $\alpha$, where $m_1$ has the arc $(a,b)$ dotted and the arc $(c,d)$ undotted while $m_2$ has the arc $(a,b)$ undotted and the arc $(c,d)$ dotted. Similarly let $m_1'$ and $m_2'$ have the arc structure of $\beta$, where $m_1'$ has $(a,d)$ dotted and $(b,c)$ undotted while $m_2'$ has $(a,d)$ undotted and $(b,c)$ dotted. Suppose that $m_1, m_2, m_1',$ and $m_2'$ are identical away from arcs with endpoints $a,b,c,d$. Then we impose the Type I relation
\[ m_1 + m_2 - m_1' - m_2' = 0.\]

\item \textbf{Type II relations: } Let $m_3 \in \widetilde{R}_{n,k}$ have the arc structure of $\alpha$ with dots on the arcs $(a,b)$ and $(c,d)$ and $m_3'$ have the arc structure of $\beta$ with dots on the arcs $(a,d)$ and $(b,c)$, where $m_3$ and $m_3'$ are identical away from $a,b,c,d$. Then we impose the Type II relation
\[m_3 - m_3' = 0.\]
\end{enumerate}

See Figure \ref{Russell_relns} for pictures of the Russell relations. Due to the locality of the relations, only arcs with endpoints at $a,b,c,$ or $d$ are shown, since any arcs not ending at $a,b,c,d$ are identical in each diagram of the relation.

\begin{figure}[H]
\centering
\includegraphics[scale=1]{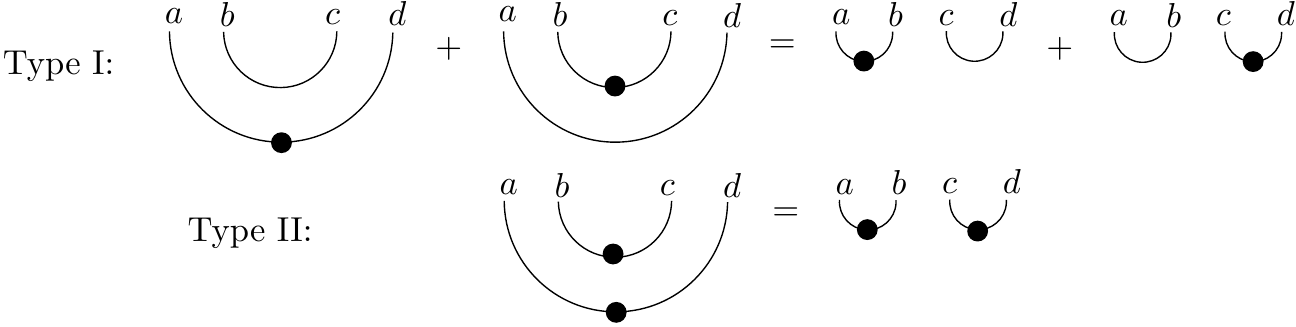}
\caption{Type I and Type II Russell relations}
\label{Russell_relns}
\end{figure}

\begin{definition}
We define the $(n,k)$-\emph{Russell skein module} $R_{n,k}$ to be the quotient of $\widetilde{R}_{n,k}$ by all Type I and Type II relations. We define the $n$th Russell skein module $R_n$ to be $\oplus_{0 \leq k \leq n} R_{n,k}$.
\end{definition}

\begin{theorem}[Russell]
$\mathcal{BN}_n$ and $R_n$ are isomorphic as $\mathbb{Z}$-modules.
\end{theorem}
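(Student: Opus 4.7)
The plan is to construct mutually inverse $\mathbb{Z}$-linear maps $\phi\colon R_n \to \mathcal{BN}_n$ and $\psi\colon \mathcal{BN}_n \to R_n$ and verify they are well-defined and inverse to each other. For $\phi$, I would first define it on the generators of $\widetilde{R}_n$ by sending a dotted crossingless matching $m$ of $2n$ points to the marked surface $\phi(m)$ consisting of a disjoint union of $n$ embedded disks (``half-pipes'') in $A \times I$, one per arc of $m$, whose boundaries exhaust the $2n$ longitudes $c_n$, with a dot placed on each disk whose underlying arc is dotted. The crossingless nature of $m$ guarantees these disks can be realized disjointly inside $A \times I$, and extending $\mathbb{Z}$-linearly defines $\phi$ on $\widetilde{R}_n$.

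Next I would show that $\phi$ descends through the quotient by the Type I and Type II relations. The Type I relation $m_1 + m_2 = m_1' + m_2'$ is the direct image of the neck-cutting relation applied to a compressing disk that interpolates between the ``nested'' and ``unnested'' local configurations of two bands: neck-cutting produces a sum of two terms, one with the dot on the upper sheet and one with the dot on the lower sheet, yielding exactly $m_1 + m_2$ on one side and $m_1' + m_2'$ on the other. The Type II relation $m_3 = m_3'$, where two dotted arcs switch configuration, follows by applying neck-cutting to the same local region and then killing the two terms in which a band would acquire a second dot via the double-dot relation, leaving only $m_3 = m_3'$.

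For the inverse $\psi$, I would show that every marked surface $S$ bounding $c_n$ reduces, modulo the Bar-Natan relations, to a $\mathbb{Z}$-linear combination of standard half-pipes coming from dotted crossingless matchings. The strategy is: (i) iteratively apply neck-cutting along essential simple closed curves to reduce every nonplanar or high-genus component of $S$ to a linear combination of planar surfaces; (ii) evaluate any closed components using the undotted- and dotted-sphere relations; (iii) apply the double-dot relation to cap the number of dots per remaining component at $1$; (iv) observe that the remaining bands, each bounding two longitudes of $c_n$, must form a crossingless matching because they are disjointly embedded in $A \times I$ and project from above to non-crossing arcs. Define $\psi(S)$ to be the resulting linear combination of Russell diagrams.

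The main obstacle is verifying well-definedness of $\psi$: the reduction above involves many choices of compressing curves and an order in which to cut, and one must check that any two reduction sequences yield the same element of $R_n$. The needed coherence follows precisely from the Russell Type I and Type II relations, since the elementary local moves between distinct choices of compressing disks correspond to exactly the neck-cutting operations that produced those relations in the analysis of $\phi$; one also needs to confirm that the topology of $A \times I$ (as opposed to $\mathbb{R}^3$) introduces no further identifications beyond those captured by $R_n$, which reduces to checking that a band wrapping nontrivially around the solid torus can be simplified by neck-cutting to a sum of standard bands. Once $\psi$ is well-defined, the identities $\phi \circ \psi = \mathrm{id}$ and $\psi \circ \phi = \mathrm{id}$ follow immediately from their definitions on standard generators.
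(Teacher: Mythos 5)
Your proposal follows essentially the same route as the paper's (sketched) proof: reduce any marked surface via the Bar-Natan relations to a configuration of $n$ bands with at most one dot each, read off a dotted crossingless matching from a cross-section, and obtain the Type I and Type II relations by tubing two bands together and neck-cutting along the two inequivalent compressing disks (one inside the tube, one encircling the puncture of the annulus). One small correction: since each boundary circle is a longitude of the solid torus, the components you call ``disks'' are in fact annuli (the paper's half-tori) with two boundary circles each, not disks.
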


\begin{proof}
A sketch of the proof goes as follows. Given a marked surface in $\mathcal{BN}_n$, it can be reduced using the Bar-Natan relations into a configuration of $n$ half-tori where each sheet carries at most one dot. Taking a vertical cross-section of such a configuration gives a dotted crossingless matching of $2n$ points, where we place a dot on an arc if the corresponding half-torus in $\mathcal{BN}_n$ was dotted.

Given any pair of half-tori, neither of which is nested inside any other half-torus, a tube can be inserted between the two. Then, a neck-cutting relation can be performed in one of two ways, either on the compressing disk inside the tube or on that which goes around the puncture and has boundary along the tube and two half-tori. The results of these neck-cutting relations exactly correspond with the Type I and Type II relations. $\blacksquare$
\end{proof}

Because they are isomorphic, we frequently refer to the Bar-Natan and Russell skein modules jointly as the Bar-Natan--Russell skein module.

\section{The quantum group $U_q(\mathfrak{sl}_2)$}
\subsection{Definition}
The representation theory of the quantum group $U_q(\mathfrak{sl}_2)$ plays a prominent role in the categorification of certain low-dimensional topological invariants. We recall the necessary pieces of that story here, following \cite{kassel} and \cite{CFS}.

\begin{definition}
Let $q$ be an indeterminate with a fixed value in $\mathbb{C}$ different from $0, 1, -1$. The quantum group $U_q(\mathfrak{sl}_2)$ is an associative algebra over $\mathbb{C}(q)$, the field of complex-valued rational functions in $q$, with four generators labeled $E, F, K, K^{-1}$ subject to the relations
\begin{gather*}
KK^{-1} = K^{-1}K = 1, \\
KE = q^2 EK, \quad KF = q^{-2}FK, \\
\lbrack E,F\rbrack = \frac{K-K^{-1}}{q-q^{-1}}.
\end{gather*}
\end{definition}

$U_q(\mathfrak{sl}_2)$ can be equipped with the structure of a Hopf algebra with comultiplication $\Delta$ and counit $\varepsilon$ defined on generators by
\begin{gather*}
\Delta(E) = E \otimes 1 + K^{-1} \otimes E \\
\Delta(F) = F \otimes K + 1 \otimes F \\
\Delta(K^{\pm 1}) = K^{\pm 1} \otimes K^{\pm 1}.
\end{gather*}
and
\[ \varepsilon(E) = \varepsilon(F) = 0, \quad \varepsilon(K) = \varepsilon(K^{-1}) = 1.\]
The antipode will not be needed here.

\subsection{Representation theory}

The representation theory of $U_q(\mathfrak{sl}_2)$ is well-known. For any nonnegative integer $n$ there is a unique irreducible representation of $U_q(\mathfrak{sl}_2)$ of dimension $n+1$, denoted $V_n$. $V_n$ has a basis labeled
\[
\{v^m\}, \quad -n \leq m \leq n, \quad m \equiv n (\mbox{mod } 2)
\]
and the action of $E, F,$ and $K$ is given by
\begin{gather*}
Ev^m = \left[ \frac{n-m}{2} \right] v^{m+2} \\
Fv^m = \left[ \frac{n+m}{2} \right] v^{m-2} \\
K^{\pm 1} v^m = q^{\pm m} v^{m}
\end{gather*}
\noindent where $\lbrack n \rbrack$, sometimes referred to as ``quantum $n$,'' is defined as
\[ \lbrack n \rbrack = \frac{q^n - q^{-n}}{q-q^{-1}}\]
and $v^{n+2}$ and $v^{-n-2}$ are 0.

We refer to the one-dimensional representation $V_0 \cong \mathbb{C}(q)$ as the \emph{trivial representation} and the two-dimensional representation $V_1 \cong \mathbb{C}(q)v^1 \oplus \mathbb{C}(q)v^{-1}$ as the \emph{fundamental representation}. We explicitly write out the actions of $E,F, $ and $K^{\pm 1}$ on the basis elements of $V_1$, since they will be used frequently:
\begin{gather*}
Ev^{1} = 0, \quad Ev^{-1} = v^1 \\
Fv^1 = v^{-1}, \quad Fv^{-1} = 0 \\
K^{\pm 1} v^1 = q^{\pm 1}v^1, \quad K^{\pm 1}v^{-1} = q^{\mp 1}v^{-1}.
\end{gather*}

The comultiplication $\Delta$ defined above determines the action of $U_q(\mathfrak{sl}_2)$ on tensor products of representations. We will need the following maps of representations intertwining the $U_q(\mathfrak{sl}_2)$ action:
\begin{gather*}
\varepsilon_1: V_1 \otimes V_1 \to V_0 \\
\varepsilon_1(v^1 \otimes v^1) = \varepsilon_1(v^{-1} \otimes v^{-1}) = 0, \quad \varepsilon_1(v^{-1} \otimes v^1) = 1, \quad \varepsilon_1(v^1 \otimes v^{-1}) = -q \\
\delta_1: V_0 \to V_1 \otimes V_1 \\
\delta_1(1) = v^1 \otimes v^{-1} -q^{-1}v^{-1} \otimes v^1.
\end{gather*}

\begin{lemma}
\label{edrelns}
The maps $\varepsilon_1$ and $\delta_1$ satisfy the relations
\begin{gather*}
(1 \otimes \varepsilon_1) \circ (\delta_1 \otimes 1) = 1 = (\varepsilon_1 \otimes 1) \circ (1 \otimes \delta_1) \\
\varepsilon_1 \circ \delta_1 = -q-q^{-1}.
\end{gather*}
\end{lemma}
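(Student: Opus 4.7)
The plan is to verify each of the three identities by direct evaluation on basis vectors, using the explicit formulas for $\varepsilon_1$ and $\delta_1$ given just before the lemma. Since $V_0$ is one-dimensional and $V_1$ has basis $\{v^1, v^{-1}\}$, there are only a few cases to check in each identity, and no structural argument is required. The only thing to keep track of is the location of the factors of $q$ and $q^{-1}$ coming from the definitions.

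For the first snake identity, I would apply $(1\otimes\varepsilon_1)\circ(\delta_1\otimes 1)$ to $v^1$ and to $v^{-1}$ separately. Applied to $v^1$, the map $\delta_1\otimes 1$ produces $v^1\otimes v^{-1}\otimes v^1 - q^{-1}\, v^{-1}\otimes v^1\otimes v^1$; then $1\otimes\varepsilon_1$ picks out $\varepsilon_1(v^{-1}\otimes v^1)=1$ on the first summand and $\varepsilon_1(v^1\otimes v^1)=0$ on the second, giving $v^1$. Applied to $v^{-1}$, the second term survives: $\varepsilon_1(v^1\otimes v^{-1}) = -q$, and combined with the $-q^{-1}$ coefficient it contributes $(-q^{-1})(-q)v^{-1}=v^{-1}$, while the first term vanishes via $\varepsilon_1(v^{-1}\otimes v^{-1})=0$. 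The mirror identity $(\varepsilon_1\otimes 1)\circ(1\otimes\delta_1) = 1$ is checked by the same kind of computation on each of $v^1$ and $v^{-1}$.

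The loop relation $\varepsilon_1\circ\delta_1 = -q-q^{-1}$ is immediate: applying $\varepsilon_1$ to $\delta_1(1) = v^1\otimes v^{-1} - q^{-1}\, v^{-1}\otimes v^1$ gives $\varepsilon_1(v^1\otimes v^{-1}) - q^{-1}\,\varepsilon_1(v^{-1}\otimes v^1) = -q - q^{-1}\cdot 1 = -q - q^{-1}$. There is no real obstacle; the only risk is a sign or $q$-power bookkeeping slip, which the asymmetry between $-q$ in $\varepsilon_1$ and $-q^{-1}$ in $\delta_1$ is designed precisely to cancel in the snake identities.
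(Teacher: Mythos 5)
Your proposal is correct and is precisely the direct basis-vector computation the paper intends by its one-line proof ``By computation.'' The evaluations you give (in particular $\varepsilon_1(v^1\otimes v^{-1})=-q$ cancelling against the $-q^{-1}$ coefficient in $\delta_1$, and $\varepsilon_1\circ\delta_1(1)=-q-q^{-1}$) all check out.
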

\begin{proof}
By computation.
\end{proof}

We define an intertwining map $R_{1,1}: V_1 \otimes V_1 \to V_1 \otimes V_1$ in terms of $\delta_1$ and $\epsilon_1$ by
\[ R_{1,1} = q^{1/2} (\delta_1 \circ \epsilon_1) + q^{-1/2} \mbox{Id}.\]
We also define $T_i: V_1^{\otimes n} \to V_1^{\otimes n}$ for $1 \leq i \leq n-1$ by
\[ T_i = 1^{\otimes (i-1)} \otimes R_{1,1} \otimes 1^{\otimes (n-i-1)}.\]

\begin{lemma}
\label{T-braid}
The elements $T_1, \ldots, T_{n-1}$ satisfy the relations
\begin{gather*}
T_iT_{i+1}T_i = T_{i+1}T_iT_{i+1} \\
T_iT_j = T_jT_i, \quad |i-j| > 1.
\end{gather*}
\end{lemma}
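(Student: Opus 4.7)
The plan is to reduce the braid relation to the Temperley--Lieb-type relations that follow from Lemma \ref{edrelns}, while handling the far commutation by a locality argument.

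First I would dispose of the easy relation $T_iT_j = T_jT_i$ for $|i-j|>1$. Since $T_i$ acts nontrivially only on the $i$th and $(i+1)$st tensor factors, for $|i-j|>1$ the two operators act on disjoint sets of tensor factors and therefore commute; concretely, $T_iT_j$ and $T_jT_i$ both equal $1^{\otimes(i-1)} \otimes R_{1,1} \otimes 1^{\otimes(j-i-2)} \otimes R_{1,1} \otimes 1^{\otimes(n-j-1)}$ (assuming $j>i+1$).

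For the braid relation, I would introduce the auxiliary operator $e = \delta_1 \circ \varepsilon_1$ and set $e_i = 1^{\otimes(i-1)} \otimes e \otimes 1^{\otimes(n-i-1)}$, so that $T_i = q^{1/2} e_i + q^{-1/2}\mathrm{Id}$. From Lemma \ref{edrelns} I can read off two key identities. First, $\varepsilon_1 \circ \delta_1 = -q - q^{-1}$ gives $e^2 = \delta_1(\varepsilon_1 \delta_1)\varepsilon_1 = -(q+q^{-1})e$, i.e.\ $e_i^2 = -(q+q^{-1})e_i$. Second, the zigzag identities $(1 \otimes \varepsilon_1)(\delta_1 \otimes 1) = 1 = (\varepsilon_1 \otimes 1)(1 \otimes \delta_1)$ imply, after sandwiching between $\delta_1$ on the left and $\varepsilon_1$ on the right in the appropriate tensor positions, that $e_i e_{i+1} e_i = e_i$ and $e_{i+1} e_i e_{i+1} = e_{i+1}$. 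These are the standard Temperley--Lieb relations.

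With these in hand, I would expand both sides of the braid relation directly. Using $e_i^2 = -(q+q^{-1})e_i$ and $e_i e_{i+1} e_i = e_i$, a routine expansion of
\[
T_iT_{i+1}T_i = (q^{1/2}e_i + q^{-1/2})(q^{1/2}e_{i+1} + q^{-1/2})(q^{1/2}e_i + q^{-1/2})
\]
collapses to
\[
q^{1/2}e_ie_{i+1} + q^{1/2}e_{i+1}e_i + q^{-1/2}e_i + q^{-1/2}e_{i+1} + q^{-3/2}\mathrm{Id},
\]
and the analogous expansion of $T_{i+1}T_iT_{i+1}$ (using $e_{i+1} e_i e_{i+1} = e_{i+1}$ and $e_{i+1}^2 = -(q+q^{-1})e_{i+1}$) produces the same expression, visibly symmetric in the two summands.

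The only genuinely delicate step is extracting the Temperley--Lieb relations $e_ie_{i\pm 1}e_i = e_i$ from Lemma \ref{edrelns}: one must be careful to identify which tensor factors $\delta_1$ and $\varepsilon_1$ act on in the composition $\delta_1\varepsilon_1 \cdot \delta_1\varepsilon_1$ with overlapping supports. Once that bookkeeping is done, everything else is formal algebraic manipulation.
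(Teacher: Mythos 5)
Your proof is correct; the paper itself only says ``By computation,'' and your argument is exactly the standard way to organize that computation. Setting $e_i = 1^{\otimes(i-1)}\otimes(\delta_1\circ\varepsilon_1)\otimes 1^{\otimes(n-i-1)}$, the identities $e_i^2=-(q+q^{-1})e_i$ and $e_ie_{i\pm1}e_i=e_i$ do follow from Lemma \ref{edrelns} as you claim (the latter by factoring $e_ie_{i+1}e_i$ as $(\delta_1\otimes 1)\bigl[(\varepsilon_1\otimes1)(1\otimes\delta_1)\bigr]\bigl[(1\otimes\varepsilon_1)(\delta_1\otimes1)\bigr](\varepsilon_1\otimes1)$ and applying the two zigzag identities), and your expansion of $T_iT_{i+1}T_i$ to the manifestly $i\leftrightarrow i+1$ symmetric expression $q^{1/2}e_ie_{i+1}+q^{1/2}e_{i+1}e_i+q^{-1/2}e_i+q^{-1/2}e_{i+1}+q^{-3/2}\mathrm{Id}$ checks out.
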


\begin{proof}
By computation.
\end{proof}

\begin{proposition}
The elements $T_1, \ldots, T_{n-1}$ define an action of the $n$th braid group on $V_1^{\otimes n}$.
\end{proposition}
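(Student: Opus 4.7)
My proof plan is short. The braid group $B_n$ has the standard presentation by generators $\sigma_1, \ldots, \sigma_{n-1}$ subject to the braid relations $\sigma_i \sigma_{i+1} \sigma_i = \sigma_{i+1} \sigma_i \sigma_{i+1}$ and the commutation relations $\sigma_i \sigma_j = \sigma_j \sigma_i$ for $|i-j|>1$. By the universal property of this presentation, a group homomorphism $B_n \to \mathrm{GL}(V_1^{\otimes n})$ is determined by a choice of invertible operators $T_1, \ldots, T_{n-1}$ on $V_1^{\otimes n}$ satisfying exactly these two families of relations. Lemma \ref{T-braid} already supplies the relations, so the only remaining point is invertibility of each $T_i$.

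Since $T_i$ is $R_{1,1}$ tensored with identities on the other factors, it suffices to show that $R_{1,1}$ is an invertible endomorphism of $V_1 \otimes V_1$. I would prove this by deriving a quadratic minimal polynomial for $R_{1,1}$ with nonzero roots. Writing $P = \delta_1 \circ \varepsilon_1$, Lemma \ref{edrelns} gives $\varepsilon_1 \circ \delta_1 = -q - q^{-1}$, and hence
\[ P^2 \;=\; \delta_1 \circ (\varepsilon_1 \circ \delta_1) \circ \varepsilon_1 \;=\; (-q-q^{-1})\, P. \]
Combined with the defining formula $R_{1,1} = q^{1/2} P + q^{-1/2}\, \mathrm{Id}$, a short algebraic manipulation yields a quadratic relation satisfied by $R_{1,1}$ whose roots are $q^{-1/2}$ and $-q^{3/2}$, both nonzero (since $q \neq 0$). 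This proves that $R_{1,1}$ is invertible; in fact one can read off $R_{1,1}^{-1}$ as an explicit linear combination of $P$ and $\mathrm{Id}$.

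There is no substantive obstacle here. The braid relations are handed to us by Lemma \ref{T-braid}, invertibility of $R_{1,1}$ is a one-line consequence of the quadratic above, and the universal property of the presentation then manufactures the required group homomorphism $B_n \to \mathrm{GL}(V_1^{\otimes n})$. The only fine point I would make explicit is the distinction between a monoid action and a group action: it is precisely the invertibility of $R_{1,1}$ that upgrades the a priori monoid homomorphism from the positive braid monoid to a genuine action of the group $B_n$.
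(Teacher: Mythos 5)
Your proof is correct and follows essentially the same route as the paper, which simply derives the proposition from Lemma \ref{T-braid}. The one thing you add --- and the paper omits --- is the verification that each $T_i$ is invertible via the quadratic relation $R_{1,1}^2 = (q^{-1/2}-q^{3/2})R_{1,1} + q\,\mathrm{Id}$, which is indeed needed to pass from the positive braid monoid to a genuine $B_n$-action; your computation of the roots $q^{-1/2}$ and $-q^{3/2}$ checks out.
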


\begin{proof}
This follows immediately from the previous lemma.
\end{proof}

Finally, we will need the notion of Jones--Wenzl projectors. Recall that the symmetric group $S_n$ is generated by the elementary transpositions $s_1, \ldots, s_{n-1}$, with the relations
\[ s_i^2 = 1, \quad s_is_{i+1}s_i = s_{i+1}s_is_{i+1}, \quad s_is_j = s_js_i \quad |i-j| > 1.\]
For any permutation $s \in S_n$, define $l(s)$ to be the number of pairs $(i,j)$, $1 \leq i < j \leq n$, such that $s(i) > s(j)$. Then there exists a presentation of $s$ given by $s_{i_1} \cdots s_{i_{l(s)}}$. Such a presentation is not unique, but any two must be related by a sequence of relations of the second and third type of those above. We call such a presentation a \emph{reduced representation} of $s$. Then it follows from Lemma \ref{T-braid} that the following $U_q(\mathfrak{sl}_2)$-intertwining endomorphism of $V_1^{\otimes n}$ is well-defined:
\[ T(s) := T_{i_1} \cdots T_{i_{l(s)}}.\]

\begin{definition}
The $n$th \emph{Jones--Wenzl projector} $p_n$ is defined by
\[p_n = \frac{1}{[n]_{-}!} \sum_{s \in S_n} q^{-3l(s)/2}T(s),\]
where $[n]_{-}! = [n]_- \cdots [1]_-$, and $[i]_- = (q^{-2i}-1)/(q^{-2}-1).$
\end{definition}

For $1 \leq i \leq n-1$, define the $U_q(\mathfrak{sl}_2)$-intertwining endomorphism of $V_1^{\otimes n}$ by
\[ U_i := 1^{\otimes (i-1)} \otimes (\delta_1 \circ \varepsilon_1) \otimes 1^{\otimes (n-i-1)}.\]

\begin{proposition}
\label{JW-prop}
The Jones--Wenzl projectors satisfy the properties
\begin{gather*}
p_n^2 = p_n \\
p_nU_i = U_ip_n = 0, \quad 1 \leq i \leq n-1.
\end{gather*}
\end{proposition}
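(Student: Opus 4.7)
The plan is to derive a quadratic (Hecke-type) relation for each $T_i$ and use it to establish the key identity $T_i p_n = q^{-1/2} p_n$ (and its right-multiplication mirror), from which both claims of the proposition follow quickly.

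First I would extract the quadratic relation. By Lemma \ref{edrelns}, $\varepsilon_1 \circ \delta_1 = -q - q^{-1}$, so $U_i^2 = -(q + q^{-1})\, U_i$. Since $T_i = q^{1/2} U_i + q^{-1/2}$, a direct computation yields
\[(T_i - q^{-1/2})(T_i + q^{3/2}) = 0, \quad\text{i.e.,}\quad T_i^2 = (q^{-1/2} - q^{3/2})\, T_i + q,\]
and in particular $T_i U_i = U_i T_i = -q^{3/2} U_i$.

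The main step is to prove $T_i p_n = q^{-1/2} p_n$. I would partition $S_n$ into pairs $\{\sigma, s_i \sigma\}$ and, in each pair, choose $\sigma$ to be the shorter element, so that $l(s_i\sigma) = l(\sigma) + 1$ and $T(s_i\sigma) = T_i T(\sigma)$. Then $T_i T(\sigma) = T(s_i \sigma)$, while the quadratic relation gives $T_i T(s_i \sigma) = (q^{-1/2} - q^{3/2})\, T(s_i \sigma) + q\, T(\sigma)$. A short bookkeeping calculation shows that the contribution of each pair to $T_i p_n$ equals $q^{-1/2}$ times its contribution to $p_n$, so summing over pairs yields the identity. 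The symmetric pairing $\{\sigma, \sigma s_i\}$ gives $p_n T_i = q^{-1/2} p_n$.

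From $T_i p_n = q^{-1/2} p_n$ (resp.\ $p_n T_i = q^{-1/2} p_n$) and $T_i = q^{1/2} U_i + q^{-1/2}$, we immediately read off $U_i p_n = 0$ (resp.\ $p_n U_i = 0$). For $p_n^2 = p_n$, iterating the eigenvalue identity along a reduced expression for any $s \in S_n$ gives $T(s) p_n = q^{-l(s)/2} p_n$, so
\[p_n^2 = \frac{1}{[n]_-!} \sum_{s \in S_n} q^{-3l(s)/2}\, T(s)\, p_n = \frac{1}{[n]_-!}\Big(\sum_{s \in S_n} q^{-2l(s)}\Big)\, p_n.\]
The Poincar\'e polynomial identity $\sum_{s \in S_n} t^{l(s)} = \prod_{i=1}^{n} (1 + t + \cdots + t^{i-1})$, evaluated at $t = q^{-2}$, yields $\sum_{s} q^{-2l(s)} = \prod_{i=1}^{n} [i]_- = [n]_-!$, so $p_n^2 = p_n$. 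The main obstacle is the bookkeeping in the pairing step: one must choose representatives consistently (always the shorter element in each coset pair) and track the $q$-power shifts carefully so that the coefficients of $T(\sigma)$ and $T(s_i\sigma)$ line up exactly.
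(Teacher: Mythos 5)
Your proof is correct --- I checked the coefficient bookkeeping in the pairing step and it works out exactly: with $L = l(\sigma)$, the pair $\{\sigma, s_i\sigma\}$ contributes coefficient $q^{-3L/2-2}$ to $T(s_i\sigma)$ and $q^{-3L/2-1/2}$ to $T(\sigma)$ after left multiplication by $T_i$, which are precisely $q^{-1/2}$ times the original coefficients $q^{-3(L+1)/2}$ and $q^{-3L/2}$. The quadratic relation, the identity $T(s)p_n = q^{-l(s)/2}p_n$, and the evaluation $\sum_{s} q^{-2l(s)} = [n]_-!$ via the Poincar\'e polynomial are all right, and the use of $T(s_i\sigma) = T_iT(\sigma)$ is justified by the well-definedness of $T(s)$ from Lemma \ref{T-braid}.

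The paper does not prove Proposition \ref{JW-prop} at all; it defers to Kauffman--Lins, where the argument is typically run diagrammatically and hand-in-hand with the Wenzl recursion of Theorem \ref{JW-thm} (one proves the recursion and the annihilation/idempotence properties together by induction on $n$). Your route is the standard algebraic one for the quantum-symmetrizer formula: establish the eigenvalue identity $T_ip_n = p_nT_i = q^{-1/2}p_n$ and read everything off from it. What this buys you is a self-contained proof from exactly the data the paper supplies (Lemma \ref{edrelns}, the definition of $R_{1,1}$, and the formula for $p_n$), with no diagrammatics and no appeal to the recursion; what it does not give you is Theorem \ref{JW-thm} itself, which the inductive Kauffman--Lins approach produces as a byproduct and which the paper also needs later. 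One presentational note: it is worth stating explicitly that left multiplication by $s_i$ partitions $S_n$ into two-element orbits with lengths differing by exactly one, since that is the only structural fact the pairing relies on.
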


\begin{theorem}
\label{JW-thm}
Jones--Wenzl projectors satisfy the inductive relation
\[ p_{n+1} = (p_n \otimes 1) - \mu_n (p_n \otimes 1) \circ U_n \circ (p_n \otimes 1),\]
where $\mu_1 = 1/(-q-q^{-1})$ and $\mu_{k+1} = (-q-q^{-1}-\mu_k)^{-1}$.
\end{theorem}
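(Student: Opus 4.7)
The plan is to set $q_{n+1} := (p_n \otimes 1) - \mu_n (p_n \otimes 1)\, U_n\, (p_n \otimes 1)$ and prove $q_{n+1} = p_{n+1}$ by verifying the characterizing properties of the Jones--Wenzl projector: that $q_{n+1}$ is an idempotent annihilated on both sides by $U_i$ for $1 \leq i \leq n$ and fixes the highest-weight vector $v^1 \otimes \cdots \otimes v^1 \in V_1^{\otimes (n+1)}$. Under the decomposition of $V_1^{\otimes (n+1)}$ into $U_q(\mathfrak{sl}_2)$-irreducibles, these properties uniquely identify the projector onto the $V_{n+1}$ summand. I proceed by induction on $n$; the base case is trivial with $p_1 = 1$.

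The easier annihilation identity is $U_i\, q_{n+1} = q_{n+1}\, U_i = 0$ for $1 \leq i \leq n-1$, which is immediate because $U_i$ acts on tensor positions $i, i+1 \leq n$ and hence commutes past the trailing identity of $p_n \otimes 1$ to meet $p_n$, giving zero by Proposition \ref{JW-prop}. The crucial step, which I expect to be the principal obstacle, is the identity
\[U_n\, (p_n \otimes 1)\, U_n \;=\; \mu_n^{-1}\, (p_{n-1} \otimes 1 \otimes 1)\, U_n.\]
To establish it I substitute the inductive formula $p_n = (p_{n-1} \otimes 1) - \mu_{n-1}(p_{n-1} \otimes 1)\, U_{n-1}\, (p_{n-1} \otimes 1)$ into the left-hand side and use three ingredients: $U_n^2 = (-q-q^{-1}) U_n$, a direct consequence of $\varepsilon_1 \circ \delta_1 = -q-q^{-1}$ from Lemma \ref{edrelns}; the Temperley--Lieb relation $U_n U_{n-1} U_n = U_n$, which follows from the snake identities $(1 \otimes \varepsilon_1)(\delta_1 \otimes 1) = 1 = (\varepsilon_1 \otimes 1)(1 \otimes \delta_1)$ also in Lemma \ref{edrelns}; and the commutativity of $p_{n-1} \otimes 1 \otimes 1$ with $U_n$, since they act on disjoint tensor factors. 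The coefficient then collapses by the defining recursion $\mu_n^{-1} = -q-q^{-1} - \mu_{n-1}$.

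Given this key identity, $q_{n+1} U_n = 0$ and $U_n q_{n+1} = 0$ follow once I observe the absorption relations $(p_n \otimes 1)(p_{n-1} \otimes 1 \otimes 1) = (p_n \otimes 1) = (p_{n-1} \otimes 1 \otimes 1)(p_n \otimes 1)$, each a direct consequence of the inductive formula for $p_n$ together with $p_{n-1}^2 = p_{n-1}$. Idempotency is then short: $q_{n+1}\, (p_n \otimes 1) = q_{n+1}$ is immediate from $(p_n \otimes 1)^2 = p_n \otimes 1$, so $q_{n+1}^2 = q_{n+1}\, (p_n \otimes 1) - \mu_n\, q_{n+1}\, (p_n \otimes 1)\, U_n\, (p_n \otimes 1) = q_{n+1}$, the correction term vanishing by $q_{n+1} U_n = 0$. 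Finally, $q_{n+1}(v^1 \otimes \cdots \otimes v^1) = v^1 \otimes \cdots \otimes v^1$ because $(p_n \otimes 1)$ fixes it by induction and $U_n$ annihilates it since $\varepsilon_1(v^1 \otimes v^1) = 0$.
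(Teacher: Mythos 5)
Your proof is correct. The paper itself offers no argument for Theorem \ref{JW-thm} (it defers to \cite{KL}), so there is no in-text proof to compare against; what you have written is the standard Wenzl-style derivation, and every step checks out: the identity $U_n(p_n\otimes 1)U_n = \mu_n^{-1}(p_{n-1}\otimes 1\otimes 1)U_n$ follows exactly as you say from $U_n^2 = (-q-q^{-1})U_n$, $U_nU_{n-1}U_n = U_n$, the absorption $p_n(p_{n-1}\otimes 1) = p_n$, and the recursion $\mu_n^{-1} = -q-q^{-1}-\mu_{n-1}$, and the base case $p_2 = 1 - \mu_1 U_1$ agrees with the explicit formula $p_2 = \frac{1}{1+q^{-2}}(1 + q^{-1}U_1 + q^{-2})$. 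The one point worth making explicit is that your uniqueness argument needs not only that $q_{n+1}$ satisfies the three characterizing properties but also that $p_{n+1}$ does; Proposition \ref{JW-prop} supplies idempotency and the annihilation by the $U_i$, and the highest-weight property follows from the defining formula because $R_{1,1}$ scales $v^1\otimes v^1$ by $q^{-1/2}$, so $T(s)$ scales $(v^1)^{\otimes(n+1)}$ by $q^{-l(s)/2}$ and the total coefficient $\frac{1}{[n+1]_-!}\sum_s q^{-2l(s)}$ equals $1$ by the Poincar\'e polynomial identity for $S_{n+1}$. With that remark added, the argument is complete; it is also worth noting that semisimplicity of $V_1^{\otimes(n+1)}$ (hence uniqueness of the invariant complement of the multiplicity-one summand $V_{n+1}$) is what licenses the characterization, and this holds here because the paper works over $\mathbb{C}(q)$ with $q$ generic.
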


For proofs of the previous proposition and theorem, see \cite{KL}.

\subsection{Graphical calculus}
\label{sec:graphical_calculus}
A well-known diagrammatic description of the $U_q(\mathfrak{sl}_2)$ intertwiners discussed in the previous subsection was invented by Penrose, generalized by Kauffman, and utilized by Kauffman and Lins to construct invariants of three-manifolds. This graphical calculus motivates our description of the dual Russell basis in Chapter \ref{ch:Dual-Russell} and our construction of the quantum deformation of the Russell skein module in Chapter \ref{ch:Quantum-Russell}. We review its key pieces here, following \cite{khfrenkel} and the standard references \cite{CFS} and \cite{KL}.

In the graphical calculus, an intertwiner from $V_1^{\otimes m}$ to $V_1^{\otimes n}$ is drawn as a diagram on $(m+n)/2$ strands with $m$ bottom and $n$ top endpoints. Composition of maps corresponds to the vertical stacking of diagrams, while tensor products correspond to horizontal placement.

The identity map on $V^{\otimes n}$ is drawn as $n$ vertical lines. The intertwining maps $\varepsilon_1, \delta_1$, and $R_{1,1}$ correspond to the diagrams in Figure \ref{intertwiners}, where the relation in the second row comes from the definition of $R_{1,1}$.

\begin{figure}[htbp]
\centering
\includegraphics[scale=1]{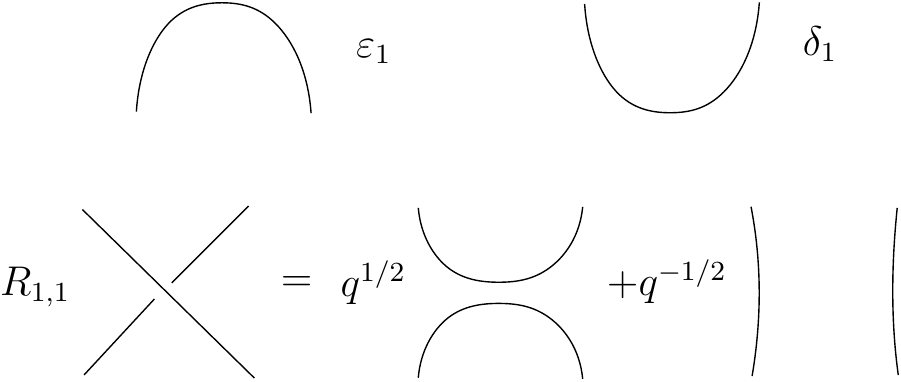}
\caption{Graphical $U_q(\mathfrak{sl}_2)$ intertwiners.}
\label{intertwiners}
\end{figure}

The relations of Lemma \ref{edrelns} give us isotopy and evaluation of the closed circle:
\begin{figure}[H]
\centering
\includegraphics[scale=1]{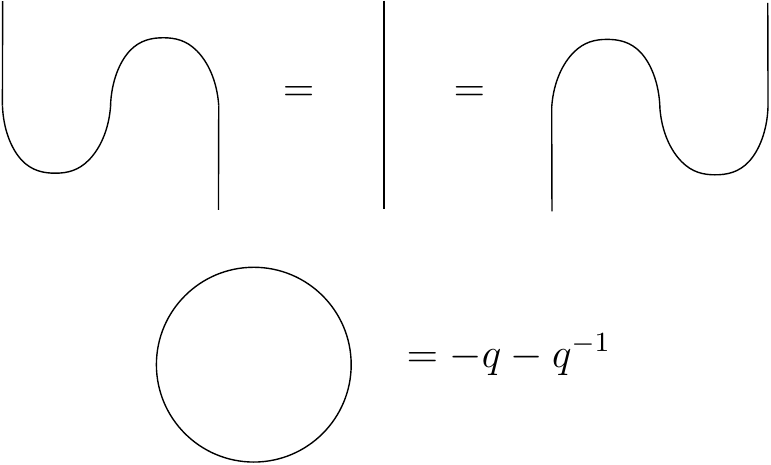}
\end{figure}

We frequently refer to the expansion of the crossing as in Figure \ref{intertwiners} and the evaluation of the circle to $-q-q^{-1}$ together as the \emph{Kauffman--Lins} relations.

The intertwiner $T_i$ is given by the following diagram.
\begin{figure}[H]
\centering
\includegraphics[scale=1]{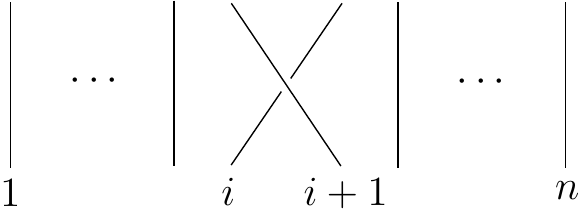}
\end{figure}

The Jones--Wenzl projector $p_n$ is depicted
\begin{figure}[H]
\centering
\includegraphics[scale=1]{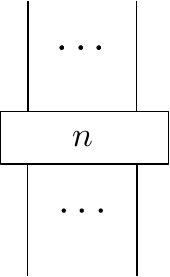}
\end{figure}
with the label ``$n$'' occasionally omitted when the size is clear. From the definition of $p_n$ and the expansion of the crossing $R_{1,1}$, $p_n$ can always be expressed as a $\mathbb{C}(q)$-linear combination of planar tangles. For example:
\begin{figure}[H]
\centering
\includegraphics[scale=1.2]{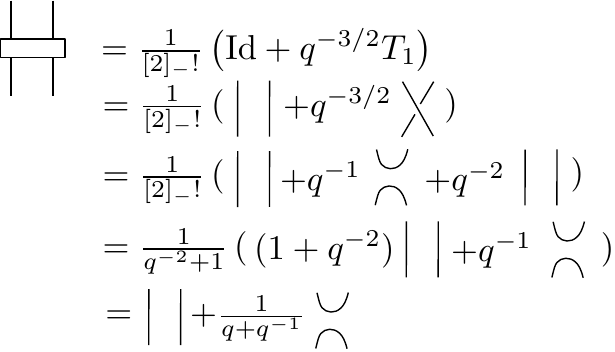}
\end{figure}

Proposition \ref{JW-prop} and Theorem \ref{JW-thm} translate into Figures \ref{JW-prop-fig} and \ref{JW-thm-fig}, respectively.

\begin{figure}[htbp]
\centering
\includegraphics[scale=1]{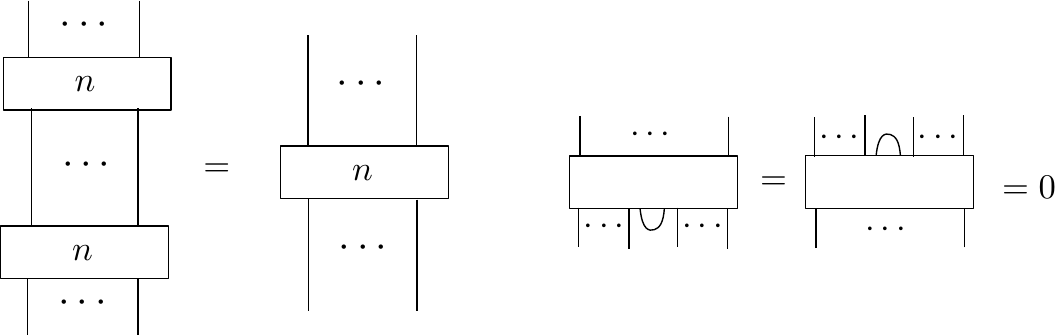}
\caption{Graphical relations involving projectors.}
\label{JW-prop-fig}
\end{figure}

\begin{figure}[htbp]
\centering
\includegraphics[scale=1]{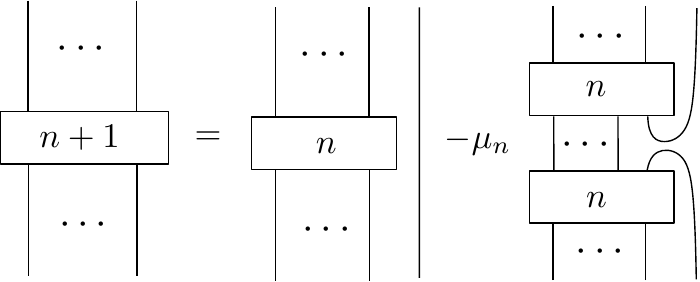}
\caption{Graphical description of inductive property of Jones--Wenzl projectors.}
\label{JW-thm-fig}
\end{figure}

\subsection{Lusztig canonical and dual canonical bases}
Lusztig defined a canonical basis in tensor products of irreducible representations of quantum groups. In particular, we consider tensor powers of the fundamental representation $V_1$ of $U_q(\mathfrak{sl}_2)$. A diagrammatic construction of the dual of his basis in $(V_1^{\otimes n})^{\ast} \cong V_1^{\otimes n}$ was obtained by Frenkel and Khovanov in \cite{khfrenkel}. In \cite{khthesis}, Khovanov gives an explicit formula for the Lusztig canonical basis using a diagrammatic approach. He describes the graphical interpretation of the natural bilinear form on $V_1 ^{\otimes n}$ and constructs the duals to the Lusztig dual canonical basis elements with respect to this form. The construction of the duals uses Jones--Wenzl projectors, and the details are reviewed in Chapter \ref{ch:Dual-Russell}. Also in that chapter, we extend Khovanov's graphical Lusztig canonical basis for the invariant subspace $\mbox{Inv}(V_1^{\otimes 2n})$ under the $U_q(\mathfrak{sl}_2)$ action in the case $q=-1$ to a basis in the Russell skein module.





\chapter{Dualizing the Russell skein module}
\label{ch:Dual-Russell}

\section{The dual Russell space}

\subsection{A bilinear form on the Russell space}
\label{sec:bilin_form}

Recall that $R_n$ denotes the $\mathbb{Z}$-module spanned by crossingless matchings of $2n$ points with at most one dot on each arc subject to Russell's Type I and Type II relations described in the previous section. Elements of this space can be expressed as linear combinations of diagrams, drawn as crossingless matching cups (with dots). In this section, we define a bilinear form on the space $R_n$, that is, a map 
\[ \langle \cdot, \cdot \rangle: R_n \otimes R_n \to \mathbb{Z}. \]

We define the pairing $\langle a,b \rangle$ of diagrams $a, b$ in $R_n$ as follows. First, rotate the diagram $b$ by 180 degrees and replace each dot by an X. Then match the endpoints of the rotated diagram with the endpoints of $a$ to form a diagram whose connected components are closed circles decorated with dots and X's. We allow lines in the diagram to move up to isotopy, so we may deform our circles to being round, and we allow dots and X's to slide freely around the circle they are on but not past one another. Finally, evaluate this diagram to an integer by multiplying the evaluations of each component, defined according to the following rules: 
\begin{itemize}
\item A closed circle with no dots or X's evaluates to 2.
\item A closed circle with the same number of dots and X's (at least one of each) arranged in alternating fashion evaluates to 1.
\item All other circles evaluate to 0.
\end{itemize}

Figure \ref{pairing1} shows the evaluation of circles in which no more than one dot and one X are present.

\begin{figure}[ht]
   \centering
   \includegraphics[scale=.7]{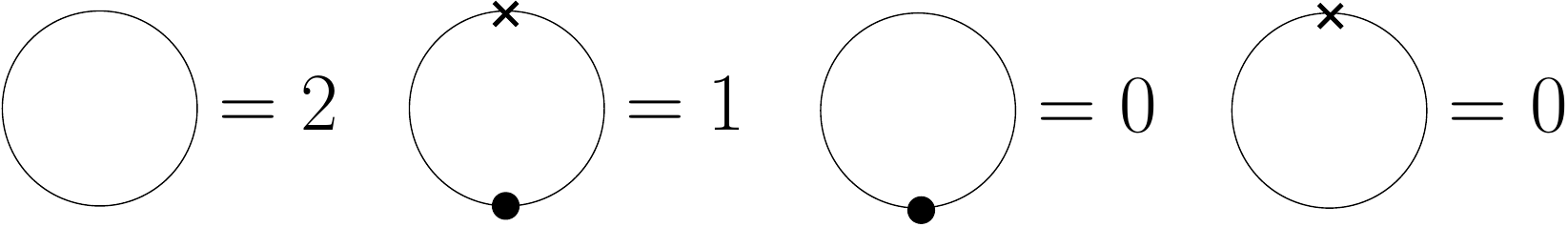} 
   \caption{The evaluation of diagrams with at most one dot and one X.}
   \label{pairing1}
\end{figure}

Given the above definition of the pairing of diagrams of $R_n$, we extend to a bilinear form on all of $R_n$ by linearity.

\begin{lemma}
\label{nondeg-sym}
The bilinear form $\langle \cdot, \cdot \rangle$ is nondegenerate and symmetric.
\end{lemma}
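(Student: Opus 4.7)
The plan is to prove symmetry by a $180^\circ$ rotation of the combined diagram, then reduce nondegeneracy to each $R_{n,k}$ separately and handle it either directly via the Gram matrix or by appeal to the dual basis constructed later in Section \ref{sec:dual-dots}.

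For symmetry, I would take the combined diagram realizing $\langle a,b\rangle$ --- namely $a$ on top carrying its dots, glued to the $180^\circ$ rotation of $b$ below carrying X's in place of its dots --- and rotate the entire picture by $180^\circ$. The result is exactly the combined diagram computing $\langle b,a\rangle$, except that the roles of dots and X's have been swapped: what was originally a dot on $a$ now sits at the bottom where one would otherwise draw an X, and similarly for the X's on $b$. Each of the three evaluation rules (bare circle $\mapsto 2$, equal alternating dot/X count $\mapsto 1$, everything else $\mapsto 0$) depends only on the unordered pair of decoration counts together with the alternation condition, both of which are invariant under swapping dots and X's. Hence $\langle a,b\rangle = \langle b,a\rangle$. (Well-definedness of the form on the quotient $R_n$, which is implicit in the statement, reduces to a local check at the four endpoints $a<b<c<d$ that appear in each Type I or Type II relation.)

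For nondegeneracy, the first observation is that the form is block-diagonal with respect to the decomposition $R_n = \bigoplus_k R_{n,k}$ by dot count: every circle contributing nontrivially must carry equal numbers of dots and X's, and summing over circles shows that $\langle a,b\rangle = 0$ whenever $a$ and $b$ have different total dot counts. It therefore suffices to prove nondegeneracy on each $R_{n,k}$. Using the basis of standard dotted crossingless matchings (dots on outer arcs only), the self-pairing of such a basis element $a$ with $k$ dots produces $n$ circles --- $k$ of them carrying one dot and one X (each evaluating to $1$) and the remaining $n-k$ bare (each evaluating to $2$) --- so $\langle a,a\rangle = 2^{n-k} \neq 0$, giving a nonzero diagonal in the Gram matrix. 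To rule out cancellation I would order the basis by an appropriate partial order on crossingless matchings (e.g., a nesting-depth order inherited from the Temperley--Lieb setting) and verify triangularity of the Gram matrix with respect to this order; in the undotted case $k=0$ this specializes to the classical Gram form on crossingless matchings, whose nondegeneracy is standard.

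The main obstacle is controlling the off-diagonal entries for $k > 0$, where the dot-matching condition complicates the combinatorics and a clean triangularity ordering is not automatic. The cleanest alternative is to defer nondegeneracy to the explicit construction of Section \ref{sec:dual-dots}, where Jones--Wenzl projectors are used to exhibit a basis of $R_{n,k}$ dual to the standard basis under $\langle \cdot,\cdot\rangle$; the existence of such a dual basis immediately forces the form to be nondegenerate, and symmetry (already established) gives the conclusion in either variable.
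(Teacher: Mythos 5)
Your symmetry argument coincides with the paper's: rotate the glued diagram by $180^\circ$, note that dots and X's are interchanged, and observe that the three evaluation rules are invariant under that swap. That half is fine.

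The nondegeneracy half contains a concrete error and then leaves its main route unfinished. The error: $\langle a,a\rangle$ is \emph{not} $2^{n-k}$ in general. The diagram computing $\langle a,a\rangle$ glues $a$ to the $180^\circ$ rotation of $a$, which is the mirror cap $W(a)$ only when $a$ is invariant under left--right reflection; otherwise arcs do not close up with themselves and you do not get $n$ circles. Worse, the self-pairing can vanish on a standard basis element: for $n=3$, $a=(1,2),(3,4),(5,6)$ with dots on $(1,2)$ and $(3,4)$, the circle through points $1,2$ acquires one dot and no X, so $\langle a,a\rangle = 0$. The quantity that always equals $2^{n-k}$ is $\langle a,\overline{a}\rangle$, where $\overline{a}$ is the horizontal reflection of $a$ --- and this is precisely what the paper's proof uses. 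Beyond that, your primary route (triangularity of the Gram matrix under some nesting order) is explicitly left open, and your fallback --- invoking the dual basis of Section \ref{sec:dual-dots} --- is logically sound, since the verification there uses only the pairing rules and not nondegeneracy, but it is a forward reference to a far harder theorem and must be phrased with care, because the graphical dual calculus of Section \ref{sec:dual-calc} is itself set up by appealing to this very lemma. For what it is worth, the paper's own nondegeneracy argument consists solely of the observation that $\langle a,\overline{a}\rangle\neq 0$ for every diagram $a$, so your worry about cancellation among linear combinations is a fair criticism of the paper's terseness; but as written your proof neither closes that gap nor correctly computes the pairings it relies on.
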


\begin{proof}
It is clear that $\langle \cdot, \cdot \rangle$ is nondegenerate because $\langle a, \overline{a} \rangle$, where $\overline{a}$ is the horizontal reflection of $a$, is non-zero for any diagram $a \in R_n$.

To see that $\langle \cdot, \cdot \rangle$ is symmetric, for $a, b$ diagrams in $R_n$, consider the closed circles constructed in the definitions of $\langle a, b \rangle$ and $\langle b, a \rangle$ The closed diagrams are identical except that they have been reflected across the horizontal axis and have dots and X's interchanged. Observe that the evaluation rules in the definition of $\langle \cdot, \cdot \rangle$ are preserved under these transformations.
\end{proof}

\begin{proposition}
The bilinear form $\langle \cdot, \cdot \rangle$ is well-defined on $R_n$.
\end{proposition}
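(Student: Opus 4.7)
By bilinearity and the symmetry established in Lemma \ref{nondeg-sym}, it suffices to show that for every diagram $\gamma \in R_n$, pairing the left-hand side of each Type I or Type II relation against $\gamma$ yields zero. Since the Russell relations are local---all diagrams appearing in a given relation agree outside a small neighborhood of the four points $a < b < c < d$---the closed diagrams obtained by gluing rotated $\gamma$ on top of each $m_i$ agree outside this local region, and only the local behavior must be analyzed.

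The key structural observation is that the exterior of the local region pairs the four points $a, b, c, d$ via a crossingless matching. Planarity leaves only two possibilities: $(a,b)(c,d)$ and $(a,d)(b,c)$; the pairing $(a,c)(b,d)$ would require a crossing. For each fixed exterior pairing, combining the two local arcs of $m_i$ with the exterior arcs produces either one closed loop (when the local and exterior pairings disagree) or two closed loops (when they agree), each carrying the local dots of $m_i$ together with the dots and X's along the relevant exterior paths.

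Writing $P_1, P_2$ for the linear sequences of dots and X's along the two exterior paths and $D$ for a single local dot, the Type I and Type II relations reduce---after applying cyclic rotation and reversal invariance of the evaluation---to the combinatorial identities
\begin{align*}
f(P_1 D)\, f(P_2) + f(P_1)\, f(P_2 D) &= f(P_1 P_2 D) + f(P_1 D P_2), \\
f(P_1 D)\, f(P_2 D) &= f(P_1 D P_2 D),
\end{align*}
where $f(\sigma)$ is the evaluation of a cyclic pattern $\sigma$ of dots and X's: it equals $2$ if $\sigma$ is empty, $1$ if $\sigma$ is cyclically alternating with equal numbers of dots and X's, and $0$ otherwise. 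The other exterior case yields the same two identities after relabeling.

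Both identities can be verified by a direct case analysis on $P_1$ and $P_2$: whether each is empty, cyclically alternating (and, if so, which marks it begins and ends with), or non-alternating. A key simplifying fact is that the conditions $f(P_i) = 1$ and $f(P_i D) = 1$ impose opposite length parities on $P_i$ and are therefore mutually exclusive, so at most one term on each side contributes non-trivially in each sub-case. The main obstacle is the bookkeeping through these subcases---in particular, the junction conditions at the boundaries between the $P_i$'s and the inserted $D$'s---though no non-routine ideas are required.
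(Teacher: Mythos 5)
Your proposal is correct and follows essentially the same route as the paper: reduce by bilinearity and locality to the local picture around $a<b<c<d$, observe that the exterior connects these four points in one of the two planar ways (so each side of a relation closes up into either one or two circles), and then verify the resulting evaluations case by case. The only difference is presentational --- the paper carries out the final check via pictures of the finitely many circle configurations, while you package it as two identities for the cyclic-word evaluation $f$ and verify those; both rest on the same dichotomy and the same rotation/reversal invariance of the evaluation.
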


\begin{proof}
We must check that the bilinear form respects all Type I and Type II Russell relations. That is, given a relation $r=0$ in $R_n$, we must have
\[ \langle r, a \rangle = 0 \]
for all $a \in R_n$. We begin with the assumption that $r$ is a relation of Type I and consider the pairing of $r$ with an arbitrary diagram $a$. Recall that in the definition of $\langle r, a \rangle$, we first reflect $a$, change dots of $a$ to X's, and then for each diagram in $r$ match endpoints to form a linear combination of collections of closed circles. Let $i < j < k < l$ be the endpoints involved in the Type I relation of $r$, so that the diagrams on the lefthand side of Figure \ref{Russell_relns} have arcs between points $(i,j)$ and $(k,l)$, while the diagrams on the righthand side have arcs between points $(i,l)$ and $(j,k)$, with all other arcs among all four diagrams are identical.

When any diagram in $r$ gets matched up with $a$, either all four points $i,j,k,l$ will lie on a single closed circle, or they will lie on two distinct circles. We assume that for the diagrams in $r$ on the lefthand side of the Type I relation of Figure \ref{Russell_relns}, endpoints $i, j, k, l$ will lie on a single circle, whereas for the diagrams on the righthand side, $i, l$ will lie on a circle distinct from the one on which $j,k$ lie. The opposite scenario will be completely symmetric.

Now let $d$ be a diagram in $r$. The pairing $\langle d, a \rangle$ will be 0 unless $d$ and $a$ have the same number of dots. We may ignore any closed circles in $\langle d, a \rangle$ not including the endpoints $i, j, k, l$, since they will be identical in all four diagrams. Up to symmetry there are then four cases to check, shown in Figure \ref{bil_form_welldef}.

\begin{figure}[ht]
   \centering
   \includegraphics[scale=.7]{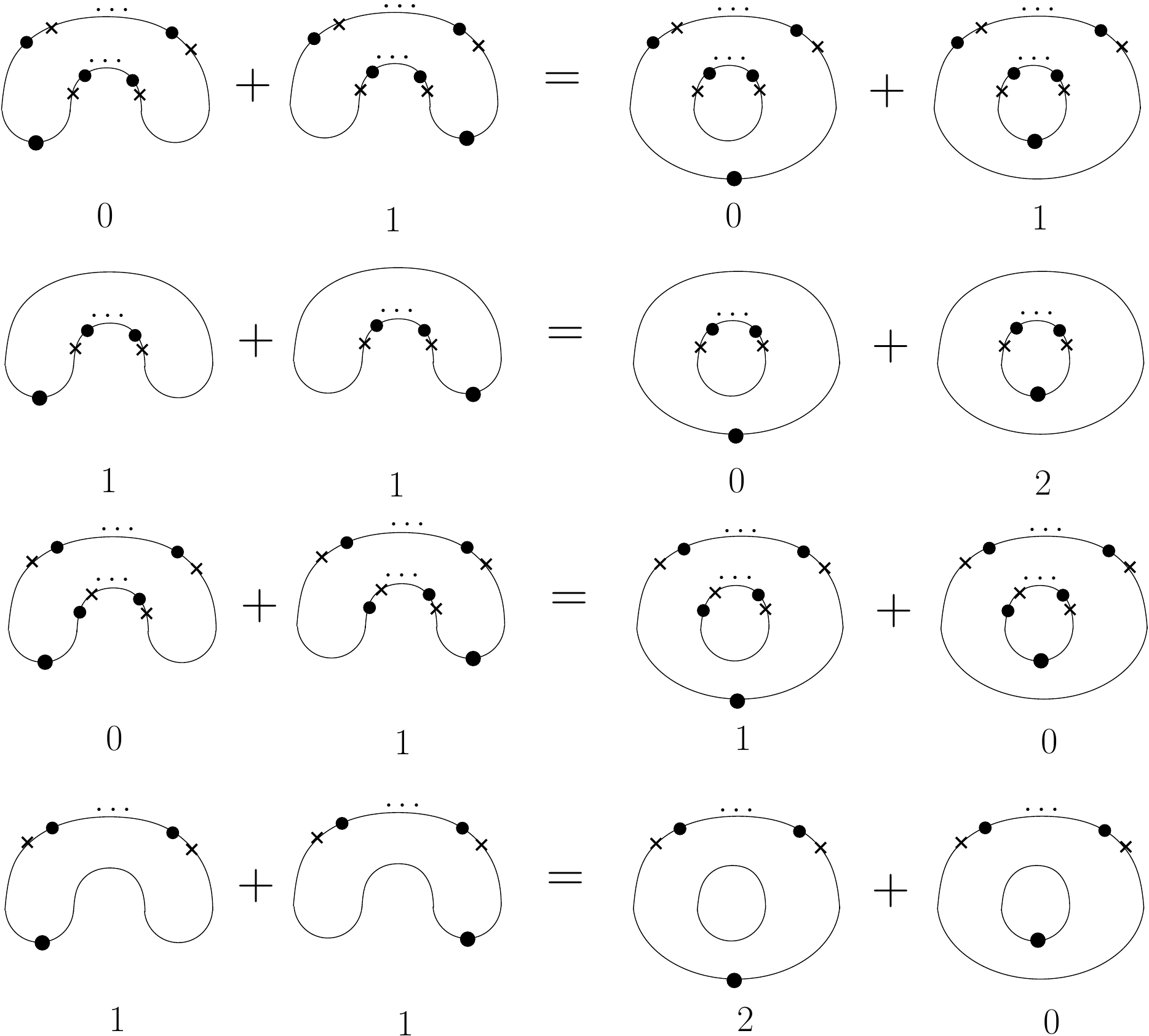} 
   \caption{The bilinear form respects Type I relations.}
   \label{bil_form_welldef}
\end{figure}

We may repeat a similar argument for $r$ of Type II, with only one case to check, shown in Figure \ref{bil_form_welldef2}.  $\blacksquare$

\begin{figure}[ht]
   \centering
   \includegraphics[scale=.8]{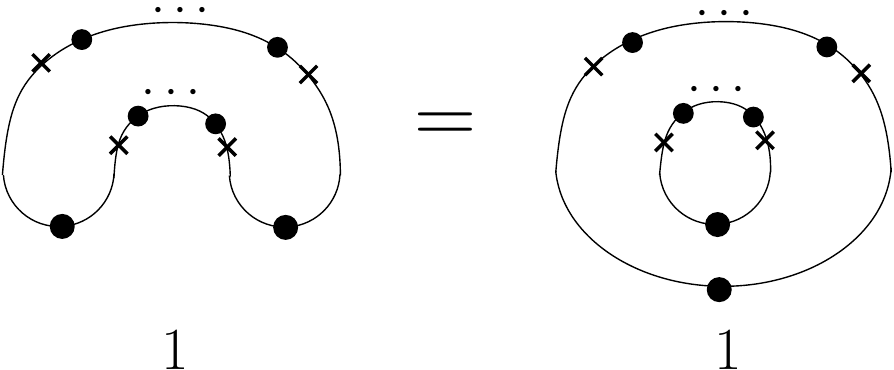} 
   \caption{The bilinear form respects Type II relations.}
   \label{bil_form_welldef2}
\end{figure}
\end{proof}
 
Note that the definition of the bilinear form $\langle \cdot, \cdot \rangle$ is equivalent to imposing the local relations between dots and X's shown in Figure \ref{linerelns}.

\begin{figure}[ht]
   \centering
   \includegraphics[scale=1]{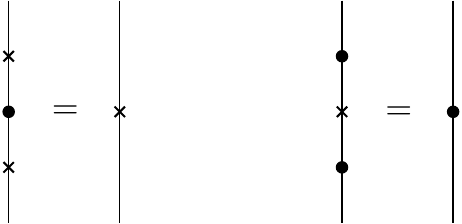}
   \caption{Local relations defining the bilinear form on $R_n$.}
   \label{linerelns}
\end{figure}

Consider a $\mathbb{Z}$-module $A$ consisting of linear combinations of diagrams, where a diagram is a vertical line decorated with dots and X's subject to the local relations of Figure \ref{linerelns}. Multiplication of diagrams in $A$ is given by vertical stacking. Then $A$ is 5-dimensional over $\mathbb{Z}$ and isomorphic to $\mathbb{Z} \langle a,b \rangle /(a^2 = b^2 = 0, aba=a, bab=b)$, where $a$ corresponds to a dot on the line and $b$ corresponds to an X. As a free abelian group, $A$ then has a basis given by $\{ 1, a, b, ab, ba\}$. This module has a complete set of three primitive orthogonal idempotents given by $\{ab, ba, 1-ab-ba\}$, and the associated quiver is shown in Figure \ref{quiverpres}, with the relation that the composition of any two compatible arrows is the identity map. Note it follows that
\[ A \cong \mathbb{Z} \times \mbox{Mat}(2, \mathbb{Z}).\]

\begin{figure}[h]
   \centering
   \includegraphics[width = 2in]{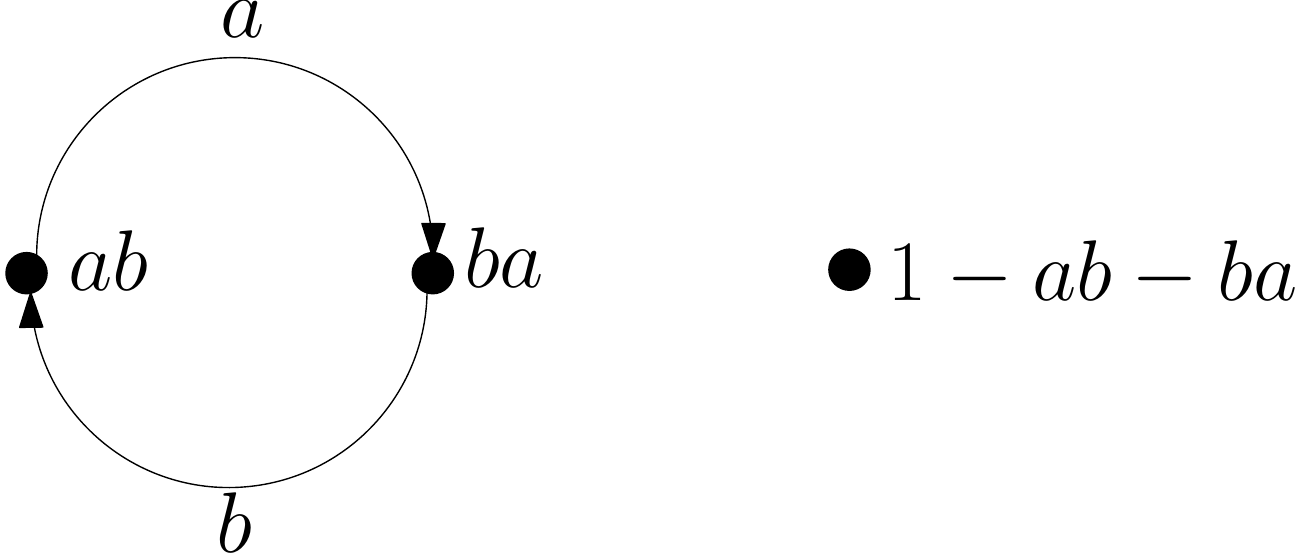}
   \caption{The quiver of an algebra defined by the local relations of Figure \ref{linerelns}.}
   \label{quiverpres}
\end{figure}

\begin{conjecture}
This bilinear form $\langle \cdot, \cdot \rangle$ is positive definite.
\end{conjecture}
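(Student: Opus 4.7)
The natural strategy is to exploit the grading of $R_n$ by total dot count and prove positive-definiteness one graded piece at a time. First I would verify that $\langle a, b \rangle = 0$ whenever $a$ and $b$ have different total numbers of dots: a closed circle contributes nonzero only when its dot count equals its X count, so summing over all circles forces $a$ and $b$ to have the same number of dots. Using the basis of standard matchings (dots on outer arcs only, as noted in the introduction) sorted by dot count $k$, the Gram matrix of $\langle \cdot, \cdot \rangle$ therefore splits as a block-diagonal matrix $G = G_0 \oplus G_1 \oplus \cdots \oplus G_n$, and it suffices to show each $G_k$ is positive definite.

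For the $k = 0$ block, the pairing becomes $\langle a, b \rangle = 2^{c(a,b)}$, where $c(a,b)$ is the number of circles produced by joining $a$ with the reflection of $b$. This is the Gram matrix of the standard cell module of the Temperley--Lieb algebra at loop parameter $\delta = 2$, which is classically positive definite (by direct eigenvalue estimates, or from the closed-form determinant in the cellular theory of Graham--Lehrer). Alternatively, $G_0$ can be identified with the Kauffman--Lins pairing on $\mathrm{Inv}(n)$ reviewed in Section~\ref{sec:graphical_calculus}, specialized at $q = -1$.

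For $k \geq 1$ I would induct on $n$, filtering the basis by the outer arc $(1, 2j)$ emanating from the leftmost endpoint. Between two diagrams that share this leftmost arc, the pairing factors as the product of two smaller Russell pairings (on the arcs nested inside and outside $(1, 2j)$) times a nonnegative scalar in $\{0, 1, 2\}$ coming from the outer circle, so positive-definiteness on these ``diagonal'' strata follows by induction. The main obstacle is to control the off-diagonal cross-terms between diagrams whose leftmost arcs differ, where the joined diagram no longer decomposes into independent sub-pieces; here I would seek a unitriangular change of basis, refining the leftmost-arc filtration by a further partial order on matchings (for instance nesting depth), that conjugates $G_k$ into block-diagonal form.

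A more conceptual alternative would use the explicit dual Russell basis from Theorem~\ref{dualbasisthm}: if $C$ is the transition matrix between the standard basis and its dual, then $C = G^{-1}$, and the Jones--Wenzl formulas underlying the dual basis construction should expose a Cholesky-type factorization $G = M^T M$ over $\mathbb{Q}$, which would give positive-definiteness immediately. I would expect this second route to be cleaner in principle but to require careful bookkeeping of the Jones--Wenzl coefficients $\mu_k$ appearing in Theorem~\ref{JW-thm}; identifying the correct factorization is where I expect the bulk of the technical work to lie.
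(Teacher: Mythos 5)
The statement you are addressing is left as an open \emph{conjecture} in the paper --- no proof of it is given anywhere in the text --- so the only question is whether your proposal closes it, and it does not. The parts of your plan that are sound are the two preliminary reductions. The vanishing of $\langle a,b\rangle$ when $a$ and $b$ carry different numbers of dots does follow from the evaluation rules (a circle whose dot count differs from its X count evaluates to $0$, and a global mismatch forces at least one such circle), so the Gram matrix in the standard basis is block-diagonal by $k$. The $k=0$ block is indeed the Temperley--Lieb Gram matrix of the standard module at loop parameter $\delta=2$, whose positive-definiteness is classical; this settles the conjecture on the undotted summand, which is the part already visible in $\mbox{Inv}(n)$.

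For $k\geq 1$, which is the entire new content of the conjecture, your argument is a plan whose decisive steps are left as hopes rather than proofs. The ``diagonal stratum'' analysis only controls pairings between diagrams sharing the leftmost outer arc, and even granting that those strata decompose into tensor products of smaller positive-definite forms, you explicitly defer the off-diagonal cross-terms between strata to an unexhibited ``unitriangular change of basis'': no partial order is specified, no triangularity statement is formulated, and no reason is given why the cross-terms should be eliminable at all. The alternative route via the dual basis of Theorem~\ref{dualbasisthm} is circular as stated: the existence of a factorization $G=M^{T}M$ with $M$ invertible over $\mathbb{R}$ is \emph{equivalent} to positive-definiteness, so ``seeking'' such a factorization in the Jones--Wenzl coefficients restates the problem rather than reducing it (and the duals in the paper are only determined up to sign and scale, so extracting signed square roots of the normalizing binomial coefficients is exactly where a sign obstruction could hide). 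Until either the cross-term triangularity or an explicit factorization is actually established, the conjecture remains open.
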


\subsection{A graphical calculus for the dual space}
\label{sec:dual-calc}
We now turn our attention to the dual Russell space. Before doing so, we must first be working over a field, so we change our ground ring from $\mathbb{Z}$ to $\mathbb{Q}$ and define the dual space $R_n^{\ast}$ to be the vector space of linear functionals $R_n \to \mathbb{Q}$.

By Lemma \ref{nondeg-sym}, the bilinear form $\langle \cdot, \cdot \rangle: R_n \otimes R_n \to \mathbb{Q}$ is nondegenerate, so it defines an isomorphism from $R_n$ to $R_n^{\ast}$ given by tŒhe map
\[ x \mapsto \langle \cdot, x \rangle.\]

Diagrammatically, we draw the linear functional $\langle \cdot, x \rangle \in R_n^{\ast}$ as the 180 degree rotation of $x$ with dots changed to X's, consistent with our definition of the bilinear form. Therefore in general we consider $R_n^{\ast}$ to be a vector space spanned by crossingless matching caps of $2n$ endpoints decorated with $X$'s, subject to the relations that a diagram with two adjacent X's is zero and and the analogues of the local Type I and Type II Russell relations shown in Figure \ref{dual_space_relns}.

\begin{figure}[ht]
   \centering
   \includegraphics[scale=.8]{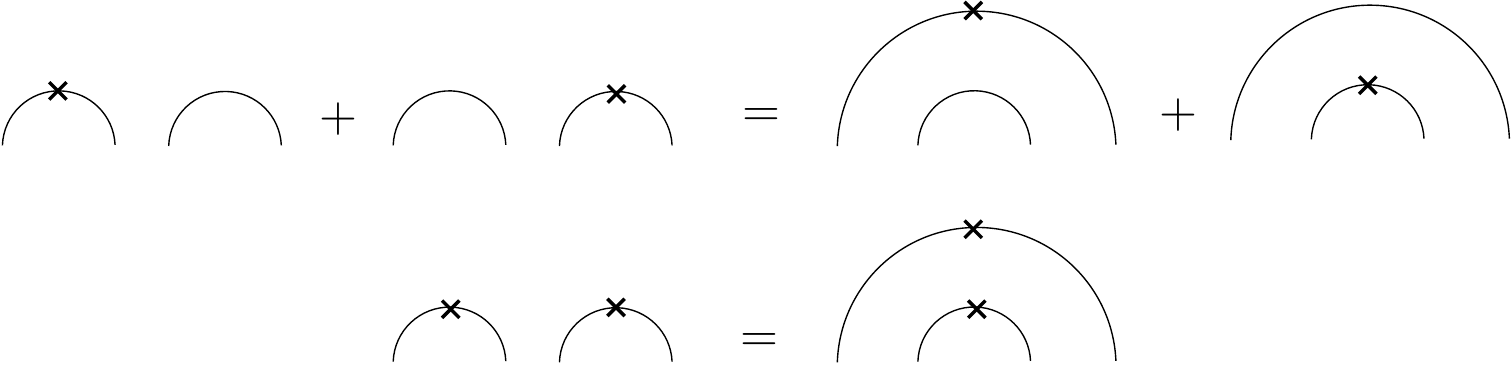}
   \caption{Local relations in $R_n^{\ast}$.}
   \label{dual_space_relns}
\end{figure}

For instance, $R_1^{\ast}$ is the vector space with basis given by the diagrams of Figure \ref{dualn1}.
\begin{figure}[ht] 
   \centering
   \includegraphics[scale=1]{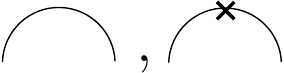} 
   \caption{Basis elements of $R_1^{\ast}$.}
   \label{dualn1}
\end{figure}

\section{Finding the dual basis}
In \cite{russ-2}, Russell and Tymoczko define ``standard'' dotted crossingless matchings to be those that have dots on outer arcs only and show that this set of diagrams forms a basis of $R_n$. Observe that by applying Type I and Type II relations to elements of Russell's graphical algebra, any element can be expressed in such a way that dots are only carried by outer arcs of a crossingless matching diagram. If a dotted arc is nested inside an undotted arc, then Type I relations can express the diagram as a linear combination of diagrams where dots only appear on outer arcs. If two dotted arcs are nested inside an undotted arc, a Type II relation can be applied.
\begin{definition}
The \emph{Russell basis} of $R_n$ is defined to be the graphical basis consisting of dotted crossingless matchings such that dots may only appear on outer arcs.
\end{definition}

Given the Russell basis of $R_n$ and the graphical calculus for the dual space $R_n^{\ast}$, we would like to construct the dual basis. We do this by first finding the duals of crossingless matchings without dots by specializing work of Frenkel and Khovanov and then by extending their result to dualize diagrams that do have dots present.

\subsection{Dual basis elements without dots}
\label{sec:no-dots}

In \cite{khfrenkel}, Frenkel and Khovanov give a graphical presentation for the dual of the Lusztig canonical basis of any tensor product $V_{a_1} \otimes \cdots \otimes V_{a_n}$ of irreducible finite-dimensional representations of $U_q(\mathfrak{sl}_2)$. They also give an explicit formula for the canonical Lusztig basis stemming from an inductive construction of duals to these graphical basis elements using Jones--Wenzl projectors. We will specialize their work to construct graphical duals of those Russell basis elements that do not carry dots.

We will use the standard graphical calculus of $U_q(\mathfrak{sl}_2)$ intertwiners, as reviewed in Section \ref{sec:graphical_calculus}. 
For now we work with generic $q$, though ultimately we will only need the specialization of these results to $q=-1$.

Recall that $V_1$ denotes the two-dimensional fundamental representation of $U_q(\mathfrak{sl}_2)$. Since this is the only representation we will be working with, we drop the subscript and just write $V$. The two standard basis vectors of the dual space will be denoted by $v^1$ and $v^{-1}$. Diagrammatically, we will depict $v^1$ as an up arrow and $v^{-1}$ as a down arrow. A vector $v^{\epsilon_1} \otimes \cdots \otimes v^{\epsilon_n}$ in the tensor product $V^{\otimes n}$, where $\epsilon_i = \pm 1$, will then be depicted as a horizontal sequence of up and down arrows numbered 1 through $n$ from left to right, where the direction of the $i$th arrow is determined by $\epsilon_i$.

\begin{definition}
We denote by $\{ v^{\epsilon_1} \heart \cdots \heart v^{\epsilon_n} \}$, where $\epsilon_i = \pm 1$ for all $i$, the elements of the dual of Lusztig's canonical basis of $V^{\otimes n}$.
\end{definition}

Following \cite{khfrenkel}, an element $v^{\epsilon_1} \heart \cdots \heart v^{\epsilon_n}$ of the dual canonical basis of $V^{\otimes n}$ admits a geometric interpretation. First, draw $v^{\epsilon_1} \otimes \cdots \otimes v^{\epsilon_n}$ as a sequence of up/down arrows as described above. Next, if the diagram contains a pair (up arrow, down arrow) such that
\begin{itemize}
\item the up arrow is to the left of the down arrow
\item no arrows lie between the two arrows,
\end{itemize}
then we connect the two arrows into a simple unoriented arc that does not intersect anything. Finally, we repeat this procedure until no such pairs of arrows remain.

Recall that in the usual graphical calculus of $U_q(\mathfrak{sl}_2)$, an arc represents the element of the standard basis given by applying the local relation of Figure \ref{cup_reln}.
\begin{figure}[htbp]
   \centering
   \includegraphics[scale=1.1]{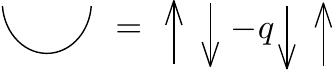}
   \caption{A relation in the graphical calculus of $U_q(\mathfrak{sl}_2)$ representations.}
   \label{cup_reln}
\end{figure}

\begin{example}
An example of a vector and its corresponding dual canonical basis vector in $V^{\otimes 6}$ is shown in Figure \ref{dualCanonBasisEx}.

\begin{figure}[ht]
   \centering
   \includegraphics[width = 4in]{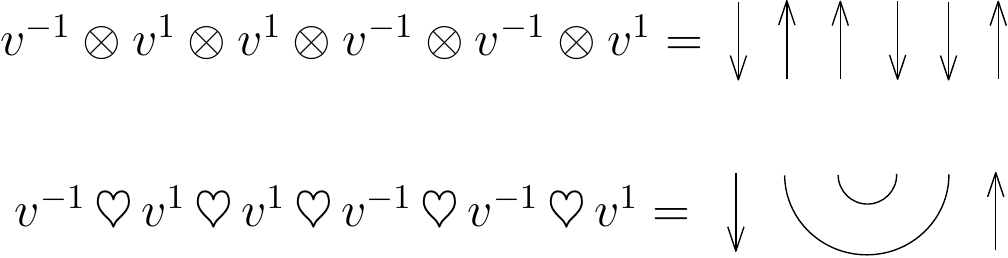}
   \caption{A sample element of the dual canonical basis of $V^{\otimes 6}$.}
   \label{dualCanonBasisEx}
\end{figure}
\end{example}

One can define the actions of $E, F,$ and $K^{\pm 1}$ on elements of the dual canonical basis graphically (see \cite{khfrenkel} Section 2.3). From this description, it is clear that the elements of the dual canonical basis of $V^{\otimes n}$ that are invariant under the action of $U_q(\mathfrak{sl}_2)$ are those which have only arcs and no arrows. On the algebraic side, this means that a dual canonical basis vector $v^{\epsilon_1} \heart \cdots \heart v^{\epsilon_n}$ in $V^{\otimes n}$ is invariant under the $U_q(\mathfrak{sl}_2)$ action if and only if $\sum_{i=1}^n \epsilon_i = 0$ and for any $i, 1 \leq i \leq n$, we have $\sum_{j=1}^i \epsilon_j \geq 0$. This establishes a bijection
\[
\mbox{crossingless matchings of } 2k \mbox{ points} \leftrightarrow \mbox{dual canonical basis of } \mbox{Inv}(V^{\otimes 2k}).
\]

Before discussing how to dualize this graphical basis, we must first establish a graphical description of a bilinear form on $V^{\otimes n}$ that our bases will be dual with respect to, that is, we establish a bilinear pairing
\[ \langle \cdot, \cdot \rangle : V^{\otimes n} \times V^{\otimes n} \to \mathbb{C}[q, q^{-1}]. \]

Let $y$ be a diagram representing a standard basis element $v^{\eta_1} \otimes \cdots \otimes v^{\eta_n}$ of $V^{\otimes n}$ and let $x$ be a dual canonical basis vector,
\[ x = v^{\epsilon_1} \heart \cdots \heart v^{\epsilon_n}.\]
We define
\[ \langle y, x \rangle \]
as follows. First, rotate the diagram of $x$ by 180 degrees and reverse the orientations of all arrows. Then match the top of the diagram for $y$ with the bottom of the rotated diagram and evaluate the resulting diagram locally according to the following rules. Each arc in the resulting diagram evaluates to 0 if the orientations of the two ends are not compatible and 1 if they are compatible, except in the cases of Figure \ref{bil_form_Vn}.

\begin{figure}[ht]
   \centering
   \includegraphics[scale=.8]{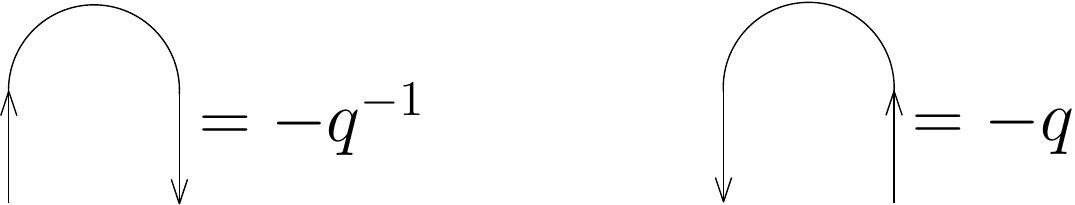}
   \caption{Local rules defining the graphical bilinear form on $V^{\otimes n}$.}
   \label{bil_form_Vn}
\end{figure}

\begin{definition}
The element $v_{\epsilon_n} \diam \cdots \diam v_{\epsilon_1}$ of $V^{\otimes n}$, where $\epsilon_i = \pm 1$ for all $i$, is defined to be the dual of $v^{\epsilon_1} \heart \cdots \heart v^{\epsilon_n}$ with respect to the bilinear form defined above.

The collection of all elements  $\{ v_{\epsilon_n} \diam \cdots \diam v_{\epsilon_1} \}$ form the \emph{canonical basis} of $V^{\otimes n}$.

\end{definition}

We denote the canonical basis element of $V^{\otimes n}$ with lexicographically highest term $v_1^{\otimes x_1} \otimes v_{-1}^{\otimes y_1} \otimes \cdots \otimes v_1^{\otimes x_k} \otimes v_{-1}^{\otimes y_k}$ by $v(x_1, y_1; \ldots; x_k, y_k)$, that is, 
\[v(x_1, y_1; \ldots; x_k, y_k) =  v_1^{\otimes x_1} \diam v_{-1}^{\otimes y_1} \diam \cdots \diam v_1^{\otimes x_k} \diam v_{-1}^{\otimes y_k}.\]

The following lemma is due to Khovanov in \cite{khthesis}.
\begin{lemma}[Khovanov]
\begin{enumerate}
\item $v(0,y_1; x_2, \ldots; x_k, y_k) = v_{-1}^{\otimes y_1} \otimes v(x_2, y_2; \ldots x_k, y_k)$
\item $v(x_1, y_1; \ldots, y_{k-1}; x_k, 0) = v(x_1, y_1; \ldots ; x_{k-1}, y_{k-1}) \otimes v_1^{x_k}$
\end{enumerate}
\end{lemma}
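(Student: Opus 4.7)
The plan is to reduce both statements to a single observation: the dual canonical basis diagram factorizes as a pure tensor when a block of endpoints cannot participate in any arcs, and this factorization is compatible with the graphical bilinear form.

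For part (1), I would first unpack what $x_1 = 0$ means for the diagrammatic construction of the dual canonical basis. The lexicographically highest term of $v(0, y_1; x_2, y_2; \ldots; x_k, y_k)$ is $v_{-1}^{\otimes y_1} \otimes v_1^{\otimes x_2} \otimes \cdots$, so positions $1, \ldots, y_1$ are down arrows. According to the recipe reviewed just above the lemma, an arc connects an up arrow to a down arrow strictly to its right with no arrows between them; none of the first $y_1$ down arrows can be the right endpoint of such an arc because there are no up arrows to their left. Hence the diagrammatic element splits cleanly:
\[
v^{\epsilon_1} \heart \cdots \heart v^{\epsilon_n} \; = \; (v^{-1})^{\otimes y_1} \otimes \bigl( v^{\epsilon_{y_1+1}} \heart \cdots \heart v^{\epsilon_n} \bigr),
\]
where the second factor is the dual canonical basis element of $V^{\otimes (n-y_1)}$ with lex highest term $v_1^{\otimes x_2} \otimes v_{-1}^{\otimes y_2} \otimes \cdots$.

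Next I would verify that the bilinear form on $V^{\otimes n}$ respects this factorization. By the local evaluation rules of Figure \ref{bil_form_Vn}, the pairing of a stacked diagram is computed as a product over its arcs and vertical strands. Since the splitting above has no arcs crossing the boundary between the first $y_1$ positions and the remaining $n - y_1$ positions, the bilinear form on $V^{\otimes n}$ restricted to tensors of this shape factors as the tensor product of the bilinear forms on $V^{\otimes y_1}$ and $V^{\otimes (n - y_1)}$. Consequently the $\diam$-dual of a pure tensor of dual-canonical elements is the pure tensor of their $\diam$-duals, and I get
\[
v(0, y_1; x_2, y_2; \ldots; x_k, y_k) = v_{-1}^{\otimes y_1} \otimes v(x_2, y_2; \ldots; x_k, y_k),
\]
using that $(v^{-1})^{\otimes y_1}$ is its own dual canonical basis element in $V^{\otimes y_1}$ and its $\diam$-dual is $v_{-1}^{\otimes y_1}$ (the unique all-down-arrow vector pairs to $1$ with itself).

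Part (2) is symmetric. When $y_k = 0$, the final $x_k$ positions are all up arrows; no up arrow in those positions can be the left endpoint of an arc since there are no down arrows to its right. The dual canonical basis element therefore splits as $\bigl( v^{\epsilon_1} \heart \cdots \heart v^{\epsilon_{n-x_k}} \bigr) \otimes (v^1)^{\otimes x_k}$, and the same factorization of the bilinear form yields the claimed identity. The main obstacle is simply to verify cleanly that the bilinear form factors across the boundary of the pure-tensor block; once the locality of Figure \ref{bil_form_Vn} is invoked this is immediate, so the remainder is bookkeeping of the $\heart$/$\diam$ conventions and the indexing.
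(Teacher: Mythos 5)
The paper offers no proof of this lemma---it is quoted from Khovanov's thesis---so your argument must stand on its own, and its central step does not. The claim that ``the $\diam$-dual of a pure tensor of dual-canonical elements is the pure tensor of their $\diam$-duals'' is false in general, and the locality of Figure \ref{bil_form_Vn} does not deliver it. Locality only tells you that the pairing factors against those dual canonical basis elements whose own arc diagrams have no arc crossing the cut between the first $y_1$ positions and the rest. But being the dual basis vector requires vanishing against \emph{every} other dual canonical basis element, including those with arcs joining the two blocks; for these the form does not factor, and an arc with one endpoint meeting a $v_{-1}$ and the other meeting a $v_1$ does not evaluate to zero locally. The smallest case already defeats the general principle: $v^{-1} \heart v^{1} = v^{-1} \otimes v^{1}$ is a pure tensor of one-strand dual canonical elements, yet its dual is $v_1 \diam v_{-1} = v(1,1)$, which carries a correction term and is not a pure tensor. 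The entire content of the lemma is the vanishing of $\langle v_{-1}^{\otimes y_1} \otimes v(x_2,\ldots;x_k,y_k),\, E''\rangle$ for the crossing diagrams $E''$, and your argument assumes this rather than proving it.

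There is a second, compounding problem with conventions. In this paper the dual of $v^{\epsilon_1}\heart\cdots\heart v^{\epsilon_n}$ is $v_{\epsilon_n}\diam\cdots\diam v_{\epsilon_1}$: duality reverses the arrow sequence (see the worked example, where $v^1\heart v^1\heart v^{-1}\heart v^{-1}\heart v^1\heart v^{-1}$ is dual to $v(0,1;1,2;2,0)$). Hence the $y_1$ leading down arrows in the lex-highest term of $v(0,y_1;\ldots)$ correspond to the \emph{right-hand} end of its dual-canonical partner, where down arrows are generally absorbed into arcs---in that same example the partner is a union of three arcs with no isolated arrows, so no diagrammatic splitting of the partner exists at all. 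The element you factor, the one with unmatched down arrows on the left, is dual to a different canonical basis vector. A correct proof works on the canonical-basis side directly, using the defining characterization of the canonical basis (bar-invariance together with unitriangularity over the lex-highest standard vector) and the fact that $v^{-1}$ is a bar-invariant lowest-weight vector killed by $F$, so that tensoring with $v_{-1}^{\otimes y_1}$ on the left commutes with the quasi-$R$-matrix; alternatively, one can run a bilinear-form argument, but then the crossing-arc cases must be treated explicitly.
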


Also in \cite{khthesis}, Khovanov gives the following inductive description of these canonical basis elements.
\begin{theorem}[Khovanov]
\label{thm:khfrenkel}
\begin{enumerate}
\item If $y_{i-1} \geq x_i$ and $y_i \leq x_{i+1}$ then 
\begin{multline*} v(x_1, y_1; \ldots; x_k, y_k) = \left[ \begin{array}{c} x_i +y_i \\ x_i \end{array} \right] (1^{\otimes l} \otimes p_{x_i+y_i} \otimes 1^{\otimes j}) \\
(v(x_1, y_1; \ldots; x_{i-1}, y_{i-1}+y_i; x_i + x_{i+1}, y_{i+1}; \ldots; x_k, y_k))
\end{multline*}
where $l = \sum_{t < i}x_t+ y_t, j = \sum_{t>i}x_t+y_t$.
\item If $y_1 \leq x_2$ then
\begin{equation*}
v(x_1, y_1; \ldots; x_k, y_k) = \left[ \begin{array}{c} x_1 + y_1 \\ x_1 \end{array} \right] (p_{x_1+y_1} \otimes 1^{n-x_1-y_1})(v_{-1}^{\otimes y_1} \otimes v(x_1 + x_2, y_2; \ldots; x_k, y_k)).
\end{equation*}
\item If $y_{k-1} \geq x_k$ then
\begin{equation*}
v(x_1, y_1; \ldots; x_k, y_k) = \left[ \begin{array}{c} x_k+y_k \\ x_k \end{array} \right] (1^{\otimes n -x_k-y_k} \otimes p_{x_k +y_k})(v(x_1,y_1; \ldots; x_{k-1},y_{k-1}+y_k) \otimes v_1^{\otimes x_k}).
\end{equation*}
\end{enumerate}
\end{theorem}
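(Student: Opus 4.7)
The plan is to prove all three parts by induction on the total length $n = \sum_i (x_i + y_i)$, using the characterization of the canonical basis element $v(x_1, y_1; \ldots; x_k, y_k)$ as the unique vector dual to the corresponding dual canonical basis element $v^{\epsilon_1} \heart \cdots \heart v^{\epsilon_n}$ (where the sequence of signs encodes the block structure) under the bilinear form $\langle \cdot, \cdot \rangle$. It therefore suffices to verify that the right-hand side in each case pairs as $\delta_{b, b_0}$ with every dual canonical basis vector $b$, where $b_0$ is the dual canonical basis vector corresponding to the block pattern $(x_1, y_1; \ldots; x_k, y_k)$.

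The key technical input is the ``killing'' property $p_m U_i = 0$ from Proposition \ref{JW-prop}: translating this through the bilinear form, I would first show that if a dual canonical basis element $w$ contains an arc (cup) whose endpoints both lie in the interval $[l+1, l+m]$ of positions where $p_m$ is inserted, then $\langle (\mathrm{id}^{\otimes l} \otimes p_m \otimes \mathrm{id}^{\otimes j}) z, w \rangle = 0$ for every $z \in V^{\otimes n}$. Applied to part (1), the hypotheses $y_{i-1} \geq x_i$ and $y_i \leq x_{i+1}$ guarantee that every dual canonical basis element $w$ which pairs nontrivially with the smaller canonical basis element $v(x_1, y_1; \ldots; x_{i-1}, y_{i-1}+y_i; x_i + x_{i+1}, y_{i+1}; \ldots; x_k, y_k)$ (nonzero by induction) has no cup confined to the $i$th projector interval, so the projector fixes the relevant pairings but annihilates the ``spurious'' contributions coming from block patterns that merge the $i$th block with its neighbors. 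This is what is needed for the RHS to have the correct support on the dual canonical basis.

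Next I would pin down the coefficient $\left[\begin{array}{c} x_i+y_i \\ x_i \end{array}\right]$ by computing the pairing of $p_m$ with the ``rainbow'' arc diagram on $m = x_i + y_i$ points, since this is the local pairing that survives at position $i$ after the projector kills the other contributions. This scalar is a standard Kauffman--Lins calculation, and matches the value that the Jones--Wenzl projector applied to $v_1^{\otimes x_i} \otimes v_{-1}^{\otimes y_i}$ contributes when projected onto the highest-weight summand. Parts (2) and (3) are the degenerate boundary cases: there the conditions $y_{i-1} \geq x_i$ or $y_i \leq x_{i+1}$ are vacuous on one side, and the ``neighboring block'' is replaced by the free tensor factor $v_{-1}^{\otimes y_1}$ or $v_1^{\otimes x_k}$, which sits outside the projector; the same pairing argument then goes through verbatim.

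The main obstacle will be the combinatorial bookkeeping in the induction: one must check carefully that the block structure of the merged element $v(x_1, y_1; \ldots; x_{i-1}, y_{i-1}+y_i; x_i+x_{i+1}, y_{i+1}; \ldots; x_k, y_k)$ is well-defined (this is precisely what the inequality assumptions ensure), and that the set of dual canonical basis elements supporting both sides agrees. Once the killing lemma for $p_m$ is packaged correctly, the remaining content is the scalar computation via the quantum binomial, so the heart of the argument is really the combinatorial analysis of which arc diagrams survive the projector, not the intertwiner identities themselves.
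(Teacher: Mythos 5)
This theorem is quoted from Khovanov's work (\cite{khthesis}) and the thesis supplies no proof of it, so there is no in-paper argument to compare against; but your strategy --- characterize $v(x_1,y_1;\ldots;x_k,y_k)$ by its pairings against the dual canonical basis, use the fact that a cup with both feet on a Jones--Wenzl projector annihilates it to control the support of the right-hand side, and then normalize by a local quantum-binomial computation --- is exactly the strategy Khovanov uses and the one this thesis redeploys in Section \ref{sec:dual-dots} to prove Theorem \ref{dualbasisthm} (compare the treatment of the element $g$ there, the figure asserting that a cup entering a projector is zero, and the coefficient-balancing via Lemma \ref{keylemma}). So the route is the right one.

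There is, however, one concrete error in the setup: you propose to induct on the total length $n=\sum_i(x_i+y_i)$, but in part (1) the element $v(x_1,y_1;\ldots;x_{i-1},y_{i-1}+y_i;x_i+x_{i+1},y_{i+1};\ldots;x_k,y_k)$ lives in the \emph{same} $V^{\otimes n}$ --- only the number of blocks drops from $k$ to $k-1$ --- and likewise parts (2) and (3) preserve total length once the detached factors $v_{-1}^{\otimes y_1}$ or $v_1^{\otimes x_k}$ are reattached. The induction must therefore be on $k$ (equivalently, on the number of sign changes in the $\epsilon$-sequence), not on $n$, and you then owe a base case: for $k=1$ none of the three recursions applies, so $v(x_1,y_1)=v_1^{\otimes x_1}\diam v_{-1}^{\otimes y_1}$ must be produced directly (e.g.\ as the appropriately normalized $p_{x_1+y_1}(v_1^{\otimes x_1}\otimes v_{-1}^{\otimes y_1})$, verified by the same pairing computation), playing the role that Lemma \ref{basecase} plays in the dotted setting. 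With the induction variable corrected and the base case supplied, the remaining work is the combinatorial support analysis you already flag, and the outline goes through.
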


Recall that $p_n$ denotes the Jones-Wenzl projector from $V^{\otimes n}$ to $V^{\otimes n}$, which in the usual graphical calculus is depicted by a box on $n$ strands.

At this point, we specialize to $q=-1$. In this case, the inductive relationship satisfied by Jones--Wenzl projectors (Figure \ref{JW-thm-fig}) becomes:

\begin{figure}[H]
   \centering
	\includegraphics[width = 3in]{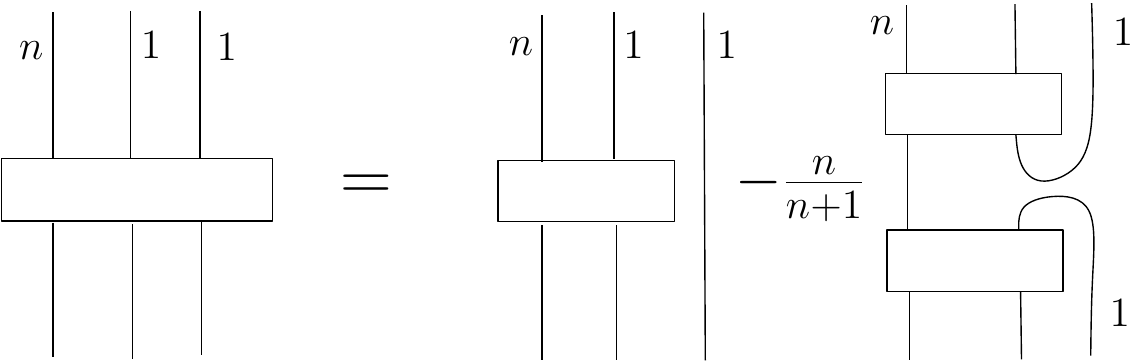} 
\end{figure}

\begin{example}
We apply Theorem \ref{thm:khfrenkel} to find a formula for the canonical basis vector \\ $ v_{-1} \diam v_{1} \diam v_{-1} \diam v_{-1} \diam v_{1} \diam v_{1}$:
 \begin{eqnarray*}
  v_{-1} \diam v_{1} \diam v_{-1} \diam v_{-1} \diam v_{1} \diam v_{1} &=&  v_{-1} \diam v_{1} \diam v_{-1}^{\otimes 2} \diam v_{1}^{\otimes 2} \\
 &= &v(0,1;1,2;2,0) \\
 &=& \left[ \begin{array}{c} 1+2 \\ 1 \end{array} \right]_{q=-1} (1^{\otimes 1} \otimes p_3 \otimes 1^{\otimes 2}) (v(0, 3; 3, 0) )\\
 &=& -3 (1^{\otimes 1} \otimes p_3 \otimes 1^{\otimes 2}) (v_{-1}^{\otimes 3} \otimes v_1^{\otimes 3})
 \end{eqnarray*}
 
 See Figure \ref{canonBasisEx} for a graphical description.
 
 \begin{figure}[ht]
   \centering
   \includegraphics[width = 1.5in]{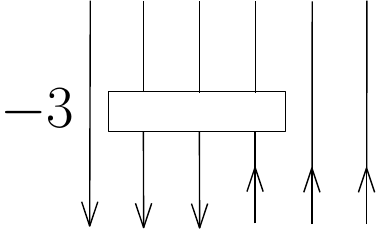}
   \caption{A graphical description of the canonical basis element $v(0,1;1,2;2,0).$}
   \label{canonBasisEx}
   \end{figure}
\end{example}

To get dual elements to our crossingless matchings of $2n$ endpoints, we would like to project our resulting vector onto the space of invariants $\mbox{Inv}(n) = \mbox{Inv}(V^{\otimes 2n})$. As outlined in \cite{khthesis} Section 3.3, this can be achieved by simply attaching a projector of size $n$ to the diagram so that all down arrows enter one side and all up arrows leave the other.

Note that when the projector is expanded in such an element of $\mbox{Inv}(n)$, we end up with a linear combination of arcs. The bilinear pairing of two collections of arcs evaluates each closed circle (up to isotopy) to $-q-q^{-1}$, according to the standard Kauffman-Lins relation. In particular, when $q=-1$, a circle evaluates to $2$, which is consistent with our bilinear form on the Russell space $R_n$.

From this point forward, when discussing duals of elements in $\mbox{Inv}(n)$, we will draw the duals as already having been rotated by 180 degrees (as is done in the definition of the bilinear form on $V^{\otimes n}$. This convention will be consistent with our graphical description of the dual Russell space, whose diagrams have crossingless matching caps.

\begin{example}
Building off of our previous example, we graphically compute the dual to the crossingless matching shown in Figure \ref{crossMatchEx} with the specialization $q=-1$. We view this matching as an element of $\mbox{Inv}(V^{\otimes 6})$ corresponding to the dual canonical basis element $v^1 \heart v^{1} \heart v^{-1} \heart v^{-1} \heart v^{1} \heart v^{-1}.$
\begin{figure}[ht]
   \centering
   \includegraphics[width = 1.5in]{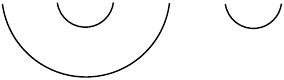}
   \caption{The element  $v^1 \heart v^{1} \heart v^{-1} \heart v^{-1} \heart v^{1} \heart v^{-1}$ of $\mbox{Inv}(V^{\otimes 6})$.}
   \label{crossMatchEx}
\end{figure}

The dual of this element is $ v_{-1} \diam v_{1} \diam v_{-1} \diam v_{-1} \diam v_{1} \diam v_{1}$, which was computed in the previous example. So we need only project to the invariant space by attached a projector, rotate the diagram by 180 degrees, and simplify the projectors. This is shown in Figure \ref{dualMatchingEx}.

\begin{figure}[H]
   \centering
   \includegraphics[scale=1]{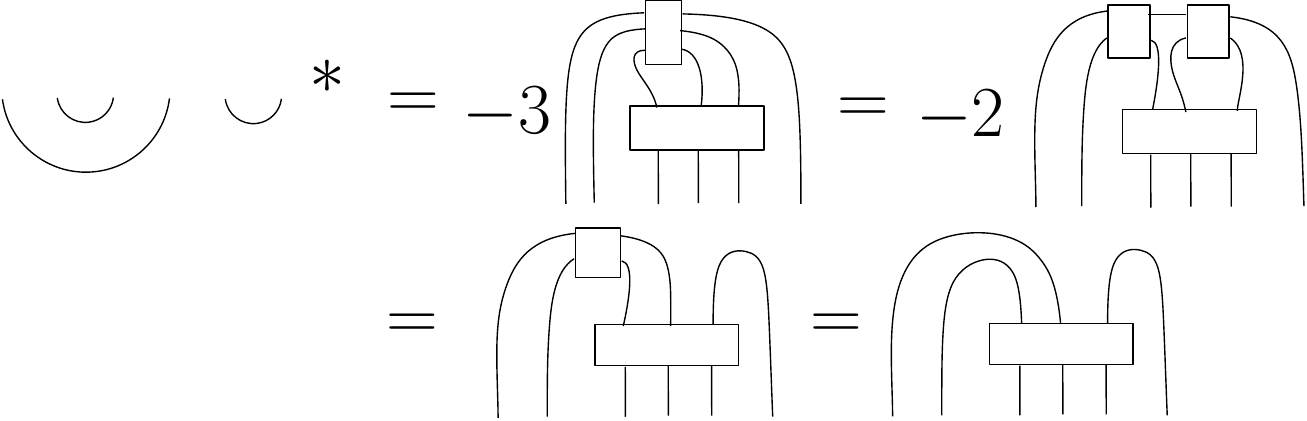}
   \caption{An example of the application of Theorem \ref{thm:khfrenkel} to find the graphical dual of a dottingless crossingless matching.}
   \label{dualMatchingEx}
\end{figure}
\end{example}

Figure \ref{dualBasis23} shows the dual basis elements for all crossingless matchings when $n=2$ and $n=3$.
\begin{figure}[ht]
   \centering
   \includegraphics[scale=.8]{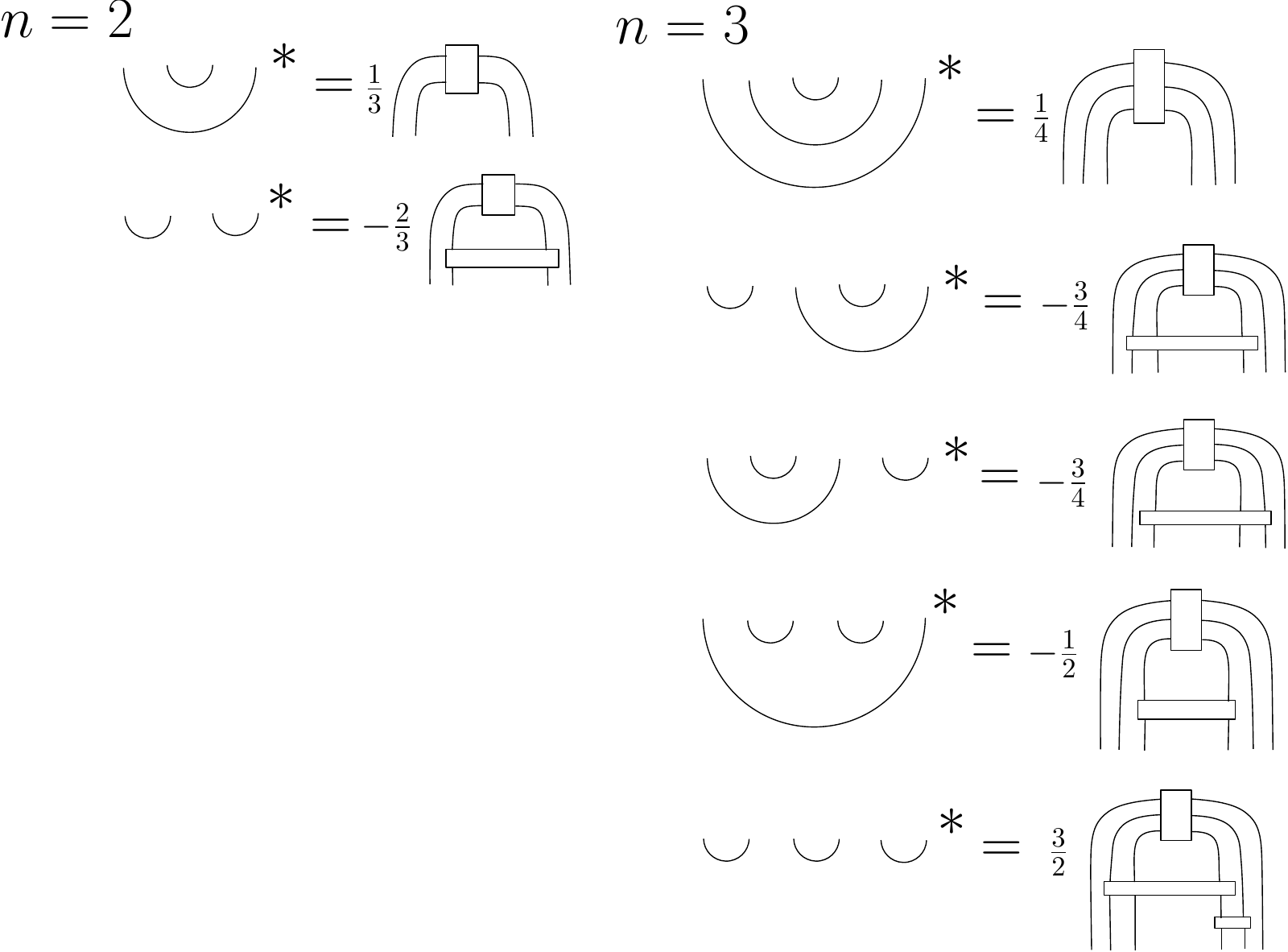}
   \caption{Crossingless matchings and their duals for $n = 2$ and $3$, $q=-1$.}
   \label{dualBasis23} 
\end{figure}

\subsection{Dual basis elements with dots}
\label{sec:dual-dots}
We now find a graphical description of elements dual to those diagrams in the Russell basis that carry dots. Ideally, we would be able to express these dual elements as a single picture, rather than a linear combination of pictures. 

First we need to establish some notation. Any crossingless matching can be uniquely represented by a sequence of up and down arrows by replacing the left endpoint of any arc by a down arrow and the right endpoint by an up arrow. A Russell basis element may carry dots on outer arcs of a crossingless matching. For each dot on an outer arc, place a dot on the corresponding up and down arrows. Then, a Russell basis element may be denoted by
\[ (x_1^{\epsilon_{x_1}}, y_1^{\epsilon_{y_1}}; \ldots; x_k^{\epsilon_{x_k}}, y_k^{\epsilon_{y_k}}),\]
where each $(x_i, y_i)$ represents the number of consecutive down arrows and up arrows, numbered from left to right, and the exponents $\epsilon_{x_i}$ or $\epsilon_{y_i}$ are 0 if the corresponding outermost arc does not carry a dot and is 1 if it does. In recovering a Russell basis element from such a tuple, there is no ambiguity over which arcs carry the dots since only outer arcs may carry them.

\begin{example}
For example, the Russell basis element in Figure \ref{RussellBasisNotation} is denoted by \\
$(2^0, 1^0; 1^0, 2^0; 1^1, 1^1; 2^1, 2^1)$.

\begin{figure}[ht]
   \centering
   \includegraphics[scale=1]{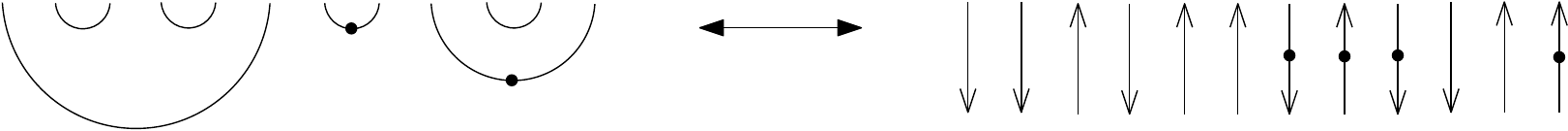}
   \caption{Example of notation for a Russell basis element.}
   \label{RussellBasisNotation}
\end{figure}
\end{example}

\begin{definition}
For a Russell basis element $m= (x_1^{\epsilon_{x_1}}, y_1^{\epsilon_{y_1}}; \ldots; x_k^{\epsilon_{x_k}}, y_k^{\epsilon{y_k}})$, we define an integer $r(m)$ as follows:
\[r(m) = k +1 - \gamma - \delta,\]
where
$\gamma = \#\{(x_i^{\epsilon_{x_i}}, y_i^{\epsilon_{y_i}}) = (1^1, 1^1)\} $ and
\begin{equation*}
\delta = \left\{
\begin{array}{ll}
0 & \mbox{if } \epsilon_{x_1} = \epsilon_{y_k} = 1 \\
1 & \mbox{if one of the following holds: } \\
& \bullet \mbox{ } k > 1, \epsilon_{x_1} = \epsilon_{y_k} = 0, \mbox{ and } \sum_{i=1}^j x_i \neq \sum_{i=1}^j y_i \mbox{ for any } j < k \\
 & \bullet \mbox{ } k > 1\mbox{ and one but not both of } \epsilon_{x_1}, \epsilon_{y_k} \mbox{ is } 1 \\
 & \bullet \mbox{ } k = 1 \mbox{ and } \epsilon_{x_1} = 0 \\
 2 &  \mbox{ if } k >1, \epsilon_{x_1} = \epsilon_{y_k} = 0, \mbox{ and } \sum_{i=1}^j x_i = \sum_{i=1}^j y_i \mbox{ for some } j < k.
\end{array}
\right.
\end{equation*}

\end{definition}

Continuing the above example, we see that
\[r((2^0, 1^0; 1^0, 2^0; 1^1, 1^1; 2^1, 2^1)) =  4+1-1-1 = 3.\]

Let $m^*$ be the dual basis element dual to $m$ with respect to the bilinear form of Section \ref{sec:bilin_form}.

\begin{lemma}
\label{basecase}
If $m$ is a Russell basis element such that $r(m)=1$, then $m$ must be of the form
\[m = (x_1^0, y_1^0; 1^1, 1^1; \ldots, 1^1, 1^1; x_{k}^0, y_k^0)\]
where $x_1 = y_1$ and $x_k = y_k$. In this case, $m^*$ is as shown (up to scale) in Figure \ref{basecasefig}.
\end{lemma}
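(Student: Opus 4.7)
The plan is two-part: first establish the characterization of $m$ with $r(m) = 1$, then describe the dual $m^{\ast}$.

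For the characterization, the condition $r(m) = k+1-\gamma-\delta = 1$ is equivalent to $\gamma+\delta = k$. Since $\delta \in \{0,1,2\}$ and $\gamma$ counts $(1^1,1^1)$ blocks, I would proceed by case analysis on $\delta$. When $\delta = 0$, every block must be $(1^1,1^1)$, matching the lemma's form with empty outer blocks. When $\delta = 1$ and $k \geq 2$, exactly one block is not $(1^1,1^1)$; inspecting the three subconditions for $\delta = 1$ shows this exceptional block must be the first or last, since otherwise both $\epsilon_{x_1}$ and $\epsilon_{y_k}$ equal $1$ and force $\delta = 0$. A self-containment argument then rules out any multi-block nested structure: if the outer arc of the exceptional block extended into an adjacent $(1^1,1^1)$ block, its right endpoint would collide with that block's own dotted outer arc endpoint, a contradiction. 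Hence the exceptional block has form $(x^0, x^0)$ with equal up/down counts. The case $\delta = 2$ is analogous: the conditions $\epsilon_{x_1} = \epsilon_{y_k} = 0$ force both the first and last blocks to be the non-$(1^1,1^1)$ ones, and the same self-containment argument produces the lemma's form, with the balance condition automatic since each block is self-balanced. The $k=1$ cases $m = (1^1,1^1)$ and $m = (x^0,x^0)$ fit the stated form with empty outer blocks.

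For the dual, the characterization shows $m$ decomposes into side-by-side self-contained blocks. Because the bilinear form of Section \ref{sec:bilin_form} evaluates locally on circles, I would build $m^{\ast}$ as the analogous concatenation of block duals. The dual of $(1^1,1^1)$ is a cap decorated with an X, matching the direct $n=1$ computation at the start of Section \ref{sec:dual-calc}. The dual of an undotted nested block $(x^0,x^0)$ corresponds to the canonical basis vector $v(x,x)$, which by Khovanov's inductive description (Theorem \ref{thm:khfrenkel} together with its preceding base-case lemma) is a scalar multiple of $p_{2x}(v_1^{\otimes x} \otimes v_{-1}^{\otimes x})$. Projecting onto the invariant subspace attaches another Jones--Wenzl projector, and after rotating to the cap convention of $R_n^{\ast}$ this yields $x$ nested caps capped off by a Jones--Wenzl projector of width $2x$. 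Assembling these block duals side by side gives Figure \ref{basecasefig} up to an overall scalar.

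The hard part will be verifying $\langle n, m^{\ast} \rangle = 0$ for every Russell basis element $n \neq m$. I would case-split on how the arcs of $n$ interact with the block structure of $m^{\ast}$. If an arc of $n$ crosses a block boundary of $m^{\ast}$, then closing up the pairing produces a cup attached to one of the Jones--Wenzl projectors, which annihilates the whole diagram by Proposition \ref{JW-prop}. If $n$ respects the block decomposition, the pairing factors as a product over blocks, and any discrepancy in dot/X placement violates the alternation rule defining the bilinear form, forcing at least one circle to evaluate to zero. These two cases together exhaust all $n \neq m$, while a direct computation of $\langle m, m^{\ast} \rangle \neq 0$ pins down the implicit scalar.
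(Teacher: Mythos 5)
Your characterization of the elements with $r(m)=1$ is sound and follows the same case analysis on $\delta\in\{0,1,2\}$ that the paper's proof uses; the only tightening needed is that the balance conditions $x_1=y_1$ and $x_k=y_k$ fall out of the prefix-sum clause in the definition of $\delta$ together with the global balance of a crossingless matching, so the ``self-containment'' digression is unnecessary.

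The second half has a genuine gap: the dual of $m$ is \emph{not} the side-by-side concatenation of the duals of its blocks, and Figure \ref{basecasefig} is not that concatenation. Take $m=(1^0,1^0;1^1,1^1)$ in $R_{2,1}$, i.e.\ the undotted arc $(1,2)$ next to the dotted arc $(3,4)$. Your block-by-block element is a plain cap on $\{1,2\}$ next to an X-decorated cap on $\{3,4\}$. Pairing it with the Russell basis element $n$ having dotted outer arc $(1,4)$ and undotted inner arc $(2,3)$ yields a single circle carrying one dot and one X, which evaluates to $1$, not $0$, so the concatenation is not dual to $m$. The arc $(2,3)$ of $n$ crosses your block boundary, but in your construction no projector spans that boundary, so nothing annihilates it: your claim that a boundary-crossing arc ``produces a cup attached to one of the Jones--Wenzl projectors'' fails exactly because its two endpoints land in different blocks. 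This is why the paper's $m^*$ carries a \emph{single} Jones--Wenzl projector of size $x_1+2\alpha+x_k$ spanning the entire middle interval $(x_1+1,\,2x_1+2\alpha+x_k)$ (consistent with the corollary that $r(m)$ equals the number of projectors in $m^*$): any undotted cup of $n$ contained in one of the three intervals, and in particular any cup joining two adjacent blocks of $m$, forms a turnback on that projector and kills the pairing, leaving the dot-count argument to isolate $m$. Two further slips in the same passage: the nested undotted matching on $2x$ points corresponds to the canonical basis vector $v(0,x;x,0)=v_{-1}^{\otimes x}\otimes v_1^{\otimes x}$, not to $v(x,x)$, and projecting to the invariant subspace inserts a projector of size $x$, not $2x$; indeed $p_{2x}$ vanishes on all of $\mbox{Inv}(V^{\otimes 2x})$, so a width-$2x$ projector closed off by nested caps is the zero element.
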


\begin{figure}[H]
   \centering
   \includegraphics[scale=.7]{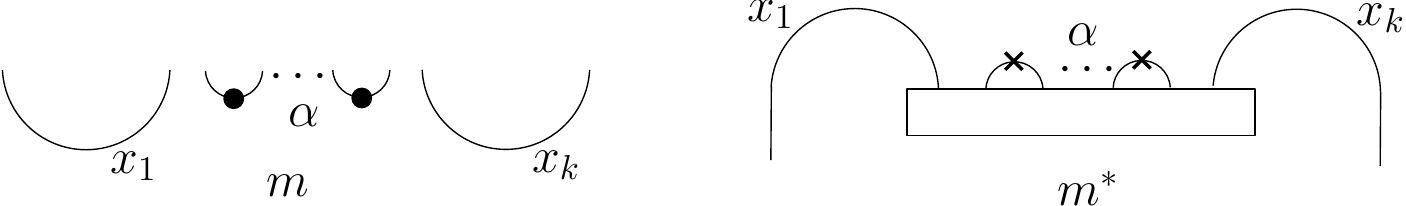}
   \caption{$m$ and $m^*$ when $r(m) = 1$. The labels $x_1$ and $x_k$ denote the number of parallel arcs represented by the single arc shown.}
      \label{basecasefig}
\end{figure}

\begin{proof}
First we show that any $m = (x_1^{\epsilon_{x_1}}, y_1^{\epsilon_{y_1}}; \ldots; x_k^{\epsilon_{x_k}}, y_k^{\epsilon_{y_k}})$ must be of the stated form. In order for $r(m) = 1$, we must have
\[ k =  \gamma + \delta.\]
Note that  $\delta$ can only be 0, 1, or 2. First suppose that it is 0. Then $k = \gamma$, so it must be that $m = (1^1, 1^1; \ldots, 1^1, 1^1)$. If $\delta= 1$, then $\gamma= k -1$, so we must have $m = (1^1, 1^1; \ldots 1^1, 1^1; x_k^0, y_k^0)$ with $x_k = y_k$ or $m = (x_1^0, y_1^0; 1^1, 1^1; \ldots ;1^1, 1^1)$ with $x_1 = y_1$. Finally, if $\delta = 2$, then $\gamma = k -2$ and $m = (x_1^0, y_1^0; 1^1, 1^1; \ldots; 1^1, 1^1; x_k^0, y_k^0)$ with $x_1 = y_1$ and $x_k = y_k$. Therefore in any of these three cases, $m$ is of the stated form.

Denote the element shown on the right in Figure \ref{basecasefig} by $b$. By abuse of notation, we will use $\langle m, b \rangle$ to denote the evaluation of the diagram obtained by matching the endpoints of $m$ with the endpoints of $b$ and evaluating according to the rules of Section \ref{sec:bilin_form}. To see that $m$ is the unique element of the Russell basis such that $\langle m, b \rangle \neq 0$, first note that any Russell basis element must have exactly $\alpha$ dots in order to pair with $b$ to something nonzero. Any Russell basis element with undotted cups within the intervals $(1, x_1), (x_1+1, 2x_1+2\alpha+x_k), (2x_1+2\alpha+x_k+1, 2x_1 +2\alpha+2x_k)$ will pair to 0 with $b$ because of the property of Jones-Wenzl projectors that

\begin{figure}[H]
   \centering
   \includegraphics[scale=1]{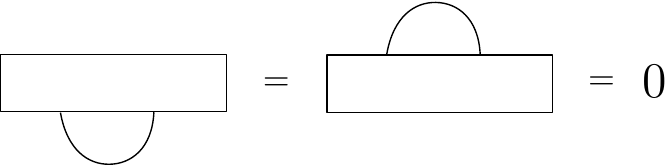}
\end{figure}

It is clear that, up to scale, $m$ is the only diagram that has $\alpha$ dots and no undotted cups within those intervals.
$\blacksquare$
\end{proof}

The following theorem gives an algorithm for constructing any $m^*$ and is proved by induction on $r(m)$.
\begin{theorem} Any dual basis element with $r > 1$ can be constructed inductively by applying one of the following.
\label{dualbasisthm}
\begin{enumerate}
\item
\begin{multline*}
(x_1^{\epsilon_{x_1}}, y_1^{\epsilon_{y_1}}; \ldots; x_i^{\epsilon_{x_i}}, y_i^0; x_{i+1}^0, y_{i+1}^{\epsilon_{y_{i+1}}}; \ldots; x_k^{\epsilon_{x_k}}, y_k^{\epsilon_{y_k}})^* =  \\ \left( \begin{array}{c}
y_i + x_{i+1} \\
y_i
\end{array}
\right)
1^{\otimes l} \otimes p_{y_i + x_{i+1}} \otimes 1^{\otimes j} (x_1^{\epsilon_{x_1}}, y_1^{\epsilon_{y_1}}; \ldots; (x_i+x_{i+1})^{\epsilon_{x_i}}, (y_i+y_{i+1})^{\epsilon_{y_{i+1}}}; \ldots; x_k^{\epsilon_{x_k}}, y_k^{\epsilon_{y_k}})^*
\end{multline*}
if $x_i \geq y_i$ and $x_{i+1} \leq y_{i+1}$.

\item
\begin{enumerate}
\item
\begin{multline*}
(x_1^{\epsilon_{x_1}}, y_1^{\epsilon_{y_1}}; \ldots; x_i^0, y_i^0; 1^1, 1^1; \ldots; 1^1, 1^1; x_{i+\alpha+1}^0, y_{i+\alpha+1}^0; \ldots, x_k^{\epsilon_{x_k}}, y_k^{\epsilon_{y_k}})^* \\ =
\left(
\begin{array}{c}
x_i + 2\alpha + y_{i+\alpha+1} \\
x_i + 2\alpha
\end{array}
\right)
1^{\otimes l} \otimes p_{y_i + 2\alpha + x_{i+\alpha+1}} \otimes 1^{\otimes j} (x_1^{\epsilon_{x_1}}, y_1^{\epsilon_{y_1}}; \ldots; x_{i-1}^{\epsilon_{x_{i-1}}}, y_{i-1}^{\epsilon_{y_{i-1}}}; \\ (x_i+x_{i + \alpha +1})^0, (y_i+y_{i + \alpha +1})^0; 1^1, 1^1; \ldots, 1^1, 1^1; x_{i+\alpha+2}^{\epsilon_{x_{i+\alpha+2}}}, y_{i+\alpha+2}^{\epsilon_{y_{i+\alpha+2}}}; \ldots; x_k^{\epsilon{x_k}}, y_k^{\epsilon_{y_k}})^*
\end{multline*}
if $x_i = y_i$, $x_{i+\alpha+1} = y_{i+\alpha+1}$, and $i + \alpha + 1 < k$.
\item
\begin{multline*}
(x_1^{\epsilon_{x_1}}, y_1^{\epsilon_{y_1}}; \ldots; x_i^0, y_i^0; 1^1, 1^1; \ldots; 1^1, 1^1; x_{i+\alpha+1}^0, y_{i+\alpha_1}^0; \ldots, x_k^{\epsilon_{x_k}}, y_k^{\epsilon_{y_k}})^* \\ =
\left(
\begin{array}{c}
x_i + 2\alpha + y_{i+\alpha+1} \\
x_i
\end{array}
\right)
1^{\otimes l} \otimes p_{y_i + 2\alpha + x_{i+\alpha+1}} \otimes 1^{\otimes j} (x_1^{\epsilon_{x_1}}, y_1^{\epsilon_{y_1}}; \ldots; x_{i-1}^{\epsilon_{x_{i-1}}}, y_{i-1}^{\epsilon_{y_{i-1}}}; \\ 1^1, 1^1; \ldots, 1^1, 1^1; (x_i+x_{i + \alpha +1})^0, (y_i+y_{i + \alpha +1})^0; x_{i+\alpha+2}^{\epsilon_{x_{i+\alpha+2}}}, y_{i+\alpha+2}^{\epsilon_{y_{i+\alpha+2}}}; \ldots; x_k^{\epsilon{x_k}}, y_k^{\epsilon_{y_k}})^*
 \end{multline*}
 if $x_i = y_i$, $x_{i+\alpha+1} = y_{i+\alpha+1}$, and $i>1$.
 \end{enumerate}

\item 
\begin{enumerate}
\item
\begin{multline*}
(x_1^{\epsilon_{x_1}}, y_1^{\epsilon_{y_1}}; \ldots, x_i^0, y_i^0; 1^1, 1^1; \ldots; 1^1, 1^1; x_{i+\alpha+1}^1, y_{i+\alpha+1}^1; \ldots, x_k^{\epsilon_k}, y_k^{\epsilon_k})^* = \\
\left(
\begin{array}{c}
x_i + 2\alpha +y_{i+\alpha+1}\\
y_{i+\alpha+1} -1
\end{array}
\right)
1^{\otimes l} \otimes p_{y_i + 2\alpha + x_{i+\alpha+1}} \otimes 1^{\otimes j} (x_1^{\epsilon_{x_1}}, y_1^{\epsilon_{y_1}}; \ldots; (x_{i}+x_{i+\alpha+1}-1)^0, \\ (y_{i}+y_{i+\alpha+1}-1)^0; 1^1, 1^1; \ldots; 1^1, 1^1; x_{i+\alpha+2}^{\epsilon_{x_{i+\alpha+2}}}, y_{i+\alpha+2}^{\epsilon_{y_{i+\alpha+2}}}; \ldots; x_k^{\epsilon_{x_k}}, y_k^{\epsilon_{y_k}})^* \\
\end{multline*}
if $x_i = y_i$ and $x_{i+\alpha+1} = y_{i+\alpha+1} \neq 1$, where $x_i$ and $y_i$ are taken to be 0 if $i=0$.

\item
\begin{multline*}
(x_1^{\epsilon_{x_1}}, y_1^{\epsilon_{y_1}}; \ldots, x_i^1, y_i^1; 1^1, 1^1; \ldots; 1^1, 1^1; x_{i+\alpha+1}^0, y_{i+\alpha+1}^0; \ldots, x_k^{\epsilon_k}, y_k^{\epsilon_k})^* = \\
\left(
\begin{array}{c}
x_i+2\alpha+y_{i+\alpha+1}\\
x_i -1
\end{array}
\right)
1^{\otimes l} \otimes p_{y_i + 2\alpha + x_{i+\alpha+1}} \otimes 1^{\otimes j} (x_1^{\epsilon_{x_1}}, y_1^{\epsilon_{y_1}}; \ldots; x_{i-1}^{\epsilon_{x_{i-1}}}, y_{i-1}^{\epsilon_{y_{i-1}}}; \\ 1^1, 1^1; \ldots; 1^1, 1^1; (x_i+x_{i+\alpha+1}-1)^0, (y_i+y_{i+\alpha+1}-1)^0; \ldots; x_k^{\epsilon_{x_k}}, y_k^{\epsilon_{y_k}})^* \\
\end{multline*}
if $x_i = y_i \neq 1$ and $x_{i+\alpha+1} = y_{i+\alpha+1}$, where $x_{i+\alpha+1}$ and $y_{i+\alpha+1}$ are taken to be 0 if $i+\alpha = k$.
 \end{enumerate}
\end{enumerate}
In each statement, $l = x_1 + y_1 + \cdots + x_{i-1}+y_{i-1}+x_i$ and $j = y_{i+\alpha+1}+x_{i+\alpha+2} +y_{i+\alpha+2} + \cdots + x_k + y_k$, with $\alpha$ taken to be 0 in the first case.
\end{theorem}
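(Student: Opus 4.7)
The plan is to prove Theorem \ref{dualbasisthm} by induction on the statistic $r(m)$, with the base case $r(m)=1$ handled by Lemma \ref{basecase}. For the inductive step, in each of the five sub-cases (1, 2a, 2b, 3a, 3b), I would first verify the combinatorial claim that the Russell basis element $\tilde{m}$ appearing inside the projector expression on the right-hand side has strictly smaller $r$ value, so that the inductive hypothesis applies and $\tilde{m}^*$ can be assumed known. This amounts to tracking how the counts $\gamma$ and $\delta$ in the definition of $r$ are affected when adjacent pairs are merged or when a block of dotted $(1^1, 1^1)$ arcs is relocated across the reduction.

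Let $\mu$ denote the proposed element on the right-hand side of the relevant formula. The heart of the proof is to verify that $\mu$ is indeed dual to $m$, that is, $\langle m, \mu \rangle = 1$ and $\langle m', \mu \rangle = 0$ for every other Russell basis element $m' \neq m$. The essential tool is the killing property $p_n U_i = U_i p_n = 0$ from Proposition \ref{JW-prop}, which forces any diagram placing a cup or cap adjacent to the projector to contribute zero. To carry out the pairing, I would expand the projector using the inductive description of Theorem \ref{JW-thm} (at $q = -1$) and then glue with $m'$, applying the local evaluation rules of Figure \ref{linerelns} to each connected component of the resulting closed diagram.

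For Case 1, where neither of the merged arcs carries a dot, the argument essentially reproduces the Frenkel--Khovanov computation underlying Theorem \ref{thm:khfrenkel}, with any dots elsewhere in $m$ acting as spectators unaffected by the local projector insertion. Cases 2 and 3 are more delicate: they involve a block of $\alpha$ nested single-dotted arcs $(1^1, 1^1)$ adjoining the merged position, and the prescribed binomial prefactor is chosen precisely to count the number of ways the dots can be distributed among the arcs inside the expanded projector while respecting the Russell convention that dots only appear on outer arcs. The main obstacle will be establishing the vanishing $\langle m', \mu \rangle = 0$ in Cases 3a and 3b, where $m'$ can differ from $m$ in both its arc configuration and its dot placement near the projector. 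Here one must enumerate the alternatives, using the projector's killing property to dispose of any $m'$ with incorrect arc structure and the dot-counting rules of the bilinear form to dispose of any $m'$ with incorrect dot configuration. I expect this combined bookkeeping, rather than any single isolated step, to occupy the bulk of the proof.
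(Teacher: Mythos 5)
Your overall architecture matches the paper's: induction on $r(m)$ with Lemma \ref{basecase} as the base case, a preliminary check that each of the five moves strictly decreases $r$ and that one always applies when $r>1$, and then verification that the proposed element $\mu$ satisfies $\langle m,\mu\rangle=1$ and $\langle b,\mu\rangle=0$ for all other Russell basis elements $b$. However, the tools you propose for the verification step are not sufficient, and two essential ingredients of the actual argument are missing. First, the paper proves Theorem \ref{dualbasisthm} \emph{simultaneously} with Lemma \ref{simlemma}, the statement that $m^*$ absorbs the projectors $p_{x_1}\otimes p_{y_1}\otimes\cdots\otimes p_{x_k}\otimes p_{y_k}$ (and certain larger projectors spanning the $(1^1,1^1)$ blocks). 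This absorption property is what lets one pull two explicit projectors $A$ and $B$ out of $(m')^*$ and then argue that $\langle b,\mu\rangle$ vanishes unless the expansion of the newly attached projector produces the full complement of $|A|=|B|$ arcs joining $A$ to $B$ — a counting argument on arcs between projectors, not on cups adjacent to the new projector. The killing property $p_nU_i=0$ that you cite only disposes of basis elements $b$ with a cup entirely inside the span of the newly inserted projector; it says nothing about the many $b$ whose arcs straddle the projector boundary or interact with the projectors buried inside $(m')^*$, and without the simultaneous lemma you do not even know where those interior projectors sit.

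Second, the nonvanishing evaluation $\langle m,\mu\rangle$ and the origin of the binomial prefactor are not what you describe. The coefficient does not arise from ``counting the number of ways the dots can be distributed among the arcs inside the expanded projector''; it arises from Lemma \ref{keylemma}, a separate diagrammatic identity (proved by its own induction using the recursion of Theorem \ref{JW-thm} at $q=-1$) that evaluates a middle Jones--Wenzl projector sandwiched between two others with prescribed cups and caps, producing exactly the reciprocal of the binomial coefficient that the theorem's prefactor cancels. Without an analogue of this multi-projector evaluation, your plan to ``expand the projector and apply the local rules of Figure \ref{linerelns}'' bottoms out in an unmanaged sum over all planar resolutions of $p_{y_i+2\alpha+x_{i+\alpha+1}}$, and the claimed value $1$ for $\langle m,\mu\rangle$ is asserted rather than derived. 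In short: right induction, right duality criterion, but the two lemmas that make the verification tractable (projector absorption and the stacked-projector evaluation) are absent, and their proposed replacements do not close the argument.
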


Graphically, Theorem \ref{dualbasisthm} says that the dual of $m$ can be inductively constructed from the dual of some $m'$ by attaching a projector of size $y_i+2\alpha+x_{i+\alpha+1}$ between the first $l$ and last $j$ strands of the diagram for $(m')^{\ast}$.

On the way to proving Theorem \ref{dualbasisthm}, we will need the following lemma.
\begin{lemma}
Each of the statements of Theorem \ref{dualbasisthm} reduces $r$ by 1, and if $r(m) > 1$ then at least one of them can be applied.
\end{lemma}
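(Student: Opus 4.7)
The plan is to prove the two claims of the lemma separately, by case analysis on the structure of $m$. For the first claim (each of the five statements reduces $r$ by one), I will go through each of (1), (2a), (2b), (3a), (3b) and track how $k$, $\gamma$, and $\delta$ change when $m$ is replaced by the simpler element $m'$ appearing on the right-hand side of Theorem \ref{dualbasisthm}. For the second claim (existence of an applicable statement when $r(m) > 1$), I will argue the contrapositive: if none of the five statements applies, then $m$ has the base-case form of Lemma \ref{basecase}, which forces $r(m) = 1$.

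For the first claim, in every one of the five moves two adjacent non-$(1^1,1^1)$ blocks (possibly flanking a run of $(1^1,1^1)$ blocks, which are simply relocated) are fused into a single block, so $k$ drops by exactly one. The key observation is that $\gamma$ is preserved in all five cases: in (1) the merged block has first coordinate $x_i + x_{i+1} \geq 2$ and neither input is $(1^1,1^1)$ because each carries a $0$-exponent component; in (2a) and (2b) the merged block is fully undotted; and in (3a), (3b) the hypotheses $x_{i+\alpha+1} = y_{i+\alpha+1} \neq 1$ (respectively $x_i = y_i \neq 1$) ensure the dotted input was never $(1^1,1^1)$ while the new merged block is undotted. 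A direct verification going through the four cases of the definition of $\delta$ shows $\delta$ is unchanged by every move, giving $r(m) - r(m') = 1$.

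For the second claim, suppose no statement applies to $m = (x_1^{\epsilon_{x_1}}, y_1^{\epsilon_{y_1}}; \ldots; x_k^{\epsilon_{x_k}}, y_k^{\epsilon_{y_k}})$. Failure of statement (1) rules out any adjacent pair in which the left block has $\epsilon_{y} = 0$, the right block has $\epsilon_{x} = 0$, and the inequalities $x_i \geq y_i$, $x_{i+1} \leq y_{i+1}$ hold. Using $\sum_t x_t = \sum_t y_t = n$ together with the planarity inequality $\sum_{t \leq j}(x_t - y_t) \geq 0$, I show that any two adjacent non-$(1^1,1^1)$ blocks automatically satisfy those sign conditions, so adjacent non-$(1^1,1^1)$ blocks cannot occur; every non-$(1^1,1^1)$ block must therefore be separated from any neighbour by at least one $(1^1,1^1)$ block. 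Failure of (2a), (2b), (3a), (3b) then forces each such non-$(1^1,1^1)$ block to satisfy $x_i = y_i$, to be undotted, and to appear only at $i = 1$ or $i = k$. Combining these constraints yields exactly the form described in Lemma \ref{basecase}.

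The main obstacle is the bookkeeping for $\delta$: its four-case definition mixes the boundary dots $\epsilon_{x_1}$, $\epsilon_{y_k}$ with the internal partial-sum equality $\sum_{t \leq j} x_t = \sum_{t \leq j} y_t$. Geometrically, such an equality at $j$ means the matching splits into two disconnected halves there, and each merge either preserves all such splittings or resolves one of them; translating these geometric changes into the numerical values $0, 1, 2$ of $\delta$, and handling the corner subcases where $i = 1$, $i + \alpha + 1 = k$, or the merge collapses $k$ to $1$, will account for the bulk of the verification in both parts.
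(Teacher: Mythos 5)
Your verification of the first claim rests on a description of the five moves that is not correct, and this is a genuine gap rather than a stylistic difference. Statements 3a and 3b do \emph{not} fuse two non-$(1^1,1^1)$ blocks while ``simply relocating'' the run of $(1^1,1^1)$'s: they strip the dot off the dotted block $(x^1,y^1)$, shrinking it by one strand, and create one \emph{new} $(1^1,1^1)$ block (this is forced by strand count: the affected region has $x_i+y_i+2\alpha+x_{i+\alpha+1}+y_{i+\alpha+1}$ arrows on both sides, so the right-hand side must contain $\alpha+1$ copies of $(1^1,1^1)$). Hence for 3a/3b in the generic case $\gamma$ increases by $1$ and $k$ is \emph{unchanged} (the region goes from $\alpha+2$ blocks to $1+(\alpha+1)=\alpha+2$ blocks); your claims that ``$k$ drops by exactly one'' and ``$\gamma$ is preserved in all five cases'' both fail here. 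Likewise, in the boundary cases $i=0$ (resp.\ $i+\alpha=k$) of 2a/3a (resp.\ 2b/3b) there is only one non-trivial block involved, $k$ does not drop (for 3a with $i=0$ it increases by $1$), and $\delta$ changes because $\epsilon_{x_1}$ (or $\epsilon_{y_k}$) flips from $1$ to $0$. The correct, case-dependent bookkeeping is: statement 1 gives $(\Delta k,\Delta\gamma,\Delta\delta)=(-1,0,0)$; 2a/2b give $(-1,0,0)$ generically and $(0,0,+1)$ at the boundary; 3a/3b give $(0,+1,0)$ generically and $(+1,+1,+1)$ at the boundary. Every case does yield $\Delta r=\Delta k-\Delta\gamma-\Delta\delta=-1$, but your argument, resting on false premises about all three quantities, does not establish this.

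The second part also has a broken step. The claim that any two adjacent non-$(1^1,1^1)$ blocks automatically satisfy the hypotheses of statement 1 (so that, if statement 1 never applies, such adjacencies ``cannot occur'') is false on two counts: the inequalities $x_i\geq y_i$, $x_{i+1}\leq y_{i+1}$ need not hold for an arbitrary adjacent pair (e.g.\ blocks $1,2$ of $(2^0,1^0;2^0,1^0;1^0,3^0)$), and even when they do, the conditions $\epsilon_{y_i}=\epsilon_{x_{i+1}}=0$ can fail (e.g.\ $(2^1,2^1;3^0,3^0)$, where one must invoke 3b instead). Your derived structural conclusion --- that every non-trivial block is separated from its neighbours by a $(1^1,1^1)$ --- is also stronger than the base-case form, which permits $\alpha=0$. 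The clean dichotomy is the one the paper uses: if some $x_i\neq y_i$, the planarity inequalities let you locate an adjacent pair to which statement 1 applies; if all $x_i=y_i$ and $r(m)>1$, then either some dotted block has $x_i\neq 1$ (apply 3a or 3b) or the $(1^1,1^1)$'s are not all adjacent (apply 2a or 2b).
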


\begin{proof}
First we see that each statement reduces $r$ by 1.
\begin{itemize}
\item Statement 1 reduces $k$ by 1 and leaves $\gamma$ and $\delta$ unchanged.
\item Statement 2a reduces $k$ by 1 and leaves $\gamma$ and $\delta$ unchanged if $i>0$, and if $i=0$ it increases $\delta$ by 1 and leaves $k$ and $\gamma$ unchanged. Statement 2b is analogous.
\item Statement 3a increases $\gamma$ by 1 and leaves $k$ and $\delta$ unchanged if $i=0$, and if $i =0$ it increases each of $k, \delta,$ and $\gamma$ by 1. Statement 3b is analogous.
\end{itemize}

Next we show that one of the statements of Theorem \ref{dualbasisthm} can be applied. First, if $m$ contains some $(x_i, y_i)$ such that $x_i \neq y_i$, then the first statement can be applied. If $x_i$ and $y_i$ are equal for all $i$ and $r(m) > 1$ then either $\epsilon_{x_i} = 1$ for some $i$ such that $x_i \neq 1$ or not all $(1^1,1^1)$'s are adjacent. If $\epsilon_{x_i} = 1$ for some $i$ such that $x_i \neq 1$, apply the operation of Statement 3a or 3b. If not all $(1^1, 1^1)$'s are adjacent, the operation of Statement 2a or 2b can be applied.
$\blacksquare$
\end{proof}

The following lemma will be key to the proof of Theorem \ref{dualbasisthm}.
\begin{lemma}
\label{keylemma}
The equalities shown in Figure \ref{keyLemma2Fig} hold up to sign.
\begin{figure}[p!]
   \centering
   \includegraphics[width=6in]{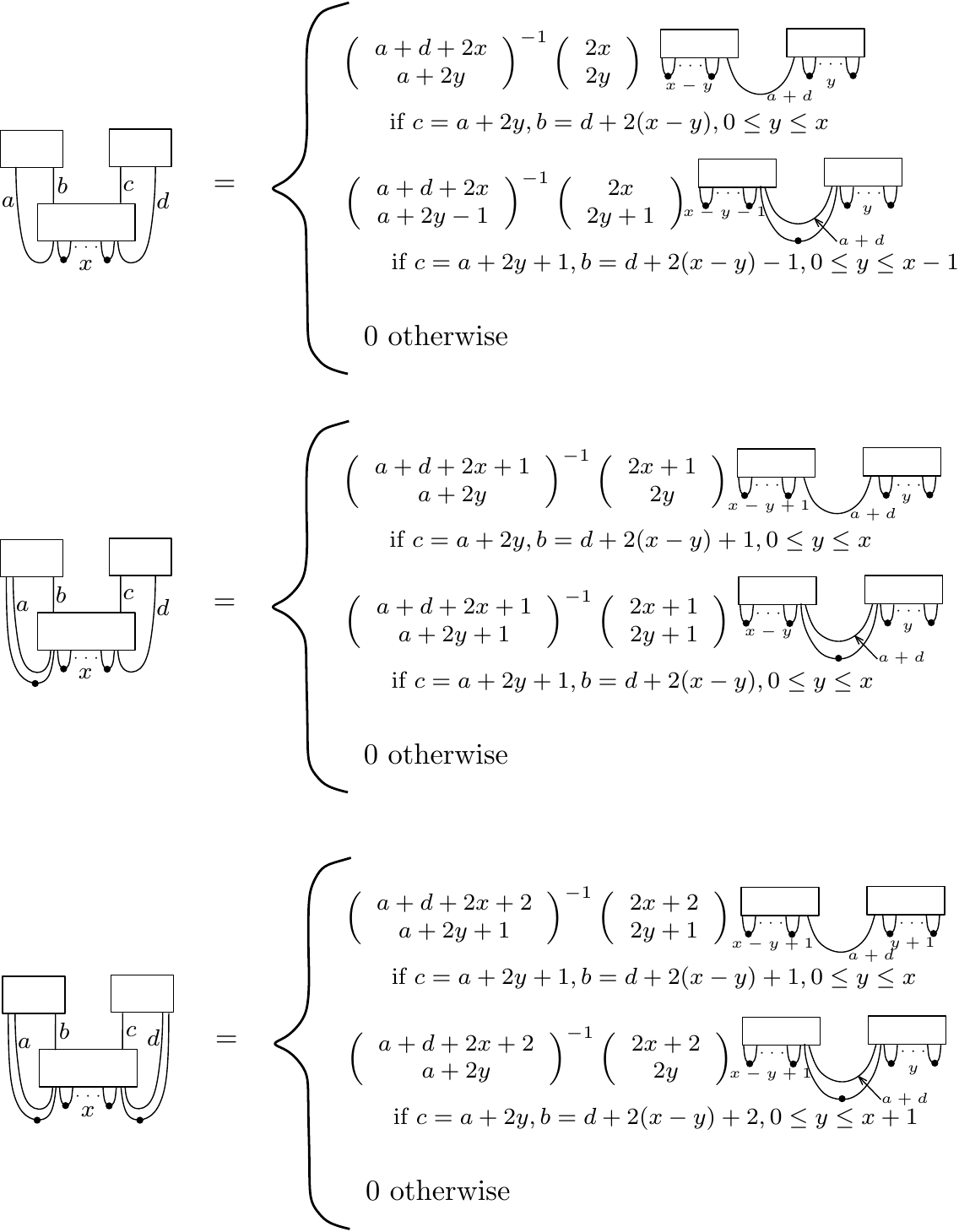}
   \caption{Lemma \ref{keylemma}}
      \label{keyLemma2Fig}
\end{figure}
\end{lemma}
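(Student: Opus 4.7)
The plan is to prove each identity in Figure \ref{keyLemma2Fig} by induction on the size $n$ of the largest Jones--Wenzl projector appearing in the statement. For the base cases ($n=1$ and $n=2$), I would verify the identities by directly expanding the projector as a linear combination of planar tangles using the definition $p_n = \frac{1}{[n]_{-}!} \sum_{s \in S_n} q^{-3l(s)/2} T(s)$ and specializing to $q=-1$, then simplifying any resulting closed circles via the Kauffman--Lins rule (circle $= 2$ at $q=-1$).

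For the inductive step, I would apply Theorem \ref{JW-thm} to rewrite the outermost occurrence of $p_n$ as $(p_{n-1} \otimes 1) - \mu_{n-1}(p_{n-1}\otimes 1)\circ U_{n-1}\circ (p_{n-1}\otimes 1)$. This splits each side of the desired identity into two summands. In the first summand, the projector $p_{n-1}$ that appears is now of smaller size, so the inductive hypothesis (or a direct simplification) can be applied. For the second summand, the $U_{n-1}$ factor introduces a cup which, in most of the configurations shown in Figure \ref{keyLemma2Fig}, is forced to attach to another Jones--Wenzl projector already in the diagram; by the absorption property $p_n U_i = 0$ from Proposition \ref{JW-prop}, this term vanishes. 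When the cup does not abut a projector, it instead forms a closed circle on one side and a strand connection on the other, which can be evaluated using the partial-trace property of projectors and absorbed into the coefficients.

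The key bookkeeping is to track the rational scalars produced by the recursion $\mu_{k+1} = (-q-q^{-1}-\mu_k)^{-1}$, together with the combinatorial binomials $\binom{a+b}{a}$ that appear when two adjacent projectors of sizes $a$ and $b$ merge into a single projector of size $a+b$; this merging identity is itself a standard consequence of Theorem \ref{JW-thm} and gets used repeatedly. Dots on strands are carried through the induction using the local Type I and Type II Russell relations and their interaction with $\varepsilon_1, \delta_1$, which at $q=-1$ become particularly symmetric; the only place a dot actively enters the computation is when it is forced to meet a projector, and there it simplifies via $p_n \cdot (\text{dot on strand } i) = p_n \cdot (\text{dot on strand } j)$ up to sign, a consequence of the cyclic symmetry of $p_n$ at $q=-1$.

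The main obstacle is the sign tracking: because the lemma is stated only \emph{up to sign}, I expect the induction to produce $\pm 1$ factors depending on how many $U_i$ terms are absorbed and how many times the inductive formula is applied, and the precise sign for each of the separate identities of Figure \ref{keyLemma2Fig} will need to be determined case by case rather than by a uniform argument. Because Theorem \ref{dualbasisthm} ultimately fixes all scalars through the pairing with Russell basis elements, the ambiguous signs here will not propagate into the final dual basis statement — which is exactly why Lemma \ref{keylemma} is stated up to sign in the first place.
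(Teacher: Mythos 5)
Your overall skeleton (induct using the Jones--Wenzl recursion of Theorem \ref{JW-thm}, kill the $U_{n-1}$ term via $p_nU_i=0$, track the $\mu_k$ scalars) is the same engine the paper runs on, but there are two concrete gaps that would prevent the induction from closing. First, your mechanism for handling dots is wrong. You invoke ``$p_n\cdot(\text{dot on strand }i)=\pm\, p_n\cdot(\text{dot on strand }j)$ by cyclic symmetry of $p_n$ at $q=-1$,'' but Jones--Wenzl projectors have no cyclic symmetry (only a reflection symmetry), and no such dot-sliding relation holds in the algebra $A$ of Figure \ref{linerelns}. The fact actually needed, and the one the paper isolates as the crux, is that when a projector is expanded into planar tangles with dots sitting on the strands entering and leaving it, every non-identity tangle in the expansion routes two dots onto a single strand and hence vanishes (two dots on a strand is $0$ in $A$), so only the identity term survives, with a computable coefficient. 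Without this collapse, the dotted configurations in Figure \ref{keyLemma2Fig} never reduce to a single diagram.

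Second, you never determine \emph{when} the left-hand side is nonzero, which is not optional bookkeeping but the structural backbone of the argument. The paper first uses the recursion (inducting on the strand parameters, not on the size of the largest projector) to derive the constraints $a\le c\le a+2x$ and $d\le b\le d+2x$, and then splits on the parity of $c-a$; the two parities produce the two different right-hand sides appearing in the figure. Your single induction on projector size, with base cases $n=1,2$, does not see this multi-parameter structure: the identities involve several projectors of sizes $a,b,c,d,x$ simultaneously, and the reduction proceeds by repeatedly migrating strands between adjacent projectors (each migration governed by a separate sub-induction), not by peeling one strand off the largest projector. Your last paragraph's claim that the sign ambiguity is harmless because Theorem \ref{dualbasisthm} fixes scalars through the pairing is correct in spirit and matches how the paper uses the lemma, but it does not repair the two gaps above.
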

Note: the first equality in Figure \ref{keyLemma2Fig} with $x=0$ is Lemma 3.1 from \cite{khthesis}.

\begin{proof}
We prove the first case: the others are proven by an analogous argument. First observe the following manipulations of the left-hand side, obtained by applying the inductive relationship for Jones-Wenzl projectors.
\begin{figure}[H]
   \centering
   \includegraphics[scale=.8]{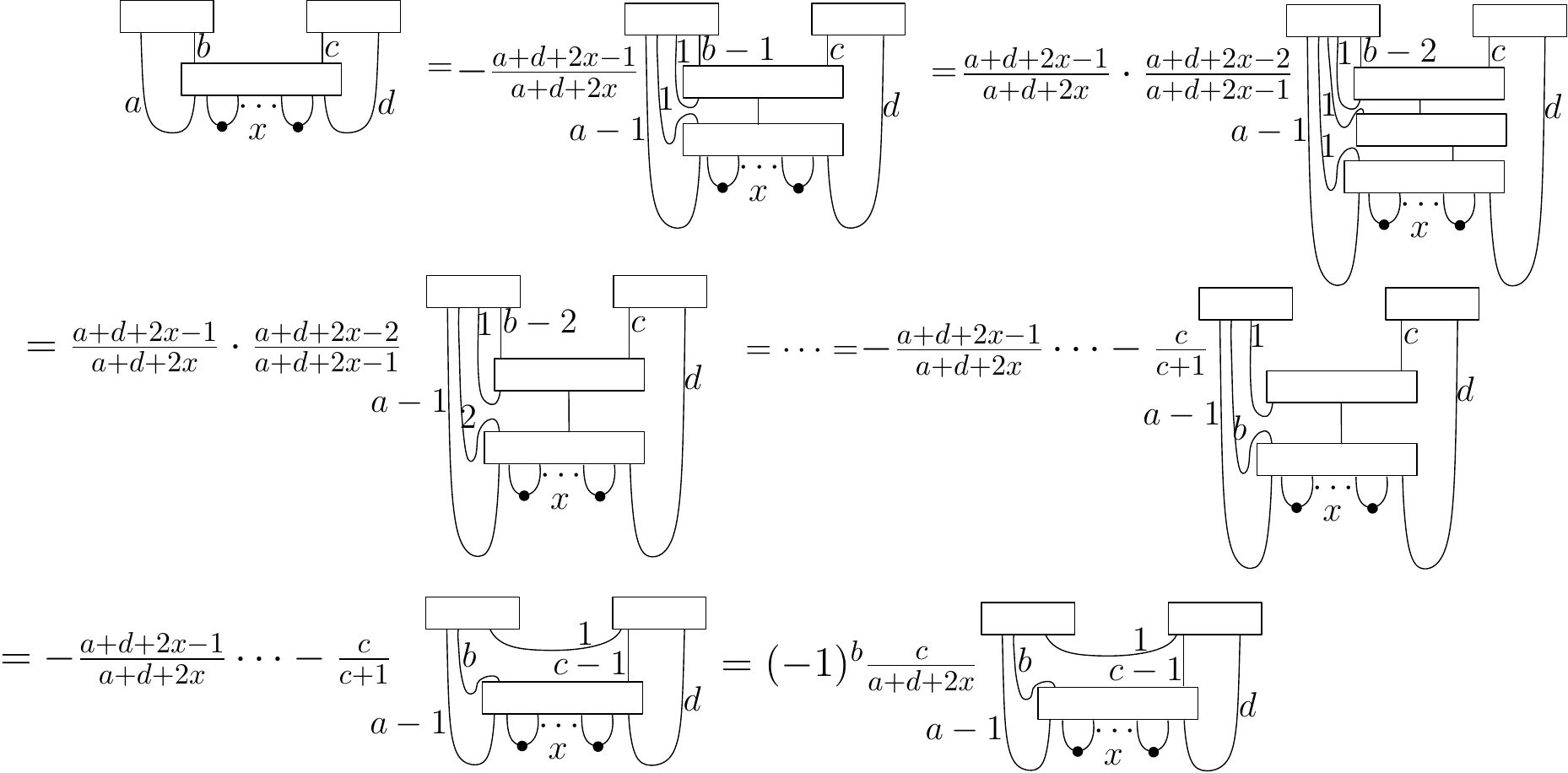}
\end{figure}
Now if $a > c$, by induction we can simplify the diagram, up to scale, to
\begin{figure}[H]
   \centering
   \includegraphics[scale=1]{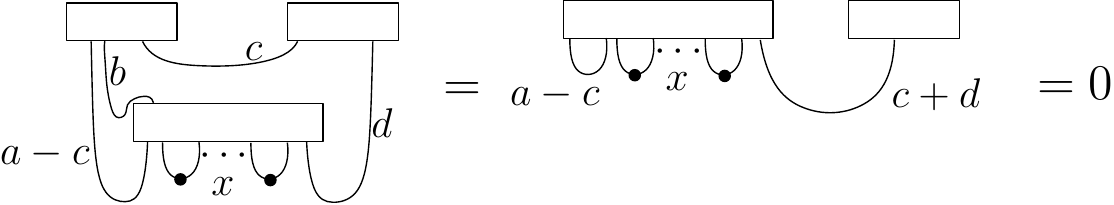}
\end{figure}
Therefore for the left-hand side to be nonzero, we must have $a \geq c$ and $b \leq d+2x$, since $a+d +2x = b+c$. By a completely symmetric argument, we can conclude that for the left-hand side to be nonzero, we must have
\[
\begin{array}{ccccc}
 a &\leq& c &\leq& a+2x \\
d &\leq& b &\leq& d+2x.
\end{array} \]
Assuming these conditions, we now divide into two cases based on whether $c$ differs from $a$ by an even number or an odd number. First suppose that
\[c = a +2y, 0 \leq y \leq x\]
and therefore
\[b = d+2(x-y).\]
First, we specialize the result of the previous calculation to this situation:
\begin{figure}[H]
   \centering
   \includegraphics[scale=.6]{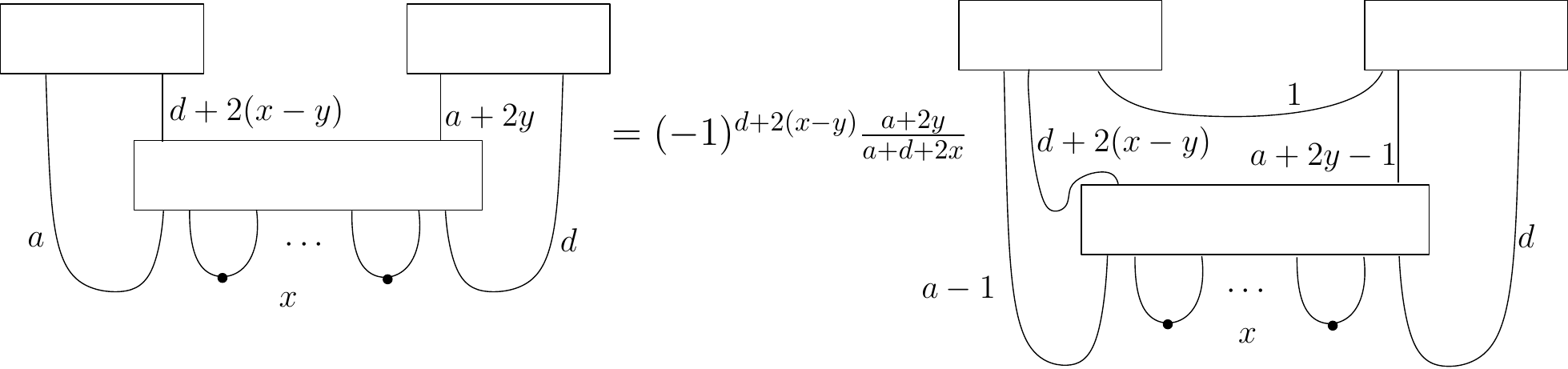}
\end{figure}
By induction on $a$, we find that this is equal to
\begin{figure}[H]
   \centering
   \includegraphics[scale=.6]{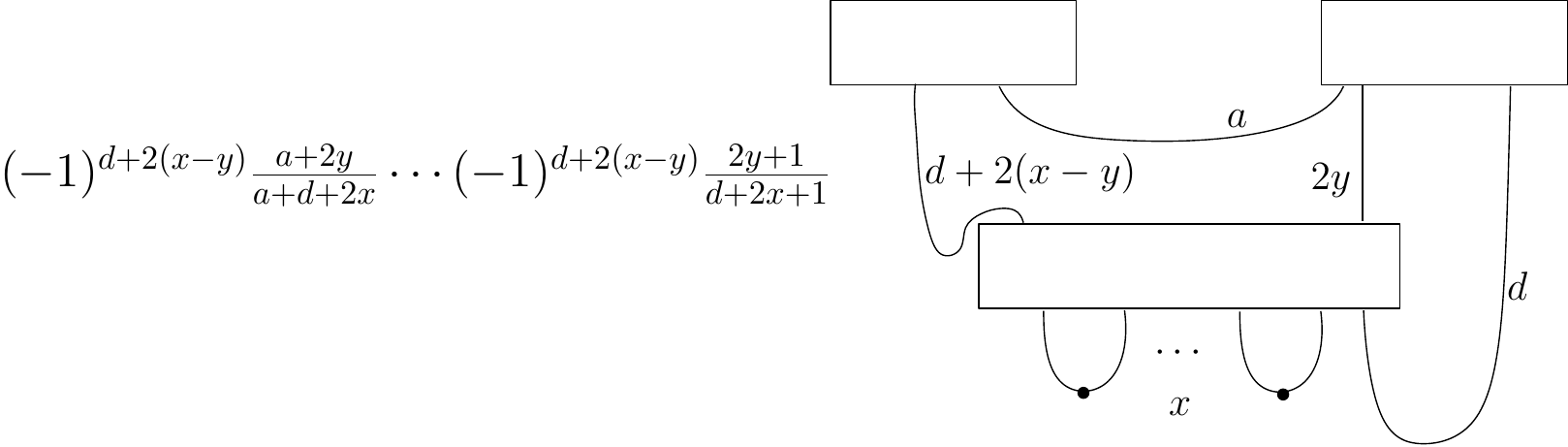}
\end{figure}
Performing a symmetric argument, we may also inductively move the $d$ strands from the middle projector to between the two top projectors:
\begin{figure}[H]
   \centering
   \includegraphics[scale=.6]{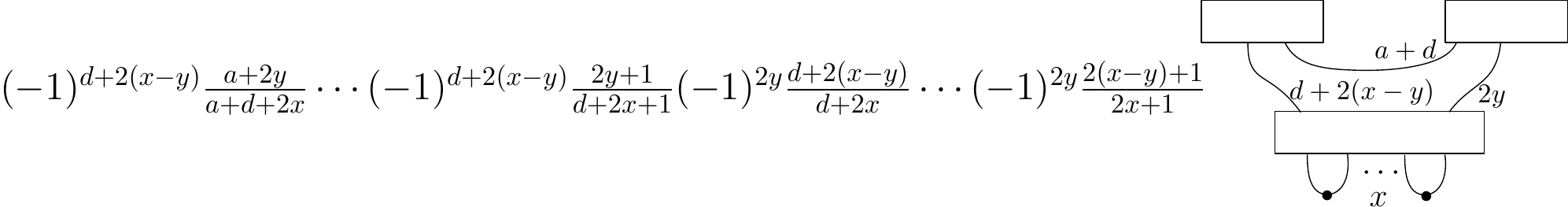}
\end{figure}
Next observe that in general we have
\begin{figure}[H]
   \centering
   \includegraphics[scale=1]{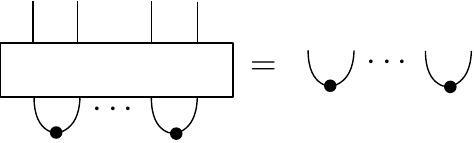}
\end{figure}
\noindent because when the Jones--Wenzl projector is expanded into a linear combination of planar tangles, only the term containing the tangle of all vertical lines will be nonzero, since any others will contain two dots on a single strand.

Therefore, returning to our calculation and simplifying the coefficient, we obtain
\begin{figure}[H]
   \centering
   \includegraphics[scale=1]{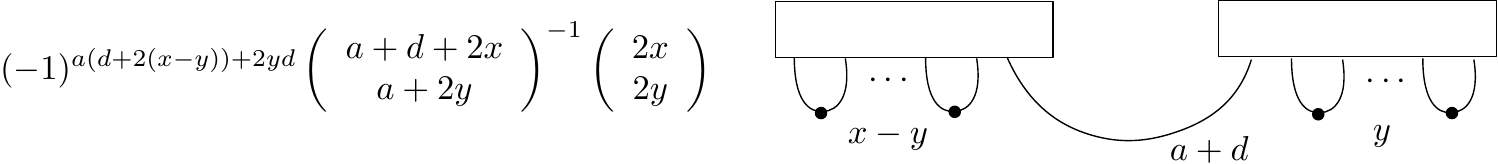}
\end{figure}
\noindent as desired. The case where $a = 2y+1$ is completely analogous until the final step, where it is clear that the diagram on the right-hand side in Figure \ref{keylemma} is the one obtained.
$\blacksquare$
\end{proof}

Simultaneously with Theorem \ref{dualbasisthm}, we will prove the following Lemma.
\begin{lemma}
\label{simlemma}

\begin{enumerate}
\item Let $m = (x_1^{\epsilon_{x_1}}, y_1^{\epsilon_{y_1}}; \ldots; x_k^{\epsilon_{x_k}}, y_k^{\epsilon_{y_k}})$. Then
\[ m^* = (p_{x_1} \otimes p_{y_1} \otimes \cdots \otimes p_{x_k} \otimes p_{y_k}) m^*.\]

\item Let $m = (x_1^{\epsilon_{x_1}}, y_1^{\epsilon_{y_1}}; \ldots; x_i^{\epsilon_{x_i}}, y_i^{\epsilon_{y_i}}; 1^1, 1^1; \ldots, 1^1, 1^1; x_{i+\alpha+1}^{\epsilon_{x_{i+\alpha+1}}}, y_{i+\alpha+1}^{\epsilon_{y_{i+\alpha+1}}}; \ldots; x_k^{\epsilon_{x_k}}, y_k^{\epsilon_{y_k}})$ where $x_i^{\epsilon_{x_i}} = y_i^{\epsilon_{y_i}}$ and $x_{i+\alpha+1}^{\epsilon_{x_{i+\alpha+1}}} = y_{i+\alpha+1}^{\epsilon_{y_{i+\alpha+1}}}$. Then
\begin{eqnarray*}
m^* &=& 1^{\otimes (x_1+y_1+ \cdots + x_i)} \otimes p_{y_i+2\beta} \otimes 1^{\otimes (2(\alpha-\beta) + x_{i+\alpha+1}+y_{i+\alpha+1} + \cdots+x_k+y_k )} m^* \\
&=& 1^{\otimes (x_1 +y_1 + \cdots + x_{i}+y_i + 2(\alpha-\beta))} \otimes p_{x_{i+\beta+1}+2\alpha} \otimes  1^{\otimes (y_{i+\alpha+1} +x_{i + \alpha +2} + y_{i+\alpha+2} + \cdots + x_k + y_k)} m^*.
\end{eqnarray*}
for any $\beta$ such that $ 0 \leq \beta \leq \alpha$.
\end{enumerate}
\end{lemma}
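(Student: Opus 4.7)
The plan is to prove this lemma by simultaneous induction with Theorem \ref{dualbasisthm} on $r(m)$. The central algebraic fact is the absorption identity for Jones--Wenzl projectors: if $a_1 + \cdots + a_r = N$, then $p_N (p_{a_1} \otimes \cdots \otimes p_{a_r}) = (p_{a_1} \otimes \cdots \otimes p_{a_r}) p_N = p_N$; in particular, $p_N$ absorbs any projector on a subset of its strands. The base case $r(m) = 1$ reduces to Lemma \ref{basecase}, which displays $m^*$ explicitly as a diagram with a single large central Jones--Wenzl projector, parallel-arc boundary groups, and dotted $(1^1, 1^1)$ strands in the middle. Part 1 follows because each claimed projector $p_{x_j}$ or $p_{y_j}$ either equals $p_1 = 1$ (on unit-sized dotted blocks), lies on strands covered by the central projector (and is absorbed directly), or is absorbed by the parallel-arc structure at the boundary. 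Part 2 follows because $p_{y_i + 2\beta}$ sits on a contiguous sub-range of the central projector's strands.

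For the inductive step, I would pick an applicable statement of Theorem \ref{dualbasisthm} to write $m^* = C \cdot (1^{\otimes l} \otimes p_N \otimes 1^{\otimes j})(m')^*$ for an $m'$ with $r(m') < r(m)$. To verify Part 1 for $m$, each required projector on $m$'s group structure falls into one of three cases: (i) it coincides with a projector already absorbed by $(m')^*$ via the inductive hypothesis; (ii) it refines a larger projector absorbed by $(m')^*$ (as happens when two groups of $m$ are merged into one group of $m'$), in which case the absorption identity $p_{a+b}(p_a \otimes p_b) = p_{a+b}$ forces absorption of the finer pair as well; or (iii) it lies on strands covered by the newly inserted central projector $p_N$, which absorbs it. These projectors occupy disjoint regions, so they can be handled independently and then reassembled. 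A parallel analysis treats Part 2: when $p_{y_i + 2\beta}$ sits inside $p_N$ the argument closes immediately, and otherwise one uses Lemma \ref{keylemma} to commute the required projector through the adjacent dotted strands and reduce to a configuration where absorption applies.

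The main obstacle will be the bookkeeping in Part 2 under Statements 2a/2b and 3a/3b of Theorem \ref{dualbasisthm}, where a $(1^1, 1^1)$ block is created or absorbed between $m$ and $m'$ and the strand count of the middle region shifts. In these configurations, one must identify the correct orientation in Lemma \ref{keylemma} (for instance, the middle equality of Figure \ref{keyLemma2Fig} when $(1^1,1^1)$ pairs straddle a projector boundary), verify that the projector sizes $y_i + 2\beta$, $y_i + 2\alpha + x_{i+\alpha+1}$, etc., align correctly, and then invoke absorption. The conceptual point is that Lemma \ref{keylemma} is designed exactly for the configurations arising from the five inductive statements, so once the correct case is matched, each reduction is immediate; the labor is in enumerating the cases cleanly.
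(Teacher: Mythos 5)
Your proposal follows the paper's argument essentially exactly: the paper proves Lemma \ref{simlemma} by simultaneous induction with Theorem \ref{dualbasisthm}, takes Lemma \ref{basecase} as the base case, and in each inductive step deduces the conclusion from the explicit form $g = C\,(1^{\otimes l}\otimes p_N\otimes 1^{\otimes j})(m')^*$ together with the inductive hypothesis and projector absorption. Your write-up actually makes the absorption identity and the case analysis more explicit than the paper does (the paper simply asserts ``it is clear from the figure and the inductive hypothesis''), but the route is the same.
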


\begin{proof}
Theorem \ref{dualbasisthm} and Lemma \ref{simlemma} are proved simultaneously by induction on $r$. It is clear that the statement of Lemma \ref{simlemma} holds for the base case $r=1$ of Lemma \ref{basecase}. 

First we show Statement 1. Let $m = (x_1^{\epsilon_{x_1}}, y_1^{\epsilon_{y_1}}; \ldots; x_i^{\epsilon_{x_i}}, y_i^0; x_{i+1}^0, y_{i+1}^{\epsilon_{y_{i+1}}}; \ldots; x_k^{\epsilon_{x_k}}, y_k^{\epsilon_{y_k}})$, with $x_i \geq y_i$ and $x_{i+1} \leq y_{i+1}$, and let $m' = (x_1^{\epsilon_{x_1}}, y_1^{\epsilon_{y_1}}; \ldots; (x_i+x_{i+1})^{\epsilon_{x_i}}, (y_i+y_{i+1})^{\epsilon_{y_{i+1}}}; \ldots; x_k^{\epsilon_{x_k}}, y_k^{\epsilon_{y_k}})$. We want to show that $g := \left( \begin{array}{c}
y_i + x_{i+1} \\
y_i
\end{array}
\right)
1^{\otimes l} \otimes p_{y_i + x_{i+1}} \otimes 1^{\otimes j} (m')^{\ast}$ is dual to $m$. This amounts to showing that $g$ paired with $m$ is 1, and $g$ paired with any Russell basis element $b \neq m$ is zero. By the inductive hypothesis of Lemma \ref{simlemma} and using the fact that $x_i \geq y_i$ and $x_{i+1} \leq y_{i+1}$, we can pull projectors $A$ and $B$ off of $(m')^*$ as shown in Figure \ref{dualbasisproof6}, where $|A| = |B|= x_{i+1}+y_i$.

\begin{figure}[H]
   \centering
   \includegraphics[scale=1]{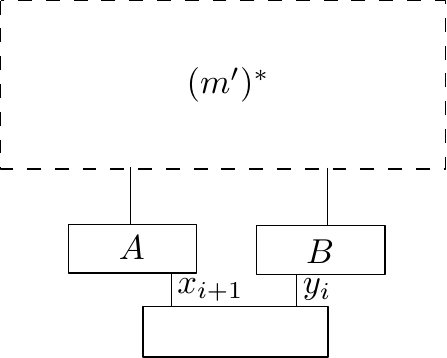}
   \caption{The element $g$ in the proof of Statement 1, where $|A| = |B|= x_{i+1}+y_i$.}
      \label{dualbasisproof6}
\end{figure}

Note that there are $x_{i+1} + y_i$ undotted arcs joining $A$ and $B$ in $m'$. For an arbitrary Russell basis element $b$, we again use the notation $\langle b, g \rangle$ to denote the evaluation of the diagram obtained by matching endpoints of $b$ and $g$ according to the rules of Section \ref{sec:bilin_form}. In order for $\langle b, g \rangle$ to be nonzero, by the inductive hypothesis there must be at least one term in the linear combination resulting from the expansion of the new projector that has $x_{i+1}+y_i$ arcs joining $A$ and $B$. This could only possibly happen if $\langle b, g \rangle$ is as shown in Figure \ref{dualbasisproof7}, where all arcs of $b$ that are not shown are identical to those of $m'$. Lemma \ref{keylemma} shows that $\langle b, g \rangle$ is indeed nonzero, with the coefficient appearing in the Lemma balancing that which appears in Theorem \ref{dualbasisthm}. 

\begin{figure}[H]
   \centering
   \includegraphics[scale=1]{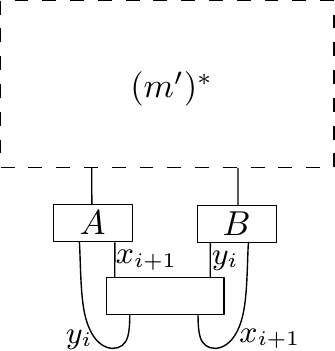}
   \caption{The only possible nonzero pairing $\langle b, g \rangle$.}
      \label{dualbasisproof7}
\end{figure}

Therefore $g = m^*$. It is clear from Figure \ref{dualbasisproof6} and the inductive hypothesis that $g$ satisfies the conclusion of Lemma \ref{simlemma}.

Next, we show Statement 2a of Theorem \ref{dualbasisthm}.

Suppose that $m = (x_1^{\epsilon_{x_1}}, y_1^{\epsilon_{y_1}}; \ldots; x_i^0, y_i^0; 1^1, 1^1; \ldots; 1^1, 1^1; x_{i+\alpha+1}^0, y_{i+\alpha+1}^0; \ldots; x_k^{\epsilon_{x_k}}, y_k^{\epsilon_{y_k}})$ is such that $x_i = y_i$, $x_{i+\alpha+1} = y_{i+\alpha+1}$, and $i + \alpha + 1 < k$. Let $m' = (x_1^{\epsilon_{x_1}},y_1^{\epsilon_{y_1}}; \ldots; x_{i-1}^{\epsilon_{i-1}}, y_{i-1}^{\epsilon_{y_{i-1}}}; (x_i + x_{i+\alpha+1})^0, (y_i+y_{i+\alpha+1})^0; 1^1, 1^1; \ldots, 1^1, 1^1; x_{i+\alpha+2}^{\epsilon_{x_{i+\alpha+2}}}, y_{i+\alpha+2}^{\epsilon_{y_{i+\alpha+2}}}; \ldots, x_k^{\epsilon_{x_k}}, y_k^{\epsilon_{y_k}})$. We want to show that $g := \left(
\begin{array}{c}
x_i + 2\alpha + y_{i+\alpha+1} \\
x_i + 2\alpha
\end{array}
\right)
1^{\otimes l} \otimes p_{y_i +2\alpha+x_{i+a+1}} \otimes 1^{\otimes j} (m')^*$ is dual to $m$. Again, this amounts to showing that $g$ paired with $m$ is 1, and $g$ paired with any Russell basis element $b \neq m$ is zero.

By the induction hypothesis of Lemma \ref{simlemma}, $(m')^* = 1^{(x_1+ y_1 + \cdots+x_{i-1}+y_{i-1})} \otimes p_{x_i+x_{i+\alpha+1}} \otimes p_{x_i+x_{i+\alpha+1} + 2\alpha} \otimes 1^{(x_{i+\alpha+2} + y_{i+\alpha+2} + \cdots + x_k + y_k)}.$ Label the first of those two projectors $A$ and the second $B$. Therefore $g$ is as shown in Figure \ref{dualbasisproof1} (with the scalar omitted).

\begin{figure}[H]
   \centering
   \includegraphics[scale=1]{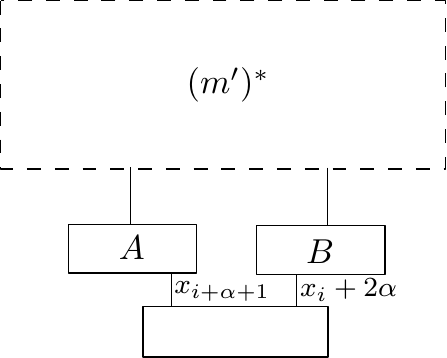}
   \caption{The element $g$ in the proof of Statement 2a, where $|A| = x_i+x_{i+\alpha+1}$ and $|B| = x_i + x_{i+\alpha+1} + 2\alpha$.}
      \label{dualbasisproof1}
\end{figure}

Consider the evaluation of the pairing of $g$ with an arbitrary Russell basis element $b$. Note that there are $x_i + x_{i+\alpha+1}$ arcs connecting $m'[l-x_i+1, l+x_{i+\alpha+1}]$ with $m'[l+x_{i+\alpha+1}+1, l+x_i + 2x_{i+\alpha+1}]$, where in general we use $m[a,b]$ to denote the subset of endpoints numbered $i$ in the diagram $m$ with $a \leq i \leq b$, and $\alpha$ dotted cups in $m'[l+x_i+2x_{i+\alpha+1}+1, l+x_i+2x_{i+\alpha+1}+2\alpha]$. Consider the pairing of $g$ with the arbitrary Russell basis element $b$. First suppose that $b$ has some nonzero number $p$ of cups in $b[l+x_i+2x_{i+\alpha+1}+1, l+x_i+2x_{i+\alpha+1}+2\alpha]$. Note that we must have $1 \leq p \leq \frac{x_{i+\alpha+1}}{2}$. Then $\langle b, g \rangle$ is as shown in one of the pictures of Figure \ref{dualbasisproof2}. In this picture, $a$ and $d$ represent the number of parallel strands in the indicated arc, none of which carry a dot.

\begin{figure}[H]
   \centering
   \includegraphics[scale=.85]{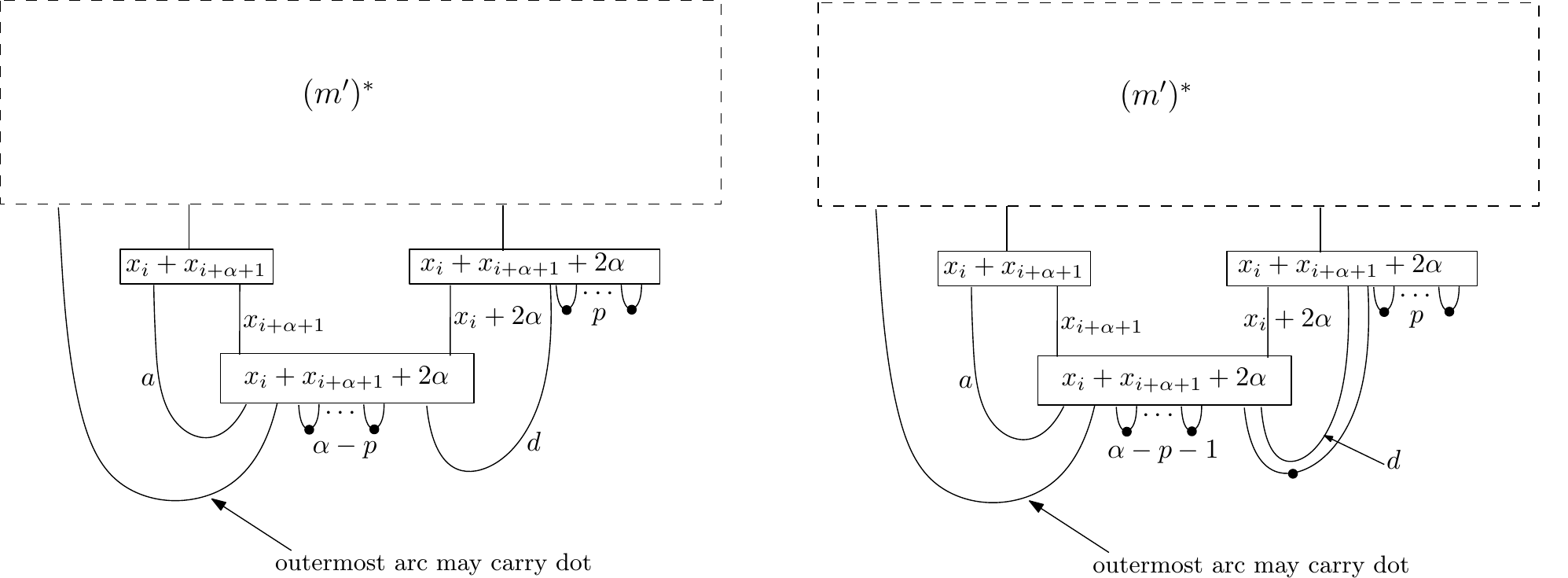}
   \caption{Pairing of $g$ with a Russell basis element with $p > 0$}
      \label{dualbasisproof2}
\end{figure}

Observe that $d$ must be exactly $x_{i+\alpha+1} - 2p$ in the picture on the left or $x_{i+\alpha+1}-2p-1$ in the picture on the right, and $a = x_i$. In either case, $a + d < x_i+x_{i+\alpha+1}$. Therefore in any expansion of the newly added projector, there will be fewer than $x_i + x_{i+\alpha}+1$ arcs connecting $[l-x_i+1, l+x_{i+\alpha+1}]$ with $[l+x_{i+\alpha+1}+1, l+x_i + 2x_{i+\alpha+1}]$, so $\langle b, g \rangle = 0$. We have thus established that if $\langle b, g \rangle \neq 0$, then we must have $p = 0$.

If $p=0$, then $\langle b, g \rangle$ is as shown in one of the pictures of Figure \ref{dualbasisproof3}. In the picture on the right, $\langle b, g \rangle = 0$ for any possible values of $a$ and $d$ since $a = x_i$ and $d = x_{i+\alpha+1}-1$, so there cannot be $x_i + x_{i+\alpha}+1$ arcs connecting $m'[l-x_i+1, l+x_{i+\alpha+1}]$ with $m'[l+x_{i+\alpha+1}+1, l+x_i + 2x_{i+\alpha+1}]$ in any expansion of the new projector. In the picture on the left we must have $a = x_i$ and $d = x_{i+\alpha+1}$. Here, $a+d = x_i + x_{i+\alpha+1}$,  so the diagram could possibly be nonzero. Therefore $\langle b, g \rangle=0$ except possibly when $a = x_i$ and $d=x_{i+\alpha+1}$.

\begin{figure}[H]
   \centering
   \includegraphics[scale=.8]{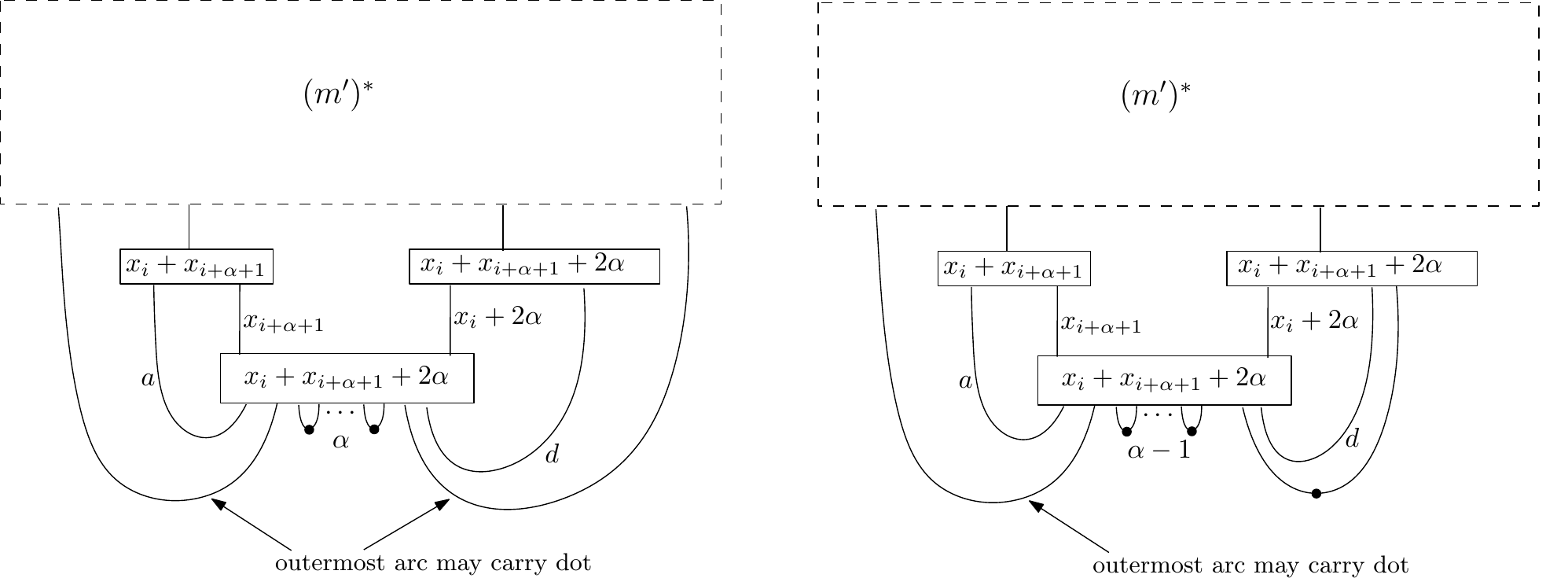}
   \caption{Pairing of $g$ with a Russell basis element with $p = 0$}
      \label{dualbasisproof3}
\end{figure}

In this case, $\langle b, g \rangle$ is shown in Figure \ref{dualbasisproof4}. Lemma \ref{keylemma} shows that $\langle b, g \rangle$ is indeed nonzero, with the coefficient appearing in the Lemma balancing that which appears in Theorem \ref{dualbasisthm}.

\begin{figure}[H]
   \centering
   \includegraphics[scale=.85]{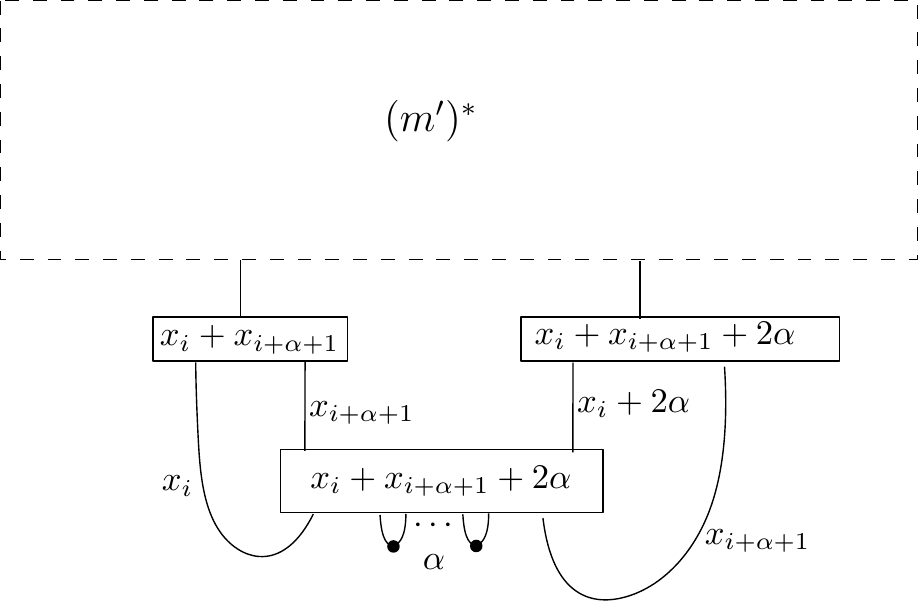}
   \caption{Only possible nonzero pairing $\langle b, g \rangle$.}
      \label{dualbasisproof4}
\end{figure}

Therefore $g = m^*$. It is clear from Figure \ref{dualbasisproof4} and the inductive hypothesis that $g$ satisfies the conclusion of Lemma \ref{simlemma}.

The proof of Statement 2b follows immediately from this one by left-right symmetry. The proof of Statement 3a follows by a completely analogous argument, where $g$ becomes as in Figure \ref{dualbasisproof5}, with $|A| = x_i + x_{i+\alpha+1}$ and $|B| = x_i + x_{i+\alpha+1}+2\alpha+1$, instead of as in Figure \ref{dualbasisproof1}. Again, the proof of Statement 3b follows from that of Statement 3a by left-right symmetry.

\begin{figure}[H]
   \centering
   \includegraphics[scale=.85]{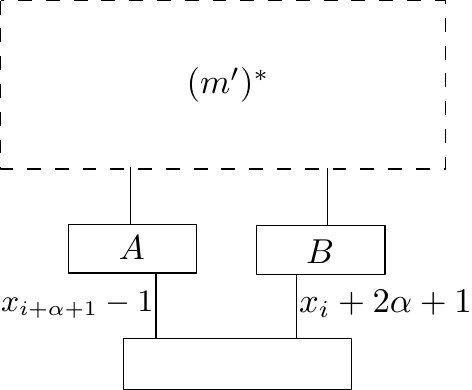}
   \caption{The element $g$ appearing in the right hand side of Statement 3a.}
      \label{dualbasisproof5}
\end{figure}

$\blacksquare$
\end{proof}

\begin{corollary}
$r(m)$ is the number of projectors in $m^*$.
\end{corollary}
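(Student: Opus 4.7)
The plan is a straightforward induction on $r(m)$, piggybacking on the explicit inductive construction of $m^{\ast}$ given by Theorem \ref{dualbasisthm} together with the lemma showing that each of its five statements strictly decreases $r$ by one.

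For the base case $r(m) = 1$, I would invoke Lemma \ref{basecase}, which pins down the form of $m^{\ast}$ as the diagram drawn in Figure \ref{basecasefig}; inspection of that figure shows that $m^{\ast}$ contains exactly one Jones--Wenzl projector. This matches the proof of Lemma \ref{basecase}, which uses a single application of the projector-absorbs-a-cup identity to characterize $m^{\ast}$ uniquely among Russell basis elements.

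For the inductive step, assume $r(m) > 1$. By the lemma immediately following the statement of Theorem \ref{dualbasisthm}, at least one of Statements 1, 2a, 2b, 3a, 3b of the theorem applies to $m$, producing some $m'$ with $r(m') = r(m) - 1$ and an equation of the form
\[ m^{\ast} = c \cdot \bigl( 1^{\otimes l} \otimes p_{N} \otimes 1^{\otimes j} \bigr) (m')^{\ast} \]
for a binomial-coefficient scalar $c$ and some size $N = y_i + 2\alpha + x_{i+\alpha+1}$. Thus $m^{\ast}$ is obtained from $(m')^{\ast}$ by prepending exactly one new projector. By the induction hypothesis, $(m')^{\ast}$ contains $r(m') = r(m) - 1$ projectors, whence $m^{\ast}$ contains $r(m)$ projectors, closing the induction.

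The one subtle point, which I expect to be the main thing worth spelling out, is well-definedness of the count: in the inductive step more than one statement of Theorem \ref{dualbasisthm} can in principle apply to a given $m$, and different choices may produce superficially different expressions for $m^{\ast}$. The count is nevertheless unambiguous because any chain of reductions terminates in exactly $r(m) - 1$ steps at the Lemma \ref{basecase} base case, with each step contributing one projector; so the total is always $r(m)$ regardless of which admissible path through the algorithm is taken. Apart from this bookkeeping and the verification of the base case by direct inspection of Figure \ref{basecasefig}, there is no further content to the argument.
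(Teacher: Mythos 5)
Your proof is correct and follows essentially the same route as the paper: the base case is Lemma \ref{basecase} (one projector), and the inductive step observes that each statement of Theorem \ref{dualbasisthm} attaches exactly one new projector while decreasing $r$ by one. Your extra remark about the count being independent of the chosen reduction path is a reasonable bit of bookkeeping that the paper leaves implicit, but it does not change the argument.
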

\begin{proof}
In the base case $r = 1$, Lemma \ref{basecase} shows that one projector is used. It is clear that every statement in Theorem \ref{dualbasisthm} adds a single projector and reduces $r$ by one.
\end{proof}

Figure \ref{dualbasisEx} shows an example of how to build a dual basis element by applying Theorem \ref{dualbasisthm} twice and the base case of Lemma \ref{basecase}. The coefficients have been omitted.

\begin{figure}[H]
   \centering
   \includegraphics[width=6.5in]{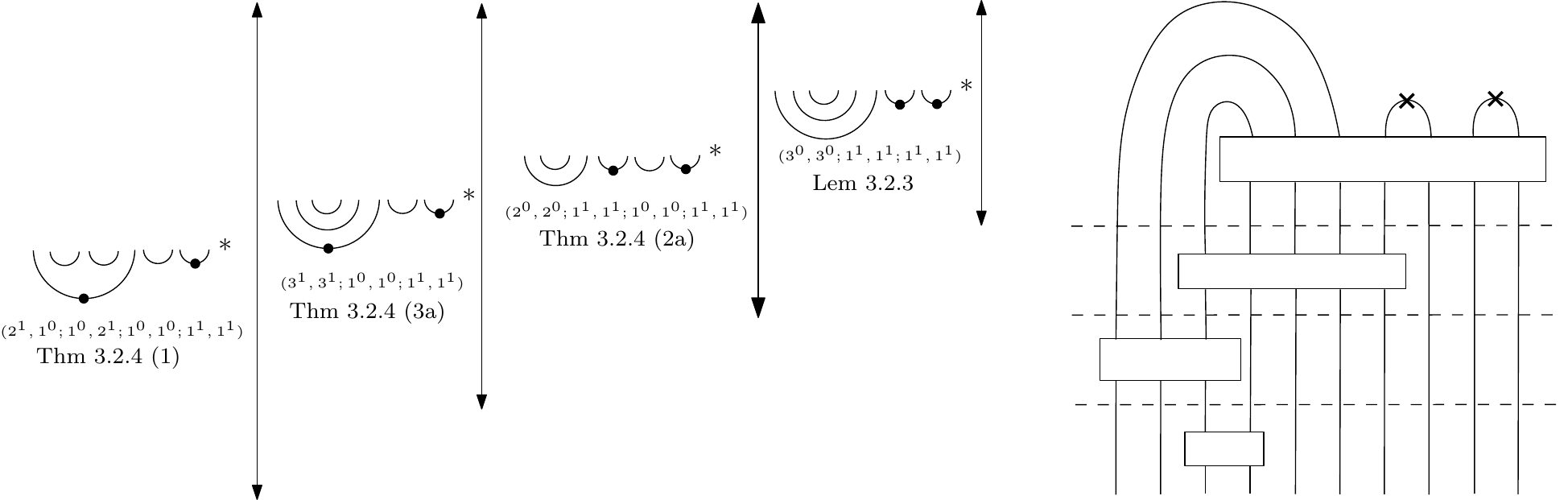}
   \caption{Sample construction of a dual basis element using Theorem \ref{dualbasisthm}. }
      \label{dualbasisEx}
\end{figure}

See Figure \ref{n=3Duals} for all graphical dual basis elements in the case $n=3$.

\begin{figure}[H]
   \centering
   \includegraphics[width=6in]{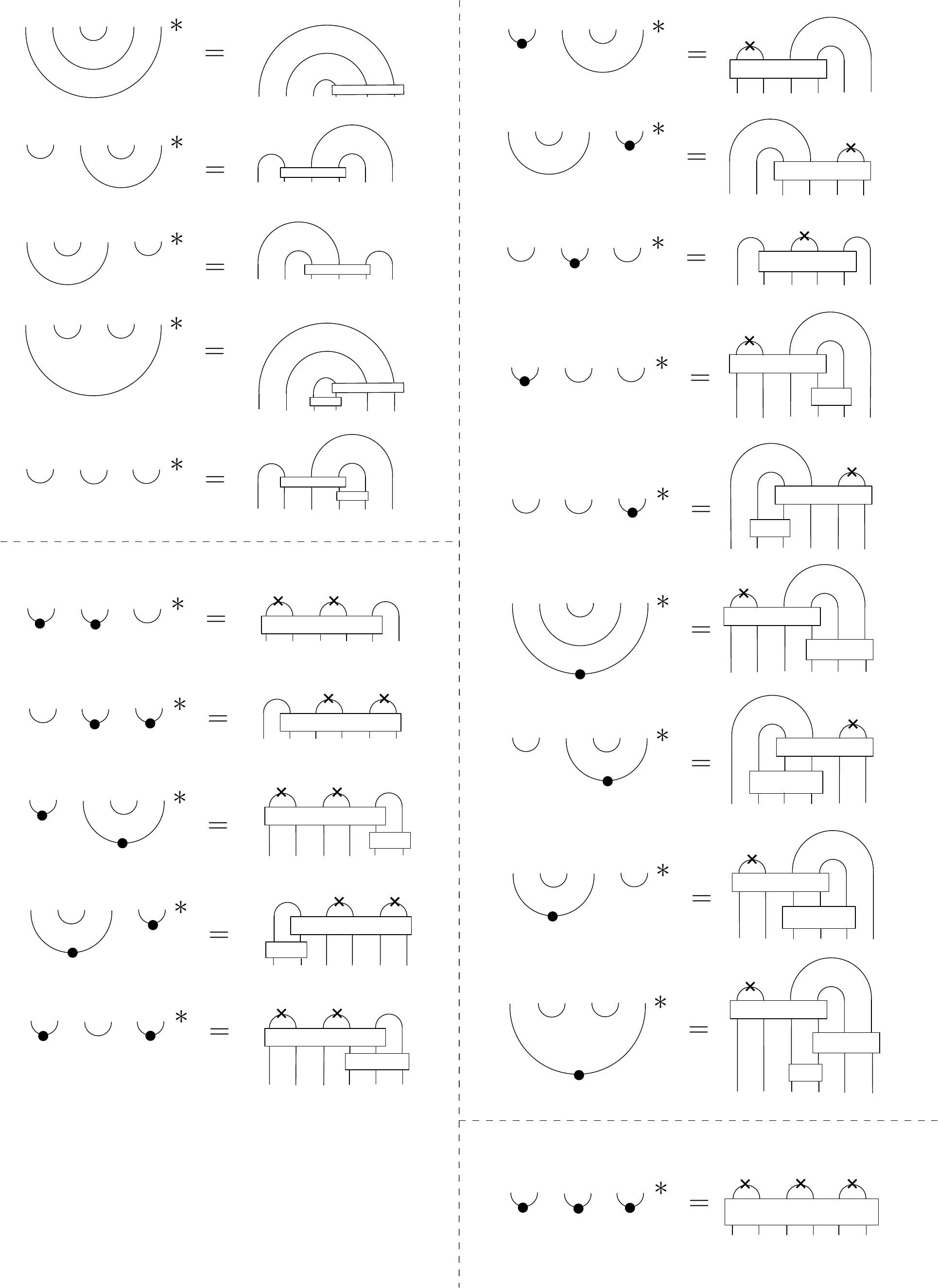}
   \caption{Graphical dual basis elements (up to scale) for $n=3$.}
      \label{n=3Duals}
\end{figure}

\chapter{A quantization of the Russell skein module}
\label{ch:Quantum-Russell}

\section{The spaces $\widetilde{R}^q_{n,k}$ and $R^q_{n,k}$}
\label{sec:qRussell-def}


\subsection {The quantum Russell space}
\label{sec:qRussell}
In this section we define a new quantum deformation of the Russell space.

\begin{definition}
Let $\widetilde{R}^q_{n,k} = \widetilde{R}_{n,k} \otimes_{\mathbb{Z}} \mathbb{Z}[q,q^{-1}]$. In other words, $\widetilde{R}^q_{n,k}$ is the space of linear combinations of the same diagrams as those that give a basis of $\widetilde{R}_{n,k}$ but now with coefficients in $\mathbb{Z}[q, q^{-1}]$ instead of $\mathbb{Z}$.
\end{definition}

When we talk about specializing $q$ to a particular integer value $a$, we first regard $\mathbb{Z}$ as a $\mathbb{Z}[q, q^{-1}]$-module $\mathbb{Z}_a$ whereby $q$ acts as $a$. Then we define
\[ \widetilde{R}_{n,k}^{q=a} = \widetilde{R}_{n,k}^q \otimes_{\mathbb{Z}[q,q^{-1}]} \mathbb{Z}_a.\]

More generally, if $a$ in a commutative ring $\mathbf{k}$, we first regard $\mathbf{k}$ as a $\mathbb{Z}[q, q^{-1}]$-module $\mathbf{k}_a$ where $q$ acts by $a$. Then we define
\[ \widetilde{R}_{n,k}^{q=a} = \widetilde{R}_{n,k}^q \otimes_{\mathbb{Z}[q,q^{-1}]}  \mathbf{k}_a.\] 

\begin{definition}
We introduce the \emph{quantum Russell relations} as shown in Figure \ref{quantum_Russell}. As before, the strand labels $a,b,c,d$ are such that $a < b < c < d$. The arcs labeled $x_1, \ldots, x_{\alpha}$ are the complete set of undotted arcs with endpoints $(e_1, e_2)$ such that $e_1 < a$ and $e_2  > d$. For each $i$, $1 \leq i \leq \alpha$, if the arc labeled $x_i$ has endpoints $(x_{i_1}, x_{i_2})$, $n_i$ is defined to be the number of dotted arcs with endpoints $(y_1, y_2)$ such that $x_{i_1} < y_1 < a$ and $ d < y_2 < x_{i_2}$. The relations are local in the sense that each of the diagrams appearing in each type of relation are identical apart from the arcs shown.
\end{definition}

\begin{figure}[H]
\centering
\includegraphics[scale=.8]{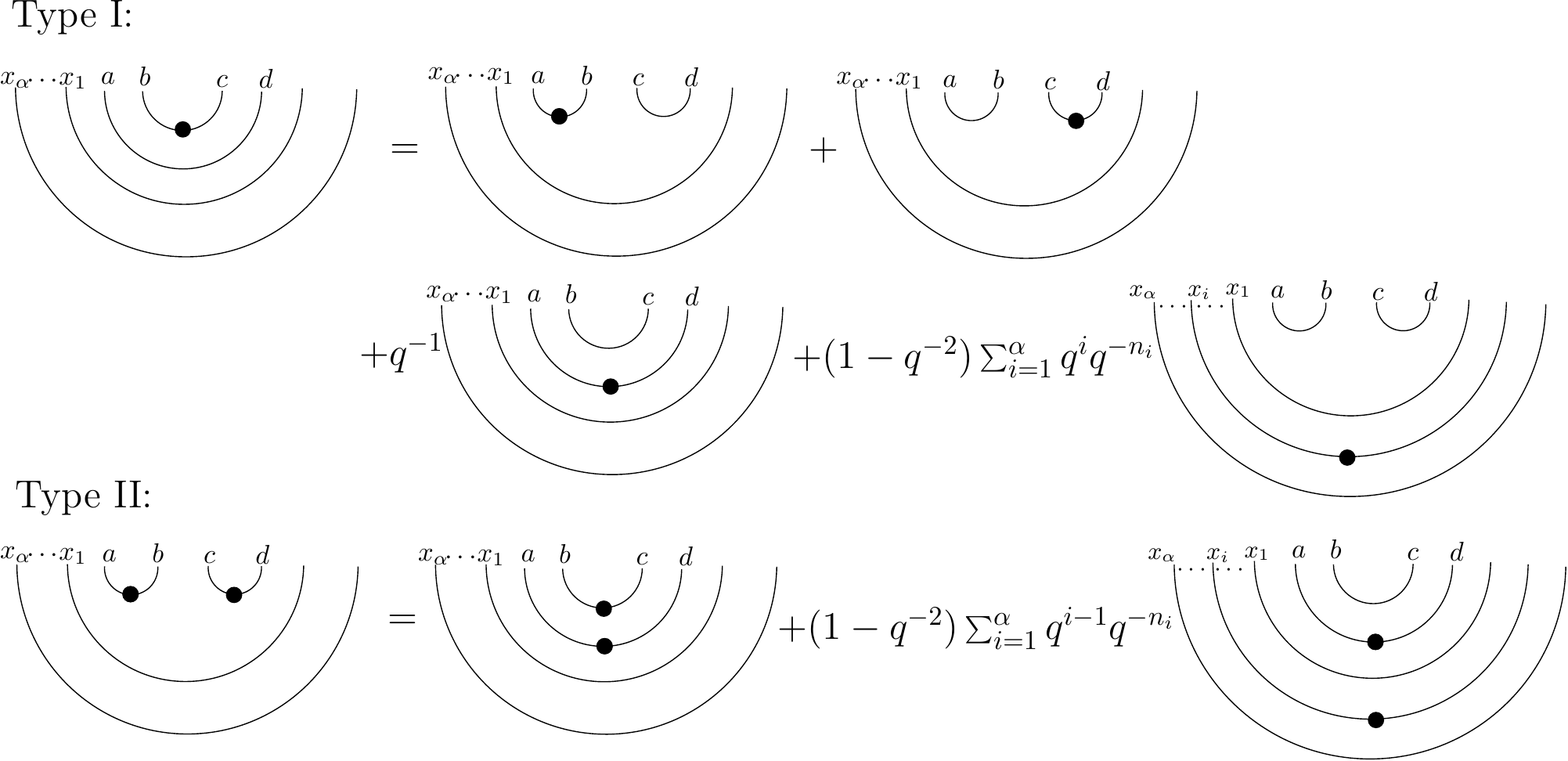}
\caption{The Type I and Type II quantum Russell relations}
\label{quantum_Russell}
\end{figure}

\begin{example}
Figure \ref{qRussell_ex} shows an application of a quantum Type I relation to the two bolded arcs in an element of $\widetilde{R}^q_{5,2}$.
\begin{figure}[H]
\centering
\includegraphics[scale=.8]{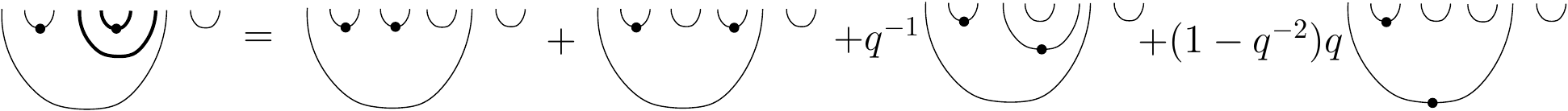}
\caption{An example application of a quantum Type I relation.}
\label{qRussell_ex}
\end{figure}
\end{example}

\begin{definition}
Define the \emph{quantum Russell space} $R^q_{n,k}$ to be the quotient of $\widetilde{R}^q_{n,k}$ by the Type I and Type II quantum Russell relations.
\end{definition}

\begin{remark}
When $q = -1$, the quantum Russell relations are the same as the original Russell relations. From this point forward we use the notation $R_{n,k}^{q=-1}$ and $R_{n,k}$ interchangeably.
\end{remark}

\begin{remark}
When $q = 1$, the space $R^{q=1}_{n,k}$ is isomorphic to the original Russell space $R_{n,k}$. The only difference in the relations is that the $q=1$ Type I relation has an extra minus sign, as shown in Figure \ref{modifiedRussell}. 

\begin{figure}[H]
\centering
\includegraphics[scale=1]{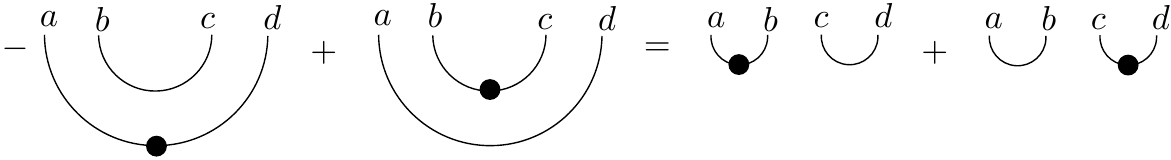}
\caption{The Type I quantum Russell relation with $q=1$ differs from the original Type I Russell relation by only a minus sign.}
\label{modifiedRussell}
\end{figure}

We note that there are two possible isomorphisms from $R^{q=1}_{n,k}$ to $R_{n,k}$ given by the rescaling of diagrams. One rescaling is given by multiplying a diagram by a factor of $-1$ for each pair of nested arcs for which the outer arc is dotted and the inner arc is undotted. Another rescaling is to multiply by $-1$ for each dotted arc nested in another (dotted or undotted) arc as well as an additional factor of $-1$ if the diagram is ``odd''. An diagram is defined to be odd if its underlying undotted arc structure can be transformed to n adjacent unnested arcs in an odd number of moves, where a move involves transforming a pair of arcs $(a, b), (c, d)$ into $(a, d), (b, c)$ or vice versa, with $a < b < c < d$. Note that under either rescaling, the new Type I relation becomes the original and the Type II relation is unchanged.
\end{remark}

\subsection{A note on locality}
\label{sec:no-local}
Note that the quantum Type I and Type II Russell relations are not as local as the original ones, in the sense that the original relations only involved two arcs and the endpoints $a,b,c,d$, whereas the quantum relations involve each undotted arc with one endpoint to the left of $a$ and the other to the right of $d$. Only when $q= \pm 1$ do the terms involving such arcs disappear. However, our relations are still semi-local in the sense that arcs completely outside of the arc labeled $x_{\alpha}$ as well as those in between the labeled undotted arcs are unaffected by either Type I or Type II relations.

It turns out that this semi-locality is the best that we could hope for in a quantum version of the Russell relations.
\begin{proposition}
There are no fully local Type I and Type II relations depending on a parameter $q$ producing a space of the same dimension as the original Russell space $R_{n,k}$.
\end{proposition}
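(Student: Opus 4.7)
The plan is to assume the existence of fully local Type I and Type II relations with $q$-dependent coefficients in $\mathbb{Z}[q, q^{-1}]$ and then derive small-case consistency constraints that force those coefficients to be constants. By the left-right reflective symmetry of the four-point block, the most general local relations may be written $m_1 + m_2 = e(q)(m_1' + m_2')$ for Type I and $m_3 = c(q) m_3'$ for Type II, for some scalars $e, c \in \mathbb{Z}[q, q^{-1}]$.

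The first consistency check would be in $n = 3$, $k = 3$, where $\dim R_{3,3} = 1$ but $\widetilde{R}_{3,3}$ is spanned by the five fully dotted crossingless matchings $X_1, \ldots, X_5$ (one for each matching of six points). Each of the six four-point blocks available in $n = 3$ gives a Type II identification of the form $X_i = c X_j$; chasing these around the resulting cycle produces both $X_4 = c X_5 = X_1$ and $X_4 = X_1/c^2$, so $(c^2 - 1) X_1 = 0$. If $c(q)^2 \neq 1$ then every $X_i$ vanishes and the quotient has dimension $0 \neq 1$, forcing $c(q) = \pm 1$.

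Next I would move to $n = 3$, $k = 2$, where $\dim R_{3,2} = 5$ and there are twelve local relations (six of each type) acting on the $15$-dimensional $\widetilde{R}_{3,2}$. I would use five Type II and five Type I relations to express each of the ten non-standard two-dot diagrams as a linear combination of the five standard basis elements, then substitute these expressions into the sixth (previously unused) Type I relation. After cancellation this should collapse to an identity
\[
\frac{e(q) - c(q)}{c(q)} \cdot L(q) = 0,
\]
where $L(q)$ is a linear combination of four distinct standard basis elements whose coefficients cannot all vanish. Since $L(q)$ must be non-zero in any quotient of the correct dimension, we need $e(q) = c(q)$; together with $c(q) = \pm 1$ from the previous step, this gives $e(q) = c(q) = \pm 1$.

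Consequently the only admissible local relations would be (up to a global sign twist) the original Russell relations themselves, so no genuinely $q$-dependent local deformation reproduces $\dim R_{n,k}$. The main obstacle I anticipate is the careful bookkeeping in the $k = 2$ step: the order of substitutions must be chosen so that the residual identity factors cleanly into $(e - c)$ times a nonzero combination of standard basis elements, rather than devolving into yet another consistency condition among the non-standard diagrams.
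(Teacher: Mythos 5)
Your overall strategy --- impose consistency/dimension constraints in small cases to force the coefficients to be constants --- is in the right spirit, and your first step is sound: in $\widetilde{R}_{3,3}$ the six Type~II moves among the five fully dotted matchings do contain a cycle of net signed length two, so $(c(q)^2-1)X=0$ and the rank-one requirement forces $c(q)=\pm 1$. (The paper instead argues purely locally, checking that different orders of applying relations to a single configuration of three nested arcs agree; this yields $a=b=d$, then $a=\pm1$ and $a=\pm c$, with no dimension count needed.)

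However, there is a genuine gap in your second step, and it is not just that the $k=2$ computation is left as a plan. The identity you anticipate, $\frac{e(q)-c(q)}{c(q)}L(q)=0$ with $L\neq 0$, would force $e(q)=c(q)$, and that conclusion is demonstrably too strong: the paper exhibits \emph{two} inequivalent fully local systems of the correct dimension, the original Russell relations and the $q=1$ specialization of the quantum relations, which share the same Type~II relation ($c=1$ in your normalization) but differ by a sign in Type~I ($e=1$ versus $e=-1$); see Figure~\ref{modifiedRussell} and the remark following the paper's proof. Since $(e,c)=(-1,1)$ is a valid system, no correct consistency computation can output $e=c$; whatever the sixth Type~I relation actually collapses to, it can at best give $e^2=1$ or $e=\pm c$. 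So the step that is supposed to finish the argument is aimed at a false target, and you would need either the corrected form of the residual identity or, as in the paper, a different configuration (three nested arcs with the outer arc undotted and both inner arcs dotted, resolved in two orders) to pin down the remaining constraints. A secondary issue: your reduction to a two-parameter reflection-symmetric ansatz is an assumption, not a consequence; the paper starts from the four-parameter family $a(q),b(q),c(q),d(q)$ and \emph{derives} the symmetry $a=b=d$ from consistency, which is what the proposition as stated requires.
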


\begin{proof}
In order to have such local relations, they must be consistent, in the sense that for a given diagram in which two or more different relations may be performed, all choices of which to perform first must yield equivalent linear combinations of diagrams. Consider the most generic local Type I and Type II relations, where $a(q), b(q), c(q), d(q)$ are any functions of $q$:
\begin{figure}[H]
\centering
\includegraphics[scale=1]{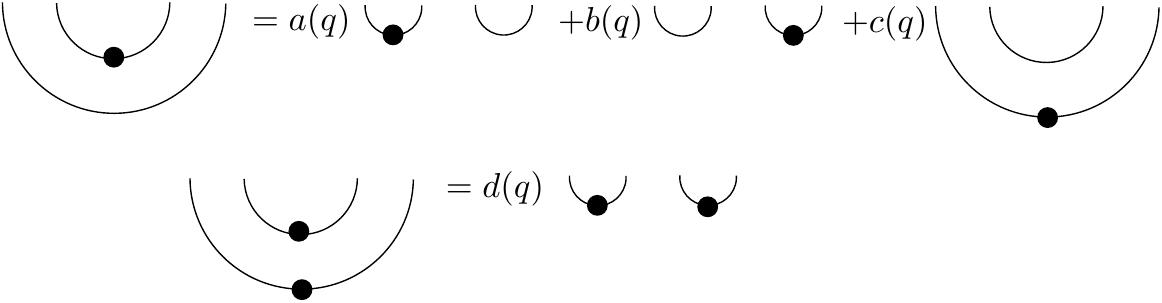}
\end{figure}
We will investigate what conditions are necessary on $a(q), b(q), c(q), d(q)$ for a deformation of the Russell space with these Type I and Type II relations to have the same dimension as the original. First consider the following equality, obtained by first applying a Type I relation to the outer undotted arc and the left of the two inner dotted arcs and then by applying the only possible Type I or Type II relation to the resulting terms.
\begin{figure}[H]
\centering
\includegraphics[scale=1]{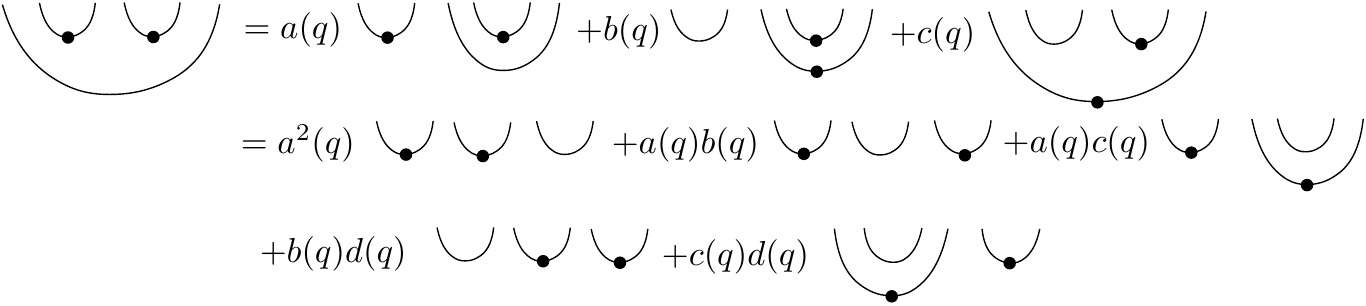}
\end{figure}
Next consider an expansion of the same element by first applying the other of the two possible Type I relations:
\begin{figure}[H]
\centering
\includegraphics[scale=1]{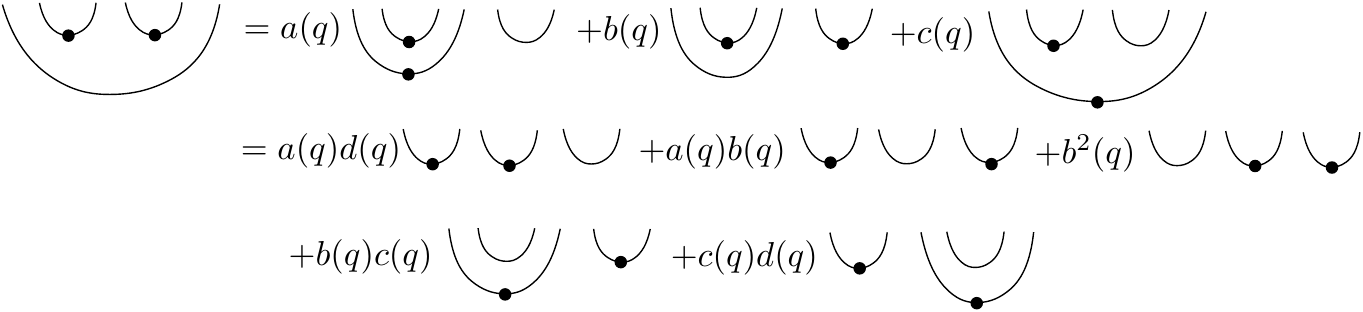}
\end{figure}
From these two computations, we see that in order to have consistency we must have
\[a(q) = b(q) = d(q).\]
Given these constraints, next consider the following computation, given by first applying the only possible Type II relation and then using the results of the previous computation.
\begin{figure}[H]
\centering
\includegraphics[scale=1]{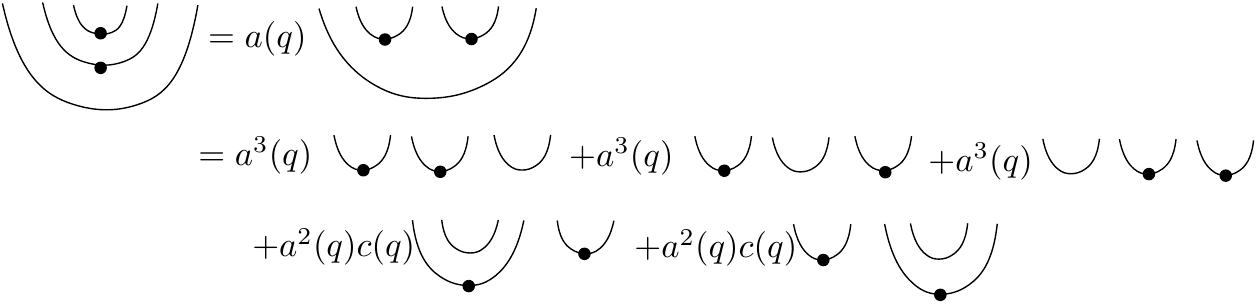}
\end{figure}
Alternatively, by first applying the only possible Type I relation, we see that
\begin{figure}[H]
\centering
\includegraphics[scale=1]{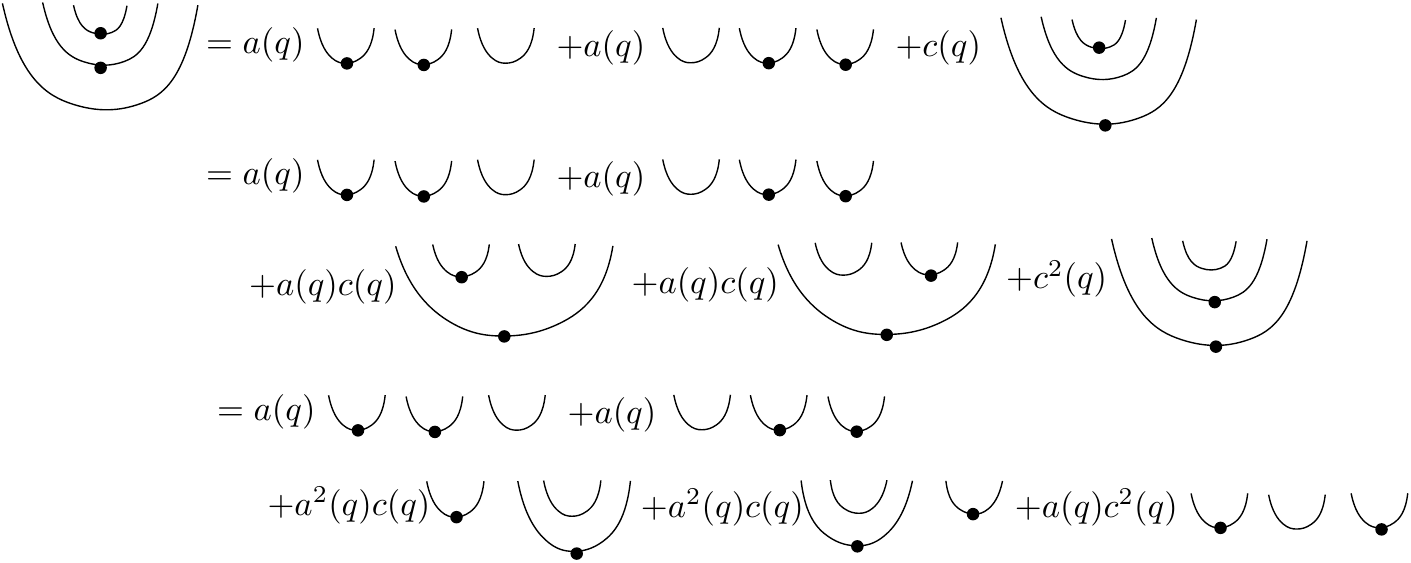}
\end{figure}
To have consistency, we conclude that
\[ a(q) = \pm 1\]
and
\[a(q) = \pm c(q), \]
so that $a, b, c, $ and $d$ may not depend on $q$. $\blacksquare$
\end{proof}

\begin{remark}
The system where $a=1, c=-1$ is the original Russell space, and that where $a=1, c=1$ is our $q=1$ quantum Russell space.
\end{remark}

\section{The spaces $\widetilde{S}^q_{n,k}$ and $S^q_{n,k}$}
\label{sec:KL-def}

First we define spaces $\widetilde{S}^q_{n,k}$.

\begin{definition}
Fix $2(n+k)$ points on a horizontal line and place a box or ``projector'' around each of the leftmost and rightmost $k$ points. A diagram in $\widetilde{S}^q_{n,k}$ is defined to be an embedding of $n+k$ arcs and a finite number of circles such that the boundary points of the arcs bijectively correspond to the $2(n+k)$ fixed points. Additionally, we require that no arc has both of its endpoints inside a single projector (such diagrams are defined to be 0). We define $\widetilde{S}^q_{n,k}$ to be the space of formal linear combinations of such diagrams with coefficients in $\mathbb{Z}[q^{1/2}, q^{-1/2}]$ subject to local relations for the expansion of a crossing and the evaluation of a circle given by traditional Kauffman--Lins diagrammatics and shown in Figure \ref{KL}.
\end{definition}

\begin{figure}[H]
\centering
\includegraphics[scale=1]{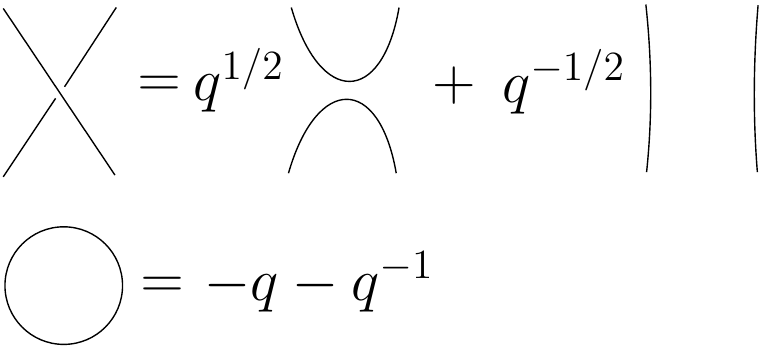}
\caption{The traditional Kauffman--Lins diagrammatics. The other type of crossing is given by the 90 degree rotation of the one pictured.}
\label{KL}
\end{figure}

\begin{remark}
For notational convenience we often suppress the dependence on $q$ and simply write $\widetilde{S}_{n,k}$.
\end{remark}

\begin{remark}
The notational use of `$\sim$' in $\widetilde{R}_{n,k}^q$ and $\widetilde{S}_{n,k}$ is unrelated.
\end{remark} 

\begin{proposition}
The space $\widetilde{S}_{n,k}$ has a basis consisting of those diagrams that do not have any circles or crossings between arcs.
\end{proposition}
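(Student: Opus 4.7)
The plan is to prove spanning and linear independence separately. For spanning, I would induct on the lexicographic pair (number of crossings, number of circles) of a diagram. The Kauffman--Lins crossing-expansion relation rewrites any diagram with a crossing as a $\mathbb{Z}[q^{1/2}, q^{-1/2}]$-linear combination of two diagrams with strictly fewer crossings, and the circle-evaluation relation rewrites a diagram containing a closed loop as $(-q-q^{-1})$ times a diagram with one fewer circle. Both moves are local and do not introduce new arcs, so they preserve the projector zero-condition (some resolutions may be automatically zero, which is harmless for the induction). The induction thus terminates with every element of $\widetilde{S}_{n,k}$ lying in the span of the crossingless, circle-less diagrams.

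For linear independence, I would compare $\widetilde{S}_{n,k}$ with the analogous module $\widetilde{T}$ defined identically but \emph{without} imposing the projector zero-condition. A standard theorem in the Temperley--Lieb / Kauffman--Lins calculus (see \cite{KL}) asserts that $\widetilde{T}$ is free over $\mathbb{Z}[q^{1/2}, q^{-1/2}]$ with basis $\mathcal{B}$ equal to the set of all crossingless matchings of the $2(n+k)$ boundary points. Partition $\mathcal{B} = \mathcal{B}_{\mathrm{good}} \sqcup \mathcal{B}_{\mathrm{bad}}$ according to whether any arc has both endpoints in a single projector; the proposed basis of $\widetilde{S}_{n,k}$ is exactly $\mathcal{B}_{\mathrm{good}}$. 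Writing $\widetilde{S}_{n,k} = \widetilde{T}/J$, where $J$ is the submodule generated by all bad tangles (possibly with crossings and circles), the proposition reduces to showing $J = \mathrm{span}(\mathcal{B}_{\mathrm{bad}})$.

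The main obstacle is the inclusion $J \subseteq \mathrm{span}(\mathcal{B}_{\mathrm{bad}})$, i.e.\ verifying that the bad-diagram relation does not accidentally impose nontrivial relations among good crossingless matchings. For this I would invoke the classical fact that any planar tangle without closed components is $3$-dimensionally isotopic, rel boundary, to its underlying combinatorial matching $M_D$, and hence related to $M_D$ in the plane by a finite sequence of Reidemeister I, II, and III moves. The Kauffman--Lins bracket is invariant under R2 and R3 and changes by the unit $-q^{\pm 3/2}$ under R1, so after also accounting for circle evaluations (each contributing the scalar $-q-q^{-1}$) we obtain $D = u \cdot M_D$ in $\widetilde{T}$ for some unit $u \in \mathbb{Z}[q^{1/2}, q^{-1/2}]$. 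Since $D$ bad means $M_D \in \mathcal{B}_{\mathrm{bad}}$, this gives $D \in \mathrm{span}(\mathcal{B}_{\mathrm{bad}})$, as required. Combined with the trivial reverse inclusion, this yields $J = \mathrm{span}(\mathcal{B}_{\mathrm{bad}})$, and therefore $\widetilde{S}_{n,k}$ is the free $\mathbb{Z}[q^{1/2}, q^{-1/2}]$-module on $\mathcal{B}_{\mathrm{good}}$, completing the proof.
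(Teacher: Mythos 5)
Your spanning argument is the same as the paper's: resolve crossings one at a time and replace each circle by a factor of $-q-q^{-1}$, inducting on the number of crossings and circles. That half is fine.

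The linear-independence half has a genuine gap. You correctly isolate the real subtlety --- that the relation ``any diagram with an arc having both endpoints in a single projector is zero'' might, after Kauffman--Lins expansion, force relations among the good crossingless matchings --- and the paper's own one-sentence justification does not address this. But your resolution rests on a false lemma: it is \emph{not} true that every planar tangle without closed components is isotopic rel boundary to its underlying matching, since arcs can be knotted and distinct arcs can be linked. Concretely, let $\alpha$ be an arc joining two points of the left projector (a bad arc) and $\beta$ a good arc, and let $D$ be the diagram in which $\alpha$ and $\beta$ form a clasp (two crossings of the same sign). Expanding both crossings in the free Kauffman--Lins module gives $D = q\,M_{\mathrm{bad}} + (1-q^{-2})\,M_{\mathrm{good}}$, where $M_{\mathrm{bad}}$ is the underlying (bad) matching of $D$ and $M_{\mathrm{good}}$ is the reconnected (good) matching; this is the same computation that produces the factor $(1-q^{-2})$ in Lemma \ref{lem:fix1}. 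So $D$ is not a unit multiple of $M_D$, and if such a $D$ is admitted as a generator of your submodule $J$, then $J \not\subseteq \mathrm{span}(\mathcal{B}_{\mathrm{bad}})$: one would deduce $(1-q^{-2})M_{\mathrm{good}} = 0$, destroying freeness. The repair is to read the zero condition as imposed only on crossingless diagrams --- equivalently, to define $\widetilde{S}_{n,k}$ as the quotient of the ordinary Temperley--Lieb skein module (free on all crossingless matchings of the $2(n+k)$ points) by the span of the bad crossingless matchings; this is how the condition is actually used later, e.g.\ in the proof of Lemma \ref{basis-lem}, where triviality is invoked only after reduction to crossingless form. Under that reading your equality $J = \mathrm{span}(\mathcal{B}_{\mathrm{bad}})$ holds by definition and no isotopy lemma is needed; under the reading you adopted, the equality is false.
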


\begin{proof}
It is clear from the Kauffman--Lins relations that such diagrams are linearly independent, since there are no crossings to expand or circles to remove. To see that they form a spanning set, note that any diagram in $\widetilde{S}_{n,k}$ that does have crossings between arcs may have those crossings resolved one at a time by applying a Kauffman--Lins relation. Similarly, any diagram containing circles may have them replaced by a coefficient of $-q - q^{-1}$. Therefore any diagram in $\widetilde{S}_{n,k}$ may be expressed as a $\mathbb{Z}[q^{1/2}, q^{-1/2}]$-linear combination of diagrams without crossings or circles.
\end{proof}

See Figure \ref{S_tilde-ex} for two equivalent sample elements of $\widetilde{S}_{3,2}$.

\begin{figure}[H]
\centering
\includegraphics[scale=.8]{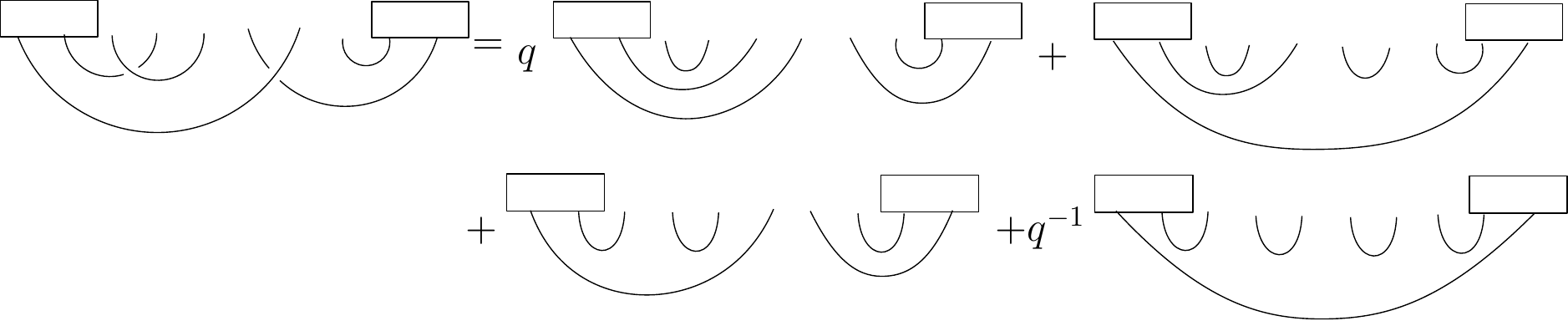}
\caption{A sample element of $\widetilde{S}_{3,2}$.}
\label{S_tilde-ex}
\end{figure}

We define an embedding $\varphi_{n,k} : \widetilde{S}_{n,k} \hookrightarrow \widetilde{S}_{n,k+1}$ as follows. Given a diagram $x$ in $\widetilde{S}_{n,k}$, add an additional fixed point on both the left and right sides of the existing $2(n+k)$ points and expand each projector to include a new fixed point. Add a new arc connecting the two new points, leaving the original arcs of $x$ in place. We define $\varphi_{n,k}(x)$ to be this resulting element of $\widetilde{S}_{n,k+1}$ and extend $\phi_{n,k}$ by linearity on an arbitrary element of $\widetilde{S}_{n,k}$. See Figure \ref{S_tilde_emb} for an example.

\begin{figure}[H]
\centering
\includegraphics[scale=1]{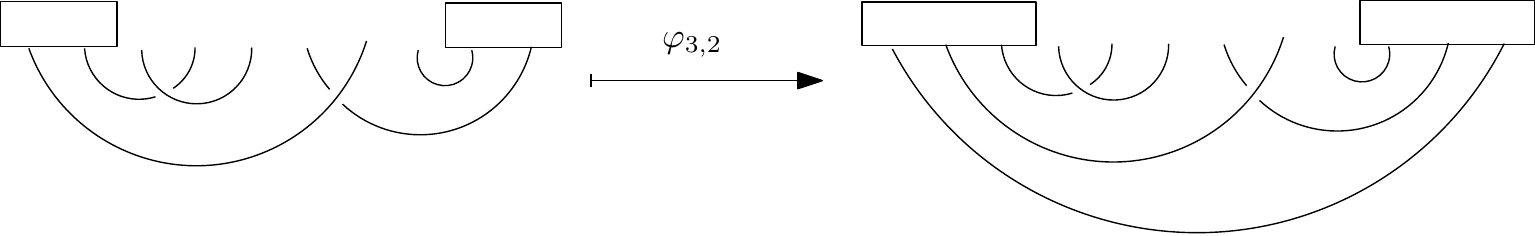}
\caption{The embedding of a sample element of $\widetilde{S}_{3,2}$ into $\widetilde{S}_{3,3}$.}
\label{S_tilde_emb}
\end{figure}

\begin{definition}
For $k \geq 1$,  define the space $S^q_{n,k}$ to be $\mbox{coker}(\varphi_{n,k-1}) = \widetilde{S}^q_{n,k}/\widetilde{S}^q_{n, k-1}$. Again, we frequently suppress $q$ and just write $S_{n,k}$.
\end{definition}

\begin{proposition}
\label{S-basis}
$S_{n,k}$ has a basis of diagrams without circles, crossings between arcs, or arcs connecting the two projectors.
\end{proposition}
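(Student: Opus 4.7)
The plan is to exploit the basis of $\widetilde{S}_{n,k}$ provided by the previous proposition (diagrams with no circles and no crossings between arcs) and track exactly which basis elements lie in $\mathrm{im}(\varphi_{n,k-1})$. I would partition this basis into two disjoint subsets: type $\mathcal{A}$, consisting of diagrams that contain at least one arc connecting the two projectors, and type $\mathcal{B}$, consisting of those that do not. The type $\mathcal{B}$ diagrams are exactly those described in the proposition, so the whole proof reduces to showing that their images form a basis of the quotient $S_{n,k} = \widetilde{S}_{n,k}/\mathrm{im}(\varphi_{n,k-1})$.

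The key geometric step is the following planarity claim: in any type $\mathcal{A}$ basis diagram, the outermost arc joining the two projectors must in fact connect the outermost point of the left projector to the outermost point of the right projector. The reason is that if its left endpoint sat at some position $i>1$, then the point at position $1$ (which lies in the same left projector) would have to be joined to some other point by an arc that, by planarity, can neither dip inside the chosen outer arc (a crossing would occur) nor terminate in the same left projector (forbidden by definition); the only option is for it to terminate in the right projector, producing a projector-connecting arc strictly enclosing the chosen one, contradicting outermostness. The symmetric argument pins down the right endpoint. This is the main obstacle of the proof, and everything else is bookkeeping.

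From this observation it follows that $\varphi_{n,k-1}$ restricts to a bijection between the standard basis of $\widetilde{S}_{n,k-1}$ and the type $\mathcal{A}$ subset of the standard basis of $\widetilde{S}_{n,k}$: the map sends a basis diagram to a type $\mathcal{A}$ basis diagram by construction (the newly added outer arc joins the two projectors and introduces neither crossings nor circles), and the inverse operation is to remove the outermost projector-connecting arc and shrink each projector by one point, which is well-defined precisely by the planarity claim. Consequently $\mathrm{im}(\varphi_{n,k-1})$ is the $\mathbb{Z}[q^{1/2},q^{-1/2}]$-submodule of $\widetilde{S}_{n,k}$ freely spanned by the type $\mathcal{A}$ basis diagrams.

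To finish, since the type $\mathcal{A}$ and type $\mathcal{B}$ diagrams together constitute a free $\mathbb{Z}[q^{1/2},q^{-1/2}]$-basis of $\widetilde{S}_{n,k}$, quotienting by the span of type $\mathcal{A}$ yields a free module whose basis is the image of the type $\mathcal{B}$ diagrams. Linear independence in the quotient follows because any relation among the type $\mathcal{B}$ classes would lift to an equality, in $\widetilde{S}_{n,k}$, between a combination of type $\mathcal{B}$ basis elements and a combination of type $\mathcal{A}$ basis elements, which forces both to vanish. This is precisely the statement of the proposition.
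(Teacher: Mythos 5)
Your proof is correct and follows essentially the same route as the paper: both hinge on the observation that a crossingless diagram with an arc connecting the two projectors must in fact contain an arc joining the leftmost point to the rightmost point, and hence lies in $\mathrm{im}(\varphi_{n,k-1})$. You are slightly more thorough than the paper, which merely asserts this planarity fact and only establishes spanning; your explicit identification of $\mathrm{im}(\varphi_{n,k-1})$ with the free span of the projector-connecting basis diagrams also settles the linear independence of the remaining diagrams in the quotient.
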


\begin{proof}
We have already seen that $\widetilde{S}_{n,k}$ has a basis of diagrams without circles or crossings, and in particular these diagrams form a spanning set, so these diagrams remain a spanning set in the quotient space $S_{n,k}$. Now we see that we may eliminate diagrams that have arcs connecting the two projectors. Suppose that $d$ is a diagram in $S_{n,k}$ without circles or crossings that does have an arc between its two projectors. Label the fixed points of $d$ by $1, \ldots, 2(n+k)$ from left to right. Since $d$ has no crossings, it must have an arc with endpoints $1$ and $2(n + k)$. Let $d' \in  \widetilde{S}_{n,k-1}$ be the diagram obtained from $d$ by only looking at the arcs with endpoints $2, \ldots, 2(n + k) - 1$. Then it is clear that $\varphi_{n,k-1}(d') = d$, so that $d$ is trivial in $S_{n,k}$.
\end{proof}

\section{The map from $\widetilde{R}^q_{n,k}$ to $\widetilde{S}_{n,k}$}
\label{sec:psi-tilde}

In this section we define a map $\widetilde{\psi}_{n,k}: \widetilde{R}^q_{n,k} \to \widetilde{S}^q_{n,k}$. Ultimately, we would like this map to descend to one from $R^q_{n,k}$ to $S^q_{n,k}$:

\centerline{
\xymatrix{
\widetilde{R}^q_{n,k} \ar[r]^{\widetilde{\psi}_{n,k}} \ar@{->>}[d] & \widetilde{S}^q_{n,k} \ar@{->>}[d] \\
R^q_{n,k} \ar@{-->}[r]^{\psi_{n,k}} & S^q_{n,k}
}
}

The map from $R^q_{n,k}$ to $S^q_{n,k}$ is the subject of the following section.

\begin{definition}
For $x$ a diagram of $\widetilde{R}^q_{n,k}$, define $\widetilde{\psi}_{n,k}(x)$ as follows:
\begin{itemize}
\item Add $k$ fixed points on each side of the existing $2n$ fixed points in $x$ and surround each new group by a projector.
\item Do not modify the undotted arcs of $x$.
\item Expand the dotted arcs of $x$ one at a time from left to right. If a dotted arc connects endpoints labeled $i$ and $j$ in $x$ ($i < j$), replace it with two new arcs: one from $i$ to the rightmost free fixed point in the left projector and one from $j$ to the rightmost free fixed point in the right projector. The following conventions for crossings between expanded arcs are used:
\begin{itemize}
\item If a dotted arc of $x$ is nested inside any undotted arcs, the left arc in the expansion should pass underneath the undotted arcs and the right arc in the expansion should pass over the undotted arcs.
\item The expansion of a dotted arc of $x$ should not intersect any undotted arc that did not contain the dotted arc.
\item If a pair of dotted arcs of $x$ is nested, then the arcs in their expansions are disjoint.
\item If a pair of dotted arcs of $x$ is unnested, then the only point of intersection of the arcs in their expansion should be where the right expanded arc of the left dotted arc passes above the left expanded arc of the right dotted arc.
\end{itemize}
\item For each pair of nested dotted arcs in $x$, $\widetilde{\psi}_{n,k}(x)$ gains a coefficient of $q^{-1/2}$. (Note that when $q=1$ we may ignore this step.)
\end{itemize}
An example is shown in Figure \ref{expansion_ex}. The map $\widetilde{\psi}_{n,k}$ is then extended to all of $\widetilde{R}^q_{n,k}$ by linearity.
\end{definition}

\begin{figure}[H]
\centering
\includegraphics[scale=1]{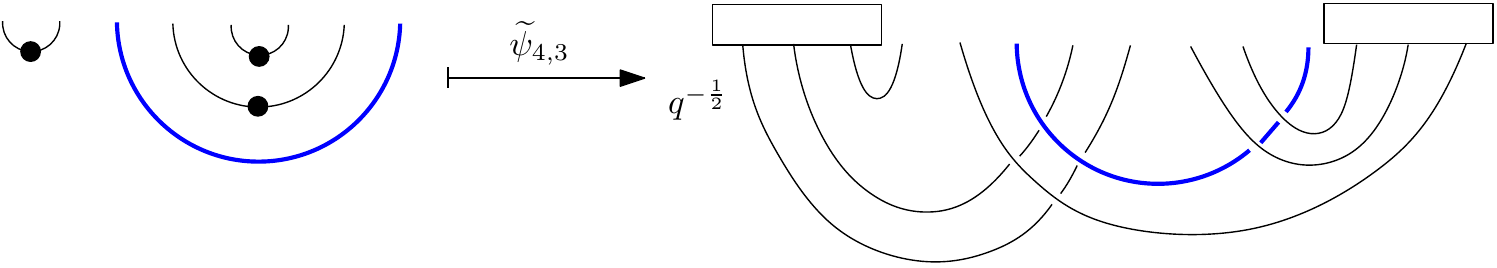}
\caption{The expansion of an element of $\widetilde{R}^q_{4,3}$. The undotted arc is bolded and highlighted blue for clarity.}
\label{expansion_ex}
\end{figure}

\section{The relationship between $R^q_{n,k}$ and $S^q_{n,k}$}
\label{sec:psi}

We will show that the map $\widetilde{\psi}_{n,k}$ is indeed well-defined on the quotient space $R_{n,k}^q$, giving an embedding of the quantum Russell skein module inside one which respects local traditional Kauffman--Lins diagrammatic relations.

Prior to proving this result, we need to establish a lemma concerning a particular type of diagram $d$ in the Kauffman--Lins space $S_{n,k}^q$, shown in Figure \ref{fix1}. 

\begin{figure}[H]
\centering
\includegraphics[scale=1]{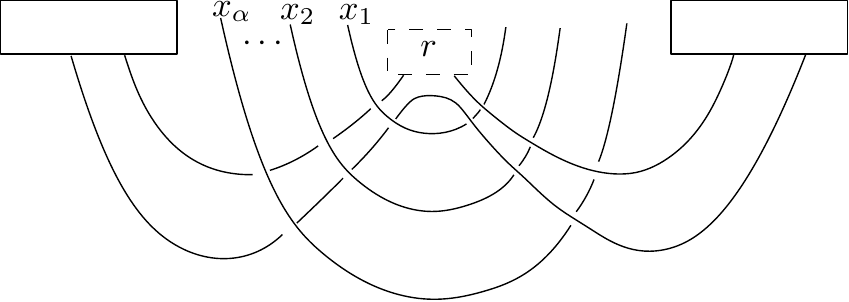}
\caption{The diagram $d \in S_{n,k}^q$.}
\label{fix1}
\end{figure}

For clarity, not all arcs of $d$ are pictured. All arcs that are interwoven with the arc between the two projectors are shown and are labeled $x_1, x_2, \ldots, x_\alpha$. The region labeled $r$ is meant to represent the sub-diagram present inside the arc labeled $x_1$. The arcs from the box $r$ to the left and right projectors represent some number $|r|$ of parallel arcs. Also not shown are sub-diagrams in between the arcs labeled $x_1, \ldots, x_\alpha$. A closeup of the region between arcs labeled $x_{i-1}$ and $x_i$ is shown in Figure \ref{fix2}.

\begin{figure}[H]
\centering
\includegraphics[scale=1]{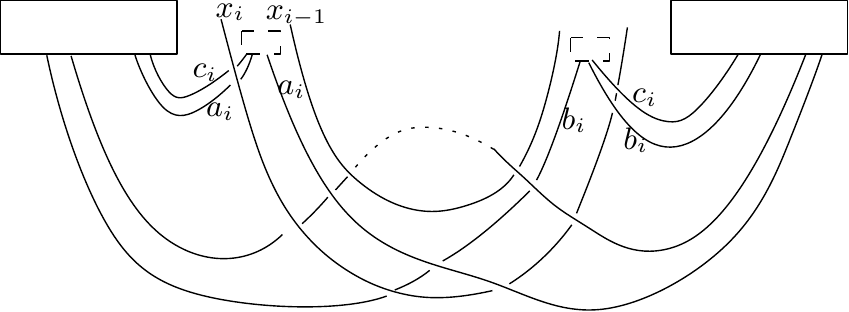}
\caption{A closeup of the region between the arcs $x_{i-1}$ and $x_i$ in $d$.}
\label{fix2}
\end{figure}
For $2 \leq i \leq \alpha$, the labels $a_i, b_i,$ and $c_i$ denote the number of parallel strands represented by the single strand adjacent to the label. There may be additional arcs and crossings inside the dotted boxes, but all arcs leaving those regions and entering one of the two projectors are shown. Again, this is a closeup of only a part of the diagram $d$ in Figure \ref{fix1}- the arcs present inside arc $x_{i-1}$ and outside arc $x_i$ remain.

\begin{lemma}
\label{lem:fix1}
In the space $S_{n,k}^q$, we have the equality
\[ d = (1-q^{-2}) \sum_{i=1}^{\alpha} q^{i-1}q^{\sum_{j=2}^i (a_j+b_j)/2} d_i,\]
where $d_i$ is the diagram shown in Figure \ref{fix3}.
\begin{figure}[H]
\centering
\includegraphics[scale=1]{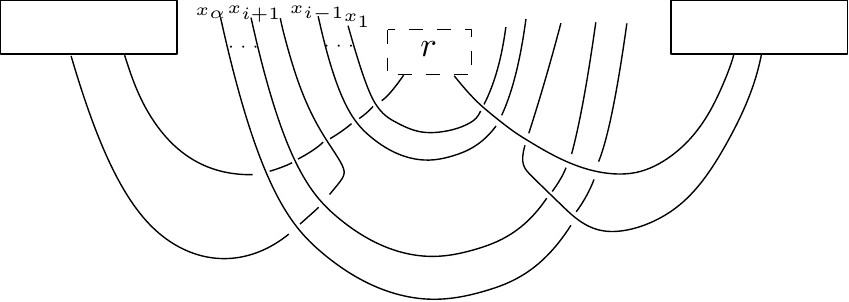}
\caption{The diagrams $d_i$ of Lemma \ref{lem:fix1}.}
\label{fix3}
\end{figure}
\end{lemma}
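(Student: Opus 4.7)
The plan is to prove this by induction on $\alpha$, resolving crossings between the connecting arc and the arcs $x_i$ using the Kauffman--Lins relations of Figure \ref{KL}. The key structural fact I will exploit is Proposition \ref{S-basis}: any connecting arc from one projector to the other that is free of crossings becomes trivial in $S_{n,k}^q$. The strategy is to apply Kauffman--Lins resolutions in such a way that I isolate one $d_i$ contribution at a time, with the remaining diagrams either vanishing (because isotopy makes them equivalent to a diagram in the image of $\varphi_{n,k-1}$) or reducing to a smaller instance of the lemma.

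The main computation, which I would carry out first for the base case $\alpha = 1$, consists of resolving the two crossings where the connecting arc enters and exits the arc $x_1$. Each crossing yields two terms with coefficients $q^{\pm 1/2}$, giving four terms after resolving both. In one term the connecting arc is ``smoothed'' so that it passes cleanly over $x_1$, yielding a diagram whose connecting arc is isotopic to one that crosses no $x_i$ and hence is trivial in $S_{n,k}^q$. Two of the other terms produce arcs that, after smoothing, have both endpoints in a single projector and thus vanish by definition. The surviving term is exactly $d_1$, and its coefficient should compute to $(1 - q^{-2})$ after tracking the $q^{\pm 1/2}$ factors from the two resolutions.

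For the inductive step, after peeling off $d_1$ from the Kauffman--Lins resolution at the outermost arc $x_1$, the remaining terms contain a diagram whose connecting arc now interweaves only with $x_2, \ldots, x_\alpha$; however, because the connecting arc has been ``pushed through'' $x_1$, it must now also cross the parallel strand packets in the transitional region between $x_1$ and $x_2$ (those counted by $a_2$ and $b_2$ in Figure \ref{fix2}). Each of these extra crossings must be resolved via Kauffman--Lins, and only one choice of resolution at each crossing preserves the connecting nature of the arc without creating an isolated loop or a doubly-projector-based arc; this choice contributes a factor of $q^{1/2}$ per strand. Collecting these contributions gives precisely the factor $q \cdot q^{(a_2+b_2)/2}$ that relates the coefficient of $d_2$ to that of $d_1$, after which the inductive hypothesis applied to the reduced diagram produces the full claimed sum.

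The main obstacle will be the careful bookkeeping of the $q$-factors: I need to verify that in the ``pushing through'' step each extra crossing with the $a_j, b_j, c_j$ strands contributes the correct power of $q$, and that the terms which I discard really do vanish in $S_{n,k}^q$ (either by the isotopy-to-trivial-connecting-arc argument or by creating endpoints inside a single projector). A secondary subtlety is that the sub-diagrams inside $r$ and between the $x_i$ must pass through the computation unchanged, which I would justify by noting that the Kauffman--Lins resolutions I perform are all local near the connecting arc and therefore commute with any diagrammatic structure disjoint from it.
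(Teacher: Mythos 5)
Your proposal is correct and follows essentially the same route as the paper: induction on $\alpha$, resolving the two crossings at the outermost arc $x_1$ via Kauffman--Lins to peel off $(1-q^{-2})d_1$, discarding terms whose connecting arc becomes unwoven (hence trivial in $S_{n,k}^q$), and extracting the factor $q\cdot q^{(a_2+b_2)/2}$ from pushing the connecting arc through the $a_2+b_2$ transitional strands before invoking the inductive hypothesis. The only caveat is the base-case bookkeeping of exactly which of the four resolution terms vanish and how the surviving coefficients combine to $(1-q^{-2})$, which you correctly flag as the point requiring verification.
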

Note that in the diagram $d_i$, the region $r$ and the regions in between the arcs $x_1, \ldots, x_\alpha$ and outside $x_\alpha$ are identical to those in $d$ (not pictured).

\begin{proof}
Proof is by induction on $\alpha$. First suppose that $\alpha = 1$. Then we see
\begin{figure}[H]
\centering
\includegraphics[scale=.7]{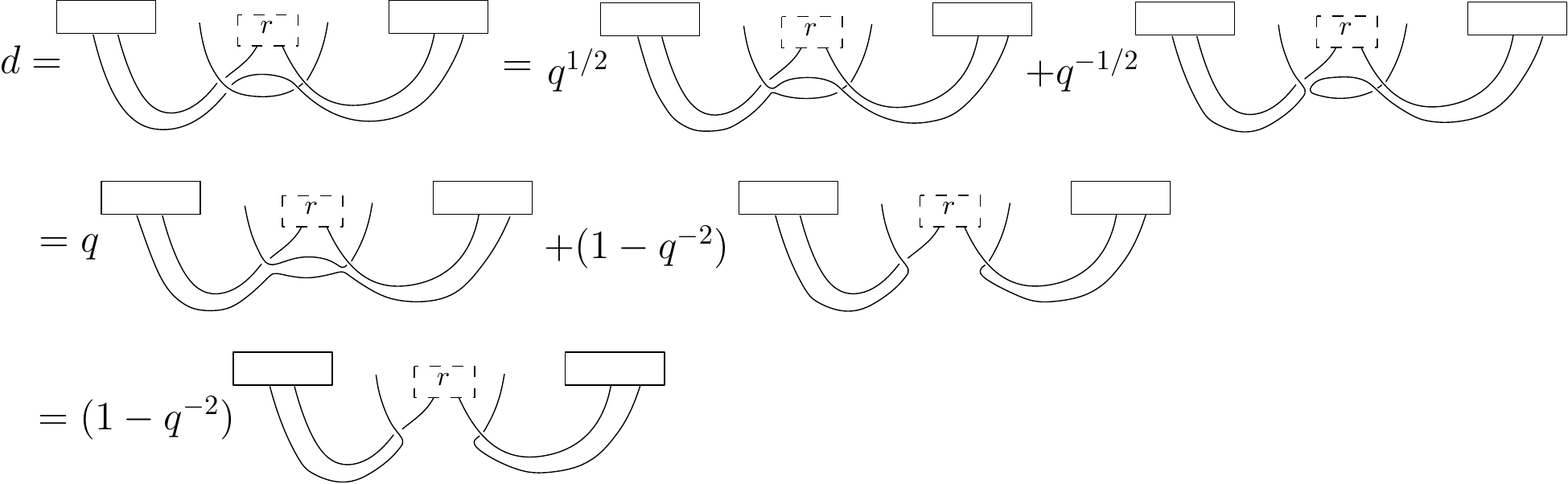}
\end{figure}

where the last equality follows from the fact that a diagram with an unwoven arc between the two projectors is trivial in $S_{n,k}^q$.

Next suppose that the statement has been proven for $\alpha-1$ arcs. Then by expanding the two crossings between the arc labeled $x_1$ and the interwoven arc between the two projectors, we see the following, where the highlighted region of a given diagram indicates the crossing(s) that are resolved to obtain the following step:
\begin{figure}[H]
\centering
\includegraphics[scale=.8]{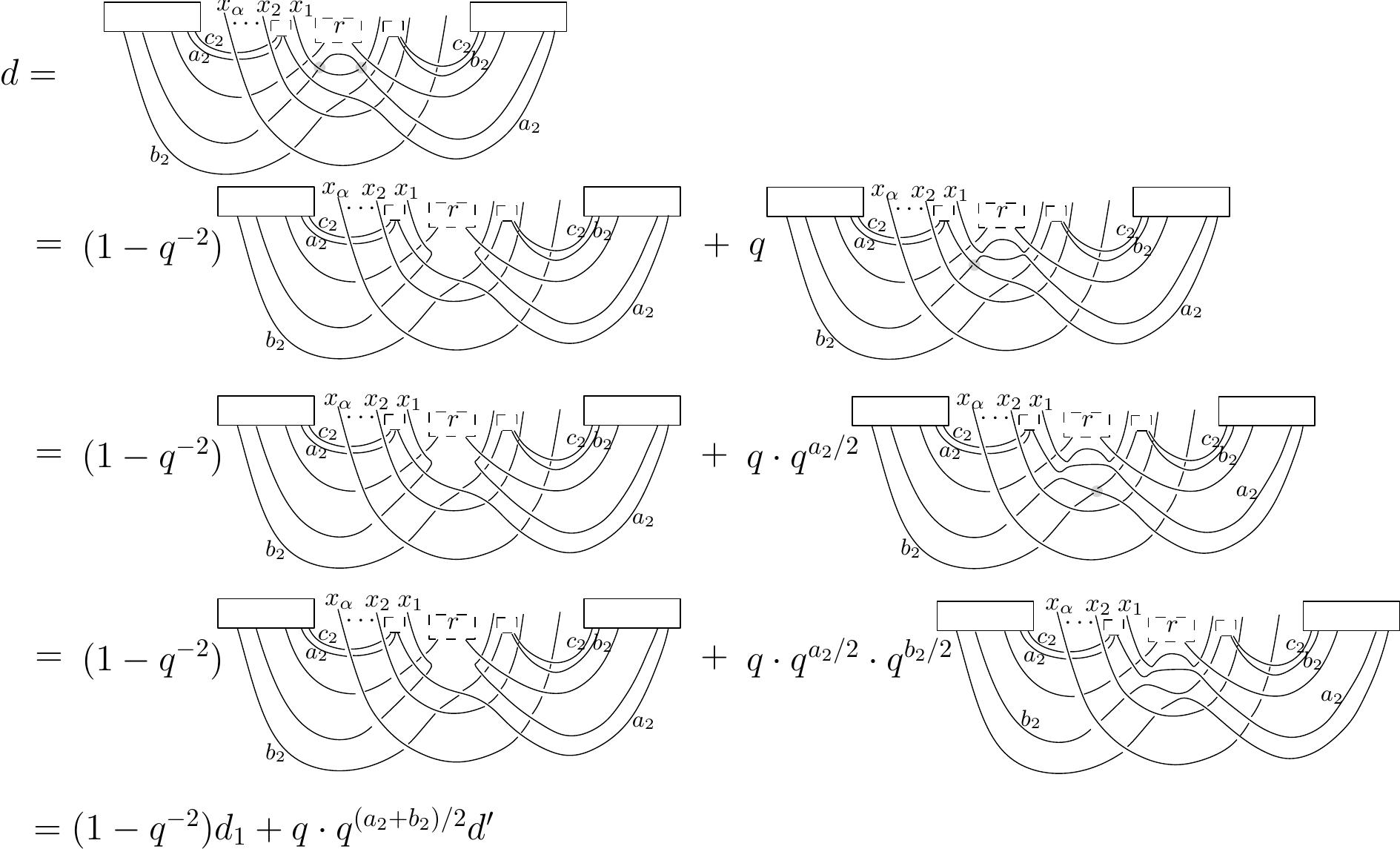}
\end{figure}
where $d_1$ and $d'$ are the two diagrams appearing in the previous line.

We observe that the diagram $d'$ has one fewer arc interwoven with the arc between the two projectors. Therefore by the inductive hypothesis, we see that
\begin{eqnarray*}
d &=& (1-q^{-2})d_1 + q \cdot q^{(a_2+b_2)/2} d' \\
&=& (1-q^{-2})d_1 + q \cdot q^{(a_2+b_2)/2} \cdot (1-q^{-2}) \cdot \sum_{i=2}^{\alpha} q^{i-2} q^{\sum_{j=3}^i(a_j+b_j)/2} d_i \\
&=& (1-q^{-2})d_1 + (1-q^{-2}) \cdot \sum_{i=2}^{\alpha} q^{i-1}q^{\sum_{j=2}^i (a_j+b_j)/2} d_i \\
&=& (1-q^{-2}) \sum_{i=1}^{\alpha} q^{i-1}q^{\sum_{j=2}^i (a_j+b_j)/2} d_i .
\end{eqnarray*}
$\blacksquare$
\end{proof}

\begin{theorem}
\label{key-thm}
The map $\widetilde{\psi}_{n,k}$ descends to a well-defined map from $R^q_{n,k}$ to $S^q_{n,k}$, which we call $\psi_{n,k}$.
\end{theorem}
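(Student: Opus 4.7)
The plan is to verify directly that $\widetilde{\psi}_{n,k}$ sends every quantum Type I and quantum Type II Russell relation to zero in $S^q_{n,k}$, so that it factors through the quotient $R^q_{n,k}$. Since both the quantum Russell relations and the map $\widetilde{\psi}_{n,k}$ act only on the local configuration at the four endpoints $a<b<c<d$ together with the $\alpha$ encompassing undotted arcs $x_1,\ldots,x_\alpha$, it suffices to understand the image of each relation in the sub-diagram on these arcs; the rest of the diagram is identical across all terms and can be pulled outside of the calculation.

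First I would handle the Type II relation, which is simpler because both sides carry exactly two dots on the inner arcs. Apply $\widetilde{\psi}_{n,k}$ to each of the two diagrams together with their $\alpha$ quantum correction terms. Each dotted arc expands into a pair of strands running to the left and right projectors and threading through the $x_i$'s according to the nesting conventions in the definition of $\widetilde{\psi}$. The two resulting configurations each contain a strand woven through the $x_i$'s and, crucially, the $q^{-1/2}$ factor for each nested pair of dotted arcs is the same on both sides. A direct comparison using the Kauffman--Lins crossing expansion then matches the two sides term by term.

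The main work is the Type I relation, where neither side is a single diagram and the expansion creates genuinely different weavings. For each of the four basic diagrams $m_1,m_2,m_1',m_2'$, I apply $\widetilde{\psi}_{n,k}$ and obtain a configuration in $S^q_{n,k}$ in which one strand is woven through all $\alpha$ encompassing arcs and one strand passes only through the arcs that actually enclosed the dotted arc being expanded. I would then resolve the innermost crossings produced by the expansion using the Kauffman--Lins relations of Figure \ref{KL}, and apply Lemma \ref{lem:fix1} to the resulting diagrams where a strand runs between the two projectors passing through $x_1,\ldots,x_\alpha$. This rewrites each $\widetilde{\psi}$-image as an explicit $\mathbb{Z}[q^{1/2},q^{-1/2}]$-linear combination of simpler diagrams $d_i$ indexed by the depth $i$ at which the threading strand crosses the arc between the two projectors. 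The quantum coefficients appearing in the quantum Russell relation (Figure \ref{quantum_Russell}) are precisely calibrated so that, after collecting the contributions from all eight terms according to their depth $i$, the $d_i$-coefficients cancel identically, with the $q^{-1/2}$ factors from nested dotted arcs balancing the $q^{1/2}$ factors produced by Kauffman--Lins crossing resolutions.

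The main obstacle will be the combinatorial bookkeeping: correctly tracking (i) which of the $x_j$'s each expanded strand passes over versus under, (ii) the $q$-powers introduced by each Kauffman--Lins resolution, and (iii) the relationship between the indices $n_i$ appearing in the quantum relation (which count dotted arcs nested inside $x_i$) and the values $a_j,b_j$ appearing in Lemma \ref{lem:fix1}. To keep this organized, I would first carry out the verification in the degenerate case $\alpha=0$ (no encompassing arcs), where the relation reduces to a small local check with only a handful of Kauffman--Lins moves, and then argue by induction on $\alpha$, peeling off the outermost $x_\alpha$ using Lemma \ref{lem:fix1} at each stage so that the inductive step reduces exactly to the $\alpha$ case with one fewer encompassing arc plus an explicit correction term matching the one in Figure \ref{quantum_Russell}.
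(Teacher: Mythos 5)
Your proposal follows essentially the same route as the paper: apply $\widetilde{\psi}_{n,k}$ to the quantum Type I relation, resolve the crossings with the Kauffman--Lins relations, and invoke Lemma \ref{lem:fix1} to rewrite the resulting interwoven-strand diagram as a sum over the encompassing arcs $x_1,\ldots,x_\alpha$, matching the $n_i$ in the relation against the $a_j,b_j$ of the lemma and balancing the $q^{\pm 1/2}$ bookkeeping from nested dotted arcs; the Type II case is handled analogously. Your proposed induction on $\alpha$ is already packaged inside Lemma \ref{lem:fix1} (whose proof is itself an induction on $\alpha$), so this is an organizational rephrasing rather than a different argument.
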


\begin{proof}
We need to show that for any quantum Type I or Type II Russell relation, the elements in $S_{n,k}^q$ obtained by applying $\widetilde{\psi}$ to the left and right-hand sides agree. For the quantum Type I relation, let $m$ denote the diagram on the left-hand side and $m_i$ the $i$th diagram appearing in the fourth term on the right-hand side in Figure \ref{quantum_Russell} (i.e, that corresponding to the index $i$ in the summation). Figure \ref{Russell_trans_GEN} shows the application of $\widetilde{\psi}$ to $m$. In this picture, the boxes labeled $A, B, C$ are meant to be generic representations of dotted crossingless matchings. The thick lines coming out of each dotted box after $\widetilde{\psi}$ is applied represent several parallel strands that appear when the dotted arcs inside the boxes are expanded. For the box $A$, the number $|A|$ is defined to be the number of dotted arcs that appear in the dotted crossingless matching that $A$ represents, and similarly for $B, C$. Also, for any diagram $M$ in the Russell space,
\[p(M) := -(\# \mbox{ pairs of nested dotted arcs in } M)/2,\]
i.e., the coefficient of the diagram in $\psi(M)$. The unlabeled dotted region represents the image under $\widetilde{\psi}$ of any additional arcs that might appear outside of the region $(a,d)$. The crossings that are expanded from one step to the next are highlighted. 

\begin{figure}[H]
\centering
\includegraphics[height= 8 in]{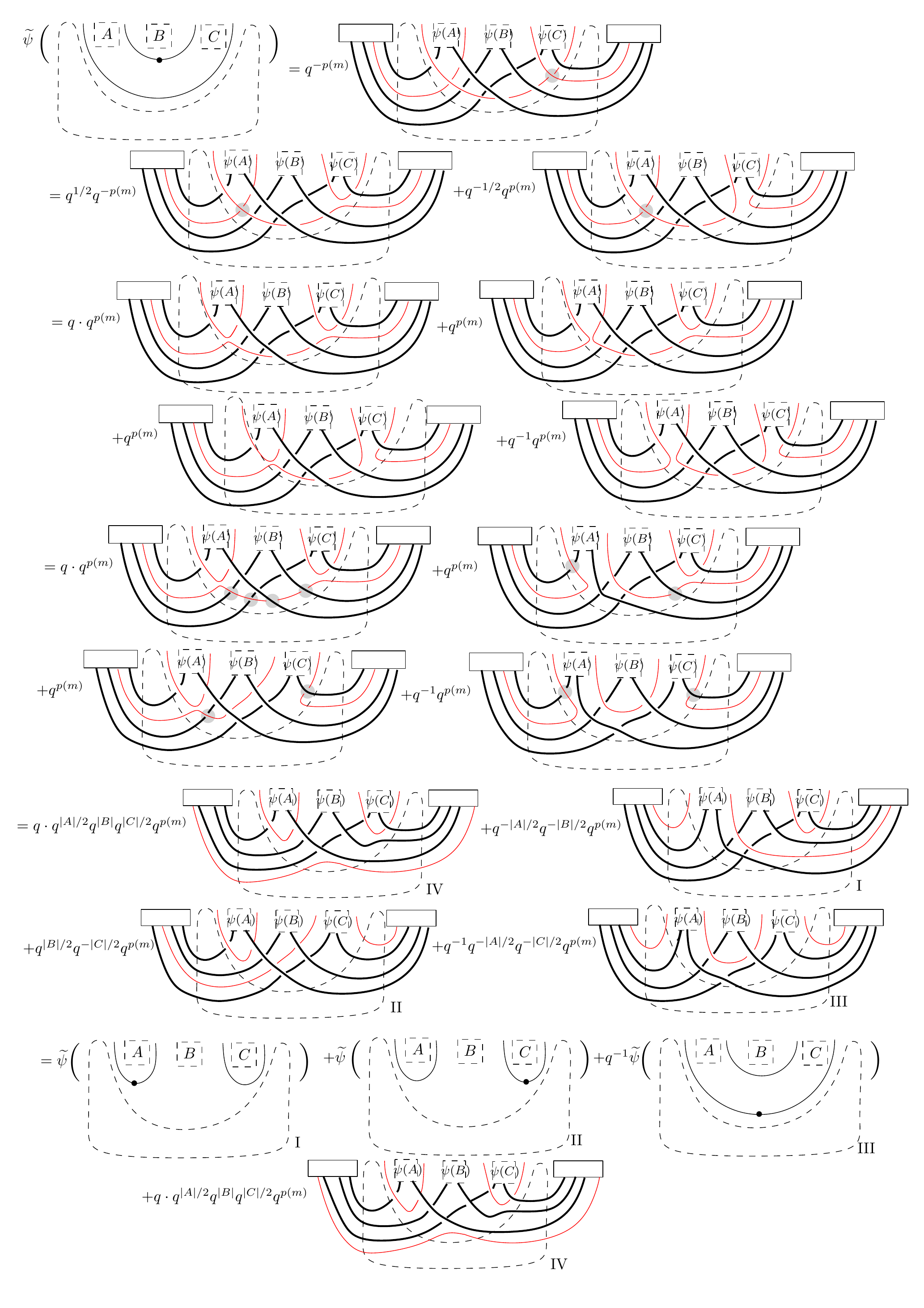}
\caption{The Type I quantum Russell relations are preserved by $\widetilde{\psi}$.}
\label{Russell_trans_GEN}
\end{figure}

From the result of the computations of Figure \ref{Russell_trans_GEN}, we recover the first three terms appearing in the quantum Type I relation. The fourth term appearing is almost of the form addressed in Lemma \ref{lem:fix1}, with the arcs $x_1, \ldots, x_\alpha$ not pictured but belonging to the outside dotted region. Denote the endpoints of each arc $x_i$ by $(x_{i_1}, x_{i_2})$. To get our fourth term into the form of Lemma \ref{lem:fix1}, we must untangle it from the expansion of any dotted arcs (not pictured) with both endpoints between $x_{1_1}$ and $a$, the number of which we denote $a_1$, as well as any with both endpoints between $d$ and $x_{1_2}$, the number of which we denote $b_1$. The result of applying Lemma \ref{lem:fix1} is analyzed separately in Figure \ref{q_Russell_pres_fix_2}. For $2 \leq i \leq \alpha$, we define $a_i$ to be the number of dotted arcs in $m$ with endpoints $(y_1, y_2)$ such that $x_{i_1} < y_1,y_2 < x_{(i-1)_1}$. Similarly, $b_i$ is the number of dotted arcs with $x_{(i-1)_2} < y_1, y_2 < x_{i_2}$. This is consistent with the $a_i$ and $b_i$ that appear in Lemma \ref{lem:fix1}.

\begin{figure}[H]
\centering
\includegraphics[width=6.5 in]{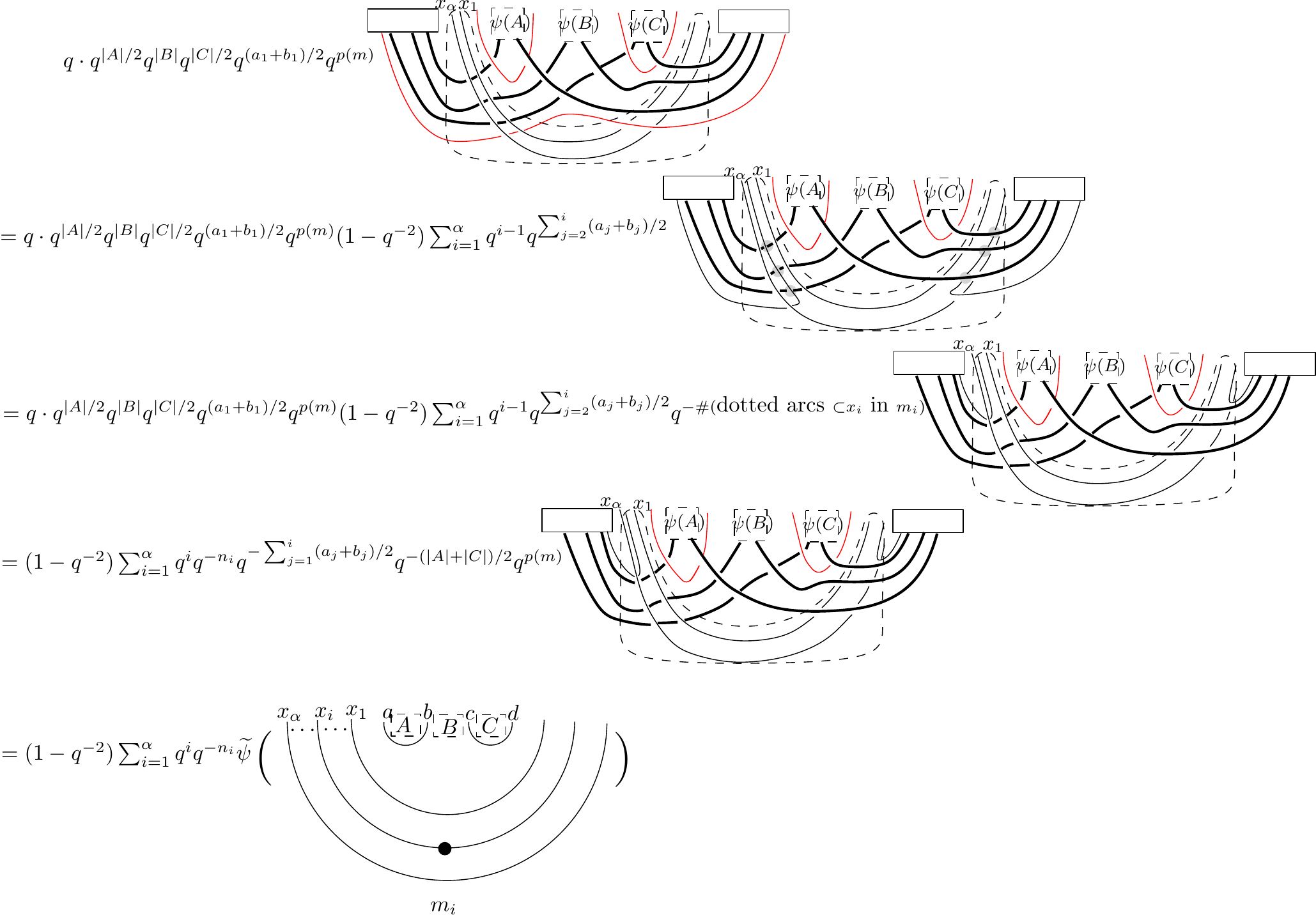}
\caption{Continuation of proof that Type I quantum Russell relations are preserved by $\widetilde{\psi}$.}
\label{q_Russell_pres_fix_2}
\end{figure}

To reach the penultimate equality of Figure \ref{q_Russell_pres_fix_2}, note that inside the sum we have a factor of $q^{-1}$ for each of the dotted arcs inside $i$ in $m_i$, which we classify into four types: those that are inside $B$, those that are inside $A$ or $C$, those that have both endpoints to the left of $a$ or to the right of $d$, and those that are nested in between $i$ and the arc $(a,d)$. The terms coming from the first class cancel with the $q^{|B|}$, and those in the second and third classes partly cancel with the $q^{|A|/2}, q^{|C|/2}, $ and $q^{\sum_{j=1}^{\alpha}(a_j+b_j)/2}$. There are then exactly $n_i$ terms which do not cancel with anything, where $n_i$ is the same as appears in the definition of the Type I and Type II quantum Russell relations of Section \ref{sec:qRussell}.

The final equality follows from the fact that, if $m_i$ is the $i$th diagram appearing in the summation in the Type I relation, then
\[p(m_i) = p(m)-(|A|+|C|)/2 - \sum_{j=1}^{i}(a_j+b_j)/2,\]
since $m_i$ has $|A|+|C|+\sum_{j=1}^i(a_j+b_j)$ more pairs of nested dotted arcs than $m$.

The proof for the quantum Type II relations is analogous.

$\blacksquare$
\end{proof}

\begin{definition}
Let $B_{n,k}$ be the subset of ${R}^q_{n,k}$ consisting of dotted crossingless matchings that only have dots on outer arcs.
\end{definition}

\begin{lemma}
\label{basis-lem}
The images of the elements of the set $B_{n,k}$ under $\psi_{n,k}$ are linearly independent in $S^q_{n,k}$.
\end{lemma}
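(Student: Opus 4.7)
To prove that $\{\psi_{n,k}(m) : m \in B_{n,k}\}$ is linearly independent in $S^q_{n,k}$, my plan is to associate to each $m \in B_{n,k}$ a distinguished basis element $V(m) \in S^q_{n,k}$ and establish two properties: the coefficient of $V(m)$ in the basis expansion of $\psi_{n,k}(m)$ will be a nonzero Laurent polynomial in $q^{1/2}$, and the coefficient of $V(m)$ in $\psi_{n,k}(m')$ will vanish for every $m' \neq m$. Explicitly, I will define $V(m)$ to be the crossing-free planar diagram obtained from $\psi_{n,k}(m)$ by resolving each of its $\binom{k}{2}$ crossings via the ``vertical'' Kauffman--Lins smoothing, the one that locally pairs top-left with bottom-left and top-right with bottom-right. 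At the crossing between a right arm running from an interior endpoint to a right-projector endpoint and a left arm running from an interior endpoint to a left-projector endpoint, this smoothing produces two new arcs that still each run from an interior endpoint to a projector endpoint (with the projector endpoints exchanged). Consequently, every arc of $V(m)$ is either an undotted arc of $m$ or an interior-to-projector arc; in particular, $V(m)$ contains no arc connecting the two projectors and no arc with both endpoints inside a single projector, so by Proposition \ref{S-basis} it is indeed a basis element of $S^q_{n,k}$.

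To see that the coefficient of $V(m)$ in the basis expansion of $\psi_{n,k}(m)$ is nonzero, I will use a degree argument in the variable $q^{1/2}$. A resolution of the $\binom{k}{2}$ crossings of $\psi_{n,k}(m)$ with $h$ horizontal smoothings and $\binom{k}{2} - h$ vertical smoothings contributes to the Kauffman--Lins expansion with scalar $q^{(2h-\binom{k}{2})/2}$, which is uniquely minimized at $h = 0$. Hence, even if some other resolution happens to produce the same planar matching $V(m)$, it can contribute only strictly higher powers of $q^{1/2}$ than the all-vertical resolution does, and so cannot cancel the leading $q^{-\binom{k}{2}/2}$ contribution from the all-vertical resolution to the coefficient of $V(m)$.

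The vanishing of the coefficient of $V(m)$ in $\psi_{n,k}(m')$ for $m' \neq m$ will follow by inspecting interior-to-interior arcs. Any resolution of $\psi_{n,k}(m')$ automatically contains the $n-k$ undotted arcs of $m'$ among its interior-to-interior arcs, since these are untouched by $\psi$ and carry no crossings. For such a resolution to coincide with $V(m)$ as a planar diagram, its interior-to-interior arcs must equal the $n-k$ undotted arcs of $m$. Since both sets have the same cardinality $n-k$, the undotted arcs of $m'$ must then coincide with the undotted arcs of $m$. However, an element of $B_{n,k}$ is uniquely determined by its undotted arcs: the $k$ dotted arcs are necessarily the unique crossingless, nested-free matching of the remaining $2k$ interior endpoints. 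Equality of undotted arcs therefore forces $m = m'$, contradicting our assumption.

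Combining these observations, any relation $\sum_{m \in B_{n,k}} c_m \psi_{n,k}(m) = 0$ in $S^q_{n,k}$ becomes, upon isolating the coefficient of $V(m)$, an equation $c_m \cdot p_m(q^{1/2}) = 0$ with $p_m$ a nonzero Laurent polynomial, forcing $c_m = 0$ for every $m$. The step I expect to require the most care is the first one: tracking the all-vertical resolution globally through the interacting arms of $\psi_{n,k}(m)$ and confirming rigorously that it produces no closed circles, no arcs between the two projectors, and no arcs with both endpoints in a single projector. This should follow inductively from the local fact that each vertical smoothing converts a pair of interior-to-projector arcs into another pair of interior-to-projector arcs (merely exchanging which projector endpoint each arm terminates at), so that the global configuration remains a planar matching between interior and projector endpoints with undotted arcs of $m$ passing through unchanged.
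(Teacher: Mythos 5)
Your proof follows the same architecture as the paper's: attach to each $m \in B_{n,k}$ a distinguished crossingless diagram, observe that this diagram can only come from $\psi_{n,k}(m)$ itself (your argument via the interior-to-interior arcs is exactly the paper's ``the undotted arcs determine the dotted ones'' step, and it is correct), and then show the diagram survives with nonzero coefficient. The divergence, and the gap, lies in that last step. The paper's route is to assert that every state other than the all-vertical one contains an arc joining the two projectors and is therefore already zero in $S^q_{n,k}$, so that $\psi_{n,k}(m)$ is a single nonzero multiple of one basis diagram and no cancellation can occur. You replace this with a $q$-degree minimality argument, but your formula for the scalar of a state, $q^{(2h-\binom{k}{2})/2}$, omits the factor $(-q-q^{-1})^{c}$ contributed by the $c$ closed circles a state may produce. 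Such circles genuinely occur once $k \geq 4$: writing $L_j, R_j$ for the two arms in the expansion of the $j$-th dotted arc, the four crossings of $R_1, R_2$ with $L_3, L_4$ can be smoothed so that the four segments between them close up into a circle. Since $(-q-q^{-1})^{c}$ contains the monomial $(-1)^{c}q^{-c}$, a state with $h$ horizontal smoothings and $c$ circles contributes in degrees as low as $h - c - \binom{k}{2}/2$, which ties your all-vertical degree $-\binom{k}{2}/2$ whenever $c = h$. So ``strictly higher powers'' is not established, and a priori such a state could cancel the all-vertical contribution to the coefficient of $V(m)$ (note that a single circle carries a minus sign on its lowest term).

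The gap is patchable. One route: check that at every crossing of $\psi_{n,k}(m)$ the two transversal strands lie on two \emph{different} arcs of $V(m)$ (equivalently, each arc of $V(m)$ traverses each arm at most once; this holds for $k=2,3$ by direct inspection and follows in general from the explicit form of $V(m)$). Then flipping a single smoothing away from the all-vertical state never splits off a circle, and since each further flip raises the circle count by at most one, every state with $h \geq 1$ satisfies $c \leq h-1$ and contributes only in degrees at least $1 - \binom{k}{2}/2$, which restores your minimality claim. The other route is the paper's stronger statement that every state with at least one horizontal smoothing acquires a projector-to-projector arc and dies in the quotient $S^q_{n,k}$, making any discussion of degrees and cancellation unnecessary. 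Either way, some global control of the states beyond counting smoothings is required; your write-up only flags the easier verification that the all-vertical state itself is circle-free, and does not address circles in the competing states.
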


\begin{proof}
Let $m$ be an element of $B_{n,k}$. Then the only crossings in $\psi_{n,k}(m)$ are between the right arc in the expansion of each dotted arc of $m$ with the left arc in the expansion of any dotted arcs to its right. 

Apply Kauffman--Lins diagrammatic relations to express $\psi_{n,k}(m)$ as a linear combination of diagrams without crossings with coefficients depending on $q$, that is, $\psi_{n,k}(m) =\sum_{i=1}^p c_i(q)D_i$ where the diagrams $D_i$ have no crossings. Note that all but one of the diagrams $D_i$ are trivial because they contain an arc between the two projectors. Suppose the nontrivial term has index $j$, so $\psi_{n,k}(m) = c_j(q)D_j$. 

We claim that the diagram $D_j$ is unique to $m$. Suppose that for some $m_1, m_2$ in $B_{n,k}$, $\psi_{n,k}(m_1)$ and $\psi_{n,k}(m_2)$ are nonzero multiples of the same crossingless diagram $D$. Then the $n-k$ undotted arcs of $m_1$ and $m_2$ must be in the same positions, since $\psi$ leaves these arcs in place. But since $m_1$ and $m_2$ can only have dots on outer arcs, their $k$ dotted arcs have to be in the same positions as well, so $m_1 = m_2$. $\blacksquare$
\end{proof}

\begin{proposition}
\label{basis}
The elements of $B_{n,k}$ form a basis of $R^q_{n,k}$ as a $\mathbb{Z}[q, q^{-1}]$-module.
\end{proposition}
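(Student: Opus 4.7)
The plan splits into linear independence and spanning. Linear independence is immediate from Lemma \ref{basis-lem}: any $\mathbb{Z}[q,q^{-1}]$-linear relation $\sum_i c_i m_i = 0$ among elements of $B_{n,k}$ in $R^q_{n,k}$ pushes forward under the linear map $\psi_{n,k}$ to a relation $\sum_i c_i \psi_{n,k}(m_i) = 0$ in $S^q_{n,k}$, which forces each $c_i = 0$.

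For spanning, I plan to induct on the complexity statistic
\[ c(m) = \sum_{\gamma} d_m(\gamma), \]
where $\gamma$ ranges over dotted arcs of a diagram $m$ and $d_m(\gamma)$ denotes the number of arcs of $m$ properly containing $\gamma$. Since $c(m) = 0$ if and only if $m \in B_{n,k}$, it suffices to show that every $m$ with $c(m) > 0$ is congruent modulo the quantum Russell relations to a linear combination of diagrams of strictly smaller complexity. Given such an $m$, pick a dotted arc $\gamma$ properly contained in some arc, and let $\delta$ be an arc of $m$ immediately containing $\gamma$. I would apply the quantum Type I relation when $\delta$ is undotted and the quantum Type II relation when $\delta$ is dotted, with $\delta$ playing the role of $(a,d)$ and $\gamma$ that of $(b,c)$.

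Each of the classical terms on the right-hand side (three for Type I, one for Type II) has complexity strictly less than $c(m)$: the dot on $\gamma$ either migrates to an arc immediately outside $\delta$ or merges with the dot on $\delta$, shrinking the sum by at least one, while the depth of every other dot is preserved, since arcs of $m$ lying inside $\delta$ but disjoint from $\gamma$ retain identical containment structure in each of the new diagrams. The main obstacle will be handling the quantum correction terms, namely the sums indexed by outer undotted arcs $x_i$ in Figure \ref{quantum_Russell}. Each such term dots $x_i$, an arc properly containing $\delta$ whose depth is strictly less than that of $\gamma$, while performing a local surgery at $a,b,c,d$ matching one of the classical terms; dots on arcs nested between $\delta$ and $x_i$ are unaffected. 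A careful bookkeeping of these interactions should show that each correction term also has complexity at most $c(m) - 1$, which closes the induction and yields spanning.
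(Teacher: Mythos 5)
Your argument is correct and is essentially the paper's own proof: linear independence via pushing a putative relation through $\psi_{n,k}$ and invoking Lemma \ref{basis-lem}, and spanning by repeatedly applying quantum Type I relations (when the containing arc is undotted) and rearranged quantum Type II relations (when it is dotted) to strictly decrease the total nesting depth of dots. Your complexity statistic $c(m)$ just makes explicit the induction measure the paper phrases as ``strictly fewer arcs containing dotted arcs,'' and your observation that the correction terms transfer the dot to an arc $x_i$ of strictly smaller depth is exactly why that measure also drops on the quantum terms.
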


\begin{proof}
First we show that the elements of $B_{n,k}$ form a spanning set of $R^q_{n,k}$. Suppose that a diagram $d$ in $R^q_{n,k}$ has one or more dots on inner arcs. If the nearest arc to a dotted inner arc is undotted, then a Type I quantum Russell relation can be used to rewrite the diagram in terms of those with strictly fewer arcs containing dotted arcs. If the nearest arc is dotted, then we may rearrange the Type II quantum Russell relation and again apply to obtain a linear combination of diagrams with strictly fewer arcs containing dotted arcs:
\begin{figure}[H]
\centering
\includegraphics[scale=.8]{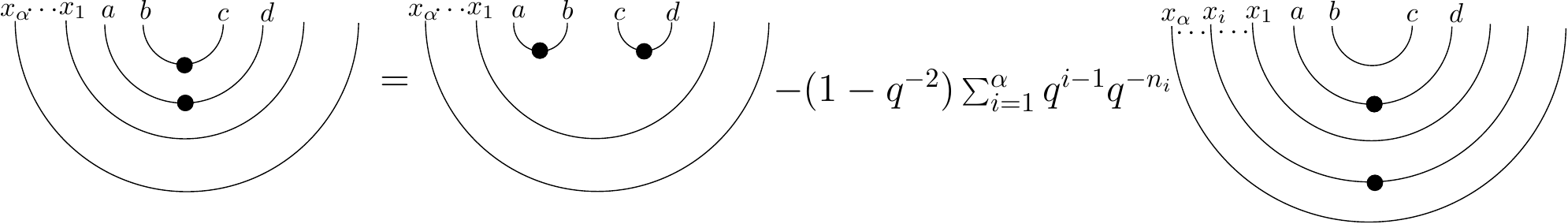}
\end{figure}

Then Type I and Type II relations can be repeatedly applied until all diagrams have dots on outer arcs only.

Next we show that the elements of $B_{n,k}$ are linearly independent. Suppose to the contrary that there is some dependence relation $f_1(q)b_1 + \cdots + f_r(q)b_r = 0$ in ${R}^q_{n,k}$. Applying $\psi_{n,k}$ to the relation, by the linearity of $\psi_{n,k}$ we would have a dependence relation among $\psi_{n,k}(b_1), \ldots, \psi_{n,k}(b_r)$. However, Lemma \ref{basis-lem} shows that this cannot be. $\blacksquare$
\end{proof}

\begin{proposition}
\label{prop:rank}
The rank of $R^q_{n,k}$ as a $\mathbb{Z}[q, q^{-1}]$-module is given by
\[ \mbox{rank}( R^q_{n,k}) = \left( \begin{array}{c} 2n \\ n+k \end{array} \right) - \left(\begin{array}{c} 2n \\ n+k+1 \end{array} \right) = \frac{2k+1}{n+k+1} \left( \begin{array}{c} 2n \\ n+k \end{array} \right) . \]
\end{proposition}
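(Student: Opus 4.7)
The first step is to invoke Proposition \ref{basis}: since $B_{n,k}$ is a $\mathbb{Z}[q,q^{-1}]$-basis of $R^q_{n,k}$, the rank equals $|B_{n,k}|$. The second equality in the statement is purely algebraic, since
\[ \binom{2n}{n+k} - \binom{2n}{n+k+1} = \binom{2n}{n+k}\left(1 - \frac{n-k}{n+k+1}\right) = \frac{2k+1}{n+k+1}\binom{2n}{n+k}. \]
Thus only the count $|B_{n,k}| = \binom{2n}{n+k} - \binom{2n}{n+k+1}$ remains to be established.

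The plan is to construct a bijection $\Phi$ between $B_{n,k}$ and the set of ballot sequences of length $2n$ with $n+k$ letters $U$ and $n-k$ letters $D$ (i.e., every prefix contains at least as many $U$'s as $D$'s); the cardinality of the latter set is $\binom{2n}{n+k} - \binom{2n}{n+k+1}$ by a classical reflection argument. Given $b \in B_{n,k}$, I would scan the $2n$ endpoints left to right and record $U$ at each left endpoint of an arc, $D$ at the right endpoint of an undotted arc, and $U$ at the right endpoint of a dotted arc. A direct count shows that there are $n+k$ letters $U$ and $n-k$ letters $D$, and the ballot condition holds because in any prefix of a crossingless matching the number of left endpoints is at least the number of right endpoints. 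The inverse map first pairs each $D$ with the rightmost unmatched $U$ to its left to recover the undotted arcs, and then pairs the remaining $2k$ unmatched $U$'s consecutively (first with second, third with fourth, and so on) to recover the dotted arcs.

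The key technical point, which I expect to be the main obstacle, is verifying that this inverse map genuinely produces a valid element of $B_{n,k}$, and that the two maps are mutual inverses. The crucial observation is that, since in any $b \in B_{n,k}$ the dotted arcs are outer, no dotted arc may have an endpoint strictly between the endpoints of an undotted arc; consequently, within the interval spanned by any undotted arc the sub-pattern of $U$'s and $D$'s produced by $\Phi$ is itself a balanced Dyck word from nested undotted arcs only. This forces the rightmost-available-$U$ matching to recover precisely the undotted arcs. The same outer-ness property guarantees that the remaining $2k$ unmatched $U$'s appear, from left to right, in the interleaved order $a_1, b_1, \ldots, a_k, b_k$, where $(a_i, b_i)$ are the dotted arcs of $b$ ordered by their left endpoints, so that consecutive pairing exactly reconstructs the dotted arcs. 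Once this is verified, the bijection is complete, the equality $|B_{n,k}| = \binom{2n}{n+k} - \binom{2n}{n+k+1}$ follows, and the proposition is established.
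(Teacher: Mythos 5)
Your proposal is correct and is essentially the paper's own argument: the paper also reduces to counting $B_{n,k}$ via Proposition \ref{basis}, encodes a dotted matching as a lattice path (equivalently, a ballot sequence) by recording a "down"/$D$ only at right endpoints of undotted arcs, inverts by matching each $D$ to the nearest available $U$ and pairing the leftover $U$'s consecutively into dotted arcs, and finishes with the standard reflection count. The only difference is cosmetic (ballot sequences versus grid paths above the diagonal), plus your more careful justification of why the inverse lands in $B_{n,k}$, which the paper leaves as "readily checked."
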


\begin{proof}
We employ a combinatorial argument, following that of Davis in \cite{catalan}. First we claim that the basis elements of $R^q_{n,k}$ (as given in Proposition \ref{basis}) are in one-to-one correspondence with paths on a grid from $(0,0)$ to $(n+k,n-k)$ that lie entirely on or above the diagonal. Call the space of all such paths $P_{n,k}$. To see the correspondence, we define inverse maps $\alpha: B_{n,k} \to P_{n,k}$ and $\beta: P_{n,k} \to B_{n,k}$. Given $m$, a dotted crossingless matching in $B_{n,k}$, look at its endpoints from left to right. Define $\alpha(m)$ to be the path that steps down for each right endpoint of an undotted arc and right for all other endpoints. It can be readily checked that such a path lies entirely on or above the diagonal. Given a path $p$ in $P_{n,k}$, translate it into a sequence of up and down arrows where, starting from $(0,0)$, each step right becomes an up arrow and each step down becomes a down arrow. Define $\beta(p)$ as follows: for each down arrow, draw an undotted arc connecting it with the first up arrow to its left. Connect the remaining up arrows in pairs from left to right with dotted arcs. It can again be readily checked that $\beta(p)$ is indeed an element of $B_{n,k}$ and that $\alpha$ and $\beta$ are inverses. An example is shown in Figure \ref{dim-ex}.

\begin{figure}[H]
\centering
\includegraphics[scale=1]{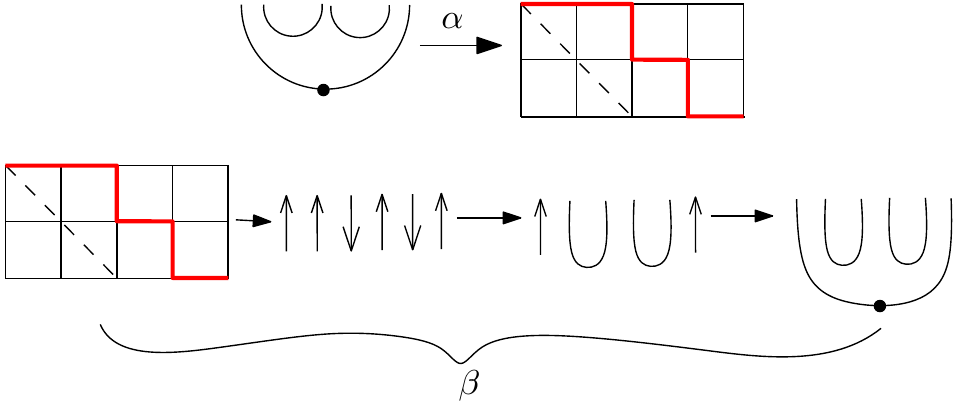}
\caption{An example of the maps $\alpha$ and $\beta$ with $n =3, k =1$.}
\label{dim-ex}
\end{figure}

Now that the bijection has been established, we can count the paths in $P_{n,k}$. To do this, we count all paths from $(0,0)$ to $(n+k,n-k)$ and then subtract the ``bad'' ones, that is, the ones that cross the diagonal. It is clear that the number of all such paths is $\left( \begin{array}{c} 2n \\ n+k \end{array} \right)$, since a path contains $2n$ total steps, $n+k$ of which are to the right. For any bad path, there must be a first point below the diagonal: label this point $p$. Reflecting the portion of the path after $p$ (i.e. switching right with down in the steps that follow) yields a new path from $(0,0)$ to $(n-k-1,n+k+1)$. This is a reversible process, so the number of bad paths is the same as the number of all paths from $(0,0)$ to $(n-k-1,n+k+1)$, which is $\left( \begin{array}{c} 2n \\ n+k+1 \end{array} \right)$. $\blacksquare$
\end{proof}

\begin{remark}
There is also a representation-theoretic way of seeing the above statement. In \cite{russ-2}, Russell and Tymoczko show that the subspace of the original Russell space $R_{n,k}$ spanned by diagrams with $2n$ endpoints with $k$ dots only on outer arcs has an action of the symmetric group $S_{2n}$ and corresponds to the same irreducible representation as the Young diagram with two rows of size $n+k$ and $n-k$. The standard hook-length formula on this representation gives the same answer.
\end{remark}

Combining several results from this section, in conclusion we have the following.

\begin{proposition}
The map $\psi_{n,k} : R_{n,k}^q \otimes_{\mathbb{Z}[q,q^{-1}]} \mathbb{Z}[q^{1/2}, q^{-1/2}] \to S^q_{n,k}$ is an isomorphism.
\end{proposition}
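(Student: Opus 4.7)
The plan is to establish $\psi_{n,k}$ as both injective and surjective, leveraging the basis $B_{n,k}$ of $R_{n,k}^q$ from Proposition \ref{basis}. Injectivity is essentially in hand: by Lemma \ref{basis-lem}, for each $m \in B_{n,k}$ the image $\psi_{n,k}(m) = c_j(q) D_j(m)$ reduces in $S_{n,k}^q$ to a nonzero scalar multiple of a single crossingless basis diagram $D_j(m)$, with distinct $m$'s giving distinct $D_j(m)$'s, and hence $\psi_{n,k}(B_{n,k})$ is linearly independent in $S_{n,k}^q$.

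For surjectivity I will first count the basis of $S_{n,k}^q$ described in Proposition \ref{S-basis} and show it has size $\binom{2n}{n+k} - \binom{2n}{n+k+1}$, matching the rank of $R_{n,k}^q$ from Proposition \ref{prop:rank}. Encoding a crossingless matching of $2(n+k)$ points as a Dyck-style sequence of L's and R's, the projector conditions force the first $k$ entries to be L's and the last $k$ to be R's, while the middle $2n$ entries contain $n$ of each; using the LIFO matching rule, the "no arc between projectors" condition becomes the requirement that the middle, viewed as a lattice path starting at height $k$ and ending at height $k$, touches height $0$ somewhere. A two-step reflection-principle argument --- subtracting the count of paths staying $\geq 1$ from the count staying $\geq 0$ on paths from $(0,k)$ to $(2n,k)$ --- yields exactly $\binom{2n}{n+k} - \binom{2n}{n+k+1}$.

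Next I need to verify that each coefficient $c_j(q)$ from Lemma \ref{basis-lem} is a unit in $\mathbb{Z}[q^{1/2}, q^{-1/2}]$. For $m \in B_{n,k}$ the dotted arcs are pairwise unnested, so $\widetilde\psi_{n,k}(m)$ has exactly $\binom{k}{2}$ crossings, and Kauffman--Lins resolution produces $2^{\binom{k}{2}}$ smoothed diagrams, each contributing a monomial factor of the form $\pm q^{a/2}$. The key claim is that exactly one smoothing combination survives in $S_{n,k}^q$: every other resolution creates either an arc between the two projectors or an arc with both endpoints in a single projector (the latter vanishes already in $\widetilde S_{n,k}^q$). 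Granted this uniqueness, $c_j(q)$ is a single monomial and hence a unit.

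Combining these three ingredients: the injection $m \mapsto D_j(m)$ from $B_{n,k}$ to the basis of $S_{n,k}^q$ is a map between finite sets of equal size $\binom{2n}{n+k} - \binom{2n}{n+k+1}$, hence a bijection; so $\psi_{n,k}(B_{n,k})$ consists of unit multiples of the distinct basis elements of $S_{n,k}^q$ and is itself a $\mathbb{Z}[q^{1/2}, q^{-1/2}]$-basis. Since $B_{n,k}$ base-changes to a basis of $R_{n,k}^q \otimes \mathbb{Z}[q^{1/2}, q^{-1/2}]$, the map $\psi_{n,k}$ sends a basis to a basis and is an isomorphism. The hard part will be the unit-coefficient step: one must show that of the $2^{\binom{k}{2}}$ smoothings of the crossings in $\widetilde\psi_{n,k}(m)$, exactly one produces a globally admissible diagram in $S_{n,k}^q$. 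This requires tracing how locally "safe" smoothings can nonetheless combine to create globally forbidden configurations, in particular arcs lying entirely inside a single projector, so that all but one smoothing combination is ruled out.
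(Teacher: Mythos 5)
Your injectivity argument coincides with the paper's (both rest on Lemma \ref{basis-lem} and Proposition \ref{basis}), but your surjectivity argument takes a genuinely different route. The paper proves surjectivity constructively: given a basis diagram $m$ of $S^q_{n,k}$ as in Proposition \ref{S-basis}, it builds an explicit preimage $b$ by taking the projector-free arcs of $m$ as the undotted arcs of $b$ and then repeatedly locating an ``innermost'' pair $(a,a')$ with $a$ ending in the left projector and $a'$ in the right projector, separated only by projector-free arcs, and fusing each such pair into a dotted arc at the cost of a factor $q^{1/2}$. Your alternative is a dimension count plus a unit-coefficient argument, and the counting half is correct: the reflection-principle computation does give $\binom{2n}{n+k}-\binom{2n}{n+k+1}$ basis diagrams for $S^q_{n,k}$, matching Proposition \ref{prop:rank}. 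What the counting route buys is an independent verification that Propositions \ref{S-basis} and \ref{prop:rank} are consistent; what it costs is the extra burden described next, which the paper's explicit construction avoids entirely.

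The burden is the step you yourself flag: you must show that among the $2^{\binom{k}{2}}$ Kauffman--Lins resolutions of $\widetilde{\psi}_{n,k}(m)$, exactly one survives in $S^q_{n,k}$, that it contains no closed circles, and that no other resolution produces the same crossingless diagram. This is not optional bookkeeping. Since $\mathbb{Z}[q^{1/2},q^{-1/2}]$ is not a field, a linearly independent set of the right cardinality need not span: if the surviving coefficient $c_j(q)$ were a sum of monomials, or carried a circle factor $(-q-q^{-1})$, the image of $B_{n,k}$ would generate only a proper submodule and surjectivity would fail. Lemma \ref{basis-lem} as stated gives you only that $c_j(q)\neq 0$, which suffices for injectivity but not for your purpose. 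The delicate point is that a ``bad'' smoothing of one crossing creates an arc joining the two projectors only in the fully resolved diagram; the partially resolved strand still meets other crossings, so you must argue that no subsequent choices can undo the forbidden configuration, and that the all-good resolution is reached by exactly one choice at each crossing. Until that analysis is supplied, the proof is incomplete; I would recommend either carrying it out (an induction on the number of crossings, resolving them from left to right, should work) or replacing the whole surjectivity step with the paper's direct preimage construction.
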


\begin{proof}
Theorem \ref{key-thm} shows that this map is well-defined. We have established that the elements of the set $B_{n,k}$ form a basis of $R_{n,k}^q$ and that the images of these elements under $\psi_{n,k}$ are linearly independent, so $\psi_{n,k}$ is injective. To see that $\psi_{n,k}$ is surjective, let $m$ be an element of the basis of $S^q_{n,k}$ given in Proposition \ref{S-basis}, that is, a crossingless matching containing no arcs that have both endpoints inside one of the projectors. From $m$ we will construct an element $b$ in $R_{n,k}^q$ such that $\psi_{n,k}(b) = m$. First, the undotted arcs of $b$ will exactly match the arcs of $m$ that have neither endpoint inside a projector. Next, if any arcs remain in $m$, find a pair of arcs $(a, a')$ such that $a$ has left endpoint inside the left projector, $a'$ has right endpoint inside the right projector, and the only arcs between $a$ and $a'$ have neither endpoint inside a projector. It is clear that such a pair must exist since $m$ has no crossings. If $a$ has right endpoint in position $i$ and $a'$ has left endpoint in position $j$ we replace $a$ and $a'$ with a dotted arc from $i$ to $j$ and a factor of $q^{1/2}$. We repeat this process until no more pairs $(a, a')$ exist, at which point it is clear from the definition of $\psi$ that $\psi_{n,k}(b) = m$.
\end{proof}

\section{A braid group action on $R^q_{n,k}$ for generic $q$}
\label{sec:Bn-act}

\subsection{A local braid group action on an isomorphic space}

We now have an embedding of a quantum deformation of the Russell skein module into a space that obeys Kauffman--Lins relations. Since Kauffman--Lins diagrammatics for generic $q$ satisfy the braid group relations, we look to define a braid group action on the quantum Russell space. 

Further, we would like the action to be local in the sense that applying the braid group generator $\sigma_i$ to a diagram $m$ produces a linear combination of diagrams that are the same as $m$ apart from arcs that have either $i$ or $i+1$ as endpoints. We find that there is a local action of the braid group on the subspace of $\widetilde{R}^{q}_{n,k}$ spanned by elements of $B_{n,k}$ but that the construction does not extend to the entire space $\widetilde{R}^q_{n,k}$.

The following proposition describes the braid group action on the Kauffman--Lins space $S_{n,k}$.

\begin{proposition}
 The braid group $B_{2n}$ acts on $\widetilde{S}_{n,k}$ as follows: a generator $\sigma_i$ of $B_{2n}$ acts on a diagram $s$ in $\widetilde{S}_{n,k}$ by attaching a positive crossing above the $i$th and $i+1$st fixed points of $s$ (numbering left to right starting at the first point after the left projector) and then expanding the crossing using Kauffman--Lins diagrammatics. This action descends to the quotient space $S_{n,k}$.
\end{proposition}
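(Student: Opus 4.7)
The plan is to verify three things in order: (i) each $\sigma_i$ induces a well-defined endomorphism of $\widetilde{S}_{n,k}$; (ii) these endomorphisms satisfy the defining relations of $B_{2n}$; and (iii) the action descends to $S_{n,k}$. For (i), diagrams of $\widetilde{S}_{n,k}$ already permit crossings and circles modulo the Kauffman--Lins relations, so stacking an additional positive crossing above positions $i, i+1$ yields another such diagram, and expanding that crossing via Figure \ref{KL} merely invokes a defining relation; hence the operation is well-defined on $\widetilde{S}_{n,k}$ and extends linearly.

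For (ii), the commutation relation $\sigma_i \sigma_j = \sigma_j \sigma_i$ for $|i-j|>1$ is immediate, since the two crossings lie in disjoint vertical strips and can be isotoped past one another before any resolution. The braid relation $\sigma_i \sigma_{i+1} \sigma_i = \sigma_{i+1} \sigma_i \sigma_{i+1}$ is precisely the invariance of the Kauffman bracket under a Reidemeister III move. To verify this, I would expand the three crossings on each side using the skein relation of Figure \ref{KL}. Each side becomes a $\mathbb{Z}[q^{1/2}, q^{-1/2}]$-linear combination of eight planar tangles on the three relevant strands; simplifying those which contain a contractible circle using the evaluation $-q-q^{-1}$ (this is where a Reidemeister II loop would appear) yields the same final linear combination on both sides. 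This is the standard computation underlying the existence of the Temperley--Lieb/Jones representation of the braid group, and if desired one could simply cite it.

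For (iii), recall $S_{n,k} = \widetilde{S}_{n,k}/\operatorname{im}(\varphi_{n,k-1})$. The embedding $\varphi_{n,k-1}$ enlarges each projector by one fixed point and connects the two new outermost points by a single arc, without modifying the middle $2n$ positions or the arcs with endpoints there. Since $\sigma_i$ attaches a crossing entirely within the middle $2n$ strands, the operators $\sigma_i$ and $\varphi_{n,k-1}$ literally commute as maps of diagrams (even before resolving the crossing), so $\sigma_i$ preserves $\operatorname{im}(\varphi_{n,k-1})$ and the action descends. The main obstacle is part (ii): although the braid relation is a textbook consequence of Kauffman--Lins diagrammatics, a self-contained verification requires a bookkeeping check that each planar term on one side matches a term on the other, with the Reidemeister II loops absorbed correctly by the circle evaluation. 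Parts (i) and (iii) are, by contrast, essentially formal.
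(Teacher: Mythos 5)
Your proposal is correct and follows essentially the same route as the paper, which disposes of the braid relations by appeal to ``basic properties of Kauffman--Lins diagrammatics'' and of the descent by noting that crossings are never attached to the projectors; your observation that $\sigma_i$ commutes with $\varphi_{n,k-1}$ is just a slightly more formal phrasing of the latter point. The only content you add is an explicit (and standard) sketch of the Reidemeister~II/III verification that the paper omits.
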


\begin{proof}
That this defines a braid group action on $\widetilde{S}_{n,k}$ follows from basic properties of Kauffman--Lins diagrammatics. To see that the action descends to $S_{n,k}$,  observe that if a diagram in $\widetilde{S}^{q}_{n,k}$ has an arc between the two projectors, then it will still have such an arc when a crossing is applied, since crossings are never attached to projectors.
\end{proof}

\begin{definition}
Let $\overline{R}^q_{n,k}$ be the subspace of $\widetilde{R}_{n,k}^q$ that consists of linear combinations of elements of $B_{n,k}$ with coefficients in $\mathbb{Z}[q, q^{-1}]$.
\end{definition}

Since the basis of $\overline{R}_{n,k}^q$ is the same as that of $R_{n,k}^q$, we have the following relationship among all three spaces:

\centerline{
\xymatrix{
\overline{R}_{n,k}^q \ar@{^{(}->}[r]  \ar@/_1pc/[rr]_{\cong} & \widetilde{R}_{n,k}^q \ar@{->>}[r] & R_{n,k}^q \\
}}

We now look to define a braid group action on the space $\overline{R}^q_{n,k}$ such that the map $\pi \circ \widetilde{\psi}_{n,k}|_{\overline{R}^q_{n,k}}$, where $\pi$ is the quotient map from $\widetilde{S}_{n,k}$ to $S_{n,k}$, commutes with the braid group actions on $\overline{R}^q_{n,k}$ and $S_{n,k}$:

\centerline{
{\xymatrixcolsep{8pc}
 \xymatrix{
\widetilde{R}^q_{n,k} \ar[rd]^{\pi \circ \widetilde{\psi}_{n,k}} \\
\overline{R}^q_{n,k}  \ar@{^{(}->}[r]^{\pi \circ \widetilde{\psi}_{n,k}|_{\overline{R}^q_{n,k}}}  \ar@{_{(}->}[u] \ar@(dl,ul)@{.>}[]^{B_{2n}} & S_{n,k} \ar@(dr,ur)[]_{B_{2n}} \\
}} }

In the proof of Lemma \ref{basis-lem}, we showed that the images of the generators of ${\overline{R}^q_{n,k}}$, that is, the elements of $B_{n,k}$, are linearly independent in $S_{n,k}$. Therefore $\psi_{n,k}|_{\overline{R}^q_{n,k}}$ is injective. To define the braid group action on each basis element $b$ of $\overline{R}^q_{n,k}$, then, we map $b$ to $S_{n,k}$, apply the braid group action there, and then pull the result back to $\overline{R}^q_{n,k}$.

\begin{proposition}
\label{braid-group-act}
There is a local braid group action on $\overline{R}^q_{n,k}$ given in Figure \ref{braid_group_act}. The map $\widetilde{\psi}_{n,k}|_{\overline{R}^q_{n,k}} : \overline{R}^q_{n,k} \to S_{n,k}$ respects the action of $B_{2n}$. 
\end{proposition}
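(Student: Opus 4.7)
The strategy is to construct the $B_{2n}$-action on $\overline{R}_{n,k}^q$ by pulling back the action on $S_{n,k}$ along the injective composite
\[ \overline{\psi}_{n,k} := \pi \circ \widetilde{\psi}_{n,k}\big|_{\overline{R}_{n,k}^q} : \overline{R}_{n,k}^q \longrightarrow S_{n,k}, \]
and then to verify that the resulting operators are precisely those depicted in Figure \ref{braid_group_act}. Injectivity of $\overline{\psi}_{n,k}$ was established in the proof of Lemma \ref{basis-lem}, and after extending scalars to $\mathbb{Z}[q^{1/2}, q^{-1/2}]$ the map becomes an isomorphism onto its image. Consequently the braid relations on the pullback action are automatic, and $\overline{\psi}_{n,k}$ is $B_{2n}$-equivariant by construction. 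The real content of the proposition is to compute this pulled-back action on each basis element $b \in B_{n,k}$ and to exhibit it in the local form asserted by Figure \ref{braid_group_act}.

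The computation rests on the Kauffman--Lins expansion of $\sigma_i$ as a $\mathbb{Z}[q^{1/2}, q^{-1/2}]$-combination of the identity and the cup-cap turnaround $U_i$ at positions $i, i+1$. I would break the verification into cases according to the local arc structure of $b$ at those positions: (i) both are endpoints of a single undotted arc; (ii) they lie on two distinct undotted arcs (with subcases for whether the arcs are side-by-side or nested together inside a common outer arc); (iii) one lies on an undotted arc and one on a dotted arc; (iv) they lie on two distinct dotted arcs; and (v) both are endpoints of a single dotted arc. In each case I would write down $\widetilde{\psi}_{n,k}(b)$, insert the crossing above positions $i, i+1$, expand via Kauffman--Lins, replace every closed circle by $-q - q^{-1}$, and discard every diagram containing an arc between the two projectors (trivial in $S_{n,k}$ by Proposition \ref{S-basis}). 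The surviving terms should lie in the image of $\overline{\psi}_{n,k}$, and I would identify each with $\overline{\psi}_{n,k}$ applied to a specific element of $B_{n,k}$ that agrees with $b$ outside positions $i, i+1$. This simultaneously yields the locality claim and matches the prescribed formulas.

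The hard part will be the cases involving dotted arcs, since $\widetilde{\psi}_{n,k}$ resolves each dotted arc into two strands running to the two projectors and accumulates a $q^{-1/2}$ for every nested pair of dotted arcs. When $\sigma_i$ is inserted at positions that carry dots, the $U_i$ turnaround can fuse or split these resolved strands and reroute them through intermediate undotted arcs, in the spirit of the untangling argument carried out in Lemma \ref{lem:fix1}. Bookkeeping which resolved diagrams survive in the quotient by $\varphi_{n,k-1}$, together with the combinatorics of the $q^{\pm 1/2}$ factors that change as the nesting pattern of dotted arcs is modified, is the technical core of the case analysis. Once each case has been matched with the pictures in Figure \ref{braid_group_act}, the fact that $\overline{\psi}_{n,k}$ intertwines the two $B_{2n}$-actions is immediate from the construction as a pullback.
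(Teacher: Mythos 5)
Your proposal follows essentially the same route as the paper: define the action by pulling back along the injective map $\pi \circ \widetilde{\psi}_{n,k}|_{\overline{R}^q_{n,k}}$ (so equivariance and the braid relations come for free), then verify locality and agreement with Figure \ref{braid_group_act} by a case-by-case Kauffman--Lins expansion of the inserted crossing, discarding diagrams with an arc joining the two projectors. The paper likewise only works one representative case in detail and declares the rest analogous, so your plan matches its proof in both strategy and level of detail.
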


\afterpage{
\begin{figure}
\centering
\includegraphics[scale=1]{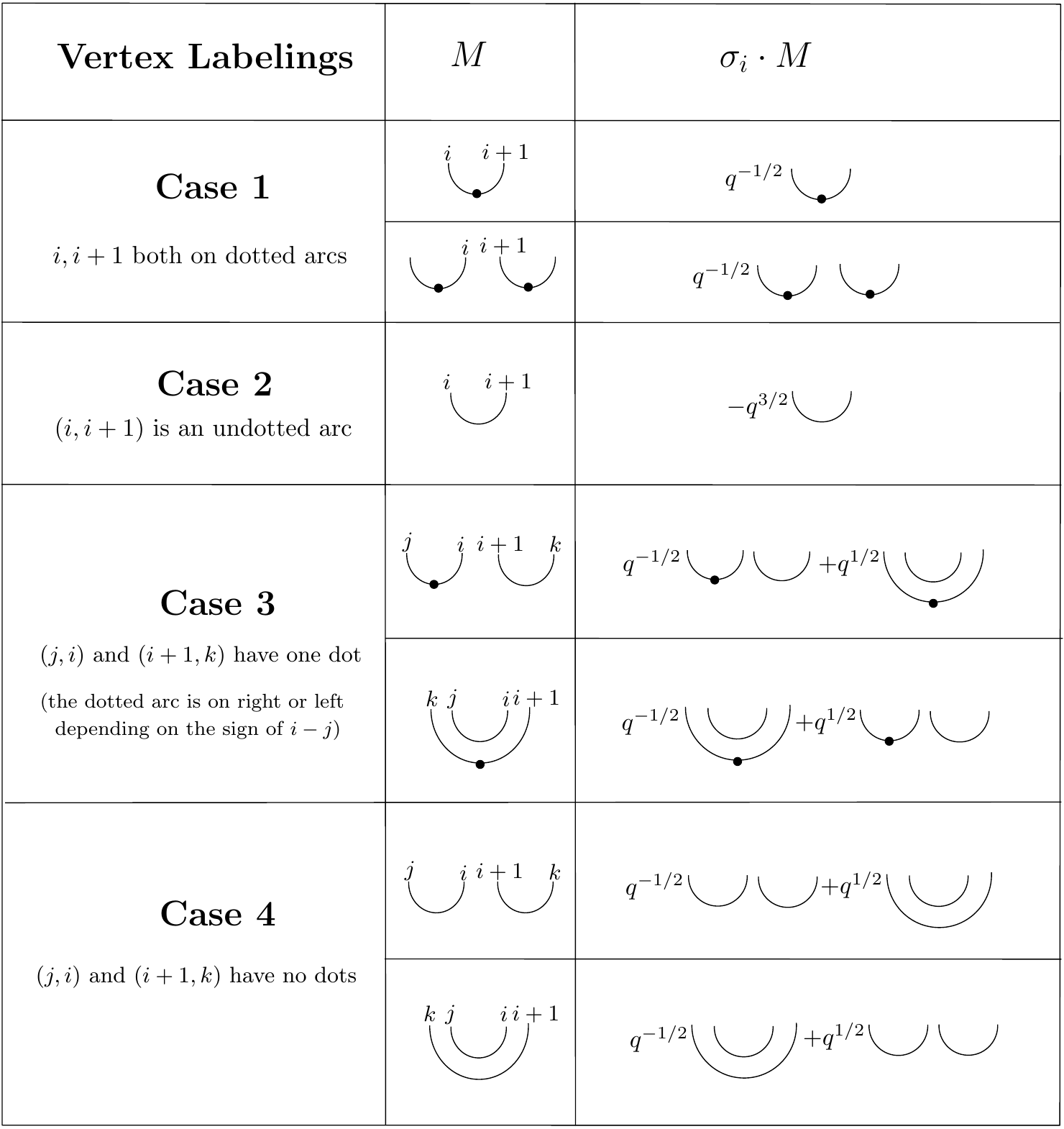}
\caption{The local quantum braid group action on $\overline{R}^q_{n,k}$.}
\label{braid_group_act}
\end{figure}}

\begin{proof}
We need only show that the construction of the action described above, which involved first passing to $S_{n,k}$, performing the braid group action there, and then pulling back to $\overline{R}_{n,k}$, gives a local action on $\overline{R}_{n,k}$ and that it is the one pictured in Figure \ref{braid_group_act}. If this is the case, it is obvious by construction that $\widetilde{\psi}_{n,k}|_{\overline{R}_{n,k}}$ respects the braid group action.

That this action is local and that it is the one pictured in Figure \ref{braid_group_act} can be proven on a case-by-case basis. As an example of the argument used, we establish this for the second picture in Case 3. The argument is illustrated in Figure \ref{braid_group_ex}. The picture inside $\psi$ on the left-hand side is meant to represent a generic element of the type in Case 3, which has an undotted arc with left endpoint in position $i+1$ nested inside a dotted arc with left endpoint in position $i$. The dashed boxes labeled $A$ and $D$ may contain any dotted crossingless matching with dots on outer arcs. The dashed circles labeled $B$ and $C$ may contain any crossingless matching without dots, since they are contained within a dotted arc. The thick lines on the right-hand side represent parallel strands coming from the expansion of dotted arcs inside boxes $A$ and $D$. We see that the diagrams coming from the expansion of the crossing in $S_{n,k}$ pull back to the desired ones in $\overline{R}_{n,k}$. The proofs for the other cases are completely analogous. $\blacksquare$
\end{proof}

\begin{figure}[H]
\centering
\includegraphics[scale=.8]{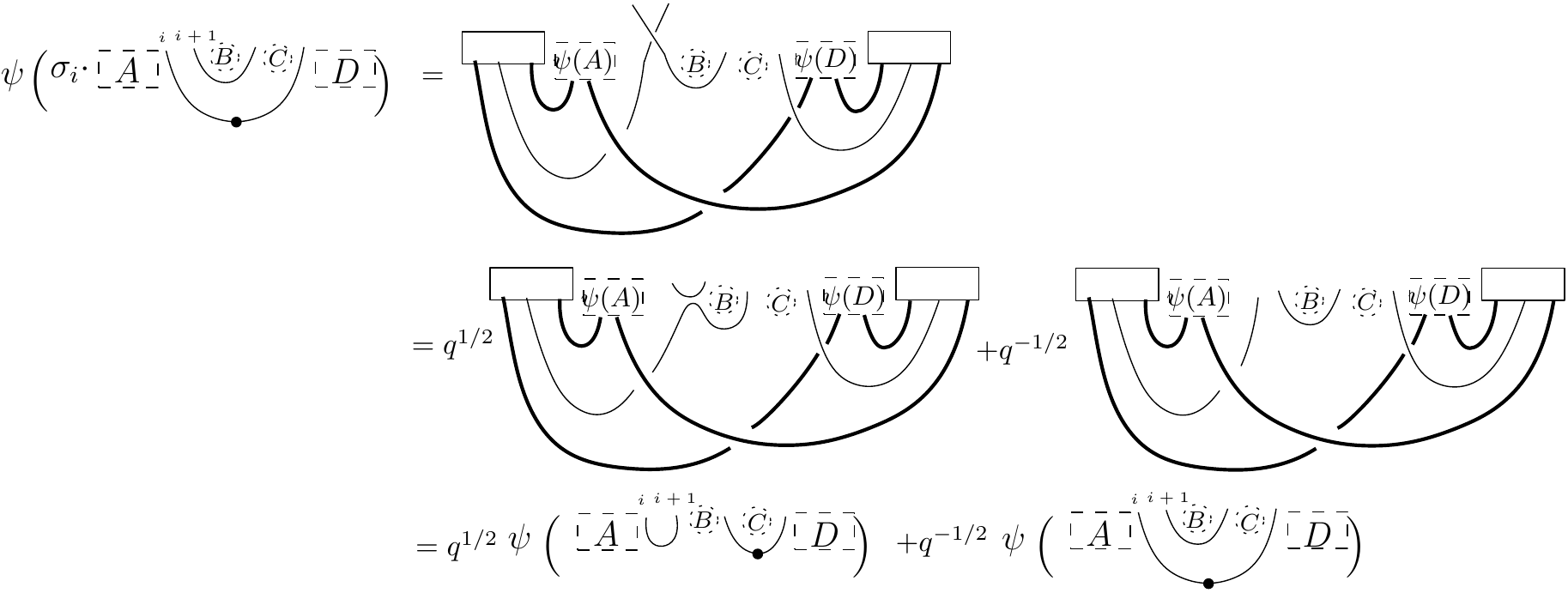}
\caption{A sample computation showing that the action of $B_{2n}$ on $\overline{R}^q_{n,k}$ constructed via $\psi$ is local and as pictured in Figure \ref{braid_group_act}.}
\label{braid_group_ex}
\end{figure}

\subsection{A semi-local braid group action on $R^q_{n,k}$}

In the previous section, we saw that by restricting to a certain class of diagrams in $R^q_{n,k}$, which happen to form a basis, we get a local braid group action on an isomorphic space by looking at the braid group action on the images of these diagrams in $S^q_{n,k}$ and then pulling back the result. In this section, we investigate what happens when we pull back the braid group action on the image of any diagram in $R^q_{n,k}$. We find that for 6 of 9 cases, the action remains completely local in the same sense described above. For the remaining three cases, the action is semi-local in the same sense that the quantum Russell relations are: an additional term appears for every undotted arc containing the arcs of interest.

\begin{theorem}
Figure \ref{braid_group_act_gen} describes a semi-local action of $B_{2n}$ on $R^q_{n,k}$.
\end{theorem}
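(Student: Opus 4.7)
The plan is to transfer the $B_{2n}$-action from $S^q_{n,k}$ via the isomorphism $\psi_{n,k}$ of the preceding proposition, and then verify by case analysis that the resulting action agrees with Figure \ref{braid_group_act_gen}. Because $\psi_{n,k}$ is an isomorphism (after tensoring with $\mathbb{Z}[q^{1/2},q^{-1/2}]$), the existence of an action of $B_{2n}$ on $R^q_{n,k}$ that intertwines with the Kauffman--Lins action is automatic; the real content of the theorem is to exhibit explicit local and semi-local formulas for the generator $\sigma_i$.

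First I would enumerate the possible local configurations of the two arcs meeting positions $i$ and $i+1$ in a diagram $m \in R^q_{n,k}$. Up to the left/right symmetry already used in Proposition \ref{braid-group-act}, there are nine cases: the two strands either belong to a single arc or to two distinct arcs, together with dot/no-dot data and nesting data. For each of these, I apply $\psi_{n,k}$ to $m$, attach the positive crossing $\sigma_i$ in $S^q_{n,k}$ just above the images of positions $i$ and $i+1$, expand the crossing using the Kauffman--Lins relation of Figure \ref{KL}, and then identify each resulting crossingless diagram with the image under $\psi_{n,k}$ of an element of $R^q_{n,k}$ using the explicit inverse constructed at the end of Section \ref{sec:psi}.

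In the six cases already treated in Proposition \ref{braid-group-act}, the same computations (as in Figure \ref{braid_group_ex}) go through verbatim once one observes that neither expansion interacts with any arc strictly containing positions $i$ and $i+1$, so the action remains fully local. The new phenomenon, and the main obstacle, occurs in the three remaining cases, namely when a dotted arc meeting position $i$ or $i+1$ is nested inside one or more undotted arcs. In these cases one of the two diagrams produced by expanding $\sigma_i$ in $S^q_{n,k}$ is of precisely the form of the diagram $d$ in Figure \ref{fix1}: it has an interwoven arc crossing over every undotted arc surrounding the local configuration. Applying Lemma \ref{lem:fix1} rewrites this interwoven diagram as a $\mathbb{Z}[q^{\pm 1/2}]$-linear combination of diagrams $d_i$ indexed by the surrounding undotted arcs $x_1,\dots,x_\alpha$, with coefficients $(1-q^{-2})q^{i-1}q^{\sum_{j=2}^i (a_j+b_j)/2}$. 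Each $d_i$ then pulls back, via the description of $\psi_{n,k}^{-1}$, to an element of $R^q_{n,k}$ whose dotted arcs sit just inside $x_i$ exactly as in Figure \ref{braid_group_act_gen}; this produces the extra summand per containing undotted arc that characterizes semi-locality.

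The delicate part of the verification will be bookkeeping the $q^{-1/2}$ factor per pair of nested dotted arcs built into the definition of $\widetilde{\psi}_{n,k}$, together with the $q^{(a_j+b_j)/2}$ factors from Lemma \ref{lem:fix1} that come from untangling the interwoven arc from the expansions of dotted arcs strictly inside the surrounding undotted ones. The match of these powers of $q$ with the coefficients written in Figure \ref{braid_group_act_gen} is formally parallel to the coefficient matching already carried out in Figure \ref{q_Russell_pres_fix_2} for the quantum Type I relation: in each of the three semi-local cases, the number of dotted arcs newly nested (or newly un-nested) under the relevant undotted arc is exactly the $n_i$ appearing in Section \ref{sec:qRussell}, and the accumulated $q^{-1/2}$'s combine with the $(1-q^{-2})q^{i-1}q^{\sum(a_j+b_j)/2}$ coefficient from Lemma \ref{lem:fix1} to yield the displayed coefficients. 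Once this is verified case by case, the braid relations $\sigma_i\sigma_{i+1}\sigma_i = \sigma_{i+1}\sigma_i\sigma_{i+1}$ and $\sigma_i\sigma_j = \sigma_j\sigma_i$ ($|i-j|>1$) are inherited from the Kauffman--Lins action on $S^q_{n,k}$ via the isomorphism $\psi_{n,k}$, completing the proof.
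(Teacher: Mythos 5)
Your proposal is correct and follows essentially the same route as the paper: pull the Kauffman--Lins braid group action back through $\psi_{n,k}$, verify locality case by case as in Proposition \ref{braid-group-act}, and attribute the semi-local terms in the three remaining cases to interwoven diagrams of the form handled by Lemma \ref{lem:fix1}. The paper explicitly omits the coefficient bookkeeping that you sketch, so your write-up simply supplies details the paper leaves out rather than taking a different approach.
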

In the three cases of Figure \ref{braid_group_act_gen} in which a term involving a summation of diagrams appears, the arcs labeled $x_{1}, \ldots, x_{\alpha}$ form the complete set of undotted arcs containing the arcs in $M$ with endpoints $i$ or $i+1$. We denote the endpoints of the arc $x_j$ by $(x_{j_1}, x_{j_2})$ and define $n_j$ to be the number of dotted arcs in $M$ with left endpoint between $x_{j_1}$ and arcs with endpoints $i$ or $i+1$ and right endpoint between arcs with endpoints $i$ or $i+1$ and $x_{j_2}$. This definition is analogous to that which appears in the quantum Type I and Type II Russell relations. In all cases, the diagrams appearing in $\sigma_i \cdot M$ are identical to $M$ apart from the arcs shown.

\afterpage{
\begin{figure}
\centering
\includegraphics[scale=.85]{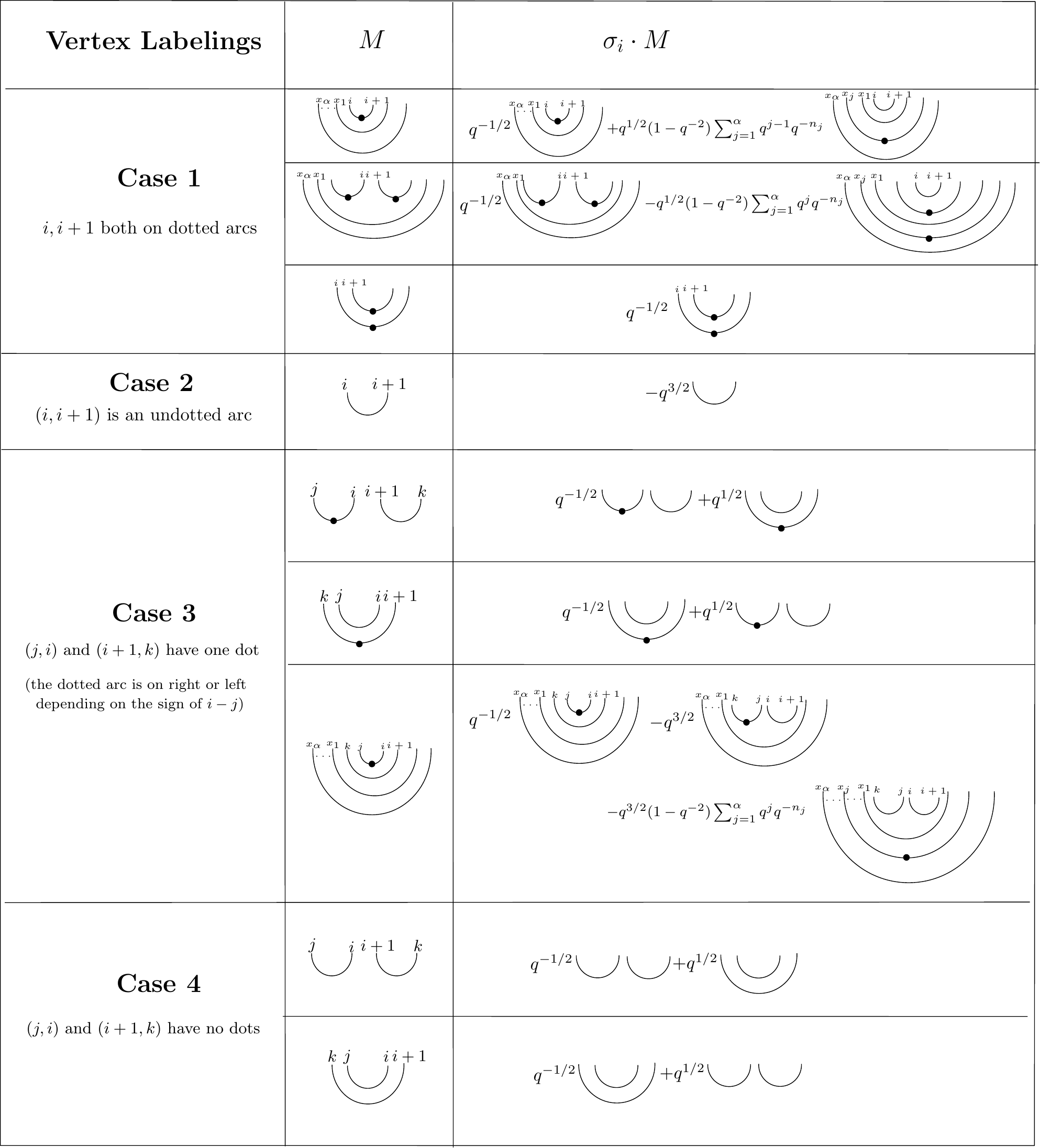}
\caption{The semi-local action of $B_{2n}$ on $R^q_{n,k}$.}
\label{braid_group_act_gen}
\end{figure}
}

\begin{proof}
This generalization of Proposition \ref{braid-group-act} follows by the same argument: in each case we map the diagram $M$ to $S_{n,k}^q$ via $\psi$, look at the braid group action there, and pull the result back. The nonlocal terms that appear in three of the cases arise from the appearance of the same type of diagram in $S_{n,k}^q$ that we saw in the proof of Theorem \ref{q_Russell_pres_fix_2}, that is, that of the form shown in Figure \ref{fix3}. We omit the details of this calculation.
\end{proof}

\begin{remark}
At this point, diagrams of the type in Figure \ref{fix3} have shown up several times and contributed to nonlocal terms both in the quantum Russell relations and in the braid group action. One might hope that the obstruction to locality could be removed if we alter our definition of $\psi$. A natural choice in a modified map $\psi'$ would be to make both arcs in the expansion of a dotted arc nested in an undotted arc both go either over or under the undotted arc. Then, when diagrams in $\widetilde{R}_{n,k}^q$ are mapped to $S_{n,k}^q$ and crossings are expanded, it would be impossible to have an arc between the two projectors that was interwoven with an undotted arc, and any such diagram would always be trivial.

Suppose we chose the expanded arcs to go under the undotted arc. Then, we would have, for example: 
\begin{figure}[H]
\centering
\includegraphics[scale=1]{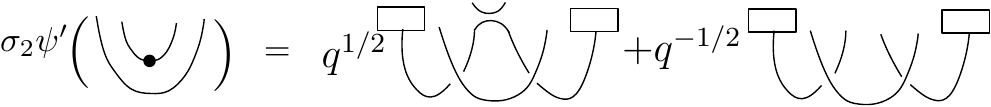}
\end{figure}
\end{remark}
The first term would disappear as desired, since a Reidemeister II move could be be performed to separate the bottom two strands, producing a diagram with an arc between the two projectors, which is zero in $S_{2,1}$.

Of course, if we alter our definition of $\psi$, the quantum Russell relations would also have to be modified so that $\psi': R^q_{n,k} \to S^q_{n,k}$ remains well-defined. Figure \ref{under-under-obs} shows that there is no clear additional modification of $\psi'$ that would contribute to local quantum Russell relations that would be preserved under $\psi'$.

\begin{figure}[H]
\centering
\includegraphics[scale=1.1]{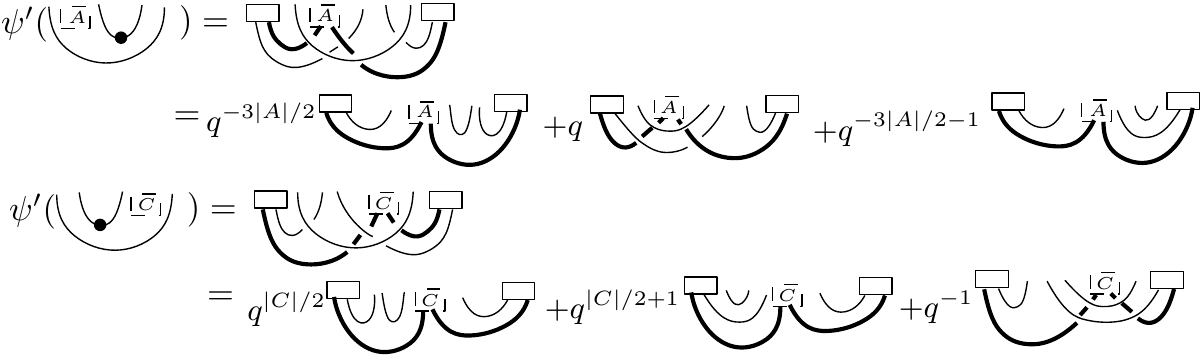}
\caption{There is no clear way to obtain quantum Russell relations that would be respected by a map $\psi'$ that expands a dotted arc nested inside an undotted arc by having both strands pass under.}
\label{under-under-obs}
\end{figure}

\section{Action of the symmetric group on $R_{n,k}^{q=1}$}
\label{sec:symm_gp_act}
We now specialize the above discussion of a braid group action on $\overline{R}^q_{n,k}$ and $R^q_{n,k}$ to the case $q=1$.

\subsection{The $q=1$ specialization of the braid group action on $\overline{R}^q_{n,k}$} 
\label{sec:symm_gp_act-1}

\begin{definition}
Denote by $\mathbb{Z}_{1}$ the one-dimensional $\mathbb{Z}[q, q^{-1}]$-module where $q$ acts as the identity. Let $\overline{R}_{n,k} \subseteq \widetilde{R}^{q=1}_{n,k}$ be the space $\overline{R}_{n,k}^q \otimes_{\mathbb{Z}[q,q^{-1}]} \mathbb{Z}_1$. That is, $\overline{R}_{n,k}$ is the free abelian group with basis $B_{n,k}$.
\end{definition}

\begin{remark}
The basis elements of $\overline{R}_{n,k}$ are given by the same diagrams as those of $R^{q=1}_{n,k}$ as well as the original Russell space $R_{n,k} = R^{q=-1}_{n,k}$.
\end{remark}

\begin{remark}
In \cite{russ-2}, Russell and Tymoczko call the elements of $B_{n,k}$ ``standard non crossing matchings.''
\end{remark}

The relationship between the spaces $\widetilde{R}^q_{n,k}, \overline{R}^q_{n,k}, \widetilde{R}^{q=1}_{n,k}$, and $\overline{R}_{n,k}$ can be visualized as follows:
\centerline{
\xymatrix{
\widetilde{R}^q_{n,k} \ar[r]^{q=1} & \widetilde{R}^{q=1}_{n,k} \\
\overline{R}^q_{n,k} \ar@{^{(}->}[u] \ar[r]^{q=1} & \overline{R}_{n,k} \ar@{^{(}->}[u]
}}

Now consider the braid group action of the previous section on both $\overline{R}^q_{n,k}$ and $S^q_{n,k}$ under the specialization $q=1$. When $q=1$, the Kauffman--Lins expansion of the positive crossing is the same as that of the negative crossing. Therefore we may neglect the orientation and depict both positive and negative crossings as a single crossing as in Figure \ref{KLq=1}. So when $q=1$, the actions of $\sigma_i$ and $\sigma_i^{-1}$ are the same, and the braid group action described in the previous section descends to an action of the symmetric group.

\begin{figure}[H]
\centering
\includegraphics[scale=1]{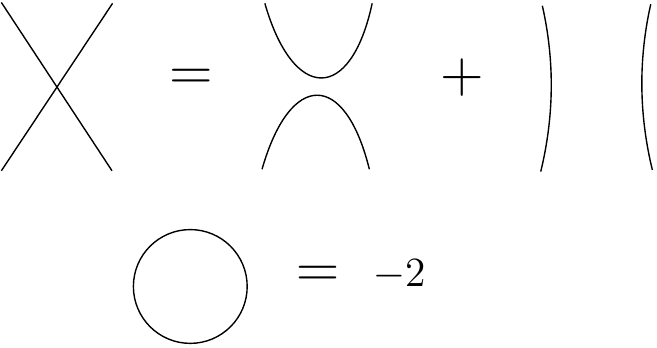}
\caption{The Kauffman--Lins diagrammatics when $q$ is specialized to 1.}
\label{KLq=1}
\end{figure}

In Figure \ref{Sn_action}, we show the specialization of the braid group action of Figure \ref{braid_group_act} to an $S_{2n}$ action on $\overline{R}_{n,k}$. We denote the generators of $S_{2n}$ by $s_i$, which as previously explained corresponds to both $\sigma_i$ and $\sigma_i^{-1}$ in the generic case.

\afterpage{
\begin{figure}
\centering
\includegraphics[scale=1]{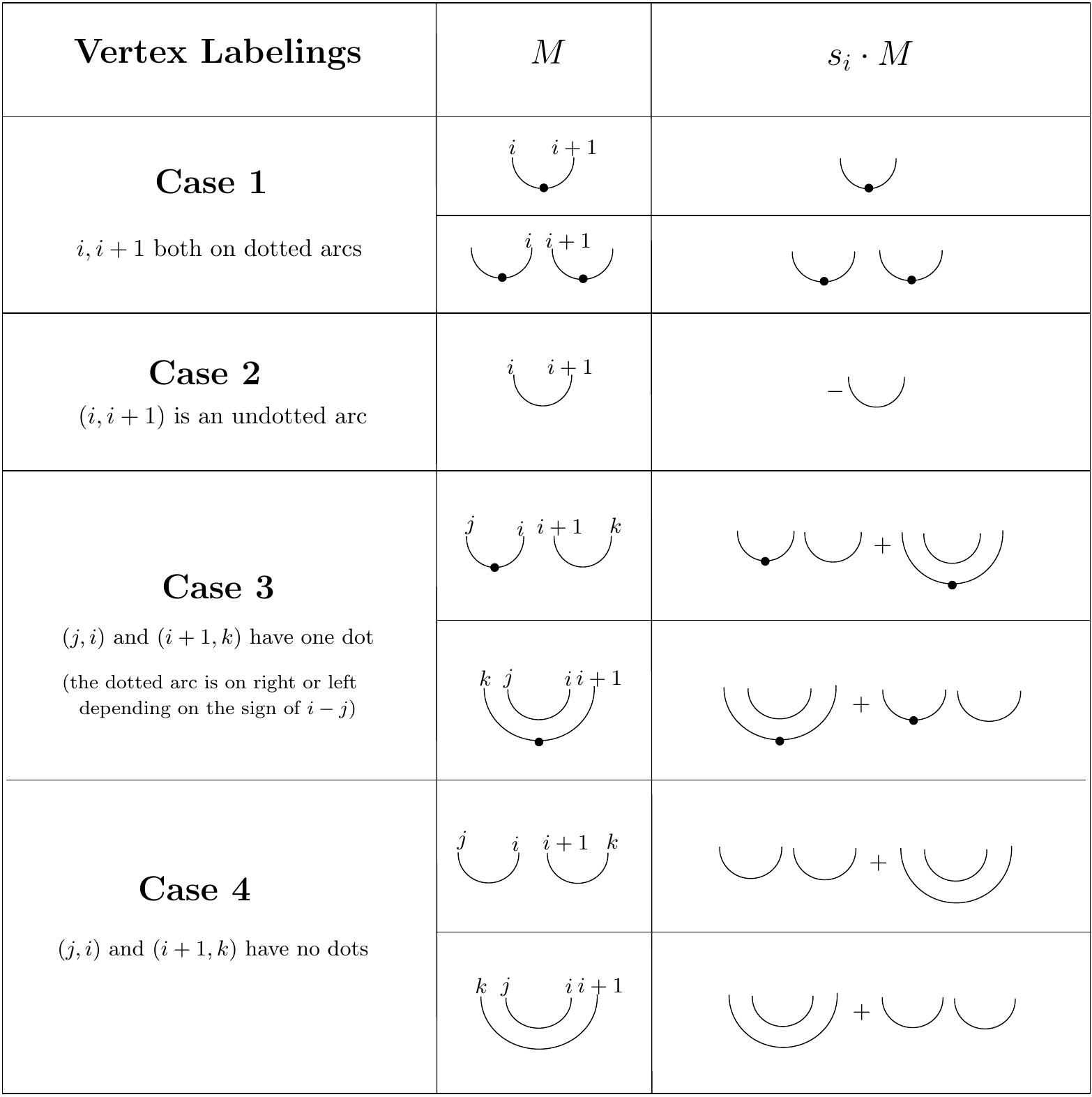}
\caption{The Russell-Tymoczko action of the symmetric group on $\overline{R}_{n,k}$}
\label{Sn_action}
\end{figure}
}

\begin{observation}
In \cite{russ}, Russell gives an explicit correspondence between the basis elements of $\overline{R}_{n,k}$ and generators of $H_{\ast}(X_n)$, where $X_n$ is the $(n,n)$-Springer variety. In \cite{russ-2}, Russell and Tymoczko define a symmetric group action on these basis elements and show that the action is the Springer representation. Our action shown in Figure \ref{Sn_action} is exactly the same as theirs. This is consistent with a result in a follow-up paper of Russell \cite{russ-3}, in which she describes a skein-theoretic formulation of the symmetric group action on $\overline{R}_{n,k}$ which parallels ours on the space $S_{n,k}$.
\end{observation}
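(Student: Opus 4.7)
The plan is to verify by case analysis that the symmetric group action on $\overline{R}_{n,k}$ depicted in Figure \ref{Sn_action} coincides with the action defined by Russell and Tymoczko in \cite{russ-2}. Both actions are defined on the same underlying free abelian group with basis $B_{n,k}$ of standard non-crossing matchings, so the verification reduces to checking that $s_i \cdot m$ agrees on each simple transposition $s_i \in S_{2n}$ and each standard matching $m$.

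First I would enumerate the local configurations of the arcs incident to positions $i$ and $i+1$ in a standard matching $m$. Each of these positions is the left or right endpoint of an arc, which itself is either dotted or undotted, subject to the constraint that dotted arcs must be outer. This produces exactly the nine cases already tabulated in Figure \ref{Sn_action}, and these same cases implicitly appear in the Russell--Tymoczko description.

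For each case I would write down the Russell--Tymoczko prescription for $s_i \cdot m$ directly from the formulas in \cite{russ-2} and compare it with the corresponding panel of our figure. In configurations where the Russell--Tymoczko prescription produces a diagram with dots on inner arcs, I would apply the Type I and Type II Russell relations (following Proposition \ref{basis}) to re-express the result as a $\mathbb{Z}$-linear combination of elements of $B_{n,k}$, then match term by term with our panel. A parallel sanity check is available by going the other way: since our action was derived by specializing to $q=1$ the braid-group action on $S^q_{n,k}$ pulled back along $\psi_{n,k}|_{\overline{R}^q_{n,k}}$, one can apply $\psi_{n,k}$ to both sides and compare in $S_{n,k}^{q=1}$, where the Kauffman--Lins reductions are immediate.

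The main obstacle is the bookkeeping of signs, multiplicities, and forced Type I / Type II resolutions in the cases that yield non-standard diagrams, particularly those that introduce nested dotted arcs and therefore require iterating the relations to return to $B_{n,k}$. Conceptually the two actions must coincide because each one is, after Russell's identification $\overline{R}_{n,k} \cong H_{\ast}(X_n)$, a realization of the Springer representation on the basis of fixed points of the torus action; but the stated equality is the explicit combinatorial matching of the two case lists, and this is what the case-by-case comparison establishes.
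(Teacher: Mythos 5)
The paper offers no argument for this Observation beyond direct inspection, and your case-by-case comparison of the nine local configurations of Figure \ref{Sn_action} against the Russell--Tymoczko rules is exactly the verification implicitly being invoked; your $\psi_{n,k}$ sanity check is likewise how the figure was obtained in the first place (Proposition \ref{braid-group-act}). One small caution: $\overline{R}_{n,k}$ is a subspace of $\widetilde{R}_{n,k}$, not a quotient, so Type I/II relations are not literally available there --- but this is harmless, since both actions are already defined so as to land in the span of $B_{n,k}$, and any standardization you perform should be read through the isomorphism $\overline{R}_{n,k}\cong R_{n,k}^{q=1}$.
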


\begin{corollary}
The $q=1$ specialization of the quantum braid group action on $\overline{R}^q_{n,k}$ yields the symmetric group representation corresponding to the Specht module of the partition $(n+k, n-k)$. Taking the union over all $k$ gives the well-known action of $S_{2n}$ on $H_{\ast}(X_n)$.
\end{corollary}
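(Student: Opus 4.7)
The plan is to deduce the corollary from the Observation preceding it together with classical facts about the Springer representation on the $(n,n)$-Springer variety. First I would set up the $q=1$ specialization carefully: since the two Kauffman--Lins crossing expansions agree at $q=1$ (Figure \ref{KLq=1}), the braid group action of Proposition \ref{braid-group-act} on $\overline{R}^q_{n,k}$ descends to an action of $S_{2n}$ on $\overline{R}_{n,k} = \overline{R}^q_{n,k}\otimes_{\mathbb{Z}[q,q^{-1}]} \mathbb{Z}_1$. On generators, the resulting action is obtained by setting $q=1$ in Figure \ref{braid_group_act}; this is exactly the action recorded in Figure \ref{Sn_action}.

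Next I would compare this $S_{2n}$-action with the Russell--Tymoczko action. Both act on the same underlying basis $B_{n,k}$ of ``standard non-crossing matchings'' with dots on outer arcs, and one checks case-by-case that the two assignments $s_i \cdot M$ coincide on each basis element. This verification is exactly the content of the Observation in the text; once it is made, the representation $\overline{R}_{n,k}$ is identified with the Russell--Tymoczko representation on the homology generators of the corresponding graded piece of $H_{\ast}(X_n)$.

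Having made this identification, I would invoke the theorem of Russell--Tymoczko \cite{russ-2} that their action is the Springer representation on $H_{\ast}(X_n)$, and the classical result (for the two-row nilpotent of Jordan type $(n,n)$) that under the Springer correspondence the graded piece of $H_{\ast}(X_n)$ indexed by $k$ is the Specht module $S^{(n+k,n-k)}$. As a consistency check one can compare dimensions: Proposition \ref{prop:rank} computes
\[
\mathrm{rank}(\overline{R}_{n,k}) = \binom{2n}{n+k}-\binom{2n}{n+k+1},
\]
which is exactly the hook-length formula for $\dim S^{(n+k,n-k)}$. Summing over $0 \leq k \leq n$ yields the Springer representation on $H_{\ast}(X_n)$, establishing both statements of the corollary.

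The main obstacle, if any, lies in the case-by-case verification that our Figure \ref{Sn_action} action matches Russell--Tymoczko's; the formulas involve several geometrically distinct configurations of arcs with and without dots, and sign conventions must be tracked carefully. Once this dictionary is established, however, the remainder of the argument is a citation of known results about Springer theory for two-row nilpotents.
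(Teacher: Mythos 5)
Your proposal is correct and follows essentially the same route as the paper: the corollary there is deduced directly from the preceding Observation that the $q=1$ specialization in Figure \ref{Sn_action} coincides with the Russell--Tymoczko action, combined with their identification of that action as the Springer representation whose $k$th graded piece is the Specht module of $(n+k,n-k)$. Your dimension check via Proposition \ref{prop:rank} matches the Remark the paper makes about the hook-length formula.
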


\begin{remark}
As previously mentioned, the traditional Russell skein module can be recovered from our quantum Russell space by specializing $q$ to $-1$. However, to get diagrammatics for the symmetric group from the Kauffman--Lins space, we must specialize $q$ to $1$. Russell and Tymoczko avoid this inconsistency by defining their action on the subspace of $\widetilde{R}_{n,k} = \widetilde{R}^{q=-1}_{n,k}$ spanned by standard crossingless matchings, which happens to be isomorphic to the quotient of $\widetilde{R}_{n,k}$ by the $q=-1$ Type I and Type II relations. Instead, we view the space spanned by standard crossingless matchings inside $\widetilde{R}_{n,k}^{q=1}$, which we extend to all of $\widetilde{R}_{n,k}^{q=1}$ in the following section in such a way that respects the $q=1$ Russell relations.
\end{remark}

\subsection{A local extension of the symmetric group action to $\widetilde{R}_{n,k}$ and $R_{n,k}$}
\label{sec:symm_gp_act-2}

Recall that in the case of generic $q$, it was not possible to extend our braid group action from $\overline{R}^q_{n,k}$ to $\widetilde{R}^q_{n,k}$ in such a way that the action remained completely local. The braid group action on the full quantum Russell space $R^q_{n,k}$ was shown in Figure \ref{braid_group_act_gen}.

Upon closer examination of Figure \ref{braid_group_act_gen}, we note that in the three cases in which nonlocal terms appear, the nonlocal terms all contain a factor of $(1-q^{-2})$. This means that when $q = \pm 1$, these terms disappear and the braid group action is in fact completely local. Note that when $q=\pm 1$, the types of diagrams $M$ that appear in both Figure \ref{braid_group_act} (where we assumed dots appeared on outer arcs only) and Figure \ref{braid_group_act_gen} have the same action of $\sigma_i$.

Therefore in particular when $q=1$, we may extend the Russell-Tymoczko symmetric group action on $\overline{R}_{n,k}$ to all of $\widetilde{R}_{n,k}$ in such a way that descends to an action on the $q=1$ Russell space $R^{q=1}_{n,k}$. That is, if we let $\pi: \widetilde{S}^{q=1}_{n,k} \to S^{q=1}_{n,k}$ be the quotient map, then we are able to extend the $S_{2n}$ action from $\overline{R}_{n,k}$ to $\widetilde{R}^{q=1}_{n,k}$ in such a way that it commutes with the map $\pi \circ \widetilde{\psi}$ and descends to $R^{q=1}_{n,k}$. See Figure \ref{new_symm_act} for a clarifying diagram.

\begin{figure}[H]
\centering
\includegraphics[scale=1]{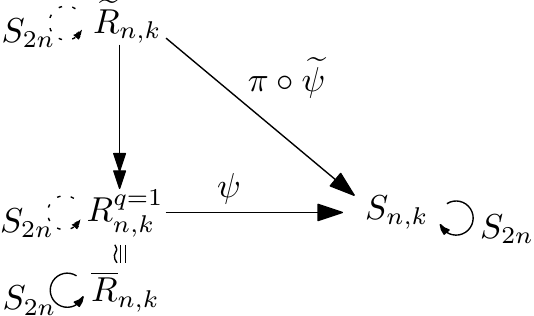}
\caption{The local $S_{2n}$ action on $\overline{R}_{n,k}$ can be extended to $\widetilde{R}_{n,k}$ and $R_{n,k}^{q=1}$ such that the diagram commutes.}
\label{new_symm_act}
\end{figure}

\begin{proposition}
Figure \ref{Sn_action_ext} defines a local action of the generators of $S_{2n}$ on diagrams of $\widetilde{R}_{n,k}$ that have dots on inner arcs. Together with the local definitions of Figure \ref{Sn_action}, this gives a well-defined $S_{2n}$ action on $\widetilde{R}_{n,k}$ that descends to $R_{n,k}^{q=1}$ such that Figure \ref{new_symm_act} commutes.
\end{proposition}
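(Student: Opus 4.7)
The strategy is to obtain the proposed local $S_{2n}$ action on $\widetilde{R}_{n,k}$ as the $q=1$ specialization of the semi-local braid group action on $R^q_{n,k}$ described in Figure \ref{braid_group_act_gen}. In the three cases where nonlocal terms appear, every one of those terms carries a factor of $(1-q^{-2})$, so they vanish identically at $q=1$. Thus the specialized action on diagrams with dots on inner arcs is purely local, and we take Figure \ref{Sn_action_ext} to record precisely these local formulas. Combined with Figure \ref{Sn_action}, which handles the outer-dot cases and is itself the $q=1$ specialization of Figure \ref{braid_group_act}, we obtain a candidate operator $s_i$ on $\widetilde{R}_{n,k}$ for each $1 \leq i \leq 2n-1$, defined by its action on the basis of diagrams and extended by linearity.

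To show the $s_i$ assemble into an $S_{2n}$ action on $\widetilde{R}_{n,k}$, I would verify the three families of defining relations: $s_i^2 = 1$, $s_is_{i+1}s_i = s_{i+1}s_is_{i+1}$, and $s_is_j = s_js_i$ for $|i-j|>1$. The last family is immediate from locality. For $s_i^2$ and the braid relation, the check proceeds by enumerating the local arc configurations near the relevant endpoints. On the subspace $\overline{R}_{n,k} \subseteq \widetilde{R}_{n,k}$ of standard diagrams, the relations already follow from Proposition \ref{braid-group-act}: the map $\widetilde{\psi}|_{\overline{R}_{n,k}}$ is injective and intertwines the action with the Kauffman--Lins braid-group action on $S_{n,k}$, which at $q=1$ collapses to an $S_{2n}$-action. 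For diagrams with dots on inner arcs, the analogous verification is carried out by composing the rules of Figure \ref{Sn_action_ext} with those of Figure \ref{Sn_action} and comparing the resulting linear combinations term-by-term.

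Next I would show that the action descends to $R_{n,k}^{q=1}$ by checking that each $s_i$ preserves the $q=1$ Type I and Type II Russell relations. Because at $q=1$ these relations are themselves purely local (the non-local correction summands in Figure \ref{quantum_Russell} all acquire the factor $(1-q^{-2})$ and vanish), the check involves only the four endpoints $a<b<c<d$ named in each relation together with the two endpoints $i,i+1$ on which $s_i$ acts. The verification thus reduces to a finite enumeration of sub-cases indexed by the relative position of $\{i,i+1\}$ with respect to $\{a,b,c,d\}$, and in each case one compares the two sides directly using the local formulas.

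Commutativity of Figure \ref{new_symm_act} is essentially built in: the formulas for $s_i$ on $\widetilde{R}_{n,k}$ were extracted by pulling back the braid group action on $S_{n,k}$ along $\pi\circ \widetilde{\psi}$ and specializing to $q=1$, at which point the non-preserved (non-local) correction terms vanish, so $\pi \circ \widetilde{\psi}$ intertwines the actions by construction. The principal obstacle throughout is not conceptual but combinatorial: each $s_i$ already distinguishes nine local arc configurations, so the quadratic, braid, and Russell-relation checks all involve a substantial but routine case analysis. The analysis is, however, dramatically shorter than it would be for generic $q$ precisely because every nonlocal correction produced by $\psi$ is killed by the specialization $q=1$.
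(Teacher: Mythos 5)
Your proposal is correct and follows essentially the same route as the paper: the paper justifies this proposition only by the observation (made just before the statement) that the nonlocal terms of Figure \ref{braid_group_act_gen} all carry the factor $(1-q^{-2})$ and hence vanish at $q=1$, so that the pulled-back braid group action becomes local, collapses to a symmetric group action, and descends because it was already well defined on $R^q_{n,k}$. Your additional care in verifying the $S_{2n}$ relations directly on the free module $\widetilde{R}_{n,k}$ is a worthwhile supplement, since $\pi\circ\widetilde{\psi}$ is not injective there and the relations cannot simply be pushed forward from $S_{n,k}$; the paper leaves this case check implicit.
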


\begin{figure}[H]
\centering
\includegraphics[scale=1]{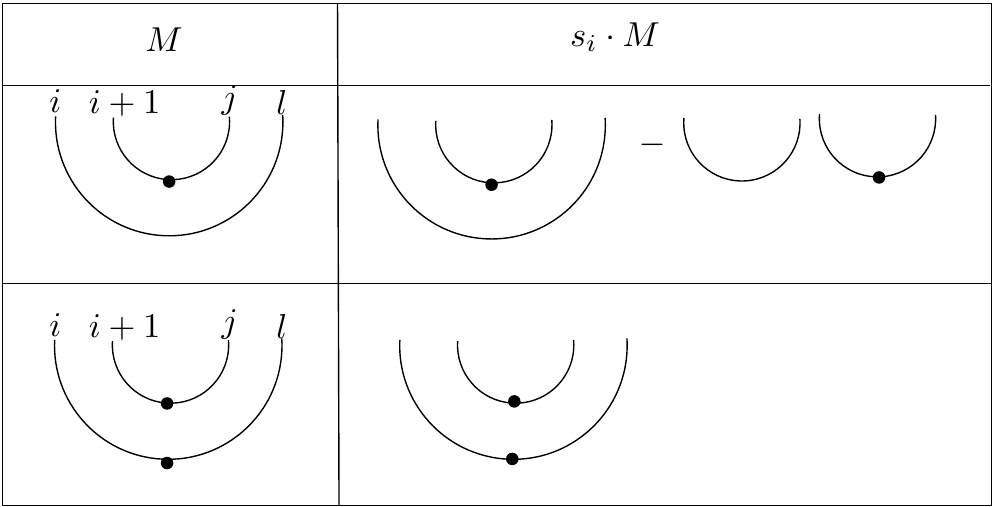}
\caption{An extension of the Russell-Tymoczko $S_{2n}$ action to $\widetilde{R}^{q=1}_{n,k}$.}
\label{Sn_action_ext}
\end{figure}

\chapter{An equivariant deformation of the Bar-Natan--Russell skein module}
\label{ch:equiv}

\section{The Russell skein module and $HH_0(H^n)$}
\label{sec:HH0}
Let $B^n$ be the set of crossingless matchings on $2n$ points. That is, elements of $B^n$ are pairings of integers from $1$ to $2n$ such that there is no quadruple $i < j < k < l$ with $(i, k)$ and $(j, l)$ paired. In \cite{crossmatch}, Khovanov defines a relation ``$\to$" on elements of $B^n$ as follows:

\begin{definition}
For $a, b \in B^n$ we write $a \to b$ if there is a quadruple $i < j < k < l$ such that $(i, j)$ and $(k, l)$ are pairs in $a$, $(i, l)$ and $(j, k)$ are pairs in $b$, and otherwise $a$ and $b$ are identical.
\end{definition}

Recall that for crossingless matchings $a$ and $b$ in $B^n$, the component ${}_b(H^n)_a$  of $H^n$ is obtained by applying the functor $\mathcal{F}$ to the collection of circles of $W(b)a$ to obtain the underlying algebra $\mathcal{A}^{\otimes k}$, where $W(b)$ is the vertical reflection of $b$ and $k$ is the number of circles in $W(b)a$. We index the tensor factors from $1$ to $k$ by numbering the circles in $W(b)a$ in the order in which the leftmost point of a circle is encountered as we traverse the center line from left to right. See Figure \ref{numConv} for an example.

\begin{figure}[ht]
   \centering
   \includegraphics[width=4in]{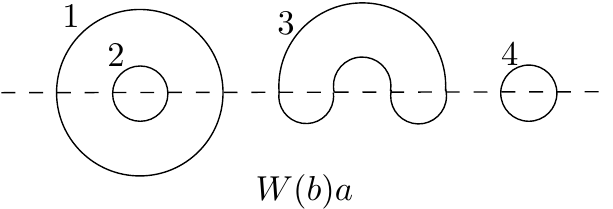}
   \caption{Numbering convention for the circles of $W(b)a$ and the corresponding tensor factors of ${}_aH^n_b$}
      \label{numConv}
\end{figure}

If $a \to b$, let $i$ and $j$ be the indices of the tensor factors of elements of ${}_a(H^n)_a$ and ${}_b(H^n)_b$ corresponding to the two circles that differ in $W(a)a$ and $W(b)b$. Define ${}_aX_b \subset {}_a(H^n)_a \oplus {}_b(H^n)_b$ to be the subspace generated by elements of one of the following forms:
\begin{itemize}
\item  ${}_ax^i_a + {}_ax^j_a - {}_bx^i_b - {}_bx^j_b$, where the ${}_*x_*^i$'s have an $x$ in tensor factor $i$ and a 1 in tensor factor $j$, the ${}_*x_*^j$'s have an $x$ in tensor factor $j$ and a 1 in tensor factor $i$, the ${}_ax^*_a$'s are in ${}_a(H^n)_a$, the ${}_bx^*_b$'s are in ${}_b(H^n)_b$, and they are all identical outside of tensor factors $i$ and $j$.
\item ${}_ax^{i,j}_a - {}_bx^{i,j}_b$, where each term has an $x$ in tensor factors $i$ and $j$ and are otherwise identical.
\end{itemize}

\begin{lemma}
\label{hncommutator}
The commutator module $[H^n, H^n]$ is generated by all elements of ${}_a(H^n)_b$ for $a \neq b$ and ${}_aX_b$ for all $a \to b$.
\end{lemma}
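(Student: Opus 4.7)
The plan is to prove both containments. Write $G$ for the $\mathbb{Z}$-submodule of $H^n$ generated by the listed elements; the goal is $G = [H^n, H^n]$.

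I would first show $G \subseteq [H^n, H^n]$. For $a \neq b$ and $x \in {}_a(H^n)_b$, the index-matching rule for multiplication in $H^n$ gives $1_a \cdot x = x$ and $x \cdot 1_a = 0$, so $x = [1_a, x]$ is a commutator. To handle ${}_aX_b$ for $a \to b$, I would realize each of the two types of generating elements as an explicit commutator $uv - vu$ with $u \in {}_a(H^n)_b$ and $v \in {}_b(H^n)_a$. The cobordisms $\mbox{Id}_{W(a)} S(b) \mbox{Id}_a$ and $\mbox{Id}_{W(b)} S(a) \mbox{Id}_b$ that define these products act non-trivially only on the single distinguished circle of $W(a)b$ and $W(b)a$ (each such diagram has $n-1$ circles, with $n-2$ matching unchanged circles of $W(a)a$ or $W(b)b$). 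Varying the $\mathcal{A}$-decoration on this distinguished circle in $u$ and $v$ and evaluating the TQFT via the Frobenius structure of $\mathcal{A}$ then produces exactly the two types of generators of ${}_aX_b$.

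For the reverse containment, I would reduce $[x, y]$ by bilinearity to the case of homogeneous $x \in {}_c(H^n)_d$ and $y \in {}_e(H^n)_f$. Case analysis on the index-matching rule leaves three possibilities: (i) $c = d$ and $e = f$, where $[x, y] = 0$ by commutativity of ${}_c(H^n)_c \cong \mathcal{A}^{\otimes n}$; (ii) exactly one of $xy, yx$ is nonzero, in which case $[x, y]$ lies in an off-diagonal component ${}_g(H^n)_h$ with $g \neq h$, hence in $G$; and (iii) $d = e$, $f = c$, $c \neq d$, producing a ``diagonal commutator'' $xy - yx \in {}_c(H^n)_c \oplus {}_d(H^n)_d$, which is the only nontrivial case.

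I would handle the diagonal commutators by induction on the distance $\delta(c, d)$ in the graph on $B^n$ whose edges are the $\to$ relations, taken symmetrically. The base case $\delta(c, d) = 1$ is the explicit computation above, which shows $xy - yx \in {}_cX_d$. For the inductive step, pick $e$ adjacent to $c$ with $\delta(e, d) < \delta(c, d)$ and factor $x = \sum_i f_i g_i$ with $f_i \in {}_c(H^n)_e$ and $g_i \in {}_e(H^n)_d$. The universal identity $[fg, v] = [f, gv] + [g, vf]$ then gives
\[
[x, y] = \sum_i [f_i, g_i y] + \sum_i [g_i, y f_i],
\]
in which each $[f_i, g_i y]$ is a diagonal commutator for the adjacent pair $(c, e)$ (handled by the base case) and each $[g_i, y f_i]$ is a diagonal commutator for $(e, d)$ at strictly smaller distance (handled by induction). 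The main obstacle will be verifying that any $x \in {}_c(H^n)_d$ admits such a factorization through an intermediate $e$ chosen adjacent to $c$ and closer to $d$: this amounts to a surjectivity statement for the multiplication ${}_c(H^n)_e \otimes {}_e(H^n)_d \to {}_c(H^n)_d$, which I would verify by a careful analysis of the saddle cobordisms along a shortest $c$-to-$d$ path in the graph. Once this surjectivity is in hand, the induction closes and the lemma follows.
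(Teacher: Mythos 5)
Your decomposition of $[H^n,H^n]$ and the first two steps track the paper's proof closely: off-diagonal summands are commutators against idempotents, and the generators of ${}_aX_b$ for adjacent $a\to b$ are exactly the nonzero values of $pq-qp$ with $p\in{}_a(H^n)_b$, $q\in{}_b(H^n)_a$ (the paper computes these explicitly via the merge--split saddles). One small caution in your case analysis: the trichotomy should be phrased in terms of index matching rather than whether $xy$ or $yx$ is nonzero as an element, since a product can vanish under the TQFT map even when the indices match; as written, case (ii) could accidentally include a diagonal commutator. Your case (iii) is the right formulation.

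Where you genuinely diverge is the reduction of diagonal commutators for non-adjacent pairs, and this is where the gap sits. Your induction requires that every $x\in{}_c(H^n)_d$ factor as $\sum_i f_ig_i$ through a neighbor $e$ of $c$ that is closer to $d$, i.e.\ surjectivity of ${}_c(H^n)_e\otimes{}_e(H^n)_d\to{}_c(H^n)_d$. You flag this but do not prove it, and it is not free: the TQFT map of a cobordism fails to be surjective as soon as any component splits a circle (the image of $\Delta$ has rank $2$ inside $\mathcal{A}^{\otimes 2}$) or has positive genus. An Euler-characteristic count shows the composite $\mathrm{Id}_{W(c)}S(e)\mathrm{Id}_d$ is all merges precisely when $|W(c)d|=|W(c)e|+|W(e)d|-n$, which for $e$ adjacent to $c$ amounts to $|W(e)d|=|W(c)d|+1$. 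So you must prove the combinatorial fact that the circle count satisfies $|W(x)y|=n-\delta(x,y)$ and that a geodesic neighbor realizing the increase always exists; this is true (split a circle of $W(c)d$ passing through at least four points by an elementary move on $c$), but it is a real piece of the argument, comparable in weight to the rest of the lemma. The paper avoids this entirely by telescoping the specific product ${}_a1_b\cdot{}_b1_a$ along a path $a=a_0,\dots,a_k=b$ into a sum of commutators over adjacent pairs, using the explicit form of the product of identity elements as $\prod_m(x^{i_m}+x^{j_m})$; that identity does the work your surjectivity claim is meant to do. Your Leibniz identity $[fg,v]=[f,gv]+[g,vf]$ and the distance induction are correct, so once the surjectivity lemma is supplied your proof closes; as it stands, it is incomplete at exactly the point you identified.
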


\begin{proof}
First, it is clear that any element of a summand ${}_a(H^n)_b$ for $a \neq b$ is in the commutator: if $y = x^{\epsilon_1} \otimes \cdots \otimes x^{\epsilon_k}$ is a generator of ${}_a(H^n)_b$ ($\epsilon_i = 0$ or $1$), and $1_b = 1 \otimes \cdots \otimes 1 \in {}_bH^n_b$, then $y = y - 0 = y \cdot 1_b - 1_b \cdot y$. 

All other elements of the commutator must belong to the component $\displaystyle{\bigoplus_{a \in B^n} {}_a(H^n)_a}$. Such elements may only come from $[{}_a(H^n)_b, {}_b(H^n)_a]$ where $a \neq b$, since $[{}_a(H^n)_a,{}_a(H^n)_a]=0.$ First suppose that $a$ and $b$ are such that $a \to b$ and let $i$ and $j$ be the indices of the tensor factors of ${}_a(H^n)_a$ and ${}_b(H^n)_b$ corresponding to the two arcs that appear in the relation $\to$. Without loss of generality take $i <  j$, so that the $i$th component of $W(b)a$ is the one that is asymmetrical. Let $p$ be an arbitrary generator of ${}_a(H^n)_b$ that has $x$'s in positions $P \subset \{1, \ldots, n-1 \}$ and $1$'s elsewhere, and similarly $q$ a generator of ${}_b(H^n)_a$ with $x$'s in positions $Q \subset \{1, \ldots, n-1\}$ and $1$'s elsewhere. Then
\[
pq - qp = \left\{
\begin{array}{ll}
0 & \mbox{if } P \cap Q \neq \emptyset \\
r^i_a + r^j_a - r^i_b -r^j_b & \mbox{if } P \cap Q = \emptyset \mbox{ and } i \notin P \cup Q\\
r^{i,j}_a - r^{i,j}_b & \mbox{if } P \cap Q = \emptyset \mbox{ and } i \in P \cup Q
\end{array}
\right.
\]
where $r^i_a$ is in ${}_a(H^n)_a$ and has $x$'s in tensor factors $i$ and those corresponding to $P \cup Q$ (some of whose indices may have shifted by one between ${}_a(H^n)_b$ and ${}_a(H^n)_a$) and 1's elsewhere, $r^j_a$ is in ${}_a(H^n)_a$ and has $x$'s in tensor factors $j$ and those corresponding to $P \cup Q$ and 1's elsewhere, and $r^i_b$ and $r^j_b$ are similar except in ${}_b(H^n)_b$. The $r_a^{i,j}$ and $r_b^{i,j}$ are the same but with $x$'s in tensor factors $i$ and $j$ as well as those corresponding to $P \cup Q$. Therefore any generator of $[{}_a(H^n)_b, {}_b(H^n)_a]$ belongs to ${}_aX_b$.

Finally, suppose that $a \nrightarrow b$. Then there exists a sequence $a = a_0, a_1, \ldots, a_k = b$ such that for each $0 \leq i \leq k-1$, either $a_i \to a_{i+1}$ or $a_{i+1} \to a_i$. Let $i_m, j_m$ be the indices of the two arcs of $a_m$ and $a_{m+1}$ that change in the relation $a_m \to a_{m+1}$ (where arcs are numbered from left to right). Let $x^{i}_{a_m}$ be the element of ${}_{a_m}H^n_{a_m}$ with $x$ in the $i$th tensor factor and $1$s elsewhere, and similarly for $x^{j}_{a_m}$.

Observe that each product ${}_{a_0}1_{a_k} \cdot {}_{a_k}1_{a_0}$ is formed by successively merging and splitting the circles corresponding to the arcs $i_m, j_m$, so that
\[ {}_{a_0}1_{a_k} \cdot {}_{a_k}1_{a_0} = {}_{a_0}1_{a_1} \cdot ((x^{i_1}_{a_1} +x^{j_1}_{a_1}) \cdots (x^{i_{k-1}}_{a_1} + x^{j_{k-1}}_{a_1})){}_{a_1}1_{a_0}.\]

Then
\begin{eqnarray*}
 [{}_a1_b, {}_b1_a] &=& {}_a1_b {}_b1_a -{}_b1_a {}_a1_b \\
  &=& {}_{a_0}1_{a_1} \cdot ((x^{i_1}_{a_1} +x^{j_1}_{a_1}) \cdots (x^{i_{k-1}}_{a_1} + x^{j_{k-1}}_{a_1})){}_{a_1}1_{a_0} \\
  & & \qquad - {}_{a_k}1_{a_{k-1}} \cdot ((x^{i_0}_{a_{k-1}}+x^{j_0}_{a_{k-1}}) \cdots (x^{i_{k-2}}_{a_{k-1}} + x^{j_{k-2}}_{a_{k-1}})) {}_{a_{k-1}}1_{a_k}.
\end{eqnarray*} 

Observe that for any $m$,
\begin{multline*}
(x^{i_0}_{a_m} + x^{j_0}_{a_m}) \cdots \widehat{(x^{i_{m-1}}_{a_m} + x^{j_{m-1}}_{a_m})} \cdots (x^{i_{k-1}}_{a_m} + x^{j_{k-1}}_{a_m}) {}_{a_m}1_{a_{m-1}} \cdot {}_{a_{m-1}}1_{a_m} = \\
{}_{a_m}1_{a_{m+1}} \cdot (x^{i_0}_{a_{m+1}} + x^{j_0}_{a_{m+1}}) \cdots \widehat{(x^{i_{m}}_{a_{m+1}} + x^{j_{m}}_{a_{m+1}})} \cdots (x^{i_{k-1}}_{a_{m+1}} + x^{j_{k-1}}_{a_{m+1}}) {}_{a_{m+1}}1_{a_m}.
\end{multline*}

Therefore
\begin{multline*}
[{}_a1_b,{}_b1_a] = [{}_{a_0}1_{a_1},(x^{i_1}_{a_1}+ x_{a_1}^{j_1}) \cdots (x_{a_1}^{i_{k-1}} + x_{a_1}^{j_{k-1}}){}_{a_1}1_{a_0}] + \cdots \\
+ [{}_{a_{m-1}}1_{a_m}, (x_{a_m}^{i_0} + x_{a_m}^{j_0}) \cdots \widehat{(x_{a_m}^{i_{m-1}} + x_{a_m}^{j_{m-1}})} \cdots (x_{a_m}^{i_{k-1}} + x_{a_m}^{j_{k-1}}){}_{a_m}1_{a_{m-1}}] + \cdots \\
+ [{}_{a_{k-1}}1_{a_k}, (x_{a_k}^{i_0} + x_{a_k}^{j_0}) \cdots (x_{a_k}^{i_{k-2}} + x_{a_k}^{j_{k-2}}){}_{a_k}1_{a_{k-1}}].
\end{multline*}

Thus  $[{}_a(H^n)_b, {}_b(H^n)_a] \subseteq \sum_{0 \leq i \leq k-1} [{}_{a_i}H^n_{a_{i+1}}, {}_{a_{i+1}}(H^n)_{a_i}]$ and is generated by a subset of \\ $ \cup_{0\leq i \leq k-1} {}_{a_i}X_{a_{i+1}}$.
$\blacksquare$
\end{proof}

\begin{proposition}
\label{prop:HH0}
$HH_0(H^n)$ is isomorphic to the Russell skein module $R_n$ as a $\mathbb{Z}$-module.
\end{proposition}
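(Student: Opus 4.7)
The plan is to identify $HH_0(H^n) = H^n/[H^n,H^n]$ with $R_n$ by decomposing the quotient in two stages using Lemma \ref{hncommutator}. First, since $[H^n,H^n]$ contains every off-diagonal summand ${}_a(H^n)_b$ with $a \neq b$, the quotient $H^n/[H^n,H^n]$ is the quotient of the diagonal part $\bigoplus_{a \in B^n} {}_a(H^n)_a$ by the subspace generated by $\bigcup_{a \to b} {}_aX_b$. I would then identify $\bigoplus_a {}_a(H^n)_a$ with $\widetilde{R}_n$ (the free abelian group on dotted crossingless matchings) by sending the diagonal diagram $W(a)a$ to $a$ itself, and by using the numbering convention of Figure \ref{numConv} to put the $n$ circles of $W(a)a$ in bijection with the $n$ arcs of $a$. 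The tensor-factor basis of ${}_a(H^n)_a = \mathcal{A}^{\otimes n}\{n\}$ is then indexed by a choice, for each arc of $a$, of either $1$ or $X$: I send the basis element with $X$ on the circles in a subset $S$ to the Russell diagram consisting of $a$ with dots on exactly the arcs in $S$. This gives a $\mathbb{Z}$-module isomorphism $\bigoplus_a {}_a(H^n)_a \xrightarrow{\sim} \widetilde{R}_n$.

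Next I would verify that under this identification the generators ${}_aX_b$ of the remaining relations translate exactly into Russell Type I and Type II relations. Suppose $a \to b$ via the quadruple $i < j < k < l$, so $a$ contains the arcs $(i,j),(k,l)$ and $b$ contains $(i,l),(j,k)$, with all other arcs identical. All other tensor factors are shared between ${}_a(H^n)_a$ and ${}_b(H^n)_b$ (via the identification of unchanged arcs), so choosing an arbitrary pattern of $1$'s and $X$'s on those shared factors fixes the ``background dots'' common to all four diagrams. The first family of generators ${}_ax^i_a + {}_ax^j_a - {}_bx^i_b - {}_bx^j_b$ then becomes
\[ m_1 + m_2 - m_1' - m_2' = 0, \]
where $m_1,m_2$ carry a single dot on $(i,j)$ or $(k,l)$ respectively and $m_1',m_2'$ carry a single dot on $(i,l)$ or $(j,k)$ respectively (with identical background dots): this is exactly the Type I Russell relation. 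The second family ${}_ax^{i,j}_a - {}_bx^{i,j}_b$ becomes $m_3 - m_3' = 0$, where both specified arcs are dotted in the $a$- and $b$-configurations respectively: this is exactly the Type II Russell relation.

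Conversely, every Type I and Type II Russell relation appears among the ${}_aX_b$ generators: given endpoints $a<b<c<d$ appearing in a Russell relation, the underlying crossingless matchings $\alpha,\beta$ are related by $\alpha \to \beta$, and any fixed dot pattern on the remaining arcs corresponds to a particular choice of background $X$'s in a tensor-factor basis element. Therefore the quotient of $\bigoplus_a {}_a(H^n)_a$ by the ${}_aX_b$ is precisely $\widetilde{R}_n$ modulo all Russell Type I and Type II relations, which is $R_n$ by definition.

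The main obstacle will be bookkeeping: making sure the bijection between arcs of $a$ (or $b$) and tensor factors of ${}_a(H^n)_a$ (or ${}_b(H^n)_b$) is compatible for $a \to b$, so that the ``shared'' arcs truly correspond to the same tensor factors under the numbering convention. Because the numbering is determined by scanning the central line from left to right, I expect this to follow from the fact that $a$ and $b$ differ only in the arcs meeting the four points $i<j<k<l$: the circles that do not touch these four points appear in the same left-to-right order in $W(a)a$ and $W(b)b$, so the identification of their tensor factors is canonical, and only the two tensor factors associated with the changing pair need be analyzed, exactly as above.
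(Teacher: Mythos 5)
Your proposal is correct and follows essentially the same route as the paper: both arguments rest on Lemma \ref{hncommutator} to kill the off-diagonal summands and to match the generators of ${}_aX_b$ with the Type I and Type II Russell relations, using the identification of dotted circles in $W(a)a$ with dotted arcs of $a$. The only cosmetic difference is direction — the paper defines an explicit map $\varphi: R_n \to H^n/[H^n,H^n]$ and checks it is well-defined and surjective, whereas you present the quotient $\bigoplus_a {}_a(H^n)_a / \langle {}_aX_b \rangle$ and identify it with $\widetilde{R}_n$ modulo the Russell relations — and your attention to the compatibility of the tensor-factor numbering for $a \to b$ is a point the paper leaves implicit.
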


\begin{proof}
Recall that the skein module $R_n$ is generated by crossingless matchings of $2n$ points, where arcs are allowed to carry up to one dot, modulo the local Type I and Type II relations of Figure \ref{Russell_relns}.

We get such relations for any $a,b \in B^n$ such that $a \to b$ or $b \to a$, where the relation $\to$ is defined as in \cite{crossmatch}. We will construct an explicit isomorphism $\varphi$ from this diagrammatically presented skein module to $HH_0(H^n)$.

Recall that $HH_0(H^n)$ is isomorphic to $H^n / [H^n, H^n]$. For a diagram $m$ in the Russell skein module given by a dotted crossingless matching with underlying arc structure $a$, construct the element $\varphi(m)$ as follows. First, vertically reflect the arcs of $a$ and glue the endpoints of $m$ with the reflected arcs to get $n$ closed circles with dots, which we call $d_m$. To get an element of ${}_a(H^n)_a \subset H^n$, take the generator of ${}_a(H^n)_a \subset H^n \cong \mathcal{A}^{\otimes n}$ that has an $X$ in each tensor factor corresponding a dotted circle in $d_m$ and $1$ otherwise. Define $\varphi(m)$ to be the class of this element in the quotient $H^n / [H^n, H^n]$ and extend $\varphi$ to the full Russell skein module by linearity.

It is clear from previous lemma that $\varphi$ is well-defined, as Type I and Type II Russell relations get mapped to elements of ${}_aX_b$, which belong to the commutator $[H^n, H^n]$. Further, the previous lemma guarantees that there is a one-to-one correspondence between the relations of the Russell skein module and the elements of $\cup_{a \to b} {}_aX_b$. Since all elements of ${}_a(H^n)_b$ for $a \neq b$ are in the commutator, $\varphi$ is surjective and thus an isomorphism.
$\blacksquare$
\end{proof}

\begin{remark}
Russell indirectly proves the previous proposition in \cite{russ}. She shows that the Bar-Natan skein module is isomorphic to the homology of the topological space $\widetilde{S}$ of \cite{crossmatch}. Khovanov showed that the cohomology of $\widetilde{S}$ is isomorphic to $HH^0(H^n) \cong Z(H^n)$. Therefore homology of $\widetilde{S}$ is isomorphic to $HH_0(H^n)$. Our argument gives a more direct isomorphism.
\end{remark}

\section{The equivariant deformation of the Russell skein module}
In Section \ref{sec:HH0}, we saw that the Russell skein module $R_n$ is isomorphic to $HH_0(H^n)$, the 0th Hochschild homology of the Khovanov arc ring $H^n$. In this section we construct a deformation of $R_n$ by considering the 0th Hochschild homology of a deformation of the ring $H^n$.

\subsection{Equivariant deformation of $H^n$}
\label{equi-Hn}
Recall that $H^n$, introduced by Khovanov \cite{khov}, is constructed by taking a direct sum of algebras  (with a grading shift) obtained by applying a functor $\mathcal{F}$ to a diagram obtained by gluing together two crossingless matchings of $2n$ points. The functor $\mathcal{F}$ associates to each circle in the diagram a tensor factor of the ring $\mathcal{A}$, where $\mathcal{A}$ is isomorphic to $\mathbb{Z}[X]/X^2$ with the usual multiplication,  and comultiplication and trace map as follows:
\begin{gather*}
\Delta: \mathcal{A} \to \mathcal{A} \otimes \mathcal{A} \\
\Delta(1) = 1 \otimes X + X \otimes 1, \Delta(X) = X \otimes X \\
\epsilon: \mathcal{A} \to \mathbb{Z} \\
\epsilon(1) = 0, \epsilon(X) = 1.
\end{gather*}

\begin{definition}
A Frobenius system $F = (R, A, \epsilon, \Delta)$ consists of commutative rings $R$ and $A$, with $R$ viewed as a subring of $A$ sharing the identity element, together with an $A$-bimodule map $\Delta: A \to A \otimes_R A$ and an $R$-module map $\epsilon: A \to R$ such that $\Delta$ is coassociative and cocommutative, and $(\epsilon \otimes \mbox{Id})\Delta = \mbox{Id}.$
\end{definition}

We can think of $\mathcal{A}$ as belonging to a Frobenius system $(\mathcal{R}, \mathcal{A}, \epsilon, \Delta)$ with $\epsilon$ and $\Delta$ described as above and $\mathcal{R} = \mathbb{Z} \subset \mathbb{Z}[X]/X^2$. 

\begin{definition}
A Frobenius system $F$ has rank two if there exists $X \in A$ such that $A \cong R1 \oplus RX$.
\end{definition}

Observe that $\mathcal{A}$ has rank two over $\mathcal{R}$. In \cite{frob-ext}, Khovanov describes how any rank two Frobenius system produces a cohomology theory of links. Here we investigate an alternate such system with a cohomological interpretation.

First observe that the ring $\mathcal{A}$ has a cohomological interpretation: it is isomorphic to the cohomology ring of the two-sphere $S^2$, and $\epsilon$ is the integration along the fundamental cycle on $S^2$. As described by Khovanov \cite{frob-ext}, there exist other rank two Frobenius systems with interpretations via equivariant homology.

If $G$ is a topological group acting continuously on $S^2$, we can define
\[ R_G := H^{\ast}_G(p, \mathbb{Z}) = H^{\ast}(BG, \mathbb{Z}),\]
the $G$-equivariant cohomology ring of a point $p$ (where $BG$ is the classifying space of $G$), and
\[ A_G := H^{\ast}_G(S^2, \mathbb{Z}) = H^{\ast}(S^2 \times_G EG, \mathbb{Z}),\]
the equivariant cohomology ring of $S^2$. In several cases, $(R_G, A_G)$ forms a part of a rank two Frobenius system, with $\epsilon$ induced by the integration along the fibers of the $S^2$-fibration
\[S^2 \times_G EG \to BG.\]

In particular, take $G$ to be the group $U(2)$ with the usual action on $S^2$. Then we obtain a rank two Frobenius system with
\begin{eqnarray*}
R_{U(2)} &\cong& H^{\ast}(BU(2), \mathbb{Z}) \cong H^{\ast}(\mbox{Gr}(2, \infty), \mathbb{Z}) \cong \mathbb{Z}[h,t] \\
A_{U(2)} &\cong& H^{\ast}(S^2 \times_{U(2)} EU(2), \mathbb{Z}) \cong H^{\ast}(BU(1) \times BU(1), \mathbb{Z}) \cong R_{U(2)}[X], X^2 = hX + t,
\end{eqnarray*}
where $\mbox{deg}(h) = 2 $ and $\mbox{deg}(t) = 4$, and
\begin{gather*}
\Delta(1) = 1 \otimes X + X \otimes 1 - h 1 \otimes 1, \Delta(X) = X \otimes X + t 1 \otimes 1 \\
\epsilon(1) = 0, \epsilon(X) = 1.
\end{gather*}

\begin{definition}
Let $\mathcal{A}_{h,t} = A_{U(2)}$ and define $H^n_{h,t}$ in the exact same way as $H^n$ but with $\mathcal{A}_{h,t}$ in place of $\mathcal{A}$. We call $H^n_{h,t}$ the \emph{equivariant deformation of $H^n$}.
\end{definition}

As in the usual case of $H^n$, $H^n_{h,t}$ has a grading. The grading works exactly as before, but now in the ring $\mathcal{A}_{h,t}$, we take $1$ to be in degree $0$, $X$ in degree $2$, $h$ in degree $2$, and $t$ in degree $4$, so that the relation $X^2 = hX + t$ is grading-preserving. 

\subsection{Equivariant deformation of $R_n$ and associated graphical calculus}
\label{sec:equi-Russell}
Recall that the original Russell module $R_n$ is isomorphic to $HH_0(H^n)$. We use this fact as motivation for constructing a new deformation of the Russell skein module. In this section, for mild technical simplification, instead of working over $\mathbb{Z}$, we work over a field $\Bbbk$. With this modification, the ring $\Bbbk[h,t]$ is local as a graded ring, and its graded Jacobson radical is the ideal $(h,t)$.

\begin{definition}
Define the \emph{equivariant deformation of $R_n$}
\[ R^{h,t}_n := HH_0(H^n_{h,t}).\]
\end{definition}

We now present a graphical description of $R^{h,t}_n$ analogous to the Russell calculus of dotted crossingless matchings. Let $\widetilde{R}^{h,t}_n$ be the free $\Bbbk[h,t]$-module generated by diagrams, where a diagram is a crossingless matching of $2n$ points (drawn as cups below a horizontal line containing the $2n$ fixed points) with each arc decorated by a finite number of dots.

We consider $\widetilde{R}^{h,t}_n$ as a graded module with grading inherited from $\Bbbk[h,t]$, again with $1$ in degree $0$, $h$ in degree $2$, and $t$ in degree $4$, and define the degree of a diagram to be twice its number of dots.

\begin{definition}
Let $\overline{R}^{h,t}_n$ be the quotient of $\widetilde{R}^{h,t}_n$ by the local relations shown in Figure \ref{equiv-relns}.
\end{definition}

\begin{figure}[htbp]
   \centering
   \includegraphics[scale=.8]{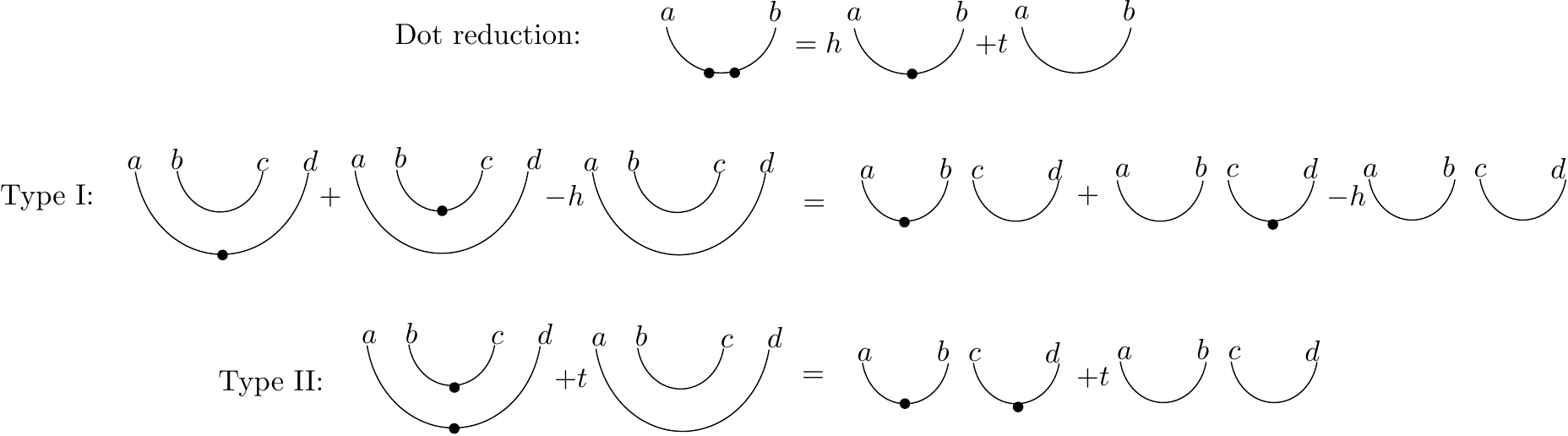}
   \caption{Local relations in the graphical deformation of the Russell skein module.}
      \label{equiv-relns}
\end{figure}

As in the original and quantum Russell skein modules, these relations are local in the sense that all diagrams in each relation are identical apart from the arcs shown. Additionally, since arcs may now carry more than one dot, we assume that for each diagram in a relation if we were to slide all dots not involved in the relation away from the region shown and cut out that region, then the resulting dotted arc segments would be identical in all diagrams.

Note that these relations are grading-preserving, so that $\overline{R}_n^{h,t}$ inherits a graded structure from $\widetilde{R}_n^{h,t}$.

\begin{proposition}
$R^{h,t}_n$ and $\overline{R}^{h,t}_n$ are isomorphic as $\Bbbk[h,t]$-modules.
\end{proposition}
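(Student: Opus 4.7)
The plan is to adapt the proof of Proposition \ref{prop:HH0} to the deformed setting. I would first define a $\Bbbk[h,t]$-linear map $\varphi: \widetilde{R}_n^{h,t} \to HH_0(H^n_{h,t})$ on a diagram $m$ with underlying arc structure $a \in B^n$ and $k_i$ dots on the $i$-th arc by sending $m$ to the class in $HH_0(H^n_{h,t})$ of the element of ${}_a(H^n_{h,t})_a \cong \mathcal{A}_{h,t}^{\otimes n}$ with $X^{k_i}$ in the $i$-th tensor factor (numbering arcs via the convention of Figure \ref{numConv}).

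The first check is that $\varphi$ descends to a map on $\overline{R}_n^{h,t}$, i.e., kills each of the three relations of Figure \ref{equiv-relns}. The dot-dot relation reduces to $X^2 = hX+t$ in $\mathcal{A}_{h,t}$, and so already holds in $H^n_{h,t}$ before passing to $HH_0$. The deformed Type I relation I would extract from the commutator ${}_a 1_b \cdot {}_b 1_a - {}_b 1_a \cdot {}_a 1_b$ for elementary moves $a \to b$: computing each product with the deformed comultiplication $\Delta(1) = 1 \otimes X + X \otimes 1 - h \cdot 1\otimes 1$ recovers the original Type I combination plus the new $h$-term attached to the undotted configuration. The deformed Type II relation arises analogously from $[{}_a X_b, {}_b 1_a]$, where $\Delta(X) = X \otimes X + t \cdot 1\otimes 1$ supplies the $t$-correction. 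In all three cases, locality is justified because arcs of $a$ disjoint from the four marked endpoints correspond to tensor factors untouched by the computation.

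To prove $\varphi$ is an isomorphism, I would establish a deformed analog of Lemma \ref{hncommutator}: the commutator $[H^n_{h,t}, H^n_{h,t}]$ is generated over $\Bbbk[h,t]$ by the off-diagonal summands ${}_a(H^n_{h,t})_b$ for $a \neq b$, together with elements coming from commutators $p \cdot q - q \cdot p$ with $p \in {}_a(H^n_{h,t})_b$, $q \in {}_b(H^n_{h,t})_a$, and $a \to b$. Given this, surjectivity of $\varphi$ follows exactly as in the original proof, since every $HH_0$-class has a representative in $\bigoplus_a {}_a(H^n_{h,t})_a$ and any such element lies in the image. Injectivity follows because the generators of the commutator produced by the elementary $a \to b$ commutators are precisely the images under $\varphi$ of the local relations of Figure \ref{equiv-relns}; non-adjacent pairs $a, b$ are handled by chaining $a = a_0 \to a_1 \to \cdots \to a_k = b$ (or reverse moves) exactly as in the proof of Lemma \ref{hncommutator}.

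The main obstacle is this deformed version of Lemma \ref{hncommutator}. In the original case, the identity $X^2 = 0$ kept commutator computations short and yielded the clean two families of generators. In the deformed case, $X^2 = hX + t$ causes products of $X$'s in the same tensor factor to expand into polynomial combinations in $h$ and $t$ mixing several dot configurations simultaneously, and the deformed $\Delta$ likewise contributes $h$- and $t$-correction terms each time an elementary move $a \to b$ is traversed. Carefully enumerating the resulting commutator generators, tracking how the $h$- and $t$-corrections accumulate under chains of elementary moves, and checking that every such generator is a $\Bbbk[h,t]$-combination of the three local relations (possibly multiplied by the algebra elements $X^{k_i}$ in the remaining tensor factors) is where the technical work lies; once this bookkeeping is done, the rest of the proof mirrors the $h=t=0$ case verbatim.
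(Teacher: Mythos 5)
Your route is genuinely different from the paper's. You propose to prove a deformed analogue of Lemma \ref{hncommutator} directly: compute the commutator $[H^n_{h,t},H^n_{h,t}]$ with the deformed structure maps $\Delta(1)=1\otimes X+X\otimes 1-h\,1\otimes 1$ and $\Delta(X)=X\otimes X+t\,1\otimes 1$, identify its diagonal generators with the three local relations of Figure \ref{equiv-relns}, and then run the argument of Proposition \ref{prop:HH0} verbatim. The paper explicitly acknowledges that this is possible but deliberately avoids it: after checking that $\varphi_{h,t}$ is surjective (which is easy), it sets up a commutative diagram comparing the exact sequence for $\varphi_{h,t}$ with the one for the undeformed $\varphi'$ and applies a graded Nakayama lemma with $R=\Bbbk[h,t]$, $I=(h,t)$, $M=\ker\varphi_{h,t}$: since the three local relations generate $M/IM$ (by the already-proven $h=t=0$ case), they generate $M$, and the isomorphism follows. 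The Nakayama argument buys you exactly the step you flag as "the main obstacle" --- it converts the statement about generators of the deformed kernel into the undeformed statement, at the cost of working over a field and invoking the grading.

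Your identification of the local generators is correct as far as it goes: the adjacent commutators $[{}_a1_b,{}_b1_a]$ and $[{}_ax^i_b,{}_b1_a]$ for $a\to b$ do produce the $h$- and $t$-corrected Type I and Type II relations, and $X^2=hX+t$ gives the dot-reduction relation already in $H^n_{h,t}$. But as written the proposal has a genuine gap at the step you yourself name: the case $a\nrightarrow b$. In the proof of Lemma \ref{hncommutator} the chaining argument rests on a telescoping identity for products ${}_{a_0}1_{a_k}\cdot{}_{a_k}1_{a_0}$ expressed via factors $(x^{i_m}+x^{j_m})$, and that identity uses $X^2=0$ to keep the intermediate products in a manageable form. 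With $X^2=hX+t$ every merge-then-split along the chain injects further $h$- and $t$-terms into lower dot-degrees, so the telescoping decomposition into a sum of elementary commutators must be re-derived, not just asserted to carry over. You have not shown that the resulting elements still lie in the $\Bbbk[h,t]$-span of the three local relations, and this is precisely the bookkeeping the paper's Nakayama argument is engineered to sidestep. Either carry out that computation in full, or replace it with the Nakayama reduction to Proposition \ref{prop:HH0}.
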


\begin{proof}
First recall that $R^{h,t}_n = HH_0(H^n_{h,t}) \cong H^n_{h,t}/[H^n_{h,t}, H^n_{h,t}]$. We define a map $\varphi_{h,t}: \widetilde{R}^{h,t}_n \to R^{h,t}_n$. For a dotted crossingless matching $m \in \widetilde{R}^{h,t}_n$ with underlying arc structure $a$, construct $\varphi_{h,t}(m)$ as follows: First vertically reflect the arcs of $a$ and glue the endpoints of $m$ with those of the reflected arcs to form a diagram $d_m$. Then take the element of ${}_a(H^n_{h,t})_{a} \cong \mathcal{A}_{h,t}^{\otimes n}$ that has a factor of $X$ for each dot on the corresponding circle in $d_m$ and $1$ if no dots are present and define $\varphi_{h,t}(m)$ to be the class of this element in the quotient $R^{h,t}_n$. Extend this map to all of $R^{h,t}_n$ by linearity. Note that $\varphi_{h,t}$ is graded of degree 0: it turns dots into factors of $X$, both of which have degree 2 in their respective rings.

It is possible to show that $\varphi_{h,t}$ descends to a well-defined isomorphism from $\overline{R}_n^{h,t}$ to $HH_0(H^n_{h,t}$ using an argument completely analogous to that of Lemma \ref{hncommutator} to show that the equivariant Type I and Type II relations exactly correspond with generators of $[H^n_{h,t}, H^n_{h,t}]$ that belong to $\oplus_{a \in B^n} {}_a(H^n_{h,t})_a \subset H^n_{h,t}$ and that all elements of $\oplus_{a \neq b \in B^n} {}_b(H^n_{h,t})_a \subset H^n_{h,t}$ are in the commutator. However, to avoid such a calculation, we alternatively draw our conclusion from the $h=t=0$ case of Proposition \ref{prop:HH0} and the following graded version of Nakayama's Lemma:

\begin{lemma}[Graded Nakayma's Lemma]
Let $R$ be a ring graded by the nonnegative integers, and $I$ a homogeneous ideal whose elements are positively graded. Let $M$ be a graded $R$-module with $M_i = 0$ for $i \ll 0$. If homogeneous elements $m_1, \ldots m_n \in M$ have images in $M/IM$ that generate it as an $R$-module, then $m_1, \ldots, m_n$ generate $M$ as an $R$-module.
\end{lemma}

Note that $\varphi_{h,t}$ is surjective: it is clear that all classes of elements of $\oplus_{a \in B^n} {}_a(H^n_{h,t})_a$ are in the image of $\varphi_{h,t}$, and elements of $\oplus_{a \neq b \in B^n} {}_a(H^n_{h,t})_b$ are contained in the commutator and are thus trivial in $H^n_{h,t}/[H^n_{h,t}, H^n_{h,t}]$. 

We take $\varphi: \widetilde{R}_n \to H^n/[H^n,H^n]$ as in Proposition \ref{prop:HH0} (where there it was considered a map on the quotient space $R_n$) and modify it slightly: we enlarge the space $\widetilde{R}_n$ to a space $\widetilde{R}_n'$, where arcs in a diagram are allowed to carry more than one dot, and extend the map $\varphi$ to a map $\varphi': \widetilde{R}'_n \to H^n/[H^n, H^n]$ that sends all such new diagrams to $0$. That is, the quotient space $\widetilde{R}_n' / \ker(\varphi')$ is the same as that of $\widetilde{R}_n/\ker{\varphi}$ but with additional relations corresponding to the $h=t=0$ analogue of the dot reduction relation shown in Figure \ref{dot_red_old}.

\begin{figure}[htbp]
   \centering
   \includegraphics[scale=1]{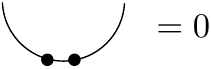}
   \caption{The $h=t=0$ dot reduction relation.}
      \label{dot_red_old}
\end{figure}

Therefore we have the following commutative diagram of exact sequences of graded $\Bbbk[h,t]$-modules. Note that we have been considering the modules in the top row as $\Bbbk$-modules, but equivalently we may consider them as $\Bbbk[h,t]$-modules where $h$ and $t$ act trivially.

\centerline{
\xymatrix{
0 \ar[r] & \Bbbk \langle \mbox{dot reduction, Type I, Type II} \rangle \ar@{^{(}->}[r] & \widetilde{R}_n '\ar@{->>}[r]^-{\varphi'} &H^n/[H^n, H^n] \ar[r] & 0\\
0 \ar[r] & \ker{\varphi_{h,t}} \ar@{^{(}->}[r] \ar[u]_{h,t=0} &\widetilde{R}_n^{h,t} \ar@{->>}[r]^-{\varphi_{h,t}} \ar[u]_{h,t=0} & H^n_{h,t}/[H^n_{h,t}, H^n_{h,t}] \ar[r] \ar[u]_{h,t=0} & 0
} 
}

Then we may apply the graded Nakayama's Lemma to get a generating set for $\ker \varphi_{h,t}$: take $R = \Bbbk[h,t]$, $I= (h,t)$, and $M = \ker \varphi_{h,t}$. $\ker \varphi_{h,t}$ inherits its grading from that of $\widetilde{R}_n^{h,t}$ and the fact that $\varphi_{h,t}$ is graded. We know that the images of the equivariant dot reduction, Type I, and Type II relations form a generating set of $M/IM$, so we can conclude that these equivariant relations generate $M$. Therefore we have
\[ \overline{R}_n^{h,t} \cong \widetilde{R}_n^{h,t}/\ker \varphi_{h,t} \cong H^n_{h,t}/[H^n_{h,t}, H^n_{h,t}]. \]
$\blacksquare$
\end{proof}

From now on, by abuse of notation, we will use $R_n^{h,t}$ to denote both $HH_0(H^n_{h,t})$ and the graphical space $\overline{R}_n^{h,t}$.

\section{Equivariant deformation has the expected rank}
\label{sec:equi-rank}
In this section, we return to working over $\mathbb{Z}$ instead of $\Bbbk$. We will show that the space $R^{h,t}_n$ has the correct size as a deformation of $R_n$, in the sense that its rank and basis are analogous to that of $R_n$.

In Section \ref{sec:psi} we saw that in the case of the quantum deformation $R_n^q$, $R_n^q$ could be decomposed into a direct sum of spaces $\oplus_{0 \leq k \leq n} R^q_{n,k}$ where each $R_{n,k}^q$ has a basis $B_{n,k}$ of diagrams with $k$ dots on outer arcs only and rank $\frac{2k+1}{n+k+1} \left( \begin{array}{c} 2n \\ n+k \end{array} \right)$ over $\mathbb{Z}[q, q^{-1}]$. In particular, specializing $q=-1$ recovers the original Russell space $R_n$, which therefore has rank $\sum_{0 \leq k \leq n} \frac{2k+1}{n+k+1} \left( \begin{array}{c} 2n \\ n+k \end{array} \right)$ over $\mathbb{Z}$.

With $t$ and $h$ present, we no longer have such a direct sum decomposition, since relations involve diagrams with different numbers of dots. Therefore instead of considering the rank of each summand separately, we consider the rank of the total space $R_n^{h,t}$.

As before, we will use $B_{n,k}$ to denote the set of diagrams in $R_n^{h,t}$ that have $k$ total dots on outer arcs only with each arc carrying at most one dot. Define $B_n$ to be $\cup_{0 \leq k \leq n} B_{n,k}$.

\begin{theorem}
$R_{n}^{h,t}$ has rank $\sum_{0 \leq k \leq n} \frac{2k+1}{n+k+1} \left( \begin{array}{c} 2n \\ n+k \end{array} \right)$ over $\mathbb{Z}[h,t]$ and $B_n$ forms a basis.
\end{theorem}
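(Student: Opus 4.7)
The plan is to show that the natural surjection
$\phi : F := \bigoplus_{b \in B_n} \mathbb{Z}[h,t]\cdot e_b \twoheadrightarrow R_n^{h,t}$
(graded so that $\deg e_b = 2 \cdot \#(\text{dots of } b)$) is an isomorphism; the rank formula then follows immediately from $|B_n| = \sum_{k=0}^n |B_{n,k}|$ together with the $q = -1$ specialization of Proposition \ref{prop:rank}.

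First I would establish surjectivity of $\phi$ by adapting the proof of Proposition \ref{basis}. Given any diagram in $\widetilde{R}_n^{h,t}$: (i) apply the dot reduction relation $X^2 = hX + t$ repeatedly to each arc carrying two or more dots, producing a $\mathbb{Z}[h,t]$-linear combination of diagrams in which every arc carries at most one dot; (ii) apply the equivariant Type I relation to move an isolated dotted arc nested inside an undotted arc outward, and the rearranged equivariant Type II relation to handle a dotted arc nested inside another dotted arc. The extra terms introduced by the $h$- and $t$-deformations of these relations may still carry inner-arc dots, but they strictly decrease a suitable complexity measure such as the lexicographic pair (total number of dots on inner arcs, maximum nesting depth of a dotted arc), so iterating the procedure terminates at an element of $\mathbb{Z}[h,t]\langle B_n\rangle$.

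For injectivity, let $K := \ker \phi$, a finitely generated graded $\mathbb{Z}[h,t]$-submodule of $F$. Tensor the short exact sequence $0 \to K \to F \to R_n^{h,t} \to 0$ against $\mathbb{Z} = \mathbb{Z}[h,t]/(h,t)$: the right-hand term becomes $R_n^{h,t}/(h,t)R_n^{h,t} \cong R_n$, since setting $h = t = 0$ in the equivariant relations recovers the original Russell relations; and the induced map $F/(h,t)F \to R_n$ sends $e_b \mapsto b$. By the $q = -1$ specialization of Proposition \ref{basis}, $B_n$ is a $\mathbb{Z}$-basis of $R_n$, so this specialized map is an isomorphism, forcing $K \subseteq (h,t) F$. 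The obstruction to $K = 0$ is measured by $\mathrm{Tor}_1^{\mathbb{Z}[h,t]}(R_n^{h,t}, \mathbb{Z})$: if this Tor vanishes then $K = (h,t)K$, and graded Nakayama applied to the finitely generated graded module $K$ (whose generators sit in bounded degrees since $F$ is finitely generated and $h,t$ have positive degree) forces $K = 0$.

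The main obstacle is thus flatness of $R_n^{h,t}$ over $\mathbb{Z}[h,t]$. The cleanest route I see is a confluence argument in the spirit of the diamond lemma: orient the three equivariant relations as a rewriting system (dot reduction strictly decreases dot count; the equivariant Type I and Type II relations decrease the complexity measure of Step 1), with termination automatic, and verify confluence of all critical pairs, in particular the overlaps between dot reduction and each Russell relation. Confluence would single out $B_n$ as the set of normal forms and produce a $\mathbb{Z}[h,t]$-linear section of $\phi$, thereby giving the desired isomorphism. A parallel ring-theoretic approach would exploit $R_n^{h,t} \cong HH_0(H^n_{h,t})$: prove an equivariant version of Lemma \ref{hncommutator} describing a generating set for $[H^n_{h,t}, H^n_{h,t}]$ and verify that the $\mathbb{Z}[h,t]$-span of $\varphi_{h,t}(B_n)$ is a direct-sum complement to this commutator inside $\bigoplus_{a \in B^n}\,{}_a(H^n_{h,t})_a$, which exhibits $R_n^{h,t}$ as free of rank $|B_n|$ on the nose. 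The genuine computation in either approach is checking that the $h$- and $t$-corrections in the equivariant Type I and II relations do not produce any relations independent of those visible when $h = t = 0$.
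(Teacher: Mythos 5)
Your proposal is correct and follows essentially the same route as the paper: the paper also obtains spanning by rewriting with the dot-reduction and rearranged Type I/II relations, and proves linear independence by a Kuperberg-style confluence argument, inducting on a ``containment'' statistic (your termination order) and checking case by case that overlapping local relations commute (your critical pairs). The intermediate Tor/Nakayama discussion in your write-up is superfluous once confluence is established, and the real work you correctly identify --- verifying the critical overlaps with the $h$- and $t$-corrections present --- is exactly what the paper carries out in its case analysis.
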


\begin{proof}
The first part of the theorem follows immediately from the second and Proposition \ref{prop:rank}, which counted the number of dotted crossingless matchings with dots on outer arcs only. We need to see that the collection of diagrams with dots on outer arcs only and each arc carrying at most one dot forms a basis.

It is easy to see that such diagrams form a spanning set. If a diagram has more than one dot on an arc, then the first of the relations in Figure \ref{equiv-relns} may be applied to reduce the number of dots on that arc. If a diagram has a dot on an inner arc, then a rearranged version of either a Type I or Type II relation can be applied to express the diagram as a linear combination of ones with strictly fewer arcs containing dotted arcs. These relations may be applied repeatedly until a linear combination of diagrams with zero or one dot on outer arcs only is attained. Therefore we need only show that such diagrams are linearly independent.

For a diagram $d$ in $R^{h,t}_n$, define the \emph{containment} $n(d)$ to be the sum over all dots in $d$ of the number of arcs in $d$ containing the arc on which the dot lies, that is, if the dot lies on an arc $a$ with endpoints $(a_1, a_2)$, the number of arcs $b$ with endpoints $(b_1, b_2)$ such that $b_1 < a_1$ and $b_2 > a_2$. We use $R_n^{h,t}(\leq N)$ to denote the space generated by diagrams $d$ in $\widetilde{R}_n^{h,t}$ with $n(d) \leq N$ and relations being the subset of those in Figure \ref{equiv-relns} that only involve diagrams with containment $\leq N$. 

We will complete the proof of the theorem by induction on $n(d)$. That is, we will assume that $R_n^{h,t}(\leq N)$ has $\cup_{0 \leq k \leq n} B_{n,k}$ as a basis and show that $R_n^{h,t}(\leq N + 1)$ has the same basis.

For the base case $R_n^{h,t}(\leq 0)$, note that if a diagram $d$ has $n(d)= 0$, that is it has no dots nested in other arcs, so it is already has dots on outer arcs only. There are no Type I or Type II relations in this space, so elements of $B_n$ are clearly linearly independent in $R_n^{h,t}(\leq 0)$.

Now suppose that $B_n$ forms a basis of $R_n^{h,t}(\leq N)$ and consider the same set inside $R_n^{h,t}(\leq N+1)$. We must show that this set remains linearly independent. To see that such diagrams are linearly independent, since we already know they form a spanning set, it is equivalent to show that expression of any diagram as a linear combination of diagrams in $B_n$ is unique. That is, when there is a choice of which local relation to apply at any stage in the process, then any possible choice must ultimately result in the same linear combination. To see that this is the case, we apply a strategy similar to that used by Kuperberg \cite{kup-G2} to show that non-elliptic webs form a basis of $G_2$-spiders.

Let $d$ be a diagram in $R_n^{h,t}(\leq N+1)$ of containment $N+1$. Suppose that $d$ can be expressed as an element of $R_n^{h,t}(\leq N)$ in more than one way by applying a local relation of Figure \ref{equiv-relns}. First note that if the arcs involved in each of the potential relations do not overlap, then it is clear that it will not matter which of the relations is applied first. Therefore we must only consider cases in which the potential relations that could be applied share an arc. The cases that must be checked are shown in Figure \ref{equiv-cases}. For each diagram, the circled regions show each of the possible containment-reducing relations that could be applied.

\begin{figure}[htbp]
   \centering
   \includegraphics[scale=1.1]{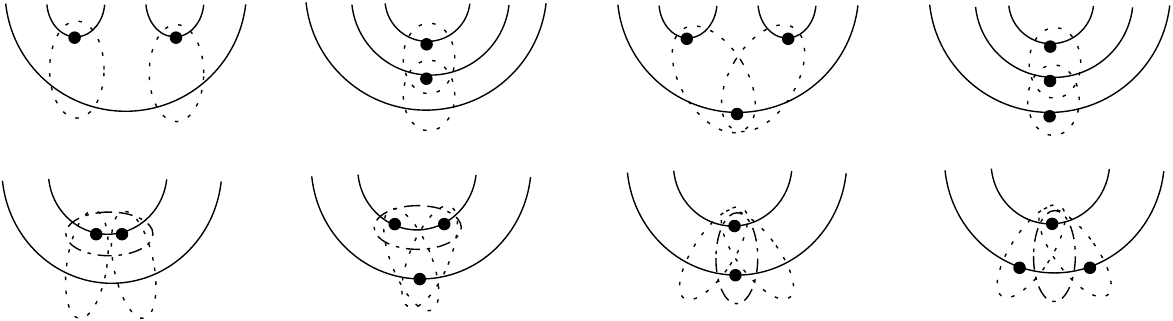}
   \caption{Cases to check that order of applying local relations does not matter.}
      \label{equiv-cases}
\end{figure}

We show the claim for the first, third, and fifth cases: the others follow by analogous argument. In the first case, there are two Type I relations that could be applied (see Figure \ref{case1}).

\begin{figure}[htbp]
   \centering
   \includegraphics[scale=.8]{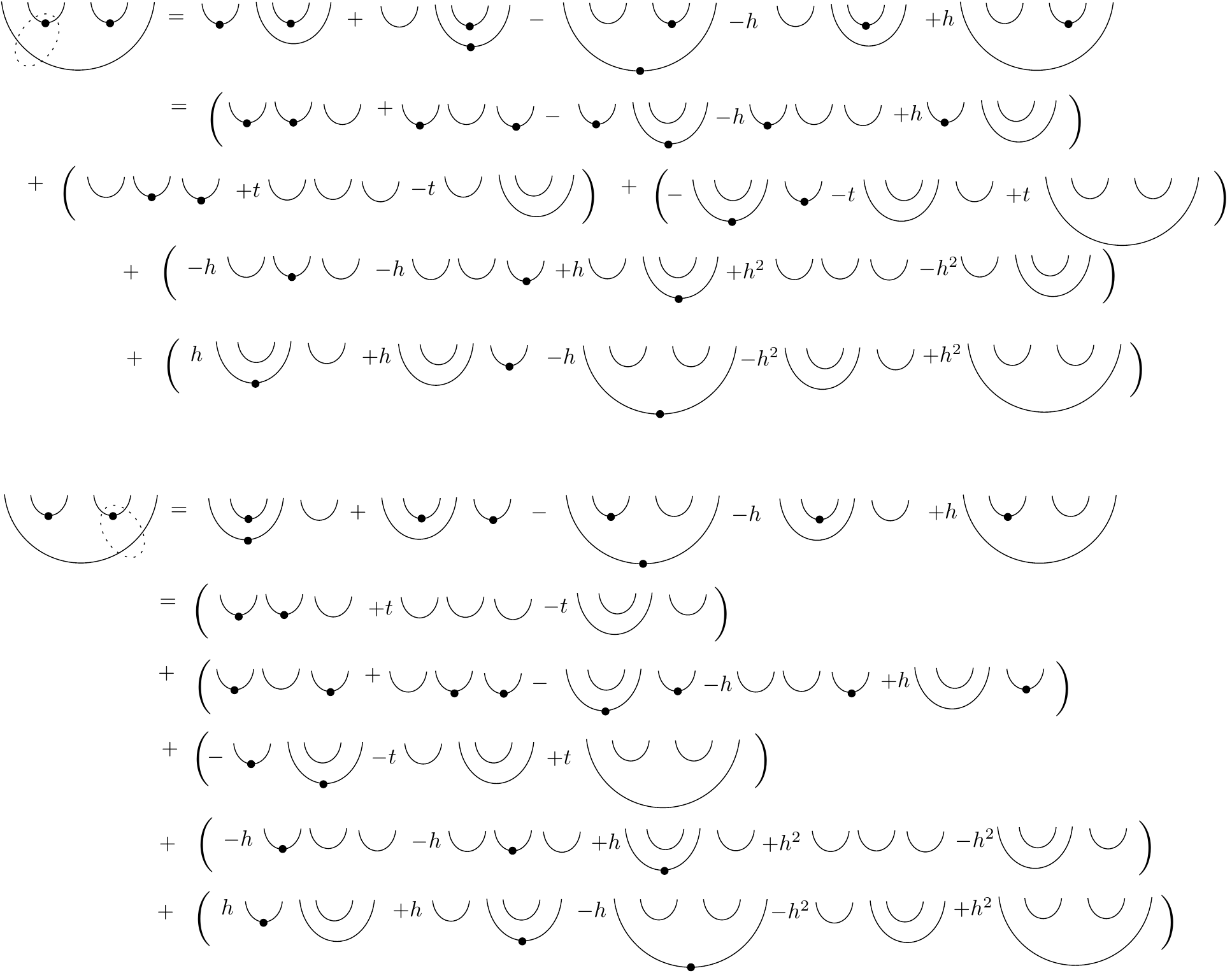}
   \caption{Order of applying two Type I relations does not matter.}
      \label{case1}
\end{figure}

The first equality in each of the two calculations of Figure \ref{case1} is obtained by applying the indicated Type I relation. As usual these are local pictures, and we assume that the regions not pictured are identical in each diagram. After applying the relation, we see that the remaining diagrams are elements of $R_n^{t,h}[\leq N]$. Therefore by the inductive hypothesis, each of those diagrams has a unique expression as a linear combination of elements of $B_n$, so the choice of which containment-reducing relation to apply next does not matter. By applying the remaining obvious Type I or Type II relations on the arcs pictured and invoking the inductive hypothesis on $R_n^{t,h}[\leq N-1]$, we see that we arrive at the same answer in both computations.

\begin{figure}[htbp]
   \centering
   \includegraphics[scale=.8]{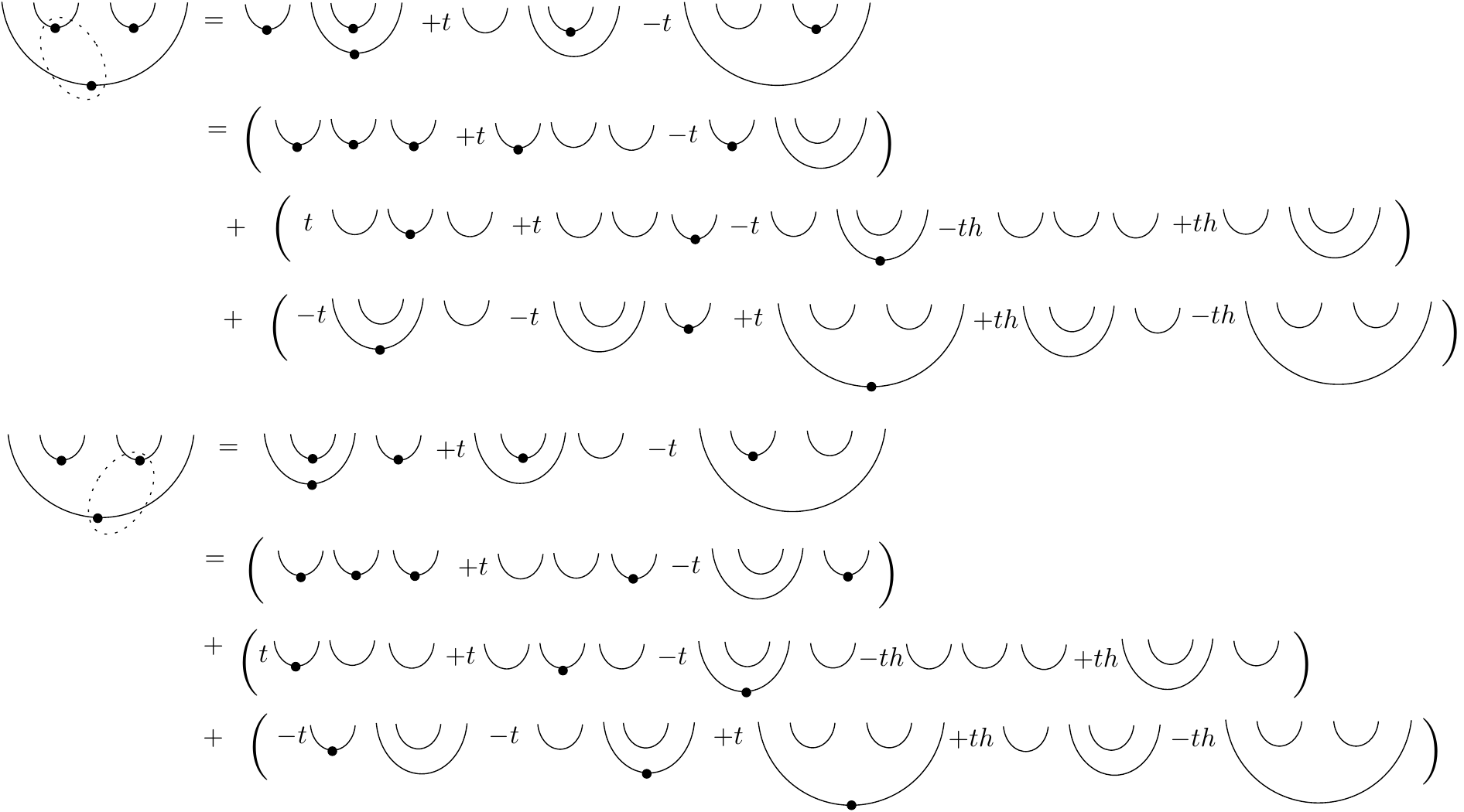}
   \caption{Order of applying two Type II relations does not matter.}
      \label{case3}
\end{figure}

Figure \ref{case3} proves the claim in the third case, following the same argument as above. In Figure \ref{case5}, we examine the fifth case. Here, there are three relations that could be applied first involving overlapping arcs: a dot reduction relation, or two possible Type I relations. $\blacksquare$

\begin{figure}[htbp]
   \centering
   \includegraphics[scale=.8]{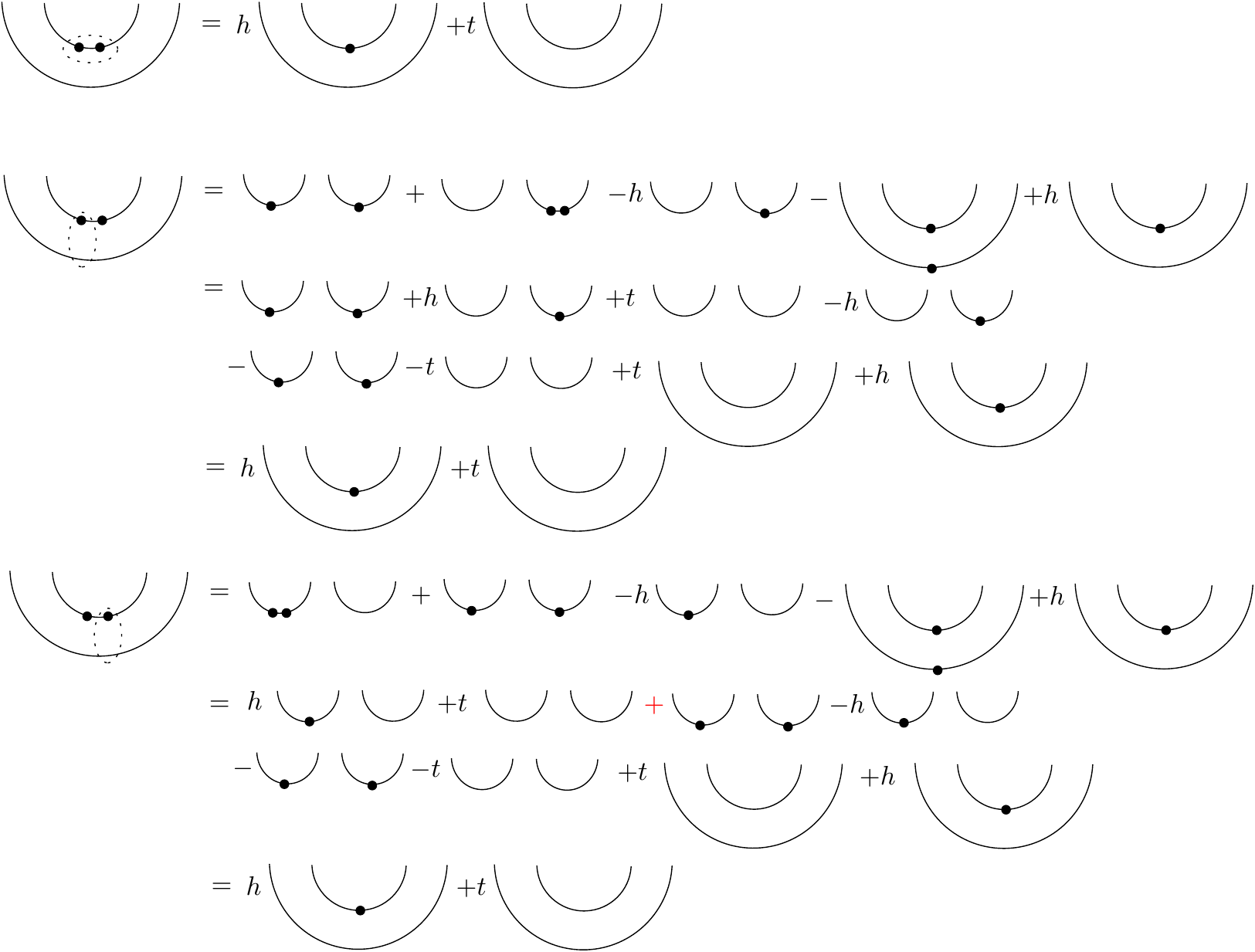}
   \caption{Order of applying dot reduction relation or two Type I relations does not matter.}
      \label{case5}
\end{figure} 
\end{proof}

\chapter{Tangle cobordisms and $(H^{\lowercase{m}},H^{\lowercase{n}})$-bimodule homomorphisms}
\label{ch:tangle-cob}

\section{Definitions}
Let $\mathbb{TL}$ be the Temperley-Lieb 2-category. The objects of this category are nonnegative even integers. The one-morphisms from $2n$ to $2m$ will be denoted $\widehat{B}^m_n$ and are given by flat tangles with $2m$ top and $2n$ bottom endpoints. For $a, b \in \widehat{B}^m_n$, the two-morphisms from $a$ to $b$ are isotopy classes of admissible cobordisms from $a$ to $b$, that is, cobordisms such that every generic horizontal cross-section is a $(2m,2n)$-tangle.

Recall that Khovanov defines a 2-functor $\mathcal{F}$ from the 2-category of tangle cobordisms to the 2-category of bimodules over the rings $H^n$, which in particular restricts to a 2-functor on $\mathbb{TL}$. We briefly review the definition of $\mathcal{F}$ on one- and two-morphisms of $\mathbb{TL}$.

When we restrict to planar $(2m, 2n)$-tangles, instead of a chain complex of $(H^m, H^n)$-bimodules, we just get a single $(H^m, H^n)$-bimodule, and a cobordism between planar tangles simply becomes a homomorphism of bimodules. For a planar $(2m, 2n)$-tangle $T$, the functor $\mathcal{F}$ associates an $(H^m, H^n)$-bimodule $\mathcal{F}(T)$ by
\[ \mathcal{F}(T) = \bigoplus_{a, b} {}_b\mathcal{F}(T)_a,\]
where the sum is over all $a \in B^n$ and $b \in B^m$, and
\[ {}_b \mathcal{F}(T)_a := \mathcal{F}(W(b)Ta)\{n\}. \]
The left action $H^m \times \mathcal{F}(T) \to \mathcal{F}(T)$ comes from maps
\[ {}_c(H^m)_b \times {}_b\mathcal{F}(T)_a \to {}_c\mathcal{F}(T)_a\]
induced by the cobordism from $W(c)bW(b)Ta$ to $W(c)Ta$ which is the composition of the identity cobordisms on $W(c)$ and on $Ta$ with the cobordism $S(b) : bW(b) \to \mbox{Vert}_{2m}$ defined in Section \ref{sec:Hn-def}. The right action of $H^n$ is similarly defined.

Now let $S$ be a cobordism between two planar $(2m, 2n)$-tangles $T_1, T_2$. Then, ignoring gradings, $S$ induces a map 
\[ \mathcal{F}(S): \mathcal{F}(T_1) \to \mathcal{F}(T_2)\]
as follows. Recall that $\mathcal{F}(T_1) = \oplus_{a,b} \mathcal{F}(W(b)T_1a)$ and $\mathcal{F}(T_2) = \oplus_{a,b}\mathcal{F}(W(b)T_2a)$, where both sums are over $a \in B^n$ and $b \in B^m$. Then for any such $a$ and $b$, $S$ induces a cobordism from $W(b)T_1a$ to $W(b)T_2a$ given by composing $S$ with the identity cobordisms on $W(b)$ and on $a$, which in turn induces a map of abelian groups $\mathcal{F}(W(b)T_1a) \to \mathcal{F}(W(b)T_2a)$. $\mathcal{F}(S)$ is defined to be the sum over all $a,b$ of these maps.

Fix a commutative ring $R$, usually $\mathbb{Z}$ or $\mathbb{Q}$. Let $\mbox{Hom}_{BN}(T_1, T_2)$ be the free $R$-module generated by tangle cobordisms with dots from $T_1$ to $T_2$ modulo the local Bar-Natan relations of Figure \ref{BN-relns}. Addition of tangle cobordisms is defined formally.

Let  $\mbox{Hom}_{(m,n)}(T_1, T_2)$ be the free $R$-module generated by $(H^m, H^n)$-bimodule homomorphisms from $\mathcal{F}(T_1)$ to $\mathcal{F}(T_2)$, with addition given by usual pointwise addition of bimodule homomorphisms.  Then Khovanov's functor gives a homomorphism of $R$-modules
\[ \phi_{T_1,T_2}: \mbox{Hom}_{BN}(T_1, T_2) \to \mbox{Hom}_{(m,n)}(T_1, T_2). \]

We now introduce two new rings, $\mbox{Hom}_{BN}(m,n)$ and $\mbox{Hom}(m,n)$.
\begin{definition}
For any nonnegative integers $m$ and $n$, define 
\[ \mbox{Hom}_{BN}(m,n) := \bigoplus_{T_1,T_2} \mbox{Hom}_{BN}(T_1,T_2)\]
and
\[ \mbox{Hom}(m,n) := \bigoplus_{T_1,T_2} \mbox{Hom}_{(m,n)}(T_1,T_2) \]
where both sums are over all $T_1$ and $T_2$ in $B_n^m$. 

Multiplication in $\mbox{Hom}_{BN}(m,n)$ is induced by the maps
\[ m_{BN}: \mbox{Hom}_{BN}(T_1,T_2) \times \mbox{Hom}_{BN}(T_2,T_3) \to \mbox{Hom}_{BN}(T_1,T_3) \]
given by vertical stacking of cobordisms. Multiplication in $\mbox{Hom}(m,n)$ is induced by the maps
\[ M: \mbox{Hom}(T_1,T_2) \times \mbox{Hom}(T_2,T_3) \to \mbox{Hom}(T_1,T_3) \]
given by composition of homomorphisms.
\end{definition}

Then the set of maps $\phi_{T_1,T_2}$ induce a ring homomorphism
\[ \phi_{m,n}: \mbox{Hom}_{BN}(m,n) \to \mbox{Hom}(m,n). \]

Note that when $T_1 = T_2 = T$, the same multiplication maps above give $\mbox{End}_{BN}(T)$ and $\mbox{End}_{(m,n)}(T)$ natural ring structures as well.

\section{Surjectivity of $\phi_{m,n}$}
\label{main}

In this section, we will show that the map $\phi_{T_1, T_2}$ is surjective for any $T_1, T_2$ and give an explicit description of its kernel. We first analyze the simpler case in which $m=n$ and $T_1, T_2$ are both the identity tangle $\mbox{Vert}_{2n}$ consisting of $2n$ vertical lines. In this case, $\mbox{End}_{(n,n)}(\mbox{Vert}_{2n})$ is just the center of the ring $H^n$. Recall that Khovanov showed in \cite{crossmatch} that
\[ Z(H^n) \cong \mathbb{Z}[x_1, \ldots, x_{2n}]/(x_1^2, \ldots, x_{2n}^2, e_1(x_1, \ldots, x_{2n}), \ldots, e_{2n}(x_1, \ldots, x_{2n}))
\]
where $e_i(x_1, \ldots, x_{2n})$ is the $i$th elementary symmetric function in $x_1, \ldots, x_{2n}$. The statement was proven through a roundabout argument which expresses $Z(H^n)$ as the cohomology of a certain topological space $\widetilde{S}$, which has the same cohomology as the $(n,n)$-Springer variety $\mathcal{B}_{n,n}$. The argument does not give an explicit map in either direction, so the $x_i$'s cannot be interpreted as any elements of $H^n$.

We claim that $e_{n+1}(x_1, \ldots, x_{2n}), \ldots, e_{2n}(x_1, \ldots, x_{2n})$ are actually redundant under the relations $x_i^2 = 0$, as each can be expressed in terms of the first $n$ elementary symmetric polynomials.
\begin{lemma}
\label{lem:polys}
For $1 \leq k \leq n$ and $x_i^2  = 0$,
\begin{multline*}
e_{n+k}(x_1, \ldots, x_{2n}) = e_1(x_{n+k}, \ldots, x_{2n})e_{n+k-1}(x_1, \ldots, x_{2n}) - e_{2}(x_{n+k}, \ldots, x_{2n})e_{n+k-2}(x_1, \ldots, x_{2n}) \\  + -  \cdots + (-1)^{n-k} e_{n-k+1}(x_{n+k}, \ldots, x_{2n})e_{2k-1}(x_1, \ldots, x_{2n}).
\end{multline*}
\end{lemma}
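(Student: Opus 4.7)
The plan is to prove the identity via generating function manipulation, exploiting in a crucial way the relations $x_i^2 = 0$. Consider the generating polynomial
\[ P(t) \;=\; \prod_{i=1}^{2n}(1 + x_i t) \;=\; \sum_{j=0}^{2n} e_j(x_1,\ldots,x_{2n})\, t^j, \]
and factor it as $P(t) = R(t)\, Q(t)$, where
\[ R(t) = \prod_{i=1}^{n+k-1}(1+x_i t), \qquad Q(t) = \prod_{i=n+k}^{2n}(1+x_i t). \]
The first observation is that $R(t)$ is a polynomial in $t$ of degree $n+k-1$, so its coefficient of $t^{n+k}$ vanishes identically, i.e.\ $e_{n+k}(x_1,\ldots,x_{n+k-1}) = 0$. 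The goal is therefore to rewrite this vanishing condition so that it yields a relation among the $e_j(x_1,\ldots,x_{2n})$'s rather than among elementary symmetric polynomials in a proper subset of the variables.

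The key step is to invert $Q(t)$ modulo the relations $x_i^2 = 0$. Since $(1+x_i t)(1 - x_i t) = 1 - x_i^2 t^2 = 1$ in the quotient ring, each factor is invertible with inverse $1 - x_i t$. Consequently,
\[ Q(t)^{-1} \;=\; \prod_{i=n+k}^{2n}(1 - x_i t) \;=\; \sum_{j=0}^{n-k+1} (-1)^j\, e_j(x_{n+k},\ldots,x_{2n})\, t^j. \]
Multiplying through by $Q(t)^{-1}$ yields $R(t) = P(t)\, Q(t)^{-1}$ in the quotient ring. Extracting the coefficient of $t^{n+k}$ on both sides, and using that the left-hand coefficient is zero, gives
\[ 0 \;=\; \sum_{j=0}^{n-k+1} (-1)^j\, e_j(x_{n+k},\ldots,x_{2n})\, e_{n+k-j}(x_1,\ldots,x_{2n}). \]
Isolating the $j=0$ term and moving the remaining terms to the other side produces exactly the claimed expression for $e_{n+k}(x_1,\ldots,x_{2n})$; the upper summation index $n-k+1$ matches because $e_j(x_{n+k},\ldots,x_{2n})$ vanishes for $j > n-k+1$, and the corresponding index on the right-hand side of the lemma is $e_{2k-1}(x_1,\ldots,x_{2n})$, consistent with $n+k-(n-k+1) = 2k-1$.

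There is no real obstacle here: the only subtlety is making sure that the inversion $(1+x_i t)^{-1} = 1 - x_i t$ is legitimate, which is where the relations $x_i^2 = 0$ enter, and that the arithmetic of indices works out so that the upper limit of the sum in the lemma matches the number of available variables in $Q$. Both are routine once the generating function framework is set up.
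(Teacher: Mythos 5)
Your proof is correct, and it takes a genuinely different route from the one in the paper. The paper argues monomial by monomial: it expands each product $e_m(x_{n+k},\ldots,x_{2n})\,e_{n+k-m}(x_1,\ldots,x_{2n})$ in the squarefree monomial basis of $\mathbb{Z}[x_1,\ldots,x_{2n}]/(x_i^2)$, observes (by a pigeonhole count, since $(n+k)+(n-k+1)>2n$) that every index set of size $n+k$ meets $\{n+k,\ldots,2n\}$ in some $N\geq 1$ elements, and then checks that the coefficient of each monomial on the right-hand side is $\sum_{m=1}^{N}(-1)^{m+1}\binom{N}{m}=1$, matching the left-hand side. Your argument instead derives the whole identity at once from two facts: the coefficient of $t^{n+k}$ in $R(t)=\prod_{i=1}^{n+k-1}(1+x_it)$ vanishes because there are too few variables, and $Q(t)=\prod_{i=n+k}^{2n}(1+x_it)$ is invertible with inverse $\prod(1-x_it)$ precisely because $x_i^2=0$. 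The index bookkeeping you do at the end (the sum truncating at $j=n-k+1$, the sign $(-1)^{n-k}$ on the last term, and $n+k-(n-k+1)=2k-1$) all checks out against the statement. What your approach buys is a conceptual explanation of the identity — it is the vanishing of $e_{n+k}$ in $n+k-1$ variables, transported by the inversion trick — and it sidesteps the binomial-coefficient computation entirely; the paper's version is more bare-hands but has the minor advantage of exhibiting explicitly which monomials occur and with what multiplicity, which the author reuses verbatim for the analogous identity for $(c_1+c_2)\cdots(c_n+c_{n+1})$ later in the chapter.
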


\begin{proof}
Note that the elementary symmetric function $e_m(x_1, \ldots, x_{2n})$ can be expressed as 
\[ e_m(x_1, \ldots, x_{2n}) = \sum_{\{i_1, \ldots, i_m\} \subset \{1, \ldots, 2n\}} x_{i_1}\cdots x_{i_k}.\]
Also observe that because of the relations $x_i^2 = 0$, a product of elementary symmetric functions
\[ e_{m}(x_{n+k}, \ldots, x_{2n})e_{n+k-m}(x_{1}, \ldots, x_{2n}) \]
can be expressed as
\[ \sum_{\{i_1, \ldots, i_{n+k}\} \subset \{1, \ldots, 2n\} } c^m_{i_1, \ldots i_{n+k}} x_{i_1} \cdots x_{i_{n+k}},\]
where
\[ c^m_{i_1, \ldots, i_{n+k}} = \left( \begin{array}{c} \left| \{i_1, \ldots, i_{n+k} \} \cap \{ 1, \ldots, 2n\} \right| \\ m \end{array} \right). \]
Note that each $\left| \{i_1, \ldots, i_{n+k} \} \cap \{ 1, \ldots, 2n\} \right|$ is guaranteed to be at least one, because the first set of that intersection has size $n+k$, the second has size $n-k+1$, and both are subsets of the same $2n$-element set. Therefore $c^m_{i_1, \ldots, i_{n+k}}$ is nonzero for all $m$ such that $1 \leq m \leq \left| \{i_1, \ldots, i_{n+k} \} \cap \{ 1, \ldots, 2n\} \right|$.

Let $n_{i_1, \ldots, i_{n+k}} =  \left| \{i_1, \ldots, i_{n+k} \} \cap \{1, \ldots, 2n\} \right|$. Then on the right-hand side of the equation in the statement of the Lemma, the coefficient of $x_{i_1} \cdots x_{i_{n+k}}$ is
\[ \sum_{m =1}^{n_{i_1, \ldots, i_{n+k}}} (-1)^{m+1} \left( \begin{array}{c} n_{i_1, \ldots, i_{n+k}} \\ m \end{array} \right) = 1,\]
which is the same as its coefficient on the left-hand side. $\blacksquare$
\end{proof}

The previous lemma allows each $e_{n+k}$ to be written in terms of $e_i$, where $i < n+k$. In particular, $e_{n+1}$ can be written in terms of $e_1, \ldots, e_n$, and by induction so can $e_{n+k}$ for any $k$.

For example, for $n=2$,
\begin{eqnarray*}
e_3(x_1, x_2, x_3, x_{4}) &=& (x_3+x_4)(x_1x_2+x_1x_3+x_1x_4+x_2x_3+x_2x_4+x_3x_4) -x_3x_4(x_1+x_2+x_3+x_4) \\
&=& (x_3+x_4)e_2(x_1,x_2,x_3,x_4)-x_3x_4e_1(x_1,x_2,x_3,x_4) \\
e_4(x_1, x_2, x_3, x_4) &=& x_4(x_1x_2x_3+x_1x_2x_4+x_1x_3x_4+x_2x_3x_4) \\
&=& x_4e_3(x_1,x_2,x_3,x_4) \\
&=& x_4((x_3+x_4)e_2(x_1,x_2,x_3,x_4)-x_3x_4e_1(x_1,x_2,x_3,x_4)).
\end{eqnarray*}

Note that with the Bar-Natan relations present, any cobordism from $\mbox{Vert}_{2n}$ to itself can be represented by one consisting of $2n$ parallel, vertical sheets with 0 or 1 dots on each sheet. Denote the cobordism with one dot on the $i$th sheet and a coefficient of $(-1)^{i+1}$ by $c_i$. Then, as a module over $\mathbb{Z}$, $\mbox{End}_{BN}(\mbox{Vert}_{2n}) = \mbox{Hom}_{BN}(\mbox{Vert}_{2n}, \mbox{Vert}_{2n})$ is generated by $c_1, \ldots, c_{2n},$ with defining relations $c_i^2 = 0$ for all $i$.

\begin{proposition}
\label{surjection}
The map $\phi_{\mbox{Vert}_{2n}, \mbox{Vert}_{2n}}: \mbox{Hom}_{BN}(\mbox{Vert}_{2n}, \mbox{Vert}_{2n}) \to \mbox{Hom}_{(n,n)}(\mbox{Vert}_{2n}, \mbox{Vert}_{2n})$ is a surjection.
\end{proposition}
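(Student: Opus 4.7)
The plan is to identify both the source and target of $\phi_{\mbox{Vert}_{2n},\mbox{Vert}_{2n}}$ with familiar rings and then show that the generators $c_1,\ldots,c_{2n}$ of the source map onto a generating set of the target. First I would observe that $\mathcal{F}(\mbox{Vert}_{2n}) \cong H^n$ as an $(H^n, H^n)$-bimodule, since for any $a \in B^n$ and $b \in B^n$ we have $W(b)\mbox{Vert}_{2n}a = W(b)a$, and the bimodule action induced by the ``simplest'' cobordisms reduces to the multiplication in $H^n$. Under this identification, every bimodule endomorphism of $\mathcal{F}(\mbox{Vert}_{2n})$ is multiplication by a central element, giving the canonical isomorphism $\mbox{End}_{(n,n)}(\mbox{Vert}_{2n}) \cong Z(H^n)$, $f \mapsto f(1)$.

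Next I would invoke Khovanov's description from \cite{crossmatch}, namely $Z(H^n) \cong \mathbb{Z}[x_1,\ldots,x_{2n}]/(x_1^2,\ldots,x_{2n}^2,e_1,\ldots,e_{2n})$, and apply Lemma \ref{lem:polys}: since $e_{n+1},\ldots,e_{2n}$ all lie in the ideal generated by $x_i^2$ and $e_1,\ldots,e_n$, the center is generated as a ring by $x_1,\ldots,x_{2n}$. It therefore suffices to place each $x_i$ in the image of $\phi$. Because $\phi$ restricts to a ring homomorphism on $\mbox{End}_{BN}(\mbox{Vert}_{2n})$ and this ring is itself generated by $c_1,\ldots,c_{2n}$ subject to $c_i^2=0$, once every $x_i$ is hit, every polynomial expression $p(x_1,\ldots,x_{2n})\in Z(H^n)$ is $\phi(p(c_1,\ldots,c_{2n}))$ and surjectivity follows.

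The identification step is the heart of the argument. I would compute $\phi(c_i)(1)\in Z(H^n)$ directly: on each summand ${}_a(H^n)_a = \mathcal{F}(W(a)a)\{n\}$, the cobordism $c_i$ (composed with identity cobordisms on $W(a)$ and $a$) carries a single dot on the $i$th vertical sheet, with the sign $(-1)^{i+1}$. Through the TQFT functor this produces the tensor in $\mathcal{A}^{\otimes n}$ with $X$ in the factor corresponding to the circle of $W(a)a$ containing the $i$th boundary point and $1$ in all others, scaled by $(-1)^{i+1}$. Collecting these components over all $a\in B^n$ yields an explicit central element, which matches the description of $x_i$ that underlies Khovanov's isomorphism.

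The main obstacle is verifying that the element $\phi(c_i)$ so computed really corresponds to the Khovanov generator $x_i$ rather than merely to some element that happens to satisfy the correct relations: Khovanov's argument passes through the cohomology of an auxiliary space $\widetilde{S}$ and does not supply an a priori explicit map between the polynomial presentation and $H^n$. I would address this by checking directly that the elements $\phi(c_i)$ are central, satisfy $\phi(c_i)^2 = 0$ and the relations $e_k(\phi(c_1),\ldots,\phi(c_{2n}))=0$ for $1 \leq k \leq n$ (the relations $k > n$ being automatic by Lemma \ref{lem:polys}), which yields a well-defined ring surjection $\mathbb{Z}[x_1,\ldots,x_{2n}]/(x_i^2,e_1,\ldots,e_n) \twoheadrightarrow Z(H^n)$ factoring through the image of $\phi$. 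Combined with Khovanov's presentation this forces the image of $\phi$ to be all of $Z(H^n)$.
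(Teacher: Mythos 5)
Your proposal takes a different route from the paper's: you try to show that the generators of the target are hit by $\phi$, whereas the paper first computes $\ker(\phi_{\mbox{Vert}_{2n},\mbox{Vert}_{2n}})$ exactly (Proposition \ref{kernelprop}, an induction over crossingless matchings expressing the kernel as an intersection of ideals $I_a$) and then identifies $\mbox{Hom}_{BN}(\mbox{Vert}_{2n},\mbox{Vert}_{2n})/\ker(\phi)$ with Khovanov's presentation of $Z(H^n)$ via Lemma \ref{lem:polys}. Your preliminary steps are fine and agree with the paper: $\mbox{End}_{(n,n)}(\mbox{Vert}_{2n}) \cong Z(H^n)$, the explicit computation of $\phi(c_i)$ as $\sum_a (-1)^{i+1}$ times the tensor with $X$ in the factor of the circle through the $i$th point, and the reduction of Khovanov's relations to $e_1,\dots,e_n$ via Lemma \ref{lem:polys}.

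The gap is in your final step, and you have in fact put your finger on exactly the right difficulty before failing to resolve it. Since Khovanov's isomorphism $Z(H^n)\cong \mathbb{Z}[x_1,\dots,x_{2n}]/(x_i^2,e_1,\dots,e_{2n})$ is abstract and does not identify the $x_i$ with elements of $H^n$, you cannot ``match'' $\phi(c_i)$ with $x_i$ directly. Your substitute --- verify that the $\phi(c_i)$ are central and satisfy $\phi(c_i)^2=0$ and $e_k(\phi(c_1),\dots,\phi(c_{2n}))=0$ --- only produces a well-defined ring homomorphism $\mathbb{Z}[x_1,\dots,x_{2n}]/(x_i^2,e_1,\dots,e_n)\to Z(H^n)$ whose image is the subring generated by the $\phi(c_i)$; it bounds the image of $\phi$ \emph{from above} (it is a quotient of the presented ring), not from below. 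Calling this map a ``surjection onto $Z(H^n)$'' assumes the conclusion: since $\mbox{End}_{BN}(\mbox{Vert}_{2n})$ is generated by the $c_i$, saying that the $\phi(c_i)$ generate $Z(H^n)$ is literally the statement that $\phi$ is surjective. Nor does the existence of a ring map from a ring abstractly isomorphic to $Z(H^n)$ into $Z(H^n)$ force surjectivity (compare $x\mapsto 2x$ on $\mathbb{Z}[x]/(x^2)$). What is missing is the content of Proposition \ref{kernelprop}: one must show the $\phi(c_i)$ satisfy \emph{only} those relations, i.e.\ compute the kernel, so that the image is pinned down as a free abelian group of the same rank as the target rather than merely a quotient of one. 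That computation is the real work of the paper's argument, and your proposal bypasses it without a replacement.
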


We prove Proposition \ref{surjection}  with the aid of the following key proposition:
\begin{proposition}
\label{kernelprop}
Over $\mathbb{Z}$, the kernel of the ring homomorphism $\phi_{\mbox{Vert}_{2n}, \mbox{Vert}_{2n}}$ is the two-sided ideal generated by the first $n$ elementary symmetric functions in $c_i, 1 \leq i \leq 2n$.
\end{proposition}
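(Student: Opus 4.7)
The plan is to establish the two inclusions $J \subseteq \ker\phi$ and $\ker\phi \subseteq J$ separately, where $\phi = \phi_{\mbox{Vert}_{2n}, \mbox{Vert}_{2n}}$ and $J \subseteq \mbox{End}_{BN}(\mbox{Vert}_{2n})$ denotes the two-sided ideal generated by $e_1(c_1, \ldots, c_{2n}), \ldots, e_n(c_1, \ldots, c_{2n})$.

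For $J \subseteq \ker\phi$, I would compute $\phi$ explicitly on each $c_i$: on the summand ${}_b(H^n)_a = \mathcal{F}(W(b)a)\{n\} \cong \mathcal{A}^{\otimes r}$, the cobordism $c_i$ acts by multiplication by $(-1)^{i+1}X$ on the tensor factor corresponding to the unique circle of $W(b)a$ passing through position $i$, and by $1$ on all remaining tensor factors. Using $X^2 = 0$, only those $k$-subsets $S \subseteq \{1,\ldots,2n\}$ with at most one element on each circle contribute nontrivially to $\phi(e_k(c))|_{\mathcal{F}(W(b)a)}$, giving
\[ \phi(e_k(c))|_{\mathcal{F}(W(b)a)} = \sum_{\{C_1,\ldots,C_k\}} \left( \prod_{l=1}^k \sum_{j \in C_l}(-1)^{j+1}\right) X_{C_1}\cdots X_{C_k},\]
where the outer sum is over unordered $k$-tuples of distinct circles of $W(b)a$. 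The key combinatorial observation is that every arc in a crossingless matching joins positions of opposite parity (by the standard bracket-sequence interpretation), so every circle of $W(b)a$ alternates between odd- and even-indexed positions and contains equal numbers of each; thus $\sum_{j \in C}(-1)^{j+1} = 0$ for every circle $C$. Consequently $\phi(e_k(c)) = 0$ for all $k \ge 1$, and in particular $J \subseteq \ker \phi$.

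For the reverse inclusion, having $J \subseteq \ker \phi$ gives an induced ring homomorphism
\[ \bar\phi: \mathbb{Z}[c_1,\ldots,c_{2n}]/(c_i^2,\, e_1(c),\ldots,e_n(c)) \longrightarrow Z(H^n), \qquad c_i \mapsto \phi(c_i).\]
Khovanov's presentation from \cite{crossmatch},
\[ Z(H^n) \cong \mathbb{Z}[x_1,\ldots,x_{2n}]/(x_i^2,\, e_1(x),\ldots,e_{2n}(x)),\]
combined with Lemma~\ref{lem:polys} (which makes $e_{n+1}(x),\ldots,e_{2n}(x)$ redundant modulo the $x_i^2$), simplifies the codomain's presentation to match the domain's exactly. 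To conclude that $\bar\phi$ is an isomorphism it then suffices to verify that $\bar\phi(c_i)$ corresponds to the abstract generator $x_i$ under Khovanov's isomorphism, which can be done by tracking generators (the Chern classes of tautological line bundles in the Borel presentation of $H^*(\mathcal{B}_{n,n})$) through Khovanov's chain $Z(H^n) \cong H^*(\widetilde{S}) \cong H^*(\mathcal{B}_{n,n})$.

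The main obstacle is this last identification: Khovanov's proof in \cite{crossmatch} is topological and does not produce the isomorphism $Z(H^n) \cong \mathbb{Z}[x]/(x_i^2, e_k)$ in terms of explicit $H^n$-elements, so matching the abstract generator $x_i$ with the concrete central element $\phi(c_i)$ requires chasing generators through his chain of isomorphisms. An alternative route, avoiding this matching entirely, is to prove directly that the domain of $\bar\phi$ is a free abelian group of the expected rank (via a Gr\"obner-basis or ballot-sequence argument applied to $\mathbb{Z}[c_i]/(c_i^2, e_1,\ldots,e_n)$), to exhibit enough central elements in the image of $\bar\phi$ to see surjectivity, and to conclude by comparing ranks with Khovanov's computation of $\mbox{rank}\, Z(H^n)$.
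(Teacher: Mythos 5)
Your argument for the inclusion $J \subseteq \ker\phi$ is correct and is a nice direct computation: every arc of a crossingless matching joins positions of opposite parity, so every circle of $W(b)a$ meets equally many odd and even positions and the signed sum $\sum_{j\in C}(-1)^{j+1}$ vanishes, killing $\phi(e_k(c))$ on every summand. The paper does not argue this way (it gets the forward inclusion for free from its description of the kernel), so this half is a genuinely different and self-contained verification.

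The reverse inclusion $\ker\phi \subseteq J$, however, is not established, and this is where the real content of the proposition lies. Your primary route requires matching $\phi(c_i)$ with the abstract generator $x_i$ in Khovanov's presentation of $Z(H^n)$, but, as the paper itself emphasizes, Khovanov's argument in \cite{crossmatch} passes through the cohomology of $\widetilde{S}$ and ``does not give an explicit map in either direction, so the $x_i$'s cannot be interpreted as any elements of $H^n$''; worse, the explicit identification $x_i \leftrightarrow \overline{c_i}$ is proved later in the paper \emph{as a consequence of} Proposition \ref{kernelprop}, so invoking it here would be circular. Your fallback route also has a gap: even granting that $\mathbb{Z}[c]/(c_i^2, e_1,\ldots,e_n)$ and $Z(H^n)$ are free abelian of the same rank, injectivity of $\bar\phi$ requires knowing that its image has full rank (equivalently, surjectivity after tensoring with $\mathbb{Q}$), and ``exhibiting enough central elements in the image'' is precisely the hard step you have not supplied --- equality of ranks of domain and codomain alone does not force a map of free abelian groups to be injective. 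What the paper does instead is entirely internal to $H^n$: for each matching $a = ((i_1,j_1),\ldots,(i_n,j_n))$, closing up $\mathrm{Vert}_{2n}$ by $a$ on both sides merges sheet $i_k$ with sheet $j_k$, so the kernel of $\phi$ followed by projection to the $a$-summand is exactly $I_a = (c_{i_1}+c_{j_1},\ldots,c_{i_n}+c_{j_n})$; hence $\ker\phi = \bigcap_{a\in B^n} I_a$, and this intersection of ideals is then computed by induction on $n$ (peeling off a short arc $(i,i+1)$, rewriting generators, and using that the $c_i+c_{i+1}$ are pairwise coprime in a suitable quotient) to equal $(e_1,\ldots,e_n)$. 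Some argument of this kind --- an upper bound on $\ker\phi$ obtained summand by summand --- is what your proof is missing.
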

\begin{proof}
We proceed by induction on $n$.
First suppose $n=1$. There is only one crossingless matching in this case. When a cobordism from $\mbox{Vert}_2$ to itself is capped off by this arc on both ends, the two sheets are joined into a single sheet and therefore cobordisms with a single dot on either sheet produce the same bimodule homomorphism. Thus $c_1+c_2$ is in the kernel, and it is clear that this generates the whole kernel.

Now we proceed with the induction step. Let $a$ denote an arbitrary crossingless matching in $B^n$. We write  $a=((i_1, j_1), \ldots (i_n, j_n))$, where each pair represents the endpoints of an arc in $a$. Observe that when a cobordism from $\mbox{Vert}_{2n}$ to itself is closed up by $a$ on both sides, each sheet $i_k$ becomes merged with the sheet $j_k$ so that dots on either the $i_k$th or $j_k$th sheets induce the same map ${}_{a}(H^n)_a \to {}_{a}H^n_a$. Therefore if we just look at the image of $\phi_{\mbox{Vert}_{2n}, \mbox{Vert}_{2n}}$ projected onto the summand ${}_{a}(H^n)_a$ of $H^n$, the kernel will be given by the ideal $I_a = (c_{i_1}+c_{j_1}, \ldots, c_{i_n}+c_{j_n})$. More generally, the kernel of $\phi_{\mbox{Vert}_{2n}, \mbox{Vert}_{2n}}$ projected onto the summand ${}_{a}(H^n)_b$ is $I_a \cap I_b$. The kernel $I$ of $\phi_{\mbox{Vert}_{2n}, \mbox{Vert}_{2n}}$ is then just the intersection of the ideals $I_a$ over all $a$ in $B^n$:

\[ I = \bigcap_{((i_1, j_1), \ldots, (i_n, j_n)) \in B^n} (c_{i_1}+c_{j_1}, \ldots, c_{i_n}+c_{j_n}). \]

Our claim is that $I = (e_1(c_1, c_2, \ldots, c_{2n}), \ldots, e_n(c_1, c_2, \ldots, c_{2n}))$. Now for some $k$ in each crossingless matching, $|i_k - j_k| = 1$. Therefore, invoking the inductive hypothesis, we have
\begin{eqnarray*}
I &=& \bigcap_{((i_1, j_1), \ldots, (i_n, j_n)) \in B^n} (c_{i_k} + c_{j_k}, c_{i_1} + c_{j_1}, \ldots, \widehat{c_{i_k} + c_{j_k}}, \ldots, c_{i_n} + c_{j_n}) \\
&=& \bigcap_{i=1}^n \left( \bigcap_{((i_1, j_1) \cdots (i, i+1) \cdots (i_n, j_n))} (c_i + c_{i+1}, c_{i_1} + c_{j_1}, \ldots, \widehat{c_{i_k} + c_{j_k}}, \ldots, c_{i_n} + c_{j_n}) \right) \\
&=& \bigcap_{i=1}^n \left( (c_i + c_{i+1}) \cap \bigcap_{((i_1, j_1) \cdots (i, i+1) \cdots (i_n, j_n))} (c_{i_1} + c_{j_1}, \ldots, \widehat{c_{i_k} + c_{j_k}}, \ldots, c_{i_n} + c_{j_n}) \right) \\
&=& \bigcap_{i=1}^n (c_i+ c_{i+1}, e_1(c_1, \ldots, \widehat{c_i, c_{i+1}}, \ldots, c_{2n}), \ldots, e_{n-1}(c_1, \ldots, \widehat{c_i, c_{i+1}}, \ldots, c_{2n})),
\end{eqnarray*}
\noindent where $\widehat{x}$ means that the variable $x$ is omitted. We only need to intersect up to $n$ rather than $2n$ since any crossingless matching with an arc from $i$ to $i+1$, $i > n$, must also have an arc from $l$ to $l+1$ for some $l \leq n$. Define
\[ e_j(c:i) = e_j(c_1, \ldots, \widehat{c_i, c_{i+1}}, \ldots, c_{2n}).\]

Observe that
\[ e_j(c_1, \ldots, c_{2n}) = e_j(c:i) + (c_i+c_{i+1})e_{j-1}(c : i) + (c_ic_{i+1})e_{j-2} (c : i), \]
where $e_i(c : k) := 0$ if $i < 0$. Therefore by manipulating the generators, we see that:
\begin{eqnarray*}
I &=& \bigcap_{i=1}^n(c_i+c_{i+1}, e_1(c:i)+(c_i+c_{i+1}), \ldots, e_{n-1}(c:i)  +(c_i+c_{i+1})e_{n-2}(c:i) + c_ic_{i+1}e_{n-3}(c:i)) \\
&=& \bigcap_{i=1}^n(c_i+c_{i+1}, e_1(c_1, \ldots, c_{2n}), \ldots, e_{n-1}(c_1, \ldots, c_{2n})) \\
&=& ((c_1+c_2)(c_2+c_3)\cdots(c_n+c_{n+1}), e_1(c_1, \ldots, c_{2n}), \ldots, e_{n-1}(c_1, \ldots, c_{2n})).
\end{eqnarray*}

To see the last equality, first look at the intersection of ideals in the quotient ring  
\[ \mathbb{Z}[c_1, \ldots, c_{2n}]/(e_1(c_1, \ldots, c_{2n}), \ldots, e_{n-1}(c_1, \ldots, c_{2n})).\]
This ring is a unique factorization domain, so it is clear that $\cap_{k=1}^n(\overline{c_k}+\overline{c_{k-1}}) = ((\overline{c_1}+\overline{c_2})\cdots(\overline{c_{n}}+\overline{c_{n+1}}))$ since the $\overline{c_k}+\overline{c_{k+1}}$ are pairwise relatively prime. Then, when we quotient by $c_1^2, \ldots, c_{2n}^2$, we get the same statement, since all ideals are now principle. Finally, we can lift the statement back to $\mathbb{Z}[c_1, \ldots, c_{2n}]/(c_1^2, \ldots, c_{2n}^2)$ to get the desired equality.

Now observe the following formula, whose proof is analogous to that of Lemma \ref{lem:polys}:
\begin{multline*}
(c_1+c_2)(c_2+c_3)\cdots(c_n+c_{n+1}) = e_n(c_1, \ldots, c_{2n})-e_1(c_{n+2}, \ldots c_{2n})e_{n-1}(c_1, \ldots, c_{2n}) 
\\+ \cdots  + (-1)^{n-1} e_{n-1}(c_{n+2}, \ldots, c_{2n})e_1(c_1, \ldots, c_{2n}).
\end{multline*}

Therefore we now have that $I = (e_1(c_1, c_2, \ldots, c_{2n}), \ldots, e_n(c_1, c_2, \ldots, c_{2n}))$ as desired.
$\blacksquare$
\end{proof}

\begin{remark}
Note that we may interpret each generator of the kernel as a ``divided power" of the first elementary symmetric function in the following sense, where the last equality makes sense over $\mathbb{Q}$:
\[ e_k(c_1,c_2, \ldots, c_{2n}) = e_1(c_1, c_2, \ldots, c_{2n})^{(k)} = \frac{1}{k!}e_1(c_1, c_2, \ldots, c_{2n})^k. \]

Over $\mathbb{Q}$, we may apply a similar argument to show that the kernel of $\phi_{\mbox{Vert}_{2n}, \mbox{Vert}_{2n}}$ is generated by just the first elementary symmetric function, $c_1+c_2+\ldots+c_{2n}$.
\end{remark}

Now we can complete the proof of Proposition \ref{surjection}:
\begin{proof}
Now we have that
\begin{eqnarray*}
\mbox{Im}(\phi_{\mbox{Vert}_{2n}, \mbox{Vert}_{2n}}) &\cong& \mbox{Hom}_{BN}(\mbox{Vert}_{2n}, \mbox{Vert}_{2n})/\mbox{ker}(\phi_{\mbox{Vert}_{2n}, \mbox{Vert}_{2n}}) \\
&\cong& \mathbb{Z}[c_1, \ldots, c_{2n}]/((c_1^2, \ldots, c_{2n}^2, e_1(c_1, \ldots, c_{2n}), \ldots, e_n(c_1, \ldots, c_{2n})) \\
&\cong& \mbox{Hom}_{(n,n)}(\mbox{Vert}_{2n}, \mbox{Vert}_{2n}).
\end{eqnarray*}
So $\phi_{\mbox{Vert}_{2n}, \mbox{Vert}_{2n}}$ is surjective.  $\blacksquare$
\end{proof}

Now we may move on to the more general case of arbitrary planar $(2m,2n)$-tangles $T_1$ and $T_2$. To establish some notation, denote by $\cap_{i,n} $ the element of $B^{n-1}_n$ and by $\cup_{i, n-1}$ the element of $B^{n}_{n-1}$ pictured in Figure \ref{capcup} below.
\begin{figure}[h]
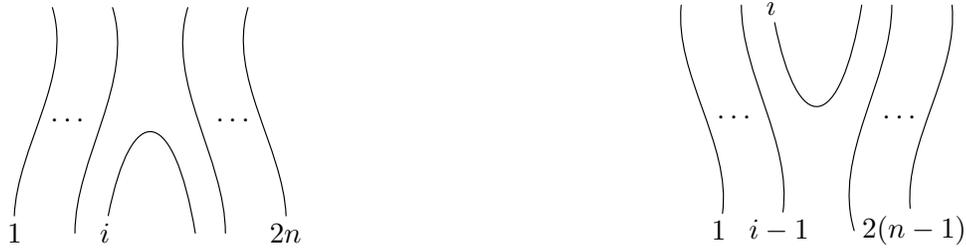

\begin{minipage}{0.5 \linewidth}
\[ 
\xy
(-13,0)*{}="a";
(-18,-30)*{}="b";
(-5,0)*{}="c";
(-10,-30)*{}="d";
"a"*{};"b"*+{1}  **\crv{(-10,-10) & (-19,-20)};
"c"*{}; "d"*{} **\crv{(-2,-10) & (-10,-20)};
(13,0)*{}="a'";
(18,-30)*{}="b'";
(5,0)*{}="c'";
(10,-30)*{}="d'";
"a'"*{};"b'"*+{2n}  **\crv{(10,-10) & (19,-20)};
"c'"*{}; "d'"*{} **\crv{(2,-10) & (10,-20)};
(-6,-30)*{}="e";
(6,-30)*{} = "f";
"e"*+{i}; "f"*{} **\crv{(-3,-12)&(3,-12)};
(-11,-15)*{\cdots};
(11,-15)*{\cdots};
\endxy \]
\end{minipage}
\hspace{.5cm}
\begin{minipage}{0.5 \linewidth}
\[ \xy
(-13,-30)*{}="a";
(-18,0)*{}="b";
(-5,-30)*{}="c";
(-10,0)*{}="d";
"a"*+{1};"b"*{}  **\crv{(-10,-20) & (-19,-10)};
"c"*+{i-1}; "d"*{} **\crv{(-2,-20) & (-10,-10)};
(13,-30)*{}="a'";
(18, 0)*{}="b'";
(5,-30)*{}="c'";
(10,0)*{}="d'";
"a'"*+{2(n-1)};"b'"*{}  **\crv{(10,-20) & (19,-10)};
"c'"*{}; "d'"*{} **\crv{(2,-20) & (10,-10)};
(-6,-0)*{}="e";
(6,0)*{} = "f";
"e"*+{i}; "f"*{} **\crv{(-3,-18)&(3,-18)};
(-11,-15)*{\cdots};
(11,-15)*{\cdots};
\endxy \]
\end{minipage}
\caption{$\cap_{i,n}$ in $B_n^{n-1}$ and $\cup_{i,n-1}$ in $B_{n-1}^n$}
\label{capcup}
\end{figure}

In  \cite{invtcob}, Khovanov proves the following proposition related to homomorphisms of $(m,n)$-bimodules, where $F_{\cup}$ is the functor of tensoring with $\mathcal{F}(\cup_{i, n-1})$ and $F_{\cap}$ is the functor of tensoring with $\mathcal{F}(\cap_{i,n})$.
\begin{proposition}[Khovanov]
\label{adjoint}
$F_{\cup}\{1\}$ is left adjoint to $F_{\cap}$, and $F_{\cap}\{-1\}$ is left adjoint to $F_{\cup}$.
\end{proposition}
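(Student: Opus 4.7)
The plan is to construct explicit unit and counit natural transformations for each adjunction from elementary cobordisms under the functor $\mathcal{F}$, then verify the two triangle identities diagrammatically. I would handle $F_{\cup}\{1\} \dashv F_{\cap}$ in detail and obtain $F_{\cap}\{-1\} \dashv F_{\cup}$ by reversing the cobordisms vertically.

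First I would compute the relevant tangle compositions. The composition $\cap_{i,n} \circ \cup_{i,n-1}$ (cup then cap) is $\mbox{Vert}_{2(n-1)}$ disjoint union with a closed circle, so $F_{\cap} F_{\cup}$ is naturally isomorphic to the endofunctor $(-) \otimes \mathcal{A}$ with an appropriate grading shift. Dually, $\cup_{i,n-1} \circ \cap_{i,n}$ is isotopic to $\mbox{Vert}_{2n}$ via the standard zig-zag straightening, so $F_{\cup} F_{\cap}$ differs from $\mbox{Id}$ only by the cobordism realizing that isotopy. I would then define the unit $\eta : \mbox{Id} \to F_{\cap} F_{\cup}\{1\}$ via $\mathcal{F}$ applied to the birth cobordism $\iota: \emptyset \to S^1$, which creates the extra circle in $\cap_{i,n} \circ \cup_{i,n-1}$ (concretely, $\eta_M(m) = 1 \otimes m$), and the counit $\epsilon : F_{\cup}\{1\} F_{\cap} \to \mbox{Id}$ via $\mathcal{F}$ applied to the saddle cobordism that performs the zig-zag straightening. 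The shift $\{1\}$ is forced by $\deg(\iota) = -1$ and the saddle having degree $+1$, making both $\eta$ and $\epsilon$ degree-zero morphisms; naturality is automatic since each is $\mathcal{F}$ applied to a fixed cobordism tensored with the identity on the module argument.

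The core computation will be the verification of the two triangle identities
\[
(F_{\cap}\epsilon) \circ (\eta F_{\cap}) = \mbox{Id}_{F_{\cap}}, \qquad (\epsilon F_{\cup}) \circ (F_{\cup}\eta) = \mbox{Id}_{F_{\cup}}.
\]
Each reduces to the classical topological cup-cap duality: composing the birth cobordism with the saddle, after extension by identity cobordisms on the remaining strands, yields a cobordism isotopic to the identity on $\cap_{i,n}$ (respectively $\cup_{i,n-1}$). Because $\mathcal{F}$ factors through the Bar-Natan category of Figure \ref{BN-relns}, this isotopy of cobordisms descends to an equality of bimodule homomorphisms. The neck-cutting and sphere relations enter only insofar as they encode the Frobenius structure of $\mathcal{A}$.

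The second adjunction $F_{\cap}\{-1\} \dashv F_{\cup}$ follows by the mirror construction: the unit comes from the reversed saddle cobordism (creating a zig-zag from the identity) and the counit from the death cobordism $\varepsilon: S^1 \to \emptyset$ (cancelling the extra circle in $\cap \circ \cup$), with the grading shift of $-1$ forced by $\deg(\varepsilon) = +1$ and the reversed saddle again of degree $+1$. The triangle identities follow by the same topological isotopy argument. The main obstacle I anticipate is the careful bookkeeping of grading shifts together with the verification that the chosen cobordisms genuinely induce $(H^m, H^n)$-bimodule maps, but this follows from the fact that the birth, death, and saddle cobordisms can each be realized as compositions of identity cobordisms on $W(b)$ and $a$ with the elementary TQFT cobordism, exactly as in the construction of the multiplication on $H^n$ in Section \ref{sec:Hn-def}.
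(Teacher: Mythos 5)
Your proposal is essentially the same argument the paper records (following Khovanov in \cite{invtcob}): the paper states the adjunction via the hom-set bijections $\alpha$ and $\beta$, where $\alpha$ composes with the birth map $\eta$ and $\beta$ with the saddle map $\nu$, and the assertion that $\alpha$ and $\beta$ are mutually inverse is precisely your pair of triangle identities, verified by the same zig-zag isotopy of cobordisms. One factual slip to fix: $\cup_{i,n-1}\circ\cap_{i,n}$ is \emph{not} isotopic to $\mbox{Vert}_{2n}$ --- it is the Temperley--Lieb diagram $U_i$, with a cap joining points $i,i+1$ at the bottom and a cup joining them at the top, so its boundary connectivity differs from that of the identity tangle and $F_{\cup}F_{\cap}$ is not ``the identity up to isotopy.'' The counit is induced by the saddle cobordism $\cup_{i,n-1}\cap_{i,n}\to \mbox{Vert}_{2n}$, which is a genuine index-one critical point, not an isotopy. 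This does not damage your proof, since the triangle identities only need the composite cobordism (birth followed by saddle, extended by identities) over a single cup or cap to be isotopic to the identity cobordism, which is correct; but the sentence claiming $\cup_{i,n-1}\circ\cap_{i,n}\simeq \mbox{Vert}_{2n}$ should be removed.
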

The statement holds whether the tensor products are taken on the left or on the right. This adjointness comes from isotopies between compositions of cobordisms between $\mbox{Vert}_{2(n-1)}$ and $\cap_{i,n}\cup_{i,n-1}$ and between $\mbox{Vert}_{2n}$ and $\cup_{i,n-1} \cap_{i,n}$. More concretely, and ignoring gradings, the proposition gives the isomorphisms
\begin{itemize}
\item $\mbox{Hom}_{(m,n)} (\cup_{i, m-1} T_1, T_2) \cong \mbox{Hom}_{(m-1, n)} (T_1, \cap_{i, m} T_2), $
where $T_1$ is a planar $(2(m-1), 2n)$-tangle and $T_2$ is a planar $(2m, 2n)$-tangle
\item $\mbox{Hom}_{(m,n)}(T_1, T_2 \cap_{i,n}) \cong \mbox{Hom}_{(m,n-1)}(T_1 \cup_{i,n-1}, T_2)$, where $T_1$ is a planar $(2m,2n)$-tangle and $T_2$ is a planar $(2m, 2(n-1))$-tangle
\item $\mbox{Hom}_{(m,n)}(T_1, \cup_{i,m-1}T_2) \cong \mbox{Hom}_{(m-1,n)}(\cap_{i,m}T_1, T_2)$, where $T_1$ is a planar $(2m,2n)$-tangle and $T_2$ is a planar $(2(m-1), 2n)$-tangle, and
\item $ \mbox{Hom}_{(m,n)}(T_1 \cap_{i, n}, T_2) \cong \mbox{Hom}_{(m, n-1)}(T_1, T_2 \cup_{i, n-1} ),$
where $T_1$ is a planar $(2m, 2(n-1))$-tangle and $T_2$ is a planar $(2m, 2n)$-tangle.
\end{itemize}

We briefly explain the explicit maps underlying the first of these isomorphisms. The maps in the other isomorphisms are analogous. In the first isomorphism, define 
\[ \alpha: \mbox{Hom}_{(m,n)}(\cup_{i, m-1} T_1, T_2) \to \mbox{Hom}_{(m-1, n)} (T_1, \cap_{i,m} T_2) \] as follows. For a map $\varphi \in \mbox{Hom}_{(m,n)} (\cup_{i,m-1}T_1, T_2)$, $\alpha(\varphi)$ is defined to be 
\[ (\mbox{Id}_{\cap_{i,m}} \otimes \varphi) \circ (\eta \otimes \mbox{Id}_{T_1})\]
where $\eta$ is the map from $\mathcal{F}(\mbox{Vert}_{2(m-1)})$ to $\mathcal{F}(\cap_{i,m} \cup_{i,m-1})$ induced by the ``birth'' cobordism from the empty $(0,0)$-tangle to the closed circle. Conversely, we define
\[ \beta: \mbox{Hom}_{(m-1, n)} (T_1, \cap_{i, m} T_2) \to \mbox{Hom}_{(m,n)} (\cup_{i, m-1} T_1, T_2) \]
that takes $\psi \in \mbox{Hom}_{(m-1, n)} (T_1, \cap_{i, m} T_2)$ to
\[ (\nu \otimes \mbox{Id}_{T_2}) \circ (\mbox{Id}_{\cup_{i, m-1}} \otimes \psi) \]
where $\nu$ is the map from $\mathcal{F}(\cup_{i, m-1} \cap_{i, m})$ to $\mathcal{F}(\mbox{Vert}_{2m})$ induced by the saddle cobordism. Khovanov explains in \cite{invtcob} that $\alpha$ and $\beta$ are mutually inverse.

\begin{example}
Disregarding grading shifts, Proposition \ref{adjoint} gives 
\[\mbox{Hom}_{(1,1)}(\cup_{1,0}\cap_{1,1}, \mbox{Vert}_2) \cong \mbox{Hom}_{(1,0)}(\cup_{1,0}, \cup_{1,0}).\]
For bimodule maps $\varphi \in \mbox{Hom}_{(1,1)}(\cup_{1,0}\cap_{1,1}, \mbox{Vert}_2)$ and $\psi \in \mbox{Hom}_{(1,0)}(\cup_{1,0}, \cup_{1,0})$, the corresponding maps $\alpha(\varphi)$ and $\beta(\psi)$ under the isomorphism are pictured below in Figures \ref{adjoint1} and \ref{adjoint2}. Here, the blank space around $\varphi$ and $\psi$ represents a homomorphism of bimodules, while the new maps $\alpha(\varphi)$ and $\beta(\psi)$ are obtained by composing the olds maps with the pictured cobordisms in the manner illustrated.

\begin{figure}[htb]
\begin{minipage}[b]{0.5 \linewidth}
\begin{center}
\scalebox{.5}{\includegraphics[scale=1.2]{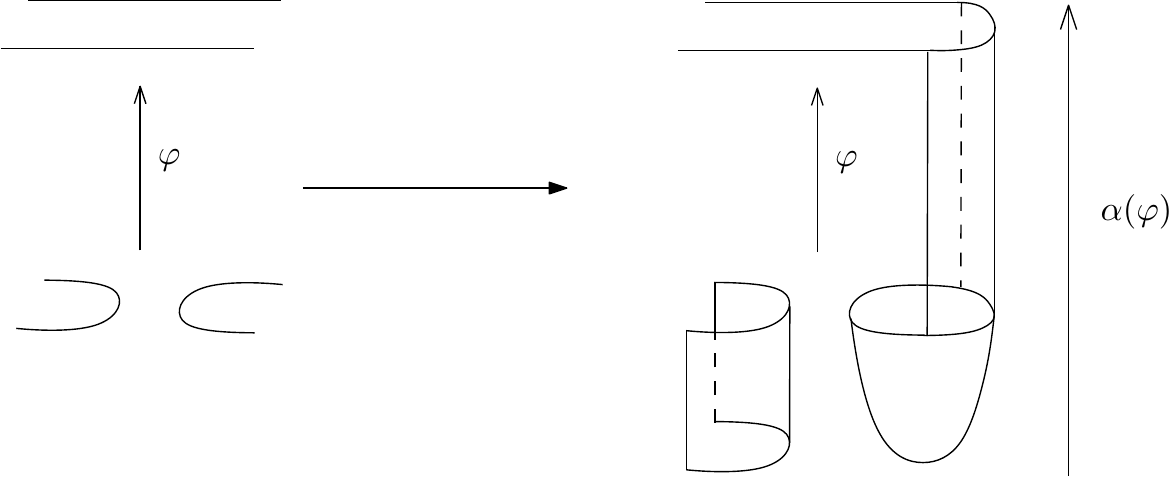}}
\caption{Image of $\varphi$ in $\mbox{Hom}_{(1,0)}(\cup_{1,0}, \cup_{1,0})$}
\label{adjoint1}
\end{center}
\end{minipage}
\hspace{.5 cm}
\begin{minipage}[b]{0.5 \linewidth}
\begin{center}
\scalebox{.6}{\includegraphics[scale=1.2]{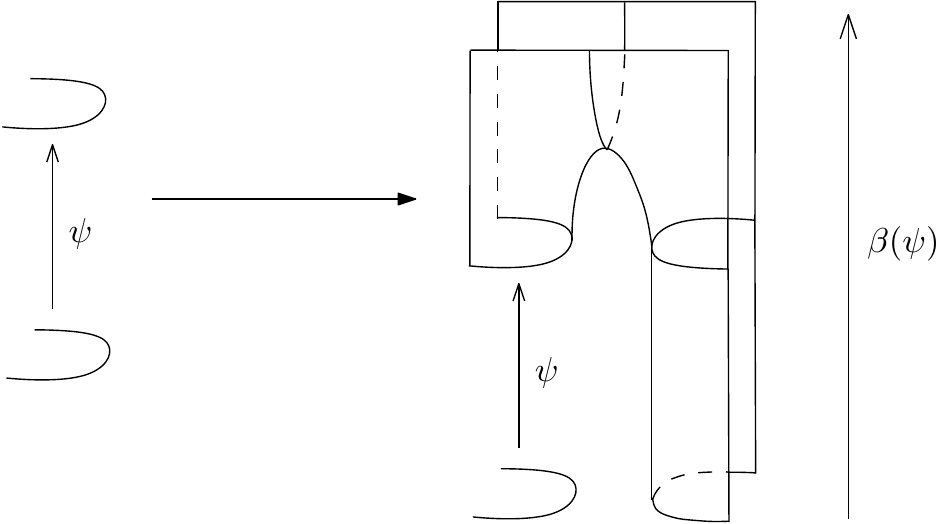}}
\caption{\mbox{Image of $\psi$ in $\mbox{Hom}_{(1,1)}(\cup_{1,0}\cap_{1,1}, \mbox{Vert}_2)$}}
\label{adjoint2}
\end{center}
\end{minipage}
\end{figure}
\end{example}

It is easy to see that we have a similar statement of adjointness for tangle cobordisms modulo Bar-Natan relations, given by bending a cobordism to move a cup or cap at one boundary to the opposite boundary, as shown in Figure \ref{cobadjoint} below.
\begin{figure}[htb]
\begin{center}
\scalebox{.8}{\includegraphics{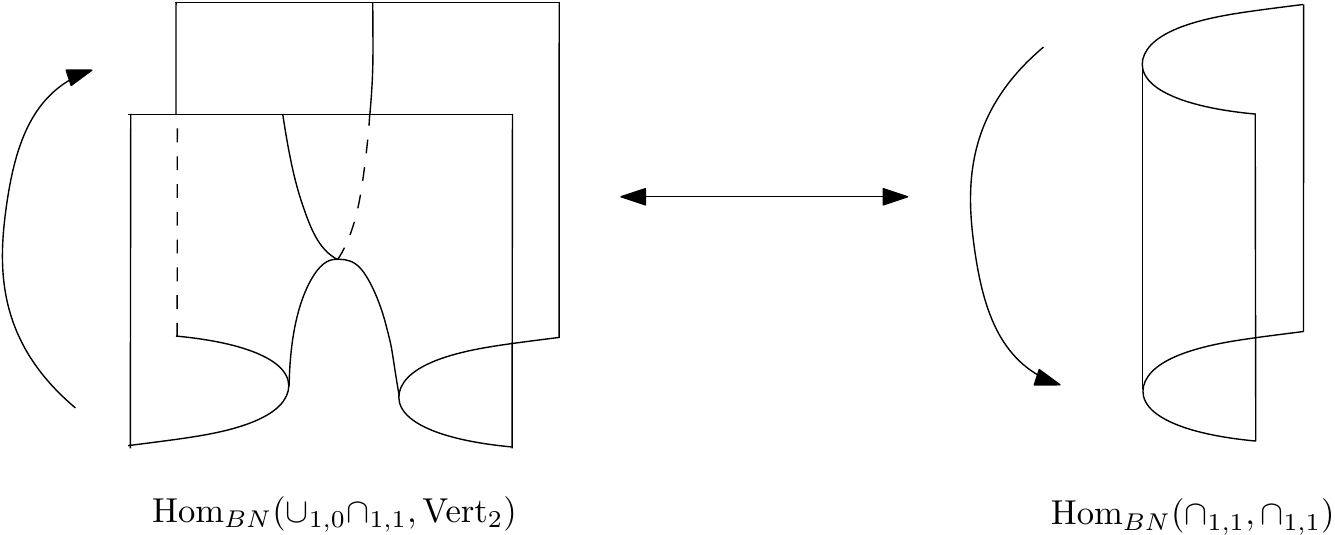}}
\caption{Adjointness of cobordisms}
\label{cobadjoint}
\end{center}
\end{figure}

\begin{lemma}
The maps $\phi_{T_1,T_2}$ respect adjointness.
\end{lemma}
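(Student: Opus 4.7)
The plan is to unwind both adjunction isomorphisms explicitly and then verify commutativity of the relevant square by appealing to the two-functoriality of $\mathcal{F}$. I will concentrate on the first adjunction,
\[ \mbox{Hom}_{(m,n)}(\cup_{i,m-1}T_1, T_2) \cong \mbox{Hom}_{(m-1,n)}(T_1, \cap_{i,m}T_2); \]
the remaining three are handled by reflecting the argument top-to-bottom or left-to-right. Concretely, I need to exhibit for each side a map $\alpha_{BN}$, $\beta_{BN}$ on cobordisms modulo Bar-Natan relations and $\alpha$, $\beta$ on bimodule homomorphisms, and show that the square
\[
\xymatrix{
\mbox{Hom}_{BN}(\cup_{i,m-1}T_1,T_2) \ar[r]^{\alpha_{BN}} \ar[d]_{\phi} & \mbox{Hom}_{BN}(T_1,\cap_{i,m}T_2) \ar[d]^{\phi} \\
\mbox{Hom}_{(m,n)}(\cup_{i,m-1}T_1,T_2) \ar[r]^{\alpha} & \mbox{Hom}_{(m-1,n)}(T_1,\cap_{i,m}T_2)
}
\]
commutes, together with the analogous statement for $\beta, \beta_{BN}$.

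First I would set up $\alpha_{BN}$ and $\beta_{BN}$ by copying the bimodule definitions picture-for-picture. That is, $\alpha_{BN}(S)$ is the cobordism obtained by precomposing $S$, tensored with $\mbox{Id}_{\cap_{i,m}}$, with the birth cobordism $\eta_{BN}\otimes \mbox{Id}_{T_1}$ from $T_1$ to $\cap_{i,m}\cup_{i,m-1}T_1$; and $\beta_{BN}(S)$ is obtained by postcomposing $\mbox{Id}_{\cup_{i,m-1}}\otimes S$ with the saddle cobordism $\nu_{BN}\otimes \mbox{Id}_{T_2}$ from $\cup_{i,m-1}\cap_{i,m}T_2$ to $T_2$. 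That $\alpha_{BN}$ and $\beta_{BN}$ are mutually inverse modulo Bar-Natan relations is precisely the diagrammatic isotopy of Figure \ref{cobadjoint}, applied in the neighborhood of the $i$th pair of strands: the two compositions yield cobordisms differing by a Reidemeister-like zig-zag, which collapses to the identity after the neck-cutting / sphere relations from Figure \ref{BN-relns}.

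Next, given $S \in \mbox{Hom}_{BN}(\cup_{i,m-1}T_1, T_2)$, I would compute both $\phi(\alpha_{BN}(S))$ and $\alpha(\phi(S))$. The first is $\mathcal{F}$ applied to a composition of three cobordisms (birth, then $\mbox{Id}\otimes S$, glued with identities); by two-functoriality of $\mathcal{F}$ this equals the composition of the induced bimodule maps, namely $(\mbox{Id}_{\mathcal{F}(\cap_{i,m})}\otimes \mathcal{F}(S))\circ (\mathcal{F}(\eta_{BN})\otimes \mbox{Id}_{\mathcal{F}(T_1)})$. Since $\mathcal{F}(\eta_{BN}) = \eta$ by the very definition of $\mathcal{F}$ on the birth cobordism, this is exactly $\alpha(\phi(S))$. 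A symmetric computation shows $\phi \circ \beta_{BN} = \beta \circ \phi$, using $\mathcal{F}(\nu_{BN})=\nu$ (the saddle map). Then I would note that the argument transports verbatim — via the obvious horizontal and vertical reflections of the cobordisms involved — to the other three adjunctions in the list after Proposition \ref{adjoint}.

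I do not expect a genuine obstacle here: the statement is almost tautological once the Bar-Natan versions of $\eta$ and $\nu$ are identified with the birth and saddle cobordisms that define them on the bimodule side. The only subtle point is checking that $\alpha_{BN},\beta_{BN}$ are well defined on equivalence classes modulo Bar-Natan relations, which is immediate since pre- and post-composition by a fixed cobordism is compatible with the local relations of Figure \ref{BN-relns}. Once this lemma is in hand, it feeds directly into the inductive argument for surjectivity of the general $\phi_{T_1,T_2}$ by reducing to the base case $T_1=T_2=\mbox{Vert}_{2n}$ handled in Proposition \ref{surjection}.
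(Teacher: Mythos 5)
Your proof is correct and follows essentially the same route as the paper: both arguments come down to observing that the Bar-Natan adjunction (bending a cup/cap to the opposite boundary) is isotopic to pre/post-composition with the birth or saddle cobordism defining $\eta$ and $\nu$, and then invoking the (two-)functoriality and isotopy-invariance of $\mathcal{F}$. One tiny remark: the zig-zag collapse you mention is a plain isotopy of the cobordism rel boundary, so no neck-cutting or sphere relations from Figure \ref{BN-relns} are actually needed there.
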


\begin{proof}
We show the statement in the case of $\cup$ being left adjoint to $\cap$; the other case is analogous. Now if $T_1, T_2$ are planar $(2m,2n)$-tangles, such that $T_1 = \cup_{i,m-1} T_1'$ with $T_1'$ a $(2(m-1), n)$-tangle, we wish to show that the following diagram commutes:

\centerline{
\xymatrixcolsep{5pc}
\xymatrix{
\mbox{Hom}_{BN}(T_1,T_2) \ar[r]^{\phi_{T_1,T_2}} \ar[d]^{\cong} & \mbox{Hom}_{(m,n)}(T_1, T_2) \ar[d]^{\cong} \\
\mbox{Hom}_{BN}(T_1', \cap_{i,m} T_2) \ar[r]^{\phi_{T_1',\cap_{i,m} T_2}} & \mbox{Hom}_{(m-1,n)}(T_1', \cap_{i,m} T_2)}
}

Let $S \in \mbox{Hom}_{BN}(T_1, T_2)$ be a cobordism in the Bar-Natan module, in the top left corner of the above commutative diagram. Taking the path down and then right, we first turn $S$ into an element of $\mbox{Hom}_{BN}(T_1', \cap_{i,m}T_2)$ by bending the component of $S$ whose boundary is $\cup_{i, m-1}$ in $T_1$ up to the top boundary and then looking at the induced bimodule homomorphism:
\begin{figure}[H]
\begin{center}
{\includegraphics[scale=.8]{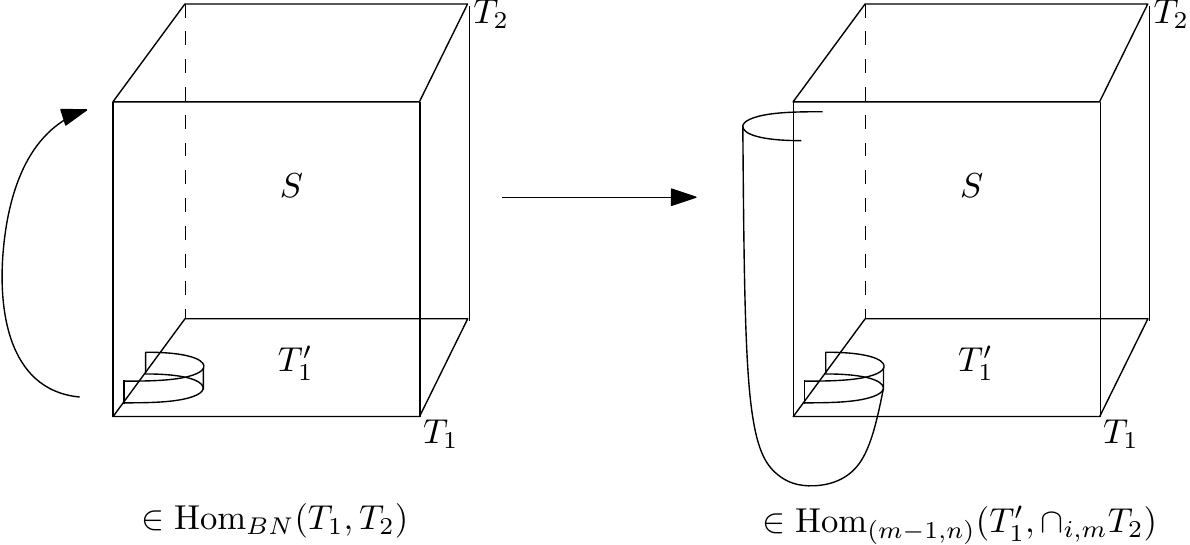}}
\end{center}
\end{figure}

On the other hand, taking the path right and then down amounts to first considering $S$ as a bimodule map and then applying adjointness on the bimodule side. However, as we discussed above, adjointness of bimodule maps can be visualized as in Figure \ref{adjoint1}, which consists of attaching a new identity cobordism from an arc to itself and capping off the resulting circle on the bottom boundary with a cobordism from the empty manifold:
\begin{figure}[H]
\begin{center}
{\includegraphics[scale=.8]{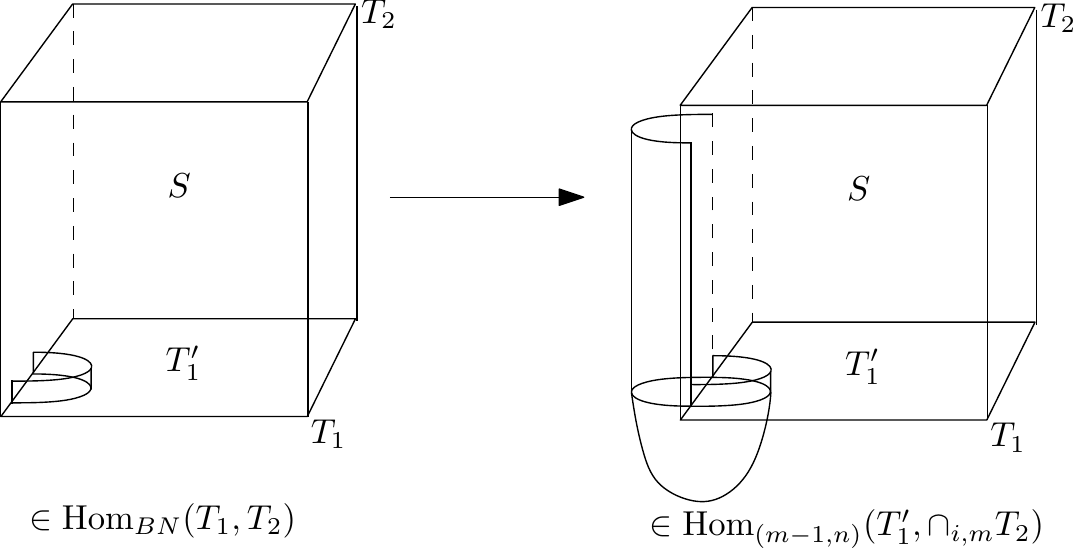}}
\end{center}
\end{figure}

It's clear that these two processes of bending the existing cobordism and attaching the cobordism described above are isotopic, and thus they induce the same bimodule homomorphisms. $\blacksquare$
\end{proof}

Using adjointness of bimodule homomorphisms, we can show the following:

\begin{proposition}
\label{decomp}
Any bimodule hom-space $\mbox{Hom}_{(m,n)}(T_1,T_2)$ is isomorphic to a direct sum of hom-spaces of the form $\mbox{Hom}_{(k,k)}(\mbox{Vert}_{2k}, \mbox{Vert}_{2k}), k \leq m, n.$
\end{proposition}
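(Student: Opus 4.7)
The plan is to prove this by induction on $m+n$, using the adjunction isomorphisms of Proposition~\ref{adjoint} to repeatedly simplify the pair $(T_1,T_2)$ until both tangles reduce to identity strands. For the base case $m=n=0$, both tangles are the empty tangle $\mbox{Vert}_0$, and $\mbox{Hom}_{(0,0)}(\mbox{Vert}_0,\mbox{Vert}_0) \cong \mathbb{Z}$ is a single summand of the required form.

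The key combinatorial input is the following: a planar flat $(2m,2n)$-tangle $T$ equals $\mbox{Vert}_{2k}$ (for some $k$) if and only if it has no arc joining two top endpoints and no arc joining two bottom endpoints. Indeed, by planarity the innermost arc among any family of nested top (resp.\ bottom) arcs must connect two adjacent endpoints, and crossinglessness forces through-strands to match in order. Thus if $T_1 \neq \mbox{Vert}_{2k}$, we can write either $T_1 = \cup_{i,m-1} T_1'$ or $T_1 = T_1' \cap_{i,n}$ for some $i$, and apply adjunction~1 or adjunction~4 to pass to a hom-space with $(m,n)$ decreased to $(m-1,n)$ or $(m,n-1)$. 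If instead $T_1 = \mbox{Vert}_{2k}$ but $T_2 \neq \mbox{Vert}_{2k}$, we apply adjunction~3 or adjunction~2 on the $T_2$ side. When we reach the case $T_1 = T_2 = \mbox{Vert}_{2k}$ (which forces $m = n = k$), the hom-space is already of the desired form.

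The subtlety is that bending an arc across to the other tangle may cause the newly introduced cap to meet a pre-existing cup (or vice versa) at matching positions, producing a closed circle disjoint from the rest of the diagram. In that situation $\mathcal{F}$ of the modified tangle decomposes as $M \otimes \mathcal{A}$ for some $(H^{m'},H^{n'})$-bimodule $M$, because the $\mathcal{A}$-factor from the isolated circle carries trivial left and right actions. Consequently the hom-space splits as $\mbox{Hom}(-,M) \otimes \mathcal{A} \cong \mbox{Hom}(-,M) \oplus \mbox{Hom}(-,M)$ (using that $\mathcal{A}$ is free of rank two as an abelian group), and the inductive hypothesis applies to each summand. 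When no circle forms, the reduced hom-space simply has strictly smaller $m+n$, and again the hypothesis applies. The bound $k \leq m,n$ in the final decomposition follows because every adjunction weakly decreases both $m$ and $n$, so any $k$ appearing at the base of the recursion must satisfy $k \leq \min(m,n)$.

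The main obstacle I anticipate is the careful handling of the closed-circle case. One must verify that the $\mathcal{A}$-factor coming from an isolated circle in $\mathcal{F}(\text{tangle with circle})$ really separates from the remaining bimodule structure and yields a genuine direct sum decomposition of hom-spaces, not merely an equality of ranks. In particular, the identification $M \otimes \mathcal{A} \cong M \oplus M$ must be checked to be compatible with the $H^{m'}$--$H^{n'}$ bimodule structure used to take homomorphisms. (Grading shifts must also be tracked if one wants a graded version of the statement, though these are invisible at the level of underlying abelian groups.)
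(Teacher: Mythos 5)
Your proof is correct and uses the same essential ingredients as the paper's: the adjunction isomorphisms of Proposition~\ref{adjoint} to peel cups and caps off the tangles, the splitting $\mathcal{F}(T\sqcup\bigcirc)\cong\mathcal{F}(T)\otimes\mathcal{A}\cong\mathcal{F}(T)\{1\}\oplus\mathcal{F}(T)\{-1\}$ to dispose of closed circles, and an induction that terminates at identity tangles. The only difference is organizational — the paper first reflects $T_2$ entirely to reduce to $\mbox{Hom}_{(m,m)}(T_1W(T_2),\mbox{Vert}_{2m})$ and then inducts on $m$, whereas you interleave single adjunctions with circle removal and induct on $m+n$ — so this is essentially the paper's argument.
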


\begin{proof}
First note that by adjointness, any hom-space $\mbox{Hom}_{(m,n)}(T_1,T_2)$ is isomorphic to \\ $\mbox{Hom}_{(m,m)}(T_1W(T_2), \mbox{Vert}_{2m})$, where $W(T_2)$ is the reflection of $T_2$. If $T_1W(T_2)$ contains a circle, then the hom-space is isomorphic to $\mbox{Hom}_{(m,m)}((T_1W(T_2))', \mbox{Vert}_{2m})^{\oplus 2}$, where $(T_1W(T_2))'$ is $T_1W(T_2)$ with the circle removed. This follows from a result of Khovanov in \cite{khov}, which specialized to our case says that
\[ \mathcal{F}(T_1W(T_2)) \cong \mathcal{F}((T_1W(T_2))') \otimes \mathcal{A} \cong \mathcal{F}((T_1W(T_2))') \{1\} \oplus \mathcal{F}((T_1W(T_2))') \{-1\}.\]
So we may reduce to the case that $T_1W(T_2)$ has no circles. Therefore we have reduced the problem to showing that $\mbox{Hom}_{(m,m)}(T, \mbox{Vert}_{2m})$ has the desired property for any planar $(2m,2m)$-tangle $T$.

Now we proceed by induction on $m$. If $m=0$, the statement is clear. If $T= \mbox{Vert}_{2m}$, the statement is also clear. Otherwise, $T$ can be written in the form $T= \cup_{i, m-1}T'$ for some $i$ and some $T'$. Then by adjointness,
\begin{eqnarray*}
\mbox{Hom}_{(m,m)}(T, \mbox{Vert}_{2m}) & \cong & \mbox{Hom}_{(m-1,m)}(T', \cap_{i, m}) \\
& \cong& \mbox{Hom}_{(m-1,m-1)}(T'\cup_{i,m-1}, \mbox{Vert}_{2(m-1)})
\end{eqnarray*}
and we are done by induction.
$\blacksquare$
\end{proof}

Together with the matching isomorphism on the Bar-Natan side and using that the maps $\phi_{T_1,T_2}$ respect adjointness, Proposition \ref{decomp} gives us the following commutative diagram:

\centerline{
\xymatrix{
\mbox{Hom}_{BN}(T_1,T_2) \ar[r]^-{\cong} \ar[d]^{\phi_{T_1,T_2}} & \bigoplus_k \mbox{Hom}_{BN}(\mbox{Vert}_{2k}, \mbox{Vert}_{2k}) \ar[d]^{\oplus \phi_{\mbox{Vert}_{2k}, \mbox{Vert}_{2k}}} \\
\mbox{Hom}_{(m,n)}(T_1, T_2) \ar[r]^-{\cong} & \bigoplus_k \mbox{Hom}_{(k,k)}(\mbox{Vert}_{2k}, \mbox{Vert}_{2k})}
}

In other words, the decomposition of $\mbox{Hom}_{(m,n)}(T_1,T_2)$ is compatible with the homomorphism $\phi_{T_1,T_2}$.

Given this decomposition, we see that $\phi_{T_1,T_2}: \mbox{Hom}_{BN}(T_1,T_2) \to \mbox{Hom}_{(m,n)}(T_1,T_2)$ is a surjective map of $R$-modules, and we can describe its kernel. We know that the kernel of each map $\phi_{\mbox{Vert}_{2k}, \mbox{Vert}_{2k}}$ is generated by the first $k$ elementary symmetric functions in $c_1, \ldots, c_{2k}$. The kernel of $\phi_{T_1,T_2}$ is then given by the elements which correspond under adjointness to the kernel elements of $\phi_{\mbox{Vert}_{2k}, \mbox{Vert}_{2k}}$ for each $k$ that appears in the decomposition of Proposition \ref{decomp}.

\begin{example}
Consider the map $\phi_{T_1,T_2}$ for $T_1 = T_2 = \cup_{1,1}$. Note that via adjointness we have
\[ \mbox{Hom}_{(2,1)}(\cup_{1,1}, \cup_{1,1}) \cong \mbox{Hom}_{(1,1)} (\mbox{Vert}_2, \cap_{1, 2} \cup_{1,1}) \cong \mbox{Hom}_{(1,1)}(\mbox{Vert}_2, \mbox{Vert}_2)^{\oplus 2}, \]
with the same isomorphisms on the Bar-Natan side. We know that the kernel of $\phi_{\mbox{Vert}_2, \mbox{Vert}_2}$ is generated by $e_1(c_1,c_2) \in \mbox{End}_{BN}(\mbox{Vert}_2)$. Using adjointness to find the corresponding elements of $\mbox{End}_{BN}(\cup_{1,2})$, we see that the kernel of $\phi_{T_1,T_2}$ is given by $R$-linear combinations of the rightmost cobordisms in the  correspondences of Figure \ref{kernelex}. Recall that in the direct sum decomposition $ \mbox{Hom}_{(1,1)} (\mbox{Vert}_2, \cap_{1, 2} \cup_{1,1}) \cong \mbox{Hom}_{(1,1)}(\mbox{Vert}_2, \mbox{Vert}_2)^{\oplus 2}$, the map from the right-hand side to the left-hand side assigns a factor of $1$ to the circle for each map in the first summand and a factor of $X$ for each map in the second summand.
\begin{figure}[htb]
\begin{center}
\includegraphics[scale=1.2]{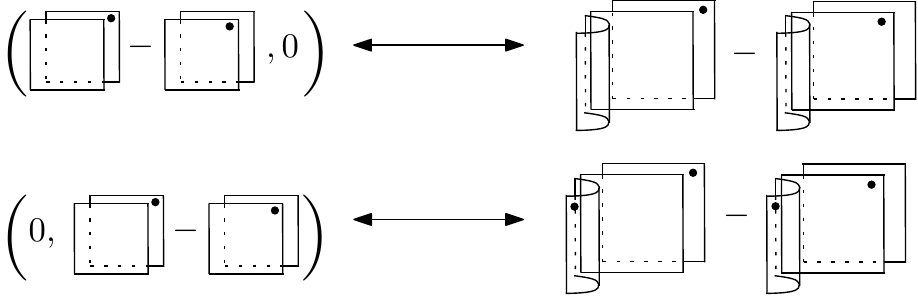}
\end{center}
\caption{Kernel generators of $\phi_{\mbox{Vert}_2, \mbox{Vert}_2}^{\oplus 2}$ and the corresponding elements of $\mbox{ker}(\phi_{\cup_{1,2}, \cup_{1,2}})$.}
\label{kernelex}
\end{figure}

\end{example}

Considering all isotopy classes of tangles $T_1, T_2$ with the same number of endpoints, we also have that the map $\phi_{m,n}$ is surjective as a map of vector spaces, with kernel spanned by the union of elements spanning the kernel of each $\phi_{T_1,T_2}$. We now proceed to give a better description of these kernel elements.

\begin{definition}
Let $a$ be any element of $B^m_n$. For $1 \leq i \leq 2m$, let $c_i(a)$ be the element of $\mbox{End}_{BN}(a)$ obtained by putting a single dot on top of the $i$th endpoint of $a$, crossing with the interval $[0,1]$, and introducing a sign of $(-1)^{i+1}$. For $1 \leq i \leq 2n$, let $c_i'(a)$ be the analogous element of $\mbox{End}_{BN}(a)$ with dots placed at the bottom of $a$.  Then define $e_k(a) \in \mbox{End}_{BN}(a)$ to be the $k$th elementary symmetric function in $c_1(a), \ldots, c_{2m}(a)$ and $e_k(a)'$ to be the $k$th elementary symmetric function in $c_1'(a), \ldots, c_{2n}'(a)$.
\end{definition}

\begin{lemma}
$e_k(a) =  e_k'(a)$ for $1 \leq k \leq \min \{2m,2n \}$.
\end{lemma}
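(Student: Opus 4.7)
The plan is to reduce both $e_k(a)$ and $e_k'(a)$ to a common closed form in terms of ``dotted sheets'' and then exploit a parity property of non-crossing matchings. The key ingredient is that in $\mbox{End}_{BN}(a)$, dots slide freely along a sheet and two dots on the same sheet give zero. More precisely, for each arc (sheet) $S$ of the flat tangle $a$, let $d_S \in \mbox{End}_{BN}(a)$ denote the identity cobordism $a \times [0,1]$ decorated with a single dot on the sheet $S$; then the Bar-Natan relations give $d_S^2 = 0$, and for any top endpoint $i$ lying on $S$ one has $c_i(a) = (-1)^{i+1} d_S$, while for any bottom endpoint $j$ on $S$ one has $c_j'(a) = (-1)^{j+1} d_S$.

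Using these facts, I would expand the elementary symmetric function directly. A product $c_{i_1}(a) \cdots c_{i_k}(a)$ vanishes unless the $k$ endpoints $i_1, \ldots, i_k$ lie on $k$ distinct sheets, so
\[
e_k(c_1(a),\ldots,c_{2m}(a)) = \sum_{\{S_1,\ldots,S_k\}} \Bigl(\prod_{r=1}^k \sigma_{S_r}\Bigr) d_{S_1} \cdots d_{S_k},
\qquad
\sigma_S := \sum_{i \in \mathrm{top}(S)} (-1)^{i+1},
\]
where the outer sum ranges over unordered $k$-subsets of sheets. An identical computation gives the same expression for $e_k'(a)$ with $\sigma_S$ replaced by $\sigma_S' := \sum_{j \in \mathrm{bot}(S)} (-1)^{j+1}$. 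Thus it suffices to verify the sheet-by-sheet identity $\sigma_S = \sigma_S'$.

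The main obstacle, and the only nontrivial point, is this parity identity, which I would establish via the standard fact that any non-crossing matching of cyclically ordered points pairs each odd-indexed point with an even-indexed one. I would arrange the $2m + 2n$ boundary points of $a$ cyclically around the rectangle (top left-to-right, then bottom right-to-left), apply this fact, and then translate back into the top/bottom indexing: for a cup (two top endpoints $i<j$ on $S$) the indices $i,j$ have opposite parity, giving $\sigma_S = 0 = \sigma_S'$; for a cap the situation is symmetric; and for a through strand with top endpoint $i$ and bottom endpoint $j$, the cyclic-parity condition forces $i \equiv j \pmod 2$, whence $\sigma_S = (-1)^{i+1} = (-1)^{j+1} = \sigma_S'$. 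Combining this with the closed form above finishes the proof. Note also that this argument requires $k \le \min\{2m, 2n\}$ only in order for both sides of the original identity to be defined; the sheet decomposition and parity argument themselves are insensitive to $k$.
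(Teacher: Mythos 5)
Your proof is correct, and it rests on the same mechanism as the paper's: the two endpoints of any cup (resp.\ cap) contribute with opposite signs, so all terms of $e_k(a)$ and $e_k'(a)$ involving dots on cups or caps cancel, and the surviving through-strand terms match because the dot can be slid from one boundary to the other. The genuine added value in your write-up is the cyclic parity lemma. The paper's proof only invokes $c_i(a)+c_{i+1}(a)=0$ for a cup $\cup_{i,m-1}$ with \emph{adjacent} endpoints, and asserts that dots on through strands ``may be slid from one end to the other'' without checking that the signs $(-1)^{i+1}$ (top index) and $(-1)^{j+1}$ (bottom index) agree. Your observation that planarity forces the two endpoints of a cup or cap to have opposite parity, and the two endpoints of a through strand to have equal parity, is precisely what is needed to kill the nested (non-adjacent) cup and cap terms and to justify the sign match on through strands; together with your explicit sheet-indexed closed form for $e_k$, this makes your argument the more complete version of the same proof.
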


\begin{proof}
Any dots placed on strands of $a$ which don't belong to a cup or cap may be slid from one end to the other, so $e_k(a)$ and $e_k(a)'$ might only differ due to dots placed on cups or caps. If $a$ contains a cup $\cup_{i,m-1}$, then $c_i(a) + c_{i+1}(a) = 0$, so any terms in $e_k(a)$ involving $c_i(a)$ or $c_{i+1}(a)$ will disappear for all possible $k$. Similarly, if $a$ contains a cap $\cap_{i,n}$, then $c_i'(a) + c_{i+1}'(a) = 0$, so any terms involving $c_i'(a)$ or $c_{i+1}'(a)$ in $e_k'(a)$ disappear. Therefore both $e_k(a)$ and $e_k'(a)$ are only left with terms involving strands that run from the top to the bottom of $a$.
\end{proof}

\begin{definition}
Given $a \in B^m_n$, define the \emph{width} $w(a)$ to be the number of through-strands of $a$, that is, the number of strands that run from the top of $a$ to the bottom.
\end{definition}

\begin{corollary}
$e_k(a) = 0$ for $k > w(a)$.
\end{corollary}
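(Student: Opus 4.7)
The plan is to push the proof of the preceding lemma one small step further. That lemma already establishes that, after applying the cancellations $c_i(a)+c_{i+1}(a)=0$ (coming from cups at the top) and $c_i'(a)+c_{i+1}'(a)=0$ (coming from caps at the bottom), the only surviving contributions to $e_k(a)=e_k'(a)$ come from dots placed on through-strands. It remains to count the effective number of variables and observe that an elementary symmetric function of degree larger than the number of variables vanishes.

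More concretely, I would first make explicit what the previous lemma's cancellation argument does at the level of the sum
\[ e_k(a) \;=\; \sum_{\substack{S \subset \{1,\dots,2m\}\\|S|=k}} \prod_{j \in S} c_j(a). \]
For each cup $\cup_{i,m-1}$ appearing at the top of $a$, the elements $c_i(a)$ and $c_{i+1}(a)$ are dots on the same strand with opposite signs, so $c_i(a)+c_{i+1}(a)=0$; moreover $c_i(a)c_{i+1}(a)=-c_i(a)^2=0$ by the Bar-Natan relation that two dots on one sheet vanish. Grouping the subsets $S$ according to whether they contain $i$, $i+1$, both, or neither, the first three cases all contribute zero. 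Hence the sum collapses to a sum over subsets $S \subset T$, where $T \subset \{1,\dots,2m\}$ is the set of top endpoints of through-strands of $a$.

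The key observation is then that $|T|=w(a)$, so for $k>w(a)$ there are no $k$-element subsets of $T$ and the collapsed sum is empty. Therefore $e_k(a)=0$. No serious obstacle is expected here; the corollary is essentially a repackaging of the cancellation already performed in the proof of the preceding lemma, combined with the trivial fact that $e_k$ of $w(a)$ variables vanishes when $k$ exceeds $w(a)$.
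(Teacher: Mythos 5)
Your argument is correct and follows the same route as the paper: both reduce $e_k(a)$, via the cancellations $c_i(a)+c_{i+1}(a)=0$ at each top cup (the paper's preceding lemma), to a sum over $k$-element subsets of the $w(a)$ through-strand endpoints, which is empty when $k>w(a)$. Your version merely makes the subset-grouping explicit; no gap.
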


\begin{proof}
In the proof of the previous proposition, we saw that for any $k$, any terms of $e_k(a)$ corresponding to placing dots on a cup or cap of $a$ disappeared. Therefore the only surviving terms come from placing $k$ dots on $w(a)$ strands. So if $k > w(a)$, then $e_k(a) = 0$.
\end{proof}

\begin{proposition}
The kernel of $\phi_{m,n}$ is the two-sided ideal $I$ generated by the elements $e_k(a), 1 \leq k \leq w(a)$, for all $a$ in $B^m_n$.
\end{proposition}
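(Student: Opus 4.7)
The plan is to prove the two containments $I \subseteq \ker(\phi_{m,n})$ and $\ker(\phi_{m,n}) \subseteq I$ separately, leveraging the decomposition machinery and kernel description already established in the preceding pages.

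First I would establish that $I \subseteq \ker(\phi_{m,n})$. Fix $a \in B^m_n$ and consider the endomorphism ring $\mbox{End}_{BN}(a)$. Under adjointness, this corresponds to $\mbox{Hom}_{BN}(aW(a), \mbox{Vert}_{2m})$, and after reducing away the circles formed in $aW(a)$ I can pass to a cobordism space built from $\mbox{End}_{BN}(\mbox{Vert}_{2w(a)})$. The key observation, already proved above, is that $e_k(a)$ lives in the part of $\mbox{End}_{BN}(a)$ supported on the through-strands (dots on cups or caps cancel in pairs). Under the adjointness identification, $e_k(a)$ is carried to the elementary symmetric function $e_k(c_1,\ldots,c_{2w(a)}) \in \mbox{End}_{BN}(\mbox{Vert}_{2w(a)})$. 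Since the maps $\phi_{T_1,T_2}$ respect adjointness and Proposition \ref{kernelprop} tells us that this element lies in $\ker(\phi_{\mbox{Vert}_{2w(a)},\mbox{Vert}_{2w(a)}})$ whenever $1 \le k \le w(a)$, we conclude $\phi_{m,n}(e_k(a)) = 0$. Since $\ker(\phi_{m,n})$ is a two-sided ideal, $I \subseteq \ker(\phi_{m,n})$ follows.

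For the reverse containment, I would use the commutative square
\[
\xymatrix{
\mbox{Hom}_{BN}(T_1,T_2) \ar[r]^-{\cong} \ar[d]^{\phi_{T_1,T_2}} & \bigoplus_k \mbox{Hom}_{BN}(\mbox{Vert}_{2k}, \mbox{Vert}_{2k}) \ar[d]^{\oplus \phi_{\mbox{Vert}_{2k}, \mbox{Vert}_{2k}}} \\
\mbox{Hom}_{(m,n)}(T_1, T_2) \ar[r]^-{\cong} & \bigoplus_k \mbox{Hom}_{(k,k)}(\mbox{Vert}_{2k}, \mbox{Vert}_{2k})
}
\]
established after Proposition \ref{decomp}. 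Any element $\xi \in \ker(\phi_{T_1,T_2})$ corresponds to a tuple $(\xi_k)$ with each $\xi_k \in \ker(\phi_{\mbox{Vert}_{2k},\mbox{Vert}_{2k}})$. By Proposition \ref{kernelprop}, each $\xi_k$ is a $\mbox{End}_{BN}(\mbox{Vert}_{2k})$-linear combination of $e_1(\mbox{Vert}_{2k}),\ldots,e_k(\mbox{Vert}_{2k})$. Pulling this expression back through the adjointness isomorphisms (which amount to repeatedly bending a cup or cap from one boundary to the other), the summand indexed by $k$ arises from a tangle $a \in B^m_n$ with $w(a) = k$ obtained by the sequence of bends, and the generators $e_j(\mbox{Vert}_{2k})$ pull back precisely to $e_j(a)$, while $\mbox{End}_{BN}(\mbox{Vert}_{2k})$-multiplication pulls back to pre- and post-composition in $\mbox{Hom}_{BN}(m,n)$, i.e.\ to the two-sided ideal action. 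Thus $\xi \in I$.

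The main obstacle I expect is the bookkeeping in the second containment: one must verify carefully that under the chain of adjointness isomorphisms and circle-removal isomorphisms used to build the decomposition of Proposition \ref{decomp}, the generator $e_j$ of $\mbox{End}_{BN}(\mbox{Vert}_{2k})$ really is the image of $e_j(a)$ for the appropriate $a$, and that the module action by $\mbox{End}_{BN}(\mbox{Vert}_{2k})$ corresponds to the two-sided ideal closure in $\mbox{Hom}_{BN}(m,n)$. The first point is essentially the content of the lemma $e_k(a) = e_k'(a)$ together with the observation that dots slide freely along through-strands and vanish on cups or caps; the second point follows because adjointness on the Bar-Natan side is implemented by gluing identity cobordisms, which is precisely how one produces two-sided ideal elements. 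The surjectivity statement of Theorem \ref{thm:bimod-kernel} is then an immediate consequence of the surjectivity of each $\phi_{\mbox{Vert}_{2k},\mbox{Vert}_{2k}}$ (Proposition \ref{surjection}) transported through the same commutative square.
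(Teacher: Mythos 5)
Your proposal is correct and follows essentially the same route as the paper: both containments are obtained by transporting the generators $e_k(c_1,\ldots,c_{2k})$ of $\ker(\phi_{\mathrm{Vert}_{2k},\mathrm{Vert}_{2k}})$ (Proposition \ref{kernelprop}) through the adjointness/circle-removal decomposition of Proposition \ref{decomp}, using that the $\phi_{T_1,T_2}$ respect adjointness and that dots vanish on cups and caps so $e_k(a)$ is supported on through-strands. The "bookkeeping" you flag is exactly what the paper's proof addresses with its brief case analysis on whether a summand arises from removing a circle (dots possibly sitting on $\cup_{i,m}\times[0,1]$ components).
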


\begin{proof}
We know that $I$ consists of those elements of $\mbox{Hom}_{BN}(a,b)$, $a,b \in B^m_n$, which correspond under adjointness to the first $k$ elementary symmetric functions in $c_1, \ldots, c_{2k}$ for each $k$ appearing in the decomposition $\mbox{Hom}_{BN}(a,b) \cong \bigoplus_{k} \mbox{End}_{BN}(\mbox{Vert}_{2k})$.

Each $e_k(a)$ is clearly one of these elements, since $e_k(a)$ corresponds under adjointness to $e_k(c_1, \ldots, c_{2l}) \in \ker (\phi_{\mbox{Vert}_{2l}})$, where $l$ is the number of through-strands in $a$. On the other hand, suppose $S \in \mbox{Hom}_{BN}(a,b)$ corresponds under adjointness to $e_p(c_1, \ldots, c_{2k}), 1 \leq p \leq k$, for some $k$ appearing in the above decomposition. If the $k$-summand of the decomposition did not come from removing a circle that arose under adjointness, then by sliding all dots on $S$ to the boundary containing $a$, it is clear that $S$ is a multiple of $e_p(a)$. If the $k$-summand did come from removing a circle, then $a$ and $b$ both contain some $\cup_{i,m}$ or $\cap_{i,n}$, and $S$ might have a dot on a component of the form $\cup_{i,m} \times [0,1]$ or $\cap_{i,n} \times [0,1]$, as in the second correspondence of Figure \ref{kernelex}. In that case, slide all dots except those to the bottom, and it is clear that $S$ is again a multiple of $e_p(a)$.
$\blacksquare$

\end{proof}

In summary, we have shown the following Theorem:

\begin{theorem}
\label{thm:bimod-kernel}
$\phi_{m,n}$ is a surjective ring homomorphism. Its kernel is the two-sided ideal generated by the elements $e_k(a)$ for $a \in B^m_n$ and $1 \leq k \leq w(a)$.
\end{theorem}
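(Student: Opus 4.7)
The plan is to assemble the preliminaries developed earlier in the chapter in a fairly direct way, without introducing any genuinely new ingredients. The theorem really packages two assertions: (i) $\phi_{m,n}$ is a surjective ring homomorphism, and (ii) its kernel is exactly the two-sided ideal generated by the elements $e_k(a)$, $a \in B^m_n$, $1 \leq k \leq w(a)$.

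For surjectivity, I would combine Proposition \ref{decomp} with its Bar-Natan analogue to build a commutative square whose horizontal arrows are the adjointness isomorphisms and whose vertical arrows are $\phi_{T_1,T_2}$ on the left and $\bigoplus_k \phi_{\mbox{Vert}_{2k},\mbox{Vert}_{2k}}$ on the right. That the square commutes is exactly the content of the lemma that $\phi$ respects adjointness. Each right-hand component is surjective by Proposition \ref{surjection}, hence so is $\phi_{T_1,T_2}$. Summing over all pairs $T_1, T_2 \in B^m_n$ yields surjectivity of the ring homomorphism $\phi_{m,n}$.

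For the kernel description, I would appeal to Proposition \ref{kernelprop}, which identifies the kernel in each summand $\mbox{End}_{BN}(\mbox{Vert}_{2k})$ as the ideal generated by the first $k$ elementary symmetric functions in $c_1, \dots, c_{2k}$. Transporting these generators through the adjointness isomorphism lands them in $\mbox{Hom}_{BN}(a,b)$ for appropriate $a,b \in B^m_n$, where I claim they coincide (up to sign) with the elements $e_p(a)$. The identification uses the lemma $e_k(a) = e'_k(a)$, which permits sliding dots freely between top and bottom boundaries, together with the vanishing $e_k(a) = 0$ for $k > w(a)$ to match the indexing ranges on both sides and to see that cups and caps contribute no new generators.

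The main obstacle is verifying the reverse containment: that every kernel element is a linear combination of the $e_k(a)$. The forward containment is essentially by construction via the adjointness correspondence. For the converse I would track a general kernel element $S \in \mbox{Hom}_{BN}(a,b)$: slide all of its dots to one boundary and match against Proposition \ref{kernelprop}. The delicate subcase is when the summand $\mbox{End}_{BN}(\mbox{Vert}_{2k})$ appears in the decomposition by removing a circle created from adjointness (as in the $\cup_{1,1}$ example of Figure \ref{kernelex}); there one must check that dots which sit on components of the form $\cup_{i,m}\times [0,1]$ or $\cap_{i,n} \times [0,1]$ do not yield new kernel generators, and instead give scalar multiples of $e_p(a)$ after sliding. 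This bookkeeping, rather than any conceptually new step, is where the care lies.
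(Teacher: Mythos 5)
Your proposal follows the paper's own argument essentially step for step: surjectivity via the adjointness-compatible decomposition of Proposition \ref{decomp} reducing to Proposition \ref{surjection}, and the kernel description by transporting the generators of Proposition \ref{kernelprop} through adjointness and identifying them with the $e_k(a)$, including the same careful handling of dots on $\cup_{i,m}\times[0,1]$ components in the circle-removal case. The approach and all key lemmas invoked match the paper's proof.
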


\section{Properties of the rings $\mbox{Hom}(m,n)$}
Now that we can express $\mbox{Hom}(m,n)$ as a quotient of the better-understood ring $\mbox{Hom}_{BN}(m,n)$, we can investigate some of its properties. First, it will be useful to note the following:

\begin{proposition}
\label{nBNiso}
 The rings $\mbox{Hom}_{BN}(m,n)$ and $H^{m+n}$ are isomorphic.
\end{proposition}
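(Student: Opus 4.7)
The plan is to exhibit a natural ring isomorphism by "bending" flat $(2m,2n)$-tangles into crossingless matching cups of $2(m+n)$ points. Define $\rho : B^m_n \to B^{m+n}$ by rotating the $2n$ bottom endpoints of a flat tangle $T$ around to sit alongside the $2m$ top endpoints, producing a crossingless matching cup of $2(m+n)$ points. Because $T$ has no crossings, this operation is well-defined on planar isotopy classes and is easily seen to be a bijection: the inverse flips a chosen half of the points on a matching back down.

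Next, I would promote $\rho$ to an isomorphism of hom-spaces $\mbox{Hom}_{BN}(T_1, T_2) \cong \mathcal{F}(W(\rho(T_2))\rho(T_1))$. A cobordism $S : T_1 \to T_2$ in the slab $\mathbb{R}^2 \times [0,1] \times [0,1]$ can be isotoped so that the bottom boundary component $T_1$ is bent up to meet $T_2$ along the top; the result is a surface embedded in a $3$-ball whose boundary is the closed $1$-manifold $W(\rho(T_2))\rho(T_1)$. This isotopy manifestly preserves all of the local Bar-Natan relations of Figure~\ref{BN-relns}. The key analytic step is the standard observation that the Bar-Natan skein module of a $3$-ball bounded by $k$ circles is a free $\mathbb{Z}$-module of rank $2^k$ with basis given by disjoint unions of (possibly once-dotted) flat disks filling in those circles: the neck-cutting relation reduces any surface to such a configuration, and the sphere relations ($S=0$, $S_{\bullet}=1$) together with $\bullet\bullet = 0$ kill the redundant cases. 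This free module is canonically isomorphic to $\mathcal{A}^{\otimes k} = \mathcal{F}(W(\rho(T_2))\rho(T_1))$, with dots corresponding to $X$-factors.

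Summing over all pairs $T_1, T_2 \in B^m_n$ and using the bijection $\rho$, this yields an isomorphism of abelian groups
\[
\mbox{Hom}_{BN}(m,n) \;=\; \bigoplus_{T_1,T_2 \in B^m_n} \mbox{Hom}_{BN}(T_1,T_2) \;\xrightarrow{\sim}\; \bigoplus_{a,b \in B^{m+n}} \mathcal{F}(W(b)a) \;=\; H^{m+n}.
\]
To see this is a ring map, observe that multiplication in $\mbox{Hom}_{BN}(m,n)$ is vertical stacking of cobordisms, while multiplication in $H^{m+n}$ is induced by the "simplest cobordism" $S(b) : bW(b) \to \mbox{Vert}_{2(m+n)}$ consisting of $m+n$ disjoint disks. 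Under the bending isotopy, stacking a cobordism $T_1 \to T_2$ below one from $T_2 \to T_3$ produces, after bending, a surface in a $3$-ball with a copy of $\rho(T_2) W(\rho(T_2))$ pinched in the middle; this pinched collar is precisely $S(\rho(T_2))$ up to isotopy, so multiplications agree. The idempotents $1_T \in \mbox{End}_{BN}(T)$ (the identity cobordism $T \times [0,1]$) bend to the idempotents $1_{\rho(T)} \in {}_{\rho(T)}(H^{m+n})_{\rho(T)}$, so units are preserved.

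The main obstacle is the third paragraph: making precise the geometric claim that vertical composition of tangle cobordisms, once bent, coincides on the nose with the saddle-composition defining $H^{m+n}$. The isotopy class is forced by dimension and boundary conditions, but one has to be careful to keep track of which portion of the bent surface plays the role of $S(b)$ versus the identity collars $\mbox{Id}_{W(c)}$ and $\mbox{Id}_a$ in the original definition of multiplication in $H^{m+n}$ (Section~\ref{sec:Hn-def}). Once this matching is established, the compatibility with Bar-Natan relations, which was already used to identify hom-spaces with $\mathcal{A}^{\otimes k}$, guarantees that the induced composition map is the one built from $m$ and $\Delta$.
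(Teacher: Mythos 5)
Your proposal is correct and follows essentially the same route as the paper: both rest on bending a flat $(2m,2n)$-tangle (and the cobordisms between them) into crossingless matchings of $2(m+n)$ points and matching vertical stacking of cobordisms with the saddle multiplication of $H^{m+n}$. The only difference is one of direction and emphasis — the paper constructs the inverse map $\gamma : H^{m+n} \to \mbox{Hom}_{BN}(m,n)$ by capping the circles of $W(b)a$ with dotted disks and then bending, asserting reversibility, whereas you go the other way and justify the hom-space identification via the neck-cutting reduction of surfaces in a ball to dotted disks, a detail the paper leaves implicit.
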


\begin{proof}
Define a map $\gamma$ from $H^{m+n}$ to $\mbox{Hom}_{BN}(m,n)$ as follows. Consider a generator of ${}_b(H^{m+n})_a$ geometrically as the diagram $W(b)a$ with a dot on each circle for each $X$ in the corresponding tensor factor. The map $\gamma$ takes such an element, caps off each of the $m+n$ circles with disks, slices it where $W(b)$ meets $a$ and bends $a$ to the bottom of the cobordism (producing an element of $\mbox{Hom}_{BN}(0, m+n)$) and then bending the first $m$ strands over to the other side.

This map is a ring homomorphism, since the merging or splitting of circles that occurs in the multiplication of $H^{m+n}$ corresponds with the vertical stacking of the resulting cobordisms. The process described above is clearly reversible, showing that it is an isomorphism.
$\blacksquare$
\end{proof}

\begin{example}
Figure \ref{m+niso} shows an example for $m=n=1$.
\begin{figure}[htb]
\begin{center}
\includegraphics{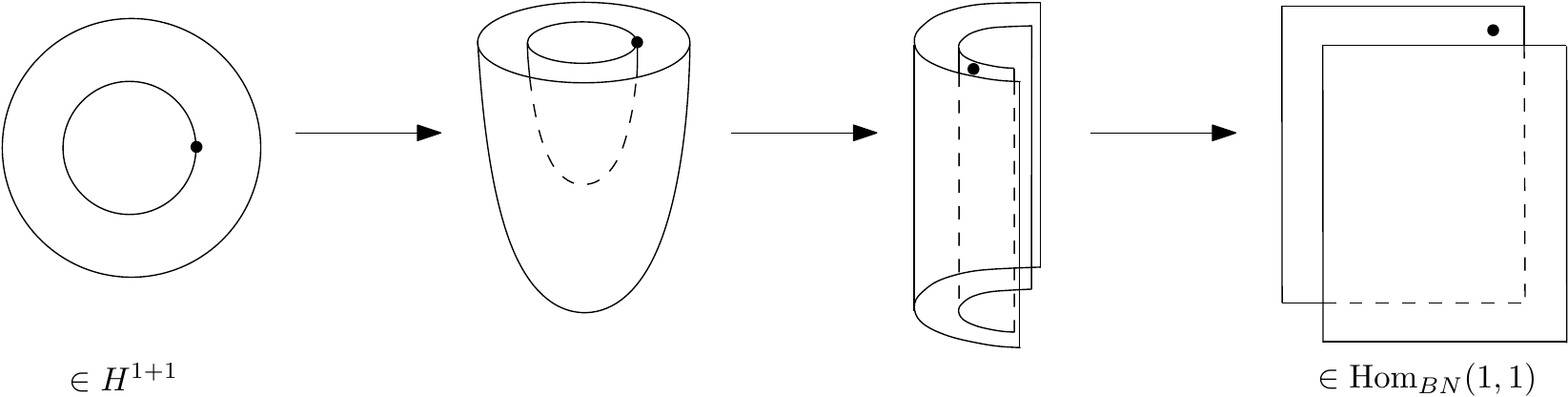}
\end{center}
\caption{Isomorphism $\gamma$ from $H^2$ to $\mbox{Hom}_{BN}(1,1)$}
\label{m+niso}
\end{figure}
\end{example}

The isomorphism of Proposition \ref{nBNiso} allows us to give a set of mutually orthogonal idempotents in $\mbox{Hom}(m,n)$.
\begin{proposition}
\label{idems}
For $a \in B^m_n$, let $\mbox{id}_a$ in $\mbox{Hom}_{BN}(m,n)$ be the identity cobordism given by $a \times [0,1]$. Then the set of elements $\{ \phi_{m,n}(\mbox{id}_a) \}_{a \in B^m_n}$ in $\mbox{Hom}(m,n)$ forms a complete set of mutually orthogonal idempotents.
\end{proposition}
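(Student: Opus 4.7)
The plan is to reduce everything to the analogous (and essentially tautological) statement in $\mbox{Hom}_{BN}(m,n)$ and then push the identities forward along the ring homomorphism $\phi_{m,n}$. The key point is that $\phi_{m,n}$ was shown to be a (surjective) ring homomorphism in Theorem \ref{thm:bimod-kernel}, and ring homomorphisms automatically preserve idempotence, orthogonality, and units. So the real content is showing that $\{\mbox{id}_a\}_{a \in B^m_n}$ is a complete set of mutually orthogonal idempotents in $\mbox{Hom}_{BN}(m,n)$.

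First I would check idempotence and orthogonality upstairs. For a fixed $a \in B^m_n$, the cobordism $\mbox{id}_a = a \times [0,1]$ lies in $\mbox{Hom}_{BN}(a,a)$, and stacking $\mbox{id}_a$ on top of itself yields $a \times [0,2]$, which is isotopic to $\mbox{id}_a$; hence $\mbox{id}_a \cdot \mbox{id}_a = \mbox{id}_a$. For $a \neq b$, the product $\mbox{id}_a \cdot \mbox{id}_b$ vanishes directly from the definition of the ring structure on $\mbox{Hom}_{BN}(m,n)$: the multiplication is concentrated on those pairs $(T_1,T_2), (T_2',T_3)$ with $T_2 = T_2'$, and $\mbox{id}_a \in \mbox{Hom}_{BN}(a,a)$ cannot compose with $\mbox{id}_b \in \mbox{Hom}_{BN}(b,b)$ when $a \neq b$.

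Next I would verify completeness upstairs, i.e. that $\sum_{a \in B^m_n} \mbox{id}_a$ is the multiplicative unit of $\mbox{Hom}_{BN}(m,n)$. Given any generator $S \in \mbox{Hom}_{BN}(T_1,T_2)$, the stacked cobordism $\mbox{id}_{T_1} \cdot S$ is $T_1 \times [0, \tfrac{1}{2}]$ glued on top of $S$, which is isotopic to $S$, and similarly $S \cdot \mbox{id}_{T_2} = S$. All other terms $\mbox{id}_a \cdot S$ with $a \neq T_1$ are zero. Hence $\sum_a \mbox{id}_a$ acts as the identity on each summand and is the unit of $\mbox{Hom}_{BN}(m,n)$.

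Finally I would push everything through $\phi_{m,n}$. Since $\phi_{m,n}$ is a ring homomorphism, we immediately get $\phi_{m,n}(\mbox{id}_a)^2 = \phi_{m,n}(\mbox{id}_a)$ and $\phi_{m,n}(\mbox{id}_a)\phi_{m,n}(\mbox{id}_b) = 0$ for $a \neq b$, while $\sum_a \phi_{m,n}(\mbox{id}_a) = \phi_{m,n}\bigl(\sum_a \mbox{id}_a\bigr) = 1_{\mbox{Hom}(m,n)}$. Each $\phi_{m,n}(\mbox{id}_a)$ is nonzero because, by construction of $\mathcal{F}$, it is the identity bimodule endomorphism of the nonzero bimodule $\mathcal{F}(a)$. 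I do not foresee a genuine obstacle here: the only thing to be a little careful about is confirming that the unit of $\mbox{Hom}_{BN}(m,n)$ really is $\sum_a \mbox{id}_a$ (rather than a single element), but this is forced by the decomposition $\mbox{Hom}_{BN}(m,n) = \bigoplus_{T_1,T_2} \mbox{Hom}_{BN}(T_1,T_2)$ and the matching of sources and targets in the stacking product, exactly as in the $H^n$ case recalled in Section \ref{sec:Hn-def}.
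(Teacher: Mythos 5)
Your proof is correct. The overall strategy is the same as the paper's — establish that $\{\mbox{id}_a\}_{a \in B^m_n}$ is a complete set of mutually orthogonal idempotents in $\mbox{Hom}_{BN}(m,n)$ and then push through the ring homomorphism $\phi_{m,n}$ — but you verify the upstairs statement differently. The paper imports it from $H^{m+n}$: by Proposition \ref{nBNiso} the bending map $\gamma$ is a ring isomorphism carrying the standard idempotents $1_{\overline{a}} \in {}_{\overline{a}}(H^{m+n})_{\overline{a}}$, which are already known to be mutually orthogonal and to sum to $1$, onto the cobordisms $\mbox{id}_a$; the whole proof is then two applications of "ring homomorphisms preserve idempotent decompositions of the unit." You instead check idempotence, orthogonality, and completeness directly from the definition of the stacking product on $\mbox{Hom}_{BN}(m,n)$ (isotopy $a\times[0,2]\cong a\times[0,1]$, vanishing of products with mismatched middle tangles, and the unit acting termwise on each summand $\mbox{Hom}_{BN}(T_1,T_2)$). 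Your route is more self-contained and does not rely on Proposition \ref{nBNiso}, at the cost of redoing for $\mbox{Hom}_{BN}(m,n)$ what is already packaged in the $H^{m+n}$ picture; the paper's route is shorter given the machinery of the section and reinforces the identification $\mbox{Hom}_{BN}(m,n)\cong H^{m+n}$ that the subsequent results use. Your added observation that each $\phi_{m,n}(\mbox{id}_a)$ is nonzero, being the identity endomorphism of the nonzero bimodule $\mathcal{F}(a)$, is a worthwhile detail the paper leaves implicit.
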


\begin{proof}
In $H^{m+n}$, idempotents are given by $1_{\overline{a}} = 1^{\otimes (m+n)} \in {}_{\overline{a}}(H^{m+n})_{\overline{a}}$ for all $\overline{a} \in B^{m+n}$. They are mutually orthogonal and satisfy 
\[ 1 = \sum_{\overline{a} \in B^{m+n}} 1_{\overline{a}}.\]
Under the isomorphism $\gamma$, the element $1_{\overline{a}}$ corresponds to the cobordism $id_a$, where $a$ is the $(2m,2n)$-tangle obtained from $\overline{a}$ by bending. Since $\gamma$ and $\phi_{m,n}$ are both ring homomorphisms and take $1$ to $1$, this completes the proof.
\end{proof}

Therefore as a left module over itself, we have the following decomposition of $\mbox{Hom}(m,n)$;
\[ \mbox{Hom}(m,n) \cong \bigoplus_{a \in B^m_n} \mbox{Hom}_{(m,n)}( -, a).\]

We can also partially describe the center of the rings $\mbox{Hom}(m,n)$. First, we give an explicit description of the center of the rings $H^n$.
\begin{proposition}
Define $\overline{c_i}$ to be the element of $H^n$ given by $\sum_{a \in B^n} x_i(a)$, where $x_i(a)$ is the element of ${}_a(H^n)_a$ corresponding to the diagram $W(a)a$ with a dot on the $i$th endpoint on the center line and a sign of $(-1)^{i+1}$. Then there is an isomorphism
\begin{eqnarray*}
\mathbb{Z}[x_1,\ldots, x_{2n}]/ ( x_1^2, \ldots, x_{2n}^2, e_1(x_1,\ldots,x_{2n}), \ldots, e_n(x_1, \ldots, x_{2n})) &\to& Z(H^n) \\
x_i &\mapsto& \overline{c_i}.
\end{eqnarray*}
\end{proposition}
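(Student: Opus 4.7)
The strategy is to realize the claimed presentation as the special case $m = n$, $T_1 = T_2 = \mbox{Vert}_{2n}$ of the surjectivity theorem (Theorem \ref{thm:bimod-kernel}), combined with the standard identification of $Z(H^n)$ with the endomorphism ring of the regular $(H^n,H^n)$-bimodule. First I would observe that $\mathcal{F}(\mbox{Vert}_{2n})$ is canonically isomorphic to $H^n$ as an $(H^n,H^n)$-bimodule, since
\[
\mathcal{F}(\mbox{Vert}_{2n}) = \bigoplus_{a,b \in B^n} \mathcal{F}(W(b)\,\mbox{Vert}_{2n}\, a)\{n\} = \bigoplus_{a,b \in B^n} \mathcal{F}(W(b)a)\{n\} = H^n.
\]
Consequently $\mbox{End}_{(n,n)}(\mbox{Vert}_{2n}) \cong \mbox{End}_{(H^n,H^n)\text{-bimod}}(H^n) \cong Z(H^n)$, where the final isomorphism sends a bimodule endomorphism $f$ to $f(1)$ and conversely sends a central element $z$ to left multiplication by $z$.

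Next I would combine Propositions \ref{surjection} and \ref{kernelprop} to obtain
\[
Z(H^n) \cong \mbox{End}_{(n,n)}(\mbox{Vert}_{2n}) \cong \mbox{End}_{BN}(\mbox{Vert}_{2n})\big/\ker(\phi_{\mbox{Vert}_{2n},\mbox{Vert}_{2n}}) \cong \mathbb{Z}[c_1,\ldots,c_{2n}]\big/(c_1^2,\ldots,c_{2n}^2,e_1,\ldots,e_n),
\]
using the already-noted description of $\mbox{End}_{BN}(\mbox{Vert}_{2n})$ as $\mathbb{Z}[c_1,\ldots,c_{2n}]/(c_i^2)$, with $c_i$ the signed dot on the $i$th vertical sheet. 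This is a ring isomorphism from the source polynomial quotient in the statement of the proposition to $Z(H^n)$; the only remaining content is to check that the generator $x_i$ maps to $\overline{c_i}$.

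Tracing the chain of isomorphisms applied to $c_i \in \mbox{End}_{BN}(\mbox{Vert}_{2n})$, the bimodule endomorphism $\phi_{\mbox{Vert}_{2n},\mbox{Vert}_{2n}}(c_i)$ acts on the idempotent $1_a \in {}_a(H^n)_a$ by the induced map on $\mathcal{F}(W(a)\,\mbox{Vert}_{2n}\,a)$ from the cobordism obtained by composing identity cobordisms on $W(a)$ and $a$ with $c_i$ sandwiched between them. Isotoping the dot of $c_i$ onto the center line of $W(a)a$ identifies this image with the element $x_i(a) \in {}_a(H^n)_a$ of the statement (with the sign $(-1)^{i+1}$ supplied by $c_i$). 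Since an endomorphism of $H^n$ as an $(H^n,H^n)$-bimodule is determined by its value on $1 = \sum_{a \in B^n} 1_a$, the central element corresponding to $\phi_{\mbox{Vert}_{2n},\mbox{Vert}_{2n}}(c_i)$ is $\sum_{a \in B^n} x_i(a) = \overline{c_i}$, as required.

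The substantive mathematical work is already packaged in Propositions \ref{surjection} and \ref{kernelprop}; the only genuine task specific to this proposition is the bookkeeping for the isotopy and sign conventions establishing $x_i \mapsto \overline{c_i}$, which is routine once the bimodule identification $\mathcal{F}(\mbox{Vert}_{2n}) \cong H^n$ is fixed. I do not anticipate any serious obstacle beyond being careful with grading shifts and the signs of the dots.
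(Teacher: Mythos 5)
Your proposal is correct and rests on the same two ingredients as the paper's own proof: Khovanov's computation of $Z(H^n)$ (packaged here via Proposition \ref{surjection}) and the kernel description of Proposition \ref{kernelprop}, so the approach is essentially the same. The one place you go further than the paper is in explicitly tracing $c_i \mapsto \overline{c_i}$ through the identification $\mbox{End}_{(n,n)}(\mbox{Vert}_{2n}) \cong Z(H^n)$; the paper leaves that bookkeeping implicit, and your version of it is right.
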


\begin{proof}
We know from \cite{crossmatch} that the two spaces are isomorphic. The claim here is that the above map is an explicit isomorphism. It easy to see that the $\overline{c_i}$ are in $Z(H^n)$. The fact that the $\overline{c_i}$ satisfy only the relations $\overline{c_i}^2 = 0$ and $e_i(\overline{c_1}, \ldots, \overline{c_{2n}}) = 0$ for $1 \leq i \leq 2n$ follows from the same argument used in the proof of Proposition \ref{kernelprop}.
\end{proof}

Therefore we may identify each $x_i$ with the explicit element $\overline{c_i}$. Because $\phi_{m,n}$ is surjective, it maps the center of $\mbox{Hom}_{BN}(m,n)$ into the center of $\mbox{Hom}(m,n)$.

\begin{proposition}
\label{prop:Z-Hom}
The image of the map $\phi_{m,n}$ restricted to $Z(\mbox{Hom}_{BN}(m,n))$, which is isomorphic to the center of $H^{m+n}$, is isomorphic to $Z(H^m) \otimes Z(H^n)$. That is,
\[ \mbox{im}(\phi_{m,n} |_Z) \cong Z(H^m) \otimes Z(H^n).\]
\end{proposition}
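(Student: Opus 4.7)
The plan is to factor $\phi_{m,n}|_Z$ through $Z(H^m) \otimes Z(H^n)$ and then show that the induced map is an isomorphism. First, using the isomorphism $\gamma$ of Proposition \ref{nBNiso} to identify $Z(H^{m+n}) \cong Z(\mbox{Hom}_{BN}(m,n))$, the generators $\overline{c_1}, \ldots, \overline{c_{2(m+n)}}$ split after bending into $2m$ ``top'' central elements $C^{top}_i$ and $2n$ ``bottom'' central elements $C^{bot}_j$; in the graphical calculus of $\mbox{Hom}_{BN}(m,n)$, $C^{top}_i$ is a sum over $a \in B^m_n$ of the cobordism $a \times [0,1]$ with a signed dot at the $i$th top endpoint, and similarly $C^{bot}_j$ for the bottom. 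With this notation $Z(H^{m+n})$ is presented by the relations $(C^{top}_i)^2 = (C^{bot}_j)^2 = 0$ together with $e_k(C^{top}, C^{bot}) = 0$ for $1 \leq k \leq m+n$.

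For the surjection $Z(H^m) \otimes Z(H^n) \twoheadrightarrow \mbox{im}(\phi_{m,n}|_Z)$, the key observation is that $e_k(C^{top})$ and $e_k(C^{bot})$ lie in $\ker(\phi_{m,n})$ for all $k \geq 1$. Indeed, because products in $\mbox{Hom}_{BN}(m,n)$ between summands living over distinct tangles vanish, one has $e_k(C^{top}) = \sum_{a \in B^m_n} e_k(a)$, and by Theorem \ref{thm:bimod-kernel} each summand is either zero (when $k > w(a)$) or one of the explicit generators of $\ker(\phi_{m,n})$ (when $k \leq w(a)$). Combining this with the binomial expansion $e_k(C^{top}, C^{bot}) = \sum_{i+j=k} e_i(C^{top})\,e_j(C^{bot})$ and Lemma \ref{lem:polys} (which rewrites each $e_\ell(C^{top})$ with $\ell > m$ in terms of $e_1(C^{top}), \ldots, e_m(C^{top})$), one checks that the quotient
\[ Z(H^{m+n})\bigl/\bigl(e_k(C^{top}),\, e_k(C^{bot}) : k \geq 1\bigr) \]
is precisely $Z(H^m) \otimes Z(H^n)$, yielding the desired surjection.

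To build an inverse, I would define a ring homomorphism $\beta: Z(H^m) \otimes Z(H^n) \to Z(\mbox{Hom}(m,n))$ by letting $\beta(x \otimes y)$ act on each bimodule $\mathcal{F}(T)$ as left multiplication by $x \in Z(H^m)$ composed with right multiplication by $y \in Z(H^n)$. Centrality of $x$ and $y$ makes this a bimodule endomorphism and places the image inside $Z(\mbox{Hom}(m,n))$. Chasing through the identifications of the previous step, $\beta(C^{top}_i \otimes 1) = \phi_{m,n}(\overline{c_i})$ for $i \leq 2m$ and $\beta(1 \otimes C^{bot}_j) = \phi_{m,n}(\overline{c_{2m+j}})$, so $\beta$ provides a candidate two-sided inverse to the surjection of the previous paragraph, and the proposition reduces to the injectivity of $\beta$.

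The main obstacle will be verifying that $\beta$ is injective. The preferred approach is to locate a single tangle $T_0 \in B^m_n$ whose bimodule $\mathcal{F}(T_0)$ is faithful as a module over $H^m \otimes_{\mathbb{Z}} (H^n)^{op}$---for instance one containing the regular bimodule $H^m \otimes H^n$ as a direct summand---which would force the commutative ring $Z(H^m) \otimes Z(H^n) = Z(H^m \otimes (H^n)^{op})$ to act faithfully and hence inject into $\mbox{End}_{(H^m, H^n)}(\mathcal{F}(T_0)) \subseteq \mbox{Hom}(m,n)$. As a backup, one could compare $\mathbb{Q}$-ranks directly: since $Z(H^k)$ has rank $\binom{2k}{k}$, it would suffice to exhibit $\binom{2m}{m}\binom{2n}{n}$ elements of $\mbox{im}(\phi_{m,n}|_Z)$ that remain linearly independent, for example by tracking the images under $\beta$ of the monomial basis of $Z(H^m) \otimes Z(H^n)$ inside a well-chosen component $\mbox{End}_{(m,n)}(T_0)$.
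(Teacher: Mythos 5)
Your surjectivity half is sound and, if anything, more explicit than what the paper records: writing $e_k(C^{top}) = \sum_{a \in B^m_n} e_k(a)$ and invoking Theorem \ref{thm:bimod-kernel} does place these elements in $\ker(\phi_{m,n})$, and the quotient computation via $e_k(C^{top},C^{bot}) = \sum_{i+j=k} e_i(C^{top})e_j(C^{bot})$ together with Lemma \ref{lem:polys} correctly identifies the quotient with $Z(H^m)\otimes Z(H^n)$. The genuine gap is exactly the step you flag yourself: the injectivity of $\beta$. Without it you have only a surjection $Z(H^m)\otimes Z(H^n)\twoheadrightarrow \mbox{im}(\phi_{m,n}|_Z)$, which is strictly weaker than the proposition, and neither of your two strategies is actually carried out. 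Worse, the first strategy cannot work as literally stated: no single planar $(2m,2n)$-tangle $T_0$ has $\mathcal{F}(T_0)$ containing the regular bimodule as a summand, since for a split tangle determined by $a_0 \in B^m$, $b_0 \in B^n$ one only gets $\mathcal{F}(T_0) \cong H^m 1_{a_0} \otimes 1_{b_0}H^n$, a single indecomposable projective on each side, and such a module need not be faithful over the center.

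The repair is to sum over \emph{all} split tangles: $\bigoplus_{a_0,b_0} \mathcal{F}(T_{a_0,b_0}) \cong \bigoplus_{a_0} H^m 1_{a_0} \otimes \bigoplus_{b_0} 1_{b_0}H^n \cong H^m \otimes H^n$, the regular $(H^m,H^n)$-bimodule, on which $Z(H^m)\otimes Z(H^n)$ acts faithfully (evaluate at $1\otimes 1$ and use that $Z(H^m)\otimes Z(H^n) \hookrightarrow H^m \otimes H^n$, which holds over $\mathbb{Z}$ since all groups involved are free abelian). Hence $\beta$ is injective into $\prod_T \mbox{End}_{(m,n)}(T)$, and the proposition follows. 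This is essentially the content of the paper's own argument, which runs the map in the opposite direction: its $\alpha$ sends a generator of $\mbox{im}(\phi_{m,n}|_Z)$ to the element of $Z(H^m)\otimes Z(H^n)$ read off from its split-tangle components, discarding the through-strand components; the assertion that the generators "are completely determined by their components involving split tangles" is precisely the faithfulness statement you are missing. So your outline is compatible with the paper's proof, but as written it stops just short of the one substantive point.
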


\begin{proof}
The generators $x_1, \ldots, x_{2(m+n)}$ of $Z(H^{m+n})$ can be translated into generators $\gamma(x_1), \ldots, \gamma(x_{2(m+n)})$ of $Z(\mbox{Hom}_{BN}(m,n))$, where $\gamma$ is the bending map of Proposition \ref{nBNiso}. For convenience, since each $\gamma(x_i)$ is a sum of dotted tangles times the interval $[0,1]$, we can restrict to two dimensions by considering the horizontal cross-section of each cobordism, obtaining a sum of dotted $(2m,2n)$-tangles. Similarly, in the definition of the generators $e_k(a)$ of $\ker(\phi_{m,n})$, we can replace $c_i(a)$ with its cross-section, so that $e_k(a)$ can be thought of as a linear combination of dotted $(2m,2n)$-tangles.

Note that because $\ker(\phi_{m,n})$ is generated by the elementary symmetric functions of dots on through-strands, relations on the $\phi_{m,n}(\gamma(x_1)), \ldots, \phi_{m,n}(\gamma(x_{2(m+n)}))$ will be generated by the first $n$ elementary symmetric functions in $\phi_{m,n}(\gamma(x_1)), \ldots, \phi_{m,n}(\gamma(x_n))$ and the first $m$ elementary symmetric functions in $\phi_{m,n}(\gamma(x_{n+1})), \ldots, \phi_{m,n}(\gamma(x_{2(m+n)}))$. The $\phi_{m,n}(\gamma(x_i))$ are then completely determined by their components involving split $(2m,2n)$-tangles, where a split tangle is one with no through-strands. 

For each generator $\phi_{m,n}(\gamma(x_i))$ of $\mbox{im}(\phi_{m,n}|_Z)$, express the generator as a sum
\[\phi_{m,n}(\gamma(x_i)) = \sum_j S_{i_j} + \sum_k T_{i_k},\]
where the $S_{i_j}$ are the components of $\phi_{m,n}(\gamma(x_i))$ that are dotted split tangles and the $T_{i_k}$ contain through-strands. Each dotted split tangle $S_{i_j}$ is of the form $a_{i_j} W(b_{i_j})$, with $a_{i_j} \in B^m$ and $b_{i_j} \in B^n$, where each arc may carry up to one dot. Define a map
\[ \alpha: \mbox{im}(\phi_{m,n}|_Z) \to Z(H^m) \otimes Z(H^n) \]
that behaves on generators by ignoring the components $T_{i_k}$, and turns each $S_{i_j}$ into an element of $Z(H^m) \otimes Z(H^n)$ by attaching $b_{i_j}$ to the bottom of $S_{i_j}$ and $W(a_{i_j})$ to the top to get $n$ dotted circles at the bottom half of the diagram and $m$ dotted circles at the top. Each collection of dotted circles gets mapped to the element of ${}_{a_{i_j}}(H^m)_{a_{i_j}}$ and ${}_{b_{i_j}}(H^n)_{b_{i_j}}$, respectively, that has a $1$ in each tensor factor corresponding to an undotted circle and $X$ for each dotted circle.

Then $\alpha$ is an isomorphism by the above discussion. $\blacksquare$
\end{proof}

\begin{example}
When $m=n=1$, $Z(H^{1+1})$ is generated by $1, x_1, x_2, x_3, x_4$, subject to $x_i^2 = 0$ and the elementary symmetric functions on the $x_i$. The composition of $\phi_{1,1}$ with $\alpha$ on generators $\gamma(x_1), \ldots, \gamma(x_4)$ of $Z(\mbox{Hom}_{BN}(1,1))$ is pictured below. The diagrams on the left represent two-dimensional identity cobordisms from the pictured tangle to itself with dots on the indicated sheets. In the image $\mbox{im}(\phi_{1,1}|_Z)$, $\phi_{1,1}(\gamma(x_1)) = \phi_{1,1}(\gamma(x_2))$ and $\phi_{1,1}(\gamma(x_3)) = \phi_{1,1}(\gamma(x_4))$. Therefore $\alpha$ is a well-defined isomorphism.
\begin{figure}[H]
\begin{center}
\includegraphics{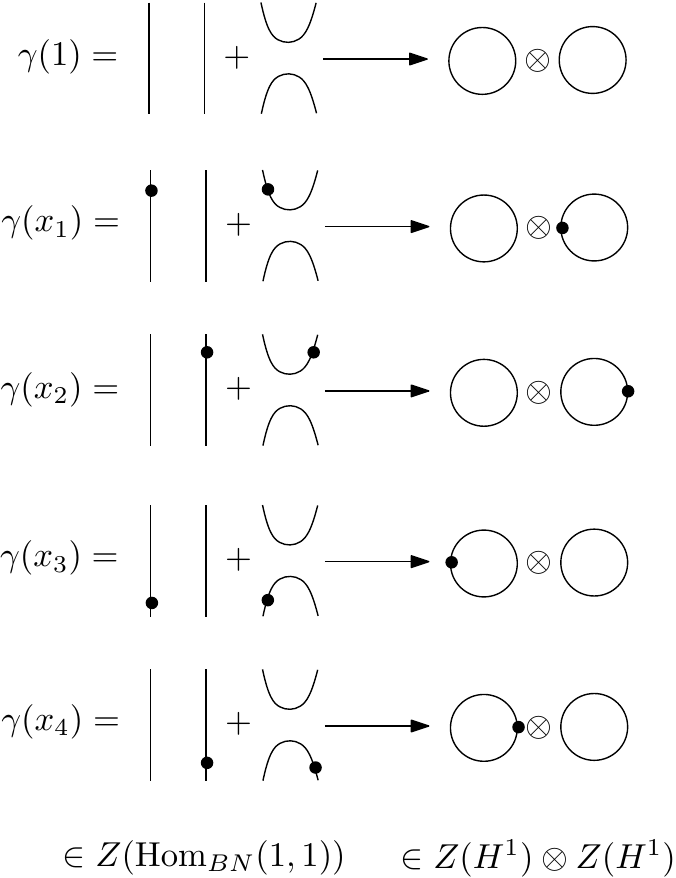}
\end{center}
\end{figure}
\end{example}

\begin{conjecture}
We conjecture that the map $\phi_{m,n}|_{Z}: Z(\mbox{Hom}_{BN}(m,n)) \to Z(\mbox{Hom}(m,n))$ is surjective, so that
\[ Z(\mbox{Hom}(m,n)) \cong Z(\mbox{Hom}_{BN}(m,n))  / (\mbox{ker}(\phi_{m,n}) \cap Z(\mbox{Hom}_{BN}(m,n))) \]
\end{conjecture}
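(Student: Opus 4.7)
The plan is to establish the conjecture by showing, via Morita equivalence, that $Z(\mbox{Hom}(m,n))$ is isomorphic to $Z(H^m) \otimes Z(H^n)$; combined with the explicit computation of $\mbox{im}(\phi_{m,n}|_Z)$ in Proposition \ref{prop:Z-Hom}, the inclusion $\mbox{im}(\phi_{m,n}|_Z) \subseteq Z(\mbox{Hom}(m,n))$ must then be an equality. Concretely, I would realize $\mbox{Hom}(m,n)$ as the endomorphism ring $\mbox{End}_{(H^m, H^n)}(M)$ of the $(H^m, H^n)$-bimodule $M = \bigoplus_{T \in B^m_n} \mathcal{F}(T)$, viewed equivalently as a left module over $R := H^m \otimes_{\mathbb{Z}} (H^n)^{\mbox{op}}$. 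The goal is then to verify that $M$ is a progenerator for the category of left $R$-modules, after which classical Morita theory delivers
\[ Z(\mbox{Hom}(m,n)) \;\cong\; Z(R) \;=\; Z(H^m) \otimes Z(H^n). \]

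The key technical step is to identify the bimodules $\mathcal{F}(T)$ attached to \emph{split} tangles. These are the $T \in B^m_n$ with no through-strands, parameterized by pairs $(c, d) \in B^m \times B^n$ recording the top and bottom matchings of $T$. For such a $T$ the closed one-manifold $W(b)Ta$ splits as a disjoint union of $W(b)c$ at the top and $W(d)a$ at the bottom, so the TQFT property of $\mathcal{F}$ gives
\[ \mathcal{F}(W(b)Ta) \;\cong\; \mathcal{F}(W(b)c) \otimes_{\mathbb{Z}} \mathcal{F}(W(d)a). \]
Summing over $a \in B^n$ and $b \in B^m$ compatibly with the bimodule structures yields
\[ \mathcal{F}(T) \;\cong\; H^m 1_c \otimes_{\mathbb{Z}} 1_d H^n \]
up to an overall grading shift. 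Since $1_c$ and $1_d$ are primitive idempotents of $H^m$ and $H^n$, the right-hand side is the indecomposable projective $R$-module $R \cdot (1_c \otimes 1_d)$. As $(c, d)$ ranges over $B^m \times B^n$ we thus realize every indecomposable projective $R$-module as a summand of $M$, which is the standard criterion for $M$ to be a progenerator.

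Once Morita equivalence is in hand, invariance of the center yields $Z(\mbox{Hom}(m,n)) \cong Z(H^m) \otimes Z(H^n)$. To conclude, I would argue that the containment $\mbox{im}(\phi_{m,n}|_Z) \subseteq Z(\mbox{Hom}(m,n))$ is actually an equality by one of two routes. Over a field of characteristic zero, Proposition \ref{prop:Z-Hom} shows that both sides are free modules of the same dimension, so the inclusion is forced to be an isomorphism. To handle the integral statement, one should check that the Morita identification $Z(\mbox{Hom}(m,n)) \xrightarrow{\sim} Z(H^m) \otimes Z(H^n)$ takes the image of $\phi_{m,n}|_Z$ onto the full tensor product; this comes down to showing that for $z \in Z(H^{m+n})$, the action of $\phi_{m,n}(z)$ on each split-tangle summand $H^m 1_c \otimes 1_d H^n$ of $M$ is by an element of $Z(H^m) 1_c \otimes 1_d Z(H^n)$ matching the map $\alpha$ from the proof of Proposition \ref{prop:Z-Hom}.

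The principal obstacle I anticipate is carrying out the progenerator verification carefully in the $\mathbb{Z}$-graded setting, since the identification of $1_c$ as a primitive idempotent and of $H^m 1_c$ as genuinely indecomposable requires care outside of characteristic zero; the graded structure on $\mathcal{A}$ and $H^m$ should keep this manageable. A secondary obstacle is the compatibility check between the Morita isomorphism and the explicit $\alpha$ of Proposition \ref{prop:Z-Hom}, which, while natural, requires tracking idempotents through Khovanov's conventions. Once these pieces are in place, the conjecture follows essentially formally.
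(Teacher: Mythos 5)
The paper offers no proof of this statement: it is left as an open conjecture, verified only by a Magma computation for $m=n=1$. So there is nothing to compare your argument against; it has to stand on its own. The good news is that its core is sound and, written up carefully, it would settle the conjecture. Your identification of the split-tangle summands is correct: for $T = cW(d)$ with $c \in B^m$, $d \in B^n$, the closed diagram $W(b)Ta$ is the disjoint union of $W(b)c$ and $W(d)a$, so $\mathcal{F}(T) \cong H^m 1_c \otimes 1_d H^n = R(1_c \otimes 1_d)$ as a bimodule, and since the $1_c \otimes 1_d$ are orthogonal idempotents summing to $1_R$, the module $M = \bigoplus_T \mathcal{F}(T)$ contains $R$ itself as a direct summand.

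The one genuine misstep is the progenerator claim. Containing every indecomposable projective as a summand is the criterion for $M$ to be a \emph{generator}, not a progenerator: a progenerator must also be projective over $R = H^m \otimes (H^n)^{\mathrm{op}}$, and $M$ is not. Tangles with through-strands give bimodules that are projective on each side (``sweet'') but not projective as $R$-modules — for instance $\mathcal{F}(\mbox{Vert}_{2n}) = H^n$ is projective as a bimodule only if $H^n$ is separable, which it is not (the thesis is, after all, about its nontrivial Hochschild homology). So the ``principal obstacle'' you anticipate — verifying the progenerator property — cannot be overcome. Fortunately it need not be: the conclusion $Z(\mbox{End}_R(M)) \cong Z(R) = Z(H^m)\otimes Z(H^n)$ requires only that $M$ be a generator, since generators are faithfully balanced ($R \xrightarrow{\sim} \mbox{End}_S(M)$ for $S = \mbox{End}_R(M)$), and for a faithfully balanced bimodule both centers coincide with $\mbox{End}_{(R,S)}(M)$. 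With that substitution, every central element of $\mbox{Hom}(m,n)$ is multiplication by a unique $z \in Z(H^m)\otimes Z(H^n)$. Your final compatibility check then goes through and should be made explicit rather than left over $\mathbb{Q}$: $\phi_{m,n}(\gamma(x_i))$ is the dotted identity cobordism at the $i$th endpoint, which acts on every summand of $M$ as left or right multiplication by $\pm \overline{c_j}$ for the corresponding generator of $Z(H^m)$ or $Z(H^n)$; since $\mbox{im}(\phi_{m,n}|_Z)$ is a subring of $Z(\mbox{Hom}(m,n))$ containing these ring generators, it is all of $Z(\mbox{Hom}(m,n))$, integrally. Your stated worry about primitivity of $1_c$ outside characteristic zero is a non-issue, as the argument only needs the idempotents to be orthogonal and to sum to $1$.
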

\begin{remark}
The conjecture was confirmed via Magma for $m=n=1$.
\end{remark}

\section{$HH_0(\mbox{Hom}(m,n))$ as a quotient of the Russell skein module}
\label{sec:quotient-Russell}

In Chapter \ref{ch:equiv}, we saw that $HH_0(H^n)$ was isomorphic to the Bar-Natan--Russell skein module. Now we turn to $HH_0(\mbox{Hom}(m,n))$. We define an analogue of the Russell skein module and show that it is isomorphic to $HH_0(\mbox{Hom}(m,n))$.

\begin{definition}
Define the diagrammatic skein module $R_{(m,n)}$ to be the $\mathbb{Z}$-module generated by crossingless matchings of $2(m+n)$ points, where arcs may carry up to one dot, modulo the following relations:
\begin{itemize}
\item Type I and Type II relations as before.
\item For each generator of $\phi_{m,n}$ over $\mathbb{Z}$, pass to an element of $H^{m+n}$ via the inverse of the map $\gamma$ of Proposition \ref{nBNiso}. Slide all dots to the bottom half of the diagram and cut off the top half, leaving a (sum of) dotted crossingless matchings.
\end{itemize}
\end{definition}

\begin{example}
$R_{(1,1)}$ has one Type I relation, one Type II relation, and two relations from $\ker(\phi_{1,1})$:
\begin{figure}[H]
\begin{center}
\includegraphics{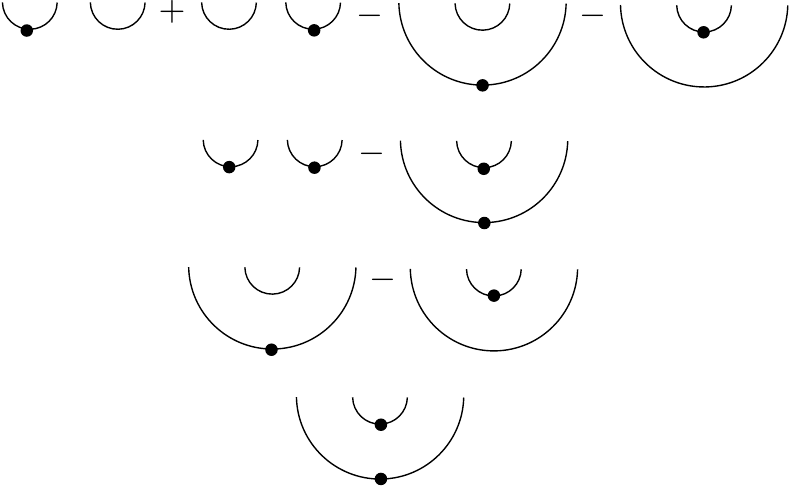}
\end{center}
\end{figure}
\end{example}

\begin{proposition}
$HH_0(\mbox{Hom}(m,n))$ and $R_{(m,n)}$ are isomorphic as $\mathbb{Z}$-modules.
\end{proposition}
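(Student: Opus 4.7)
The plan is to realize both sides as the same quotient of $H^{m+n}$ using the intermediate step $R_{m+n} \cong HH_0(H^{m+n})$ established in Proposition \ref{prop:HH0}. The key algebraic fact is that for any ring $A$ and two-sided ideal $I \subseteq A$, one has
\[
HH_0(A/I) \;\cong\; A/([A,A]+I) \;\cong\; HH_0(A)/\overline{I},
\]
where $\overline{I}$ denotes the image of $I$ under the canonical projection $A \twoheadrightarrow HH_0(A)$. This is elementary: $[A/I, A/I] = ([A,A]+I)/I$, and the identity follows from the third isomorphism theorem.

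Apply this fact with $A = \mbox{Hom}_{BN}(m,n)$ and $I = \ker(\phi_{m,n})$, so that $A/I \cong \mbox{Hom}(m,n)$. Next, transport the picture to $H^{m+n}$ via Khovanov's bending isomorphism $\gamma : H^{m+n} \xrightarrow{\sim} \mbox{Hom}_{BN}(m,n)$ from Proposition \ref{nBNiso}: letting $K = \gamma^{-1}(\ker(\phi_{m,n}))$, we obtain
\[
HH_0(\mbox{Hom}(m,n)) \;\cong\; HH_0(H^{m+n})/\pi(K),
\]
where $\pi : H^{m+n} \twoheadrightarrow HH_0(H^{m+n})$ is the projection onto the commutator quotient.

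Now I would invoke Proposition \ref{prop:HH0}, which gives an explicit isomorphism $\varphi : H^{m+n}/[H^{m+n},H^{m+n}] \xrightarrow{\sim} R_{m+n}$ described by representing an element of ${}_a(H^{m+n})_a$ geometrically as a collection of dotted circles and reading off the associated dotted crossingless matching. Under $\varphi$, the class of an arbitrary element of $H^{m+n}$ is obtained by killing all off-diagonal summands ${}_b(H^{m+n})_a$ with $a \neq b$, and then rewriting the diagonal summands as dotted crossingless matchings. Composing with $\gamma^{-1}$, this is precisely the procedure specified in the definition of $R_{(m,n)}$: take a cobordism in $\mbox{Hom}_{BN}(m,n)$, pull it back to $H^{m+n}$, slide all dots to the bottom half of the resulting closed diagram, and cut off the top. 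Hence by the very definition of $R_{(m,n)}$, the extra relations imposed on $R_{m+n}$ to form $R_{(m,n)}$ are exactly the elements of $\varphi(\pi(K))$, giving $R_{(m,n)} \cong R_{m+n}/\varphi(\pi(K)) \cong HH_0(H^{m+n})/\pi(K) \cong HH_0(\mbox{Hom}(m,n))$.

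There is essentially no obstacle, since the definition of $R_{(m,n)}$ was tailored to match the image of $\ker(\phi_{m,n})$ in $HH_0$; the only mild point of care is to confirm that it suffices to impose the relations coming from a chosen generating set of the two-sided ideal $\ker(\phi_{m,n})$, rather than from every element of the ideal. This holds because passing to the commutator quotient turns the two-sided action on $H^{m+n}$ into the trivial action on $HH_0(H^{m+n})$: any product $h\,k\,h'$ with $k \in K$ is congruent modulo commutators to $h'h \cdot k$, so the image $\pi(K)$ is already the $\mathbb{Z}$-span of $\pi$ applied to a two-sided generating set of $K$. Thus the finite list of relations listed in the definition of $R_{(m,n)}$ does cut out all of $\pi(K)$, completing the identification.
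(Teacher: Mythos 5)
Your argument is essentially the paper's own proof: both rest on the identity $(R/I)/[R/I,R/I]\cong R/([R,R]+I)$, the bending isomorphism $\gamma$ of Proposition \ref{nBNiso}, and the identification $HH_0(H^{m+n})\cong R_{m+n}$ from Proposition \ref{prop:HH0}. One small imprecision in your final paragraph: the image of a two-sided ideal $K$ in $HH_0(A)$ is the $\mathbb{Z}$-span of $\pi(ak)$ for $a\in A$ and $k$ ranging over ideal generators (not merely of $\pi(k)$), but this does not affect the conclusion since the relations defining $R_{(m,n)}$ are taken over a $\mathbb{Z}$-module generating set of $\ker(\phi_{m,n})$.
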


\begin{proof}
Recall that $\mbox{Hom}(m,n)$ can be expressed as a quotient of $\mbox{Hom}_{BN}(m,n)$, which is isomorphic to $H^{m+n}$. For any ring $R$ and any ideal $I$, there is an isomorphism $(R/I)/[R/I,R/I] \cong R/([R,R]+I)$. Therefore
\[ HH_0(\mbox{Hom}(m,n)) \cong H^{m+n}/([H^{m+n},H^{m+n}]+\gamma^{-1}(\ker(\phi_{m,n})))\]
and the statement follows from our earlier description of the commutator of the rings $H^n$ and the previous proposition.
\end{proof}

\section{The case $X^2 = t$}
\label{sec:x^2=t}
We now extend the results of Section \ref{main} to the $SU(2)$-equivariant deformation of the rings $H^n$. This can be viewed as a specialization of the $U(2)$-equivariant case of Section \ref{equi-Hn} where instead of the $\mathcal{A}_{h,t} \cong \mathbb{Z}[X]/(X^2 = hX + t)$, we specialize $h$ to $0$, so the relations simply becomes $X^2 = t$. We denote the deformed arc rings in this case by $H^n_t$. The local Bar-Natan relations in this setting are the same as above, except that now
\[
 \begin{array}{c}\includegraphics[height=10mm]{ddot.pdf}\end{array}
  \hspace{-4mm}=t.
\]

First, we establish analogues of the elementary symmetric functions. For any natural number $n$, we define functions $e_k^t(x_1, \ldots, x_{2n})$, $1 \leq k \leq n$ inductively. As the base, set $e_1^t(x_1, \ldots, x_{2n}) = e_1(x_1,\ldots, x_{2n})$. To find $e_k^t(x_1, \ldots, x_{2n})$, consider the expansion of  $(x_1 + \cdots + x_{2n})^k$ under the conditions $x_i^2 = t$. The expansion will be of the form
\[ (x_1 + \cdots + x_{2n})^k = k! e_k(x_1,\ldots, x_{2n}) + \sum_{i=1}^{k-1} c_{i,k}t^{(k-i)/2} e_i(x_1, \ldots, x_{2n}) + c_{0,k} t^{k/2}\]
where $c_{i,k} = 0$ if $i \neq k$ mod $2$, $0 \leq i \leq k$.
Inductively define
\[ e^t_k(x_1, \ldots, x_{2n}) := \frac{1}{k!} \left( (x_1 + \cdots x_{2n})^k - \sum_{i=1}^{k-1} c_{i,k}t^{(k-i)/2} e^t_i(x_1, \ldots, x_{2n})  \right). \]

For example, when $n=4$ we obtain
\begin{eqnarray*}
e_1^t(x_1, \ldots, x_8) &=& e_1(x_1, \ldots, x_8) \\
e_2^t(x_1, \ldots, x_8) &=& e_2(x_1, \ldots, x_8) + 4t \\
e_3^t(x_1, \ldots, x_8) &=& e_3(x_1, \ldots, x_8) \\
e_4^t(x_1, \ldots, x_8) &=& e_4(x_1, \ldots, x_8) -6t^2.
\end{eqnarray*}

Note in particular that $e_k^t(x_1, \ldots, x_{2n}) = e_k(x_1, \ldots, x_{2n})$ when $k$ is odd.

We will show that the map $\phi_{m,n}^t: \mbox{Hom}^t_{BN}(m,n) \to \mbox{Hom}^t(m,n)$ is surjective and describe its kernel using an argument parallel to that in Section \ref{main}.

\begin{proposition}
\label{kernelpropt}
As a map of $\mathbb{Z}$-modules, the kernel of $\phi^t_{\mbox{Vert}_{2n}, \mbox{Vert}_{2n}}$ is the two-sided ideal generated by $e_1^t(c_1, \ldots, c_{2n})$, \ldots, $e_n^t(c_1, \ldots, c_{2n})$.
\end{proposition}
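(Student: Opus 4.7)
The plan is to mirror the proof of Proposition \ref{kernelprop} throughout, replacing the elementary symmetric functions $e_k$ by their $t$-deformed analogues $e_k^t$ at each step. First I would observe that, as in the $t=0$ case, Bar-Natan reduction shows $\mbox{End}^t_{BN}(\mbox{Vert}_{2n}) \cong \mathbb{Z}[c_1,\ldots,c_{2n}]/(c_i^2 - t)$ as a commutative ring (since each double dot on a vertical sheet evaluates to $t$). The projection of $\phi^t_{\mbox{Vert}_{2n},\mbox{Vert}_{2n}}$ onto the summand ${}_a(H^n_t)_a$ for a crossingless matching $a = ((i_1,j_1),\ldots,(i_n,j_n))$ still identifies dots on the two sheets making up any arc of $a$ (the identification is purely topological and does not involve $t$); combined with the sign convention $c_i = (-1)^{i+1}(\text{dot on sheet } i)$, this shows that the kernel on this summand is exactly $I_a := (c_{i_k}+c_{j_k} : 1 \le k \le n)$. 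Hence
\[ \ker \phi^t_{\mbox{Vert}_{2n},\mbox{Vert}_{2n}} = \bigcap_{a \in B^n} I_a, \]
and the problem reduces to the ideal-theoretic identity $\bigcap_a I_a = (e_1^t,\ldots,e_n^t)$ inside $\mathbb{Z}[c_1,\ldots,c_{2n}]/(c_i^2 - t)$.

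I would prove this by induction on $n$. Every $a \in B^n$ contains some arc $(i,i+1)$, so grouping matchings by the position of such an arc gives
\[ I = \bigcap_{i=1}^{n}\bigl(c_i + c_{i+1},\, e_1^t(c:i),\ldots,e_{n-1}^t(c:i)\bigr), \]
where the inductive hypothesis has been applied to the remaining $2n-2$ variables. The crucial algebraic ingredient is the congruence
\[ e_k^t(c_1,\ldots,c_{2n}) \equiv e_k^t(c:i) \pmod{(c_i + c_{i+1})} \]
in the ambient ring, which allows us to pull the $e_k^t$'s outside the intersection. Assuming this lemma, the computation proceeds exactly as in the $t=0$ case: we reduce to computing $\bigcap_{i=1}^n (c_i + c_{i+1})$ modulo $(e_1^t,\ldots,e_{n-1}^t,\, c_j^2 - t)$, and a UFD-style argument (lifting first to a polynomial quotient where the factors $c_i + c_{i+1}$ are pairwise relatively prime) identifies this with the principal ideal $((c_1+c_2)(c_2+c_3)\cdots(c_n+c_{n+1}))$.

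To close the induction, I would establish an analogue of Lemma \ref{lem:polys} expressing each $e_{n+j}^t$ as a polynomial combination of $e_1^t,\ldots,e_{n+j-1}^t$, and then mimic the identity used at the end of the proof of Proposition \ref{kernelprop}, namely that $(c_1+c_2)(c_2+c_3)\cdots(c_n+c_{n+1})$ equals $e_n^t$ modulo the lower $e_k^t$. Both identities should follow from a careful bookkeeping of the deformation terms, since the $e_k^t$ are defined precisely so that the constant/lower-order corrections coming from applying $c_i^2 = t$ land in the sub-ideal generated by earlier $e_j^t$'s.

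The main obstacle is proving the congruence $e_k^t(c_1,\ldots,c_{2n}) \equiv e_k^t(c:i) \pmod{(c_i+c_{i+1})}$. The natural approach is via generating functions: in the quotient by $(c_i + c_{i+1},\, c_j^2 - t)$ one has the factorization $(1+c_i z)(1+c_{i+1}z) = (1+c_iz)(1-c_iz) = 1 - tz^2$, which is independent of $i$, so the generating function $\prod_j(1+c_j z)$ reduces to $(1-tz^2)\prod_{j\ne i,i+1}(1+c_j z)$. Matching coefficients on both sides, together with the observation that the universal scalar correction $(1-tz^2)$ depends only on $t$ and not on the individual $c_j$'s, should yield the congruence once one checks that the inductive definition of $e_k^t$ is precisely the one compatible with this generating-function normalization. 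Verifying this compatibility — i.e., that the coefficients $c_{i,k}$ appearing in the paper's definition exactly cancel the $t$-shifts coming from the $(1-tz^2)$ factor at each level of induction — is the most delicate part of the argument, but it is a finite combinatorial check rather than a conceptual difficulty.
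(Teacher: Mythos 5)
Your overall architecture coincides with the paper's: identify $\mbox{End}^t_{BN}(\mbox{Vert}_{2n})$ with $\mathbb{Z}[c_1,\ldots,c_{2n}]/(c_i^2-t)$, write the kernel as $\bigcap_a I_a$ with $I_a=(c_{i_k}+c_{j_k})$, induct on $n$, and finish with the UFD-style intersection computation plus a product identity for $(c_1+c_2)\cdots(c_n+c_{n+1})$. The gap is in your ``crucial algebraic ingredient'': the congruence $e_k^t(c_1,\ldots,c_{2n}) \equiv e_k^t(c:i) \pmod{(c_i+c_{i+1})}$ is false for $k\geq 3$. Your own generating-function computation shows why: matching coefficients in $\prod_j(1+c_jz) \equiv (1-tz^2)\prod_{j\neq i,i+1}(1+c_jz)$ gives $e_k(c) \equiv e_k(c:i) - t\,e_{k-2}(c:i)$, and the $e_k^t$ of the proposition are not normalized so as to absorb the $-t\,e_{k-2}$ term. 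Indeed $e_k^t = e_k$ for every odd $k$, so already $e_3^t(c_1,\ldots,c_{2n}) \equiv -t\,e_1(c:i)$ modulo $(c_i+c_{i+1})$, whereas $e_3^t(c:i)=e_3(c:i)$; concretely, for $2n=4$ and $i=1$ one gets $e_3(c_1,\ldots,c_4)\equiv -t(c_3+c_4)$ while $e_3(c_3,c_4)=0$. A generating-function-compatible family would have to satisfy $\sum_k \tilde{e}_k z^k = (1-tz^2)^{-n}\prod_j(1+c_jz)$, giving $\tilde{e}_3 = e_3 + nt\,e_1$, which is not the $e_3^t$ in the statement you are proving. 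So the ``finite combinatorial check'' you defer comes back negative.

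The repair is to weaken the lemma to what the induction actually requires: the equality of ideals $(c_i+c_{i+1},\,e_1^t(c:i),\ldots,e_{n-1}^t(c:i)) = (c_i+c_{i+1},\,e_1^t(c),\ldots,e_{n-1}^t(c))$. This does hold, because the discrepancy $e_k^t(c)-e_k^t(c:i)$ modulo $(c_i+c_{i+1})$ is a $\mathbb{Z}[t]$-combination of the lower $e_j^t(c:i)$ (in the computations above it is exactly $-t\,e_{k-2}^t(c:i)$ for $k\geq 3$), so the generators can be exchanged one at a time starting from $j=1$. This is precisely what the paper's proof does via its explicit identities expressing $e_j^t(c_1,\ldots,c_{2n})$ in terms of $e_j^t(c:k)$, $(c_k+c_{k+1})e_{j-1}^t(c:k)$, and a corrected multiple of $e_{j-2}^t(c:k)$, treating the odd and even cases separately. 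Alternatively one may work throughout with the renormalized $\tilde{e}_k$, which generate the same ideal as the $e_k^t$ since they differ by combinations of lower deformed symmetric functions. With either fix, the remainder of your argument (the relative primality of the $c_i+c_{i+1}$ after passing to the quotient, and the $t$-deformed analogues of Lemma \ref{lem:polys} and of the final product identity, for which the paper supplies the needed formula with its extra $nt(\cdots)$ correction) goes through as in Proposition \ref{kernelprop}.
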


\begin{proof}
The argument is completely analogous to that of Proposition \ref{kernelprop}. Our kernel $I^t$ is again given by
\[ I^t = \bigcap_{((i_1, j_1), \ldots, (i_n, j_n)) \in B^n} (c_{i_1}+c_{j_1}, \ldots, c_{i_n}+c_{j_n}). \]

Again we assume inductively that
\[
{I^t = \bigcap_{k=1}^n (c_k + c_{k+1}, e_1^t(c_1, \ldots, \widehat{c_k, c_{k+1}}, \ldots, c_{2n}), \ldots, e^t_{n-1}(c_1, \ldots, \widehat{c_k, c_{k+1}}, \ldots, c_{2n}))}.
\]

By manipulating generators, we see that
\begin{eqnarray*}
I ^t &=& \bigcap_{k=1}^n(c_k+c_{k+1}, e_1^t(c_1, \ldots, c_{2n}), \ldots, e^t_{n-1}(c_1, \ldots, c_{2n})) \\
&=& ((c_1+c_2)(c_2+c_3)\cdots(c_n+c_{n+1}), e_1^t(c_1, \ldots, c_{2n}), \ldots, e^t_{n-1}(c_1, \ldots, c_{2n})).
\end{eqnarray*}

To see the first equality, note that if $j$ is odd, then $e^t_j(c_1, \ldots, c_{2n}) = e_j(c_1, \ldots, c_{2n})$, so
\begin{eqnarray*}
e^t_j(c_1, \ldots, c_{2n}) &=& e^t_j(c_1, \ldots, \widehat{c_k, c_{k+1}}, \ldots, c_{2n}) + (c_k + c_{k+1})e_{j-1}^t(c_1, \ldots, \widehat{c_k, c_{k+1}}, \ldots, c_{2n})+\\
& & \qquad (c_kc_{k+1})e_{j-2}^t(c_1, \ldots, \widehat{c_k, c_{k+1}}, \ldots, c_{2n}) - mt^{(j-1)/2}(c_k + c_{k+1})
\end{eqnarray*}
where $m$ is the integer such that 
\[ e_{j-1}^t(c_1, \ldots, \widehat{c_k, c_{k+1}}, \ldots, c_{2n}) = e_{j-1}(c_1, \ldots, \widehat{c_k, c_{k+1}}, \ldots, c_{2n}) + mt^{(j-1)/2}.\]
If $j$ is even, define $m_n^j, m_{n-1}^j,$ and $m_{n-1}^{j-2}$ so that
\begin{eqnarray*}
e^t_j(c_1, \ldots, c_{2n}) &=& e_j(c_1, \ldots, c_{2n}) + m^j_n t^{(j-1)/2} \\
e^t_j(c_1, \ldots, \widehat{c_k, c_{k+1}}, \ldots, c_{2n}) &=& e_j(c_1, \ldots, \widehat{c_k, c_{k+1}}, \ldots, c_{2n}) + m^j_{n-1}t^{(j-1)/2} \\
e^t_{j-2}(c_1, \ldots, \widehat{c_k, c_{k+1}}, \ldots, c_{2n}) &=& e_{j-2}(c_1, \ldots, \widehat{c_k, c_{k+1}}, \ldots, c_{2n}) + m^{j-2}_{n-1}t^{(j-3)/2} 
\end{eqnarray*}
Then
\begin{eqnarray*}
e_j^t(c_1, \ldots, c_{2n}) &=& e^t_j(c_1, \ldots, \widehat{c_k, c_{k+1}}, \ldots, c_{2n}) + (c_k + c_{k+1})e_{j-1}^t(c_1, \ldots, \widehat{c_k, c_{k+1}}, \ldots, c_{2n})+\\
& & \qquad (c_kc_{k+1} + \frac{m_n^j-m_{n-1}^j}{m_{n-1}^{j-1}}t) e^t_{j-2}(c_1, \ldots, \widehat{c_k, c_{k+1}}, \ldots, c_{2n}).
\end{eqnarray*}

Finally, by observing that
\begin{eqnarray*}
(c_1 +c_2)\cdots(c_n+c_{n+1}) &=& e_n^t(c_1, \ldots, c_{2n}) - e_1(c_{n+2}, \ldots, c_{2n})e_{n-1}^t(c_1, \ldots, c_{2n}) + \cdots + \\
& & (-1)^{n-1}e_{n-1}(c_{n+2}, \ldots, c_{2n})e_1^t(c_1, \ldots, c_{2n}) +  \\
& & nt(e_{n-2}^t(c_1, \ldots, c_{2n}) + \cdots + (-1)^{n-3}e_{n-3}(c_{n+2}, \ldots, c_{2n})e_1^t(c_1, \ldots, c_{2n}))
\end{eqnarray*}
we see that
\[ I^t = (e_1^t(c_1, \ldots, c_{2n}), \ldots, e_{n}^t(c_1, \ldots, c_{2n})) \]
as desired.
$\blacksquare$
\end{proof}

\begin{proposition}
The center of the ring $H^n_t$ is given by 
\[ \mathbb{Z}[x_1, \ldots, x_{2n}]/(x_1^2 -t,\ldots, x_{2n}^2-t, e_1^t(x_1, \ldots, x_{2n}), \ldots, e_n^t(x_1, \ldots, x_{2n})).\]
\end{proposition}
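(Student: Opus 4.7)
The plan is to imitate the explicit description of $Z(H^n)$ established earlier in the chapter, promoting each ingredient to the deformed setting and then using Proposition~\ref{kernelpropt} to read off the relations. First, define elements $\overline{c}^{\,t}_i \in H^n_t$ by $\overline{c}^{\,t}_i := \sum_{a \in B^n} x^t_i(a)$, where $x^t_i(a) \in {}_a(H^n_t)_a$ is the generator corresponding to the diagram $W(a)a$ with a dot on the $i$-th endpoint of the center line, multiplied by $(-1)^{i+1}$. These are precisely the images of the generators $c_1,\ldots,c_{2n}$ under $\phi^t_{\mbox{Vert}_{2n},\mbox{Vert}_{2n}}$, after identifying $\mathrm{End}^t_{(n,n)}(\mbox{Vert}_{2n}) \cong Z(H^n_t)$ in the same manner as in the undeformed case. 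Sending $x_i \mapsto \overline{c}^{\,t}_i$ then defines a candidate ring homomorphism from the polynomial quotient to $Z(H^n_t)$.

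Next, I would verify that the stated relations hold. Centrality of each $\overline{c}^{\,t}_i$ is topological: multiplication in $H^n_t$ is induced by saddle cobordisms, and a single dot on a fixed boundary endpoint can be slid across any saddle on the connected component to which it belongs without altering the induced TQFT map, which forces $\overline{c}^{\,t}_i$ to commute with every element of $H^n_t$. The relation $(\overline{c}^{\,t}_i)^2 = t$ is immediate from the deformed Bar-Natan relation (a sphere with two dots evaluates to $t$). The relations $e^t_k(\overline{c}^{\,t}_1,\ldots,\overline{c}^{\,t}_{2n}) = 0$ for $1 \le k \le n$ follow directly from Proposition~\ref{kernelpropt}, since these elements lie in the kernel of $\phi^t_{\mbox{Vert}_{2n},\mbox{Vert}_{2n}}$ and hence vanish in $Z(H^n_t)$. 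That these are the \emph{only} relations is also a direct consequence of Proposition~\ref{kernelpropt}: the induced surjection from $\mathbb{Z}[t][x_1,\ldots,x_{2n}]/(x_i^2 - t)$ onto the image of $\phi^t$ has kernel exactly $(e_1^t,\ldots,e_n^t)$, so the map from the polynomial quotient into $Z(H^n_t)$ is injective.

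The remaining and main obstacle is surjectivity, i.e., showing that the $\overline{c}^{\,t}_i$ actually generate all of $Z(H^n_t)$ and not merely a proper subring. The plan is a flat-deformation/graded-Nakayama argument analogous to the one used in Section~\ref{sec:equi-rank} for the Russell space. Setting $t = 0$ specializes $H^n_t$ to $H^n$, and the already established description of $Z(H^n)$ identifies the images of the $\overline{c}^{\,t}_i$ with a generating set of $Z(H^n)$. Because $H^n_t$ is a graded free $\mathbb{Z}[t]$-module with $\deg(t) = 4$, one can filter by $t$-degree, identify the associated graded with a direct sum of shifted copies of $H^n$, and lift the centrality statement componentwise; equivalently, one applies the graded Nakayama lemma with $R = \mathbb{Z}[t]$ and $I = (t)$ to the finitely generated graded $\mathbb{Z}[t]$-module $Z(H^n_t)$, using the undeformed result to control the quotient $Z(H^n_t)/tZ(H^n_t)$. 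This produces a generating set matching the $\overline{c}^{\,t}_i$ and forces the comparison of ranks needed to conclude that the injection from the polynomial quotient onto $Z(H^n_t)$ is surjective.

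The most delicate step is the flatness/rank matching in the last paragraph: one must check that $Z(H^n_t)$ is in fact a free $\mathbb{Z}[t]$-module whose $t=0$ specialization is $Z(H^n)$, rather than some larger centralizer that could absorb extra elements for generic $t$. I expect this to follow from the grading, since the multiplication in $H^n_t$ differs from that in $H^n$ only by terms of strictly higher $t$-degree, but the bookkeeping — especially showing that taking centers commutes with the $t$-filtration — is the part that will require the most care.
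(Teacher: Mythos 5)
Your proposal is correct and follows essentially the same route as the paper: exhibit the explicit central elements $\sum_{a\in B^n}x_i(a)$, derive the relations (and their completeness) from Proposition \ref{kernelpropt}, and then rule out extra central generators by specializing $t=0$ and stripping off powers of $t$. The ``delicate step'' you flag at the end is exactly the paper's closing argument, which it handles by the short observation that any central $y$ with $\Phi(y)=0$ is of the form $t^k y'$ with $y'$ central and $\Phi(y')\neq 0$ --- a compressed version of your graded Nakayama plan.
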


\begin{proof}
There is an obvious surjective map $\Phi: H^n_t \to H^n$ given by setting $t$ to $0$. $\Phi$ restricts to a map on centers, $\Phi |_Z: Z(H^n_t) \to Z(H^n)$. From the proof of Proposition \ref{kernelprop} it follows that the $x_i$ generating $Z(H^n)$ can be realized as $\sum_{a \in B^n} x_i(a)$, where $x_i(a)$ is the element of ${}_aH^n_a$ with an $x$ in the $i$th tensor factor and $1$s elsewhere, and a sign of $(-1)^{i+1}$. Viewing these elements in $H^n_t$, it is clear that they are also central there. That they satisfy only the generating relations $x_i^2 -t = 0$ and $e^t_1(x_1, \ldots, x_{2n}), \ldots, e^t _{n}(x_1, \ldots, x_{2n})$ follows from Proposition \ref{kernelpropt}. Therefore $\Phi |_Z$ is surjective.

Now if $Z(H^n_t)$ had some other generator independent from the $x_1, \ldots, x_{2n}$, then there exists some $y$, a generator of $Z(H^n_t)$ independent from $x_1, \ldots, x_{2n}$, such that $\Phi(y) = 0$. Therefore $y$ must be a multiple of some power of $t$, i.e., $y = t^k y'$ where $y'$ has no factor of $t$ and is also central in $H^n_t$. But $\Phi(y') \neq 0$ since $y'$ has no factor of $t$, so $y'$ cannot be independent from the $x_i$, contradicting our assumption.
$\blacksquare$
\end{proof}

All other propositions from the previous section may be directly carried over to the case $X^2 = t$, and elements $e_k^t(a)$ may be defined analogously, giving the following theorem.
\begin{theorem}
$\phi_{m,n}^t$ is a surjective ring homomorphism. Its kernel is the two-sided ideal generated by the elements $e^t_k(a)$ for $a \in B^m_n$ and $1 \leq k \leq \min \{m,n\}$.
\end{theorem}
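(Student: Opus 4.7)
The plan is to mirror the proof of Theorem \ref{thm:bimod-kernel} step by step, replacing the classical elementary symmetric polynomials by their deformed analogues $e_k^t$ and invoking the work already set up in Proposition \ref{kernelpropt}. First, I would verify the base case: for $T_1 = T_2 = \mathrm{Vert}_{2n}$, Proposition \ref{kernelpropt} already tells us that $\phi^t_{\mathrm{Vert}_{2n},\mathrm{Vert}_{2n}}$ is surjective (its target being the center of $H^n_t$, which I would identify with $\mathbb{Z}[c_1,\ldots,c_{2n}]/(c_i^2-t, e^t_1, \ldots, e^t_n)$ using the preceding proposition) and that its kernel is exactly the ideal generated by $e_1^t(c_1,\ldots,c_{2n}),\ldots,e_n^t(c_1,\ldots,c_{2n})$. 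This gives the $T_1=T_2=\mathrm{Vert}_{2n}$ case of both surjectivity and the kernel description.

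Next, I would check that the adjointness machinery (Proposition \ref{adjoint}) carries over verbatim to the $X^2=t$ setting: the birth and saddle cobordisms that witness the unit/counit of the adjunction are topological, and in the functor $\mathcal{F}^t$ they produce the deformed unit/counit maps of the Frobenius system $(R_{SU(2)},A_{SU(2)})$. Only the explicit formulas for $\Delta$ and $\epsilon$ change, not the structural fact that $F_\cup\{1\}$ is left adjoint to $F_\cap$ (and similarly on the other side). The compatibility statement that $\phi^t_{T_1,T_2}$ respects adjointness is, as in the $t=0$ case, an elementary isotopy argument about bending a boundary component of a cobordism; this uses nothing specific to the relations in $\mathcal{A}$, so it transfers without change. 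Similarly, the circle removal isomorphism $\mathcal{F}^t(T_1W(T_2))\cong \mathcal{F}^t((T_1W(T_2))')\otimes \mathcal{A}_t$ still produces a direct sum decomposition (the Frobenius algebra $\mathcal{A}_t$ is still free of rank $2$ over its ground ring), so Proposition \ref{decomp} goes through and gives the analogous commutative square reducing $\mathrm{Hom}^t_{(m,n)}(T_1,T_2)$ to a direct sum of $\mathrm{End}^t_{(k,k)}(\mathrm{Vert}_{2k})$'s, compatibly with $\phi^t_{m,n}$.

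With these ingredients, surjectivity of $\phi^t_{m,n}$ follows from surjectivity in the $\mathrm{Vert}_{2k}$ case, exactly as in Proposition \ref{surjection}. For the kernel, the same lemma as before (with the same proof: slide dots on non through-strands to the boundary of a cup/cap, where they cancel in pairs) shows $e^t_k(a)$ is well-defined and vanishes whenever $k > w(a)$. The remaining task is to identify, for each summand $\mathrm{End}^t_{BN}(\mathrm{Vert}_{2k})$ appearing in the decomposition of $\mathrm{Hom}^t_{BN}(T_1,T_2)$, the preimages under adjointness of the generators $e_j^t(c_1,\ldots,c_{2k})$ of $\ker \phi^t_{\mathrm{Vert}_{2k},\mathrm{Vert}_{2k}}$. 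This is the same calculation carried out at the end of Section \ref{main}: sliding dots across caps/cups, a generator of the kernel in the $k$-summand pulls back to a scalar multiple of $e_j^t(a)$ for the tangle $a$ producing that summand (with the only subtlety being bookkeeping of signs and of circles arising from the adjunction, which already appeared in Figure \ref{kernelex}).

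The main potential obstacle is the last identification step, because the non-trivial relations in $\mathcal{A}_t$ (namely $X^2 = t$ and the modified coproduct) mean that when a dot slides across a cap or cup it can pick up extra terms involving powers of $t$; one must check that these extra contributions lie in the ideal already generated by the various $e_j^t(a)$. Concretely, if a through-strand dot of $e^t_j(a)$ is pushed through an adjoint cap in order to match a generator of $\ker\phi^t_{\mathrm{Vert}_{2k}}$, the relation $X^2=t$ produces terms supported on diagrams with a pair of strands merged, which by the definition of the deformed symmetric polynomials $e^t_{j}$ are exactly the correction terms appearing in the inductive definition $e^t_j = \tfrac{1}{j!}\big((c_1+\cdots)^j - \sum_i c_{i,j}\, t^{(j-i)/2}\, e^t_i\big)$. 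Thus the correction terms produced by adjointness are precisely those already absorbed into the definition of $e^t_j$, and no new kernel elements appear. Combining these steps yields the stated theorem.
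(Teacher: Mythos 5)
Your proposal follows essentially the same route as the paper: Proposition \ref{kernelpropt} handles the $\mbox{Vert}_{2n}$ case, and the adjointness and decomposition arguments of Section \ref{main} are carried over verbatim to the $X^2=t$ setting (the paper itself simply asserts this carry-over in one sentence). Your extra discussion of the $t$-correction terms arising when two dots land on the same sheet of a cup or cap is exactly the point the deformed polynomials $e_k^t$ are designed to absorb, so the argument is sound.
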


\addcontentsline{toc}{chapter}{Bibliography}
\bibliographystyle{plain}
\bibliography{thesis-bib}
\nocite{*}

\end{document}